\newtheorem{theorem}{Theorem}[section]
\newtheorem{lemma}[theorem]{Lemma}
\newtheorem{corollary}[theorem]{Corollary}
\newtheorem{proposition}[theorem]{Proposition}
\newtheorem{conjecture}[theorem]{Conjecture}
\theoremstyle{definition}
\newtheorem{definition}[theorem]{Definition}
\newtheorem{example}[theorem]{Example}
\theoremstyle{remark}
\newtheorem{remark}[theorem]{Remark}
\def\owlset@endmark{\ensuremath{\scriptscriptstyle\blacksquare}}
\newcommand{\Aff}{{\mathbb A}}
\newcommand{\C}{{\mathbb C}}
\newcommand{\F}{{\mathbb F}}
\newcommand{\PP}{{\mathbb P}}
\newcommand{\bP}{{\mathbb P}}
\newcommand{\Q}{{\mathbb Q}}
\newcommand{\Z}{{\mathbb Z}}
\newcommand{\bZ}{\mathbb{Z}}
\newcommand{\bQ}{\mathbb{Q}}
\def\bbar#1{\setbox0=\hbox{$#1$}\dimen0=.2\ht0 \kern\dimen0
\overline{\kern-\dimen0 #1}}
\newcommand{\calD}{{\mathcal D}}
\newcommand{\calM}{{\mathcal M}}
\newcommand{\calO}{{\mathcal O}}
\newcommand{\calP}{{\mathcal P}}
\newcommand{\calT}{{\mathcal T}}
\newcommand{\calX}{{\mathcal X}}
\newcommand{\Diff}{\mathrm{Diff}}
\newcommand{\s}{\text{-sm}}
\DeclareMathOperator{\Supp}{Supp}
\DeclareMathOperator{\Aut}{Aut}
\DeclareMathOperator{\Pic}{Pic}
\DeclareMathOperator{\Spec}{Spec}
\newcommand{\he}{H-}
\newcommand{\Zpq}{(Z,p,q)}
\newcommand{\Zpqs}{(Z^\mathrm{sm},p,q)}
\newcommand{\dfour}{(\mathbb{P}^3,d,4)}
\newcommand{\dfours}{(\mathbb{P}^{3,\mathrm{sm}},d,4)}
\title{Moduli of surfaces in $\PP^3$}
\author{Kristin DeVleming}
\email{kdevleming@umass.edu}
\address{Department of Mathematics, University of Massachusetts, Amherst, MA 01003, USA.}
\begin{document}

\begin{abstract}
The main goal of this paper is to construct a compactification of the moduli space of degree $d \ge 5$ surfaces in $\mathbb{P}^3_{\C}$, i.e. a parameter space whose interior points correspond to (equivalence classes of) smooth surfaces in $\mathbb{P}^3$ and whose boundary points correspond to degenerations of such surfaces.  We consider a divisor $D$ on a Fano variety $Z$ as a pair $(Z, D)$ satisfying certain properties.  We find a modular compactification of such pairs and, in the case of $Z = \PP^3$ and $D$ a surface, use their properties to classify the pairs on the boundary of the moduli space. 

\end{abstract}

\maketitle

\section{Introduction}

The aim of this paper is to construct a compactification of the moduli space of degree $d$ surfaces in $\PP^3$.  In the case of plane curves, using the Hilbert scheme or GIT techniques, one can find a compactification of the space parameterizing smooth degree $d$ curves in $\PP^2$, but the boundary does not always have a good modular interpretation.  For instance, there are points on the boundary that correspond to several different limits of families of plane curves.  In \cite{hassett} for the degree $4$ case and \cite{hacking} for general degree, instead of studying curves $C$, the authors worked with pairs $(\PP^2, C)$ and certain allowable degenerations.  Remembering the embedding of $C$ in $\PP^2$ and extracting certain properties yielded a compactification with a modular interpretation.  

This paper stems from the natural generalization of Hacking's work: find a good compactification of the moduli space of degree $d$ surfaces $S$ in $\PP^3$ using pairs.  In fact, because the framework is not specific to $\PP^3$, we will solve a more general problem and find a modular compactification of anticanonical divisors $D_Z$ on a fixed Fano variety $Z$ using degenerations of the pair $(Z,D_Z)$.  

For $Z = \PP^3$ and $D$ a surface of degree $d = 5$, the moduli space of smooth surfaces has been understood by \cite{horikawa}.  However, even for smooth surfaces, this moduli space is not irreducible.  When fixing the numerical invariants $K_S^5 = 5$, $p_g = 4$, and $q = 0$ of quintic surfaces, even in the smooth case, one obtains a moduli space with two components.  These details will be further explored in Section \ref{sec:quintics}, but we mention it here to indicate the increase in complexity when passing from curves to surfaces and motivate our focus on smoothable pairs. 

To find a meaningful compactification of the moduli space of degree $d$ surfaces in $\PP^3$, and more generally, divisors $D_Z$ that are a rational multiple of the anticanonical divisor on a fixed Fano variety $Z$, we will follow Hacking's approach and study pairs $(X,D)$ that arise as limits of pairs $(Z,D_Z)$.  In fact, we will only consider what we call $\Zpqs$ \he stable pairs $(X,D)$ (see Definition \ref{def:stable}) which have prescribed singularities and satisfy the relationship $pK_X + qD \sim 0$ and admit a smoothing to $Z$.  As in Hacking's work, this class of pairs is particularly well-behaved.  In the case of $Z = \PP^3$, remembering the embedding of $D$ into $\PP^3$ allows us to not only have a modular compactification of a space parameterizing $(\PP^3, D)$ but also to classify the pairs appearing on the boundary of the moduli space. The first result is that the class of pairs defined does actually give a compactification of the moduli space of pairs $(Z, D_Z)$.   

\begin{theorem}\label{theorem:stack}
The moduli space of $\Zpqs$ \he stable pairs is a proper Deligne-Mumford stack.
\end{theorem}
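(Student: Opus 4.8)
The plan is to verify the two defining properties separately: first that the moduli problem defines a Deligne--Mumford stack of finite type, and second that this stack satisfies the valuative criterion for properness.

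For the first part I would begin by establishing boundedness. Because the class of $(p,q)$ $Z$-smoothable \he stable pairs fixes the numerical invariants (the volume $K_X^n$ and the Hilbert polynomial of $(X,D)$ with respect to the polarization naturally attached to the relation $pK_X + qD \sim 0$) and constrains the singularities to a prescribed type, one expects the family to be bounded; this should follow from the general boundedness results for stable pairs with fixed invariants in the KSB and Alexeev framework, together with Hacking's treatment of the analogous situation for plane curves. Boundedness lets one embed all such pairs into a single Hilbert scheme, producing an atlas and showing the stack is of finite type. Algebraicity then follows by realizing the stack as a quotient of a locally closed subscheme of an appropriate Hilbert or $\Hom$-scheme by a $\PGL$-action, or directly via Artin's representability criteria. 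The Deligne--Mumford property --- finite and reduced automorphism groups --- should follow from the \he stability hypothesis: this condition provides an ample $\Q$-divisor of the form $K_X + cD$ for an appropriate coefficient $c$ slightly larger than $q/p$, so that $(X,D)$ is of log general type, $\Aut(X,D)$ is finite, and the diagonal of the stack is unramified.

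For properness I would apply the valuative criterion over a discrete valuation ring $R$ with fraction field $K$. Separatedness (the uniqueness part) amounts to showing that two families over $\Spec R$ agreeing over $\Spec K$ have isomorphic central fibers; this follows from the uniqueness of the log canonical model, since each \he stable pair is its own relative log canonical model and such models are unique. The existence part is the genuinely hard step: given a family of \he stable (or smooth) pairs over the punctured disk $\Spec K$, I would first spread out to a model over $R$, apply semistable reduction after a finite base change of $R$ to obtain a family with mild central fiber, and then run a relative log minimal model program for the pair, contracting the superfluous components to produce a relative log canonical model over $\Spec R$ whose special fiber is the candidate limit.

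The main obstacle I expect is showing that the limit produced by the minimal model program genuinely lands back in the prescribed class of $(p,q)$ $Z$-smoothable \he stable pairs --- that the special fiber again satisfies $pK_X + qD \sim 0$ as a linear (not merely numerical) equivalence, carries exactly the allowed singularities, and remains $Z$-smoothable. The linear-equivalence constraint and $Z$-smoothability are not automatically preserved by an abstract run of the program, so one must verify that the relative log canonical model inherits them; this is precisely where the specific structure of the permitted singularities, and for $Z = \PP^3$ the retained embedding data, must be exploited to identify the limiting pair concretely rather than abstractly, thereby closing the class under degeneration and completing the verification of properness.
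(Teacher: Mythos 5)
Your overall architecture (boundedness $\Rightarrow$ finite-type DM stack, then the valuative criterion via semistable reduction and a relative log canonical model, with separatedness from uniqueness of log canonical models) matches the paper, and your identification of the main obstacle in the properness step --- checking that the limit satisfies $pK_X+qD\sim 0$ as a genuine linear equivalence --- is exactly right; the paper resolves it with a dedicated lemma showing that a relatively nef $\Q$-Cartier divisor that is trivial on the punctured family is trivial on the whole family, and then invokes \cite[Proposition 14]{KNX} to see that the relative canonical model for $K+(\tfrac{q}{p}+\epsilon)D$ is independent of small $\epsilon$.

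However, there is a genuine gap in your boundedness step. You propose to deduce boundedness from ``the general boundedness results for stable pairs with fixed invariants in the KSB and Alexeev framework,'' but the paper explicitly explains why those results do not apply here: the defining condition is that $(X,(\tfrac{q}{p}+\epsilon)D)$ is slc and $K_X+(\tfrac{q}{p}+\epsilon)D$ is ample for \emph{some} $\epsilon>0$, and this $\epsilon$ is not bounded away from zero and does not range over a DCC set, so the Hacon--M\textsuperscript{c}Kernan--Xu boundedness theorems cannot be invoked directly. Hacking's plane-curve argument is likewise unavailable in higher dimension since it rests on the classification of slc surface singularities. The paper's actual boundedness proof goes through the smoothability hypothesis, which your proposal never uses at this stage: every $Z$-smoothable pair is a limit of smooth pairs $(Z,D_Z)$, which live in a fixed product of Hilbert schemes by Matsusaka's theorem; taking the closure of that locus, applying weak semistable reduction (Abramovich--Karu) over the whole base, and passing to the relative canonical model produces a single family of finite type in which every $(p,q)$ $Z$-smoothable \he stable pair occurs as a fiber (by comparing any one-parameter smoothing with a curve in the base, both being canonical models of a common semistable resolution). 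Without this --- or the paper's later classification results for odd-degree $\PP^3$ pairs, which substitute for smoothability --- your finite-type claim, and hence the whole construction of the stack, does not get off the ground.
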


While the smoothability assumption restricts us to one component of the moduli space, we can remove the assumption in the case of odd degree surfaces in $\PP^3$.  However, this moduli space is not irreducible. 

\begin{theorem}\label{theorem:stackP3}
For odd degree $d$, the moduli space of three-dimensional $\dfour$ \he stable pairs is a proper Deligne-Mumford stack.
\end{theorem}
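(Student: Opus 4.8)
The plan is to establish the two defining properties of a proper Deligne--Mumford stack in turn: algebraicity with finite automorphisms, and the valuative criterion for properness. The new content relative to Theorem~\ref{theorem:stack} is that the smoothability hypothesis is dropped, so the moduli functor is now reducible; the decisive point is to show it nevertheless remains closed under the formation of one-parameter limits, and this is exactly where the odd-degree hypothesis will be used.

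First I would check that the three-dimensional $(d,4)$ $\PP^3$ H-stable pairs form a bounded family. The relation $dK_X + 4D \sim 0$ fixes the volume, and hence the Hilbert polynomial of each pair with respect to $-K_X$, while the allowable singularities are slc of bounded index; by the standard boundedness theory for stable pairs this yields an algebraic stack of finite type. On the generic member one computes $K_X + D \sim (d-4)H$, which is ample since $d \ge 5$, so every such pair is of log general type. Consequently $\Aut(X,D)$ is finite and the diagonal is unramified, giving the Deligne--Mumford property.

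For properness I would apply the valuative criterion over a DVR $R$ with fraction field $K$. Separatedness (uniqueness of limits) follows from uniqueness of the log canonical model: two $R$-extensions of a fixed $K$-pair share the same relative canonical model and hence have isomorphic central fibers. For the existence of limits I start with a family $(\calX_K, \calD_K)$ over $\Spec K$, pass to a finite extension of $R$, apply semistable reduction together with properness of the ambient space of slc pairs to obtain \emph{some} extension over $\Spec R$, and then take the relative log canonical model to produce a candidate central fiber $(X_0, D_0)$.

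The crux is to verify that $(X_0, D_0)$ is again a $(d,4)$ $\PP^3$ H-stable pair, and this is the step where $d$ odd is essential. The relation $dK + 4D \sim 0$ holds on the generic fiber and on the total space up to a divisor supported on $X_0$; descending it to an honest integral equivalence on $X_0$ requires controlling two-torsion in $\Cl(X_0)$. Because $\gcd(d,4)=1$ precisely when $d$ is odd, I can choose integers $a,b$ with $ad - 4b = 1$ and define the polarizing class $H_0 := a D_0 + b K_{X_0}$ as an integral Weil divisor class; on $\PP^3$ this recovers the hyperplane class (for $d=5$ it is simply $H_0 = D_0 + K_{X_0}$). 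The integrality of $H_0$ removes the parity obstruction that would otherwise prevent one from identifying $X_0$ as a degeneration of $\PP^3$, and it is what forces $dK_{X_0} + 4D_0 \sim 0$ to hold on the nose rather than merely numerically. Once $H_0$ is secured, the classification of the possible central surfaces identifies $(X_0, D_0)$ as an admissible H-stable pair, so the limit lies in the stack. The main obstacle is thus this integral descent of the linear equivalence; the boundedness and local-structure arguments are routine by comparison, and with the valuative criterion verified the stack is proper.
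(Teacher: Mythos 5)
Your proposal treats boundedness as routine and locates the role of the odd-degree hypothesis in an integral descent of the relation $dK_{X_0}+4D_0\sim 0$ via $\gcd(d,4)=1$. Both points miss the actual content of the theorem. Boundedness is precisely the issue once smoothability is dropped: the definition of an \he stable pair only requires $(X,(\tfrac{4}{d}+\epsilon)D)$ to be slc for \emph{some} $\epsilon>0$, with no lower bound on $\epsilon$ and no a priori bound on the index of $K_X$, so the coefficients do not lie in a DCC set and the standard boundedness results such as \cite{hmx} do not apply --- the paper flags this explicitly in the introduction. The actual route is: for $d$ odd, Theorem \ref{thm:mainresult} (proved via the analysis of extremal contractions culminating in Theorem \ref{thm:even}) shows that no strictly semi log canonical pair can occur, because the possible intersection numbers $K_X\cdot C$ of contracted curves, played against $D\cdot C\in\Z$ and the relation $dK_X+4D\sim 0$, force $d$ to be even whenever a strictly lc center is present. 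Hence every pair is semi log terminal, and combined with the \emph{linear} (not merely numerical) relation $dK_X+4D\sim 0$ this makes the pairs $\epsilon$-log terminal for a fixed $\epsilon$ depending only on $d$, so \cite[Corollary 1.7]{hmx2} yields boundedness (Theorem \ref{thm:boundednessP3}). That is where oddness enters; it has nothing to do with choosing $a,b$ with $ad-4b=1$ to build a polarizing class $H_0$, and your appeal to ``identifying $X_0$ as a degeneration of $\PP^3$'' is off target, since this theorem concerns exactly the functor with the smoothability condition removed.

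Your properness discussion (semistable reduction, relative log canonical model, uniqueness of log canonical models for separatedness) does match the paper's argument, which carries over verbatim from the smoothable case of Theorem \ref{thm:proper}. But the existence half of your argument is incomplete at the decisive step: the sentence ``the classification of the possible central surfaces identifies $(X_0,D_0)$ as an admissible H-stable pair'' presupposes exactly the classification results that constitute the hard part of the paper, while your substitute mechanism (controlling two-torsion in $\Cl(X_0)$ to define $H_0=aD_0+bK_{X_0}$) does not show that the limiting pair satisfies the slc-for-some-$\epsilon$ condition within a bounded family. Without Theorem \ref{thm:slt} or an equivalent input, neither the Deligne--Mumford (finite type) property nor the verification that limits remain in the stack is established.
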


The reason for requiring oddness of degree is that, without the smoothability assumption, the class of pairs defined is not obviously a bounded family.  If boundedness was known, Hacking's arguments in the plane curve case \cite{hacking} would imply that the moduli space is a Deligne-Mumford stack and properness would follow from a relatively standard arguments.  Hacon, M\textsuperscript{c}Kernan, and Xu proved a strong result about boundedness of families of certain pairs $(X,D)$.  It requires the coefficients of the divisors appearing in $D$ to belong to a DCC set.  Here, in the definition of a $\Zpq$ \he stable pair, one requires that $(X, (\frac{q}{p} + \epsilon) D)$ is slc for $\epsilon$ sufficiently small.  However, $\epsilon$ is not bounded from below, so results on boundedness like those in \cite{hmx} do not directly apply.  If $\epsilon$ was required to belong to a DCC set, \cite[Theorem 1.1]{hmx} would apply to show the given pairs belong to a bounded family. 

In a different direction, an initial goal of this project was to classify the singular pairs appearing on the boundary of the moduli space of $\dfours$ \he stable pairs.  In working on this problem, the classification results gave enough control of the singularities of the boundary of the moduli space for odd degree $d$ to obtain boundedness even for non-smoothable pairs.  In other words, regardless of what set $\epsilon$ lives in, the classification results for odd degree $d$ actually imply boundedness.  Therefore, for $\dfour$ \he stable pairs, the following theorems serve two purposes: explicit classification of singular threefolds appearing in the moduli space and a means to achieve boundedness without carefully studying the numbers $\epsilon$ that appear in \he stable pairs. The first result is about ambient spaces $X$ with mild singularities.  Because $D \sim_{\mathbb{Q}} -\frac{d}{4} K_{X}$, classifying all the possible threefolds $X$ appearing is the first step toward understanding $D$. 

\begin{theorem}\label{thm:canonical}
Given a $\dfours$ \he stable pair $(X,D)$, if $d$ is odd and $X$ has canonical singularities, then either 
\begin{enumerate}[(a)]
	\item $X \cong \PP^3$,
	\item $X$ is isomorphic to the cone over the anticanonical embedding of the quadric surface $\PP^1 \times \PP^1$, or
	\item $X \cong \PP(1,1,2,4)$, the cone over the anticanonical embedding of the singular quadric surface
\end{enumerate}

\noindent and $D \in \left| \mathcal{O}_{X}(-\frac{d}{4}K_X) \right|$ such that $(X, \frac{4}{d}D)$ is log terminal.
\end{theorem}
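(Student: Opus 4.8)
The plan is to convert the hypotheses into a statement about the Fano geometry of $X$ and then appeal to the classification of canonical Fano threefolds of maximal index. First I would record the numerical consequences of $(X,D)$ being a $(d,4)$ $\PP^3$-smoothable \he stable pair. By definition $dK_X + 4D \sim 0$, so $D \sim_{\Q} -\tfrac{d}{4}K_X$ and $K_X + \tfrac{4}{d}D \sim_{\Q} 0$; hence $K_X + (\tfrac{4}{d}+\epsilon)D \sim_{\Q} \epsilon D$, and the ampleness built into \he stability forces $D$, and therefore $-K_X$, to be ample. Thus $X$ is a Fano threefold, and since it has canonical singularities it is normal. Because $(-K_X)^3$ is invariant in the $\Q$-Gorenstein smoothing to $\PP^3$, we get $(-K_X)^3 = (-K_{\PP^3})^3 = 64$. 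This is the point at which the oddness of $d$ enters: as $\gcd(d,4)=1$ we may choose integers $a,c$ with $ad - 4c = 1$, so that $-K_X \sim -adK_X + 4cK_X \sim 4aD + 4cK_X = 4A$ for the Weil divisor $A := aD + cK_X$. Consequently $A$ is ample, $-K_X \sim 4A$, $A^3 = \tfrac{1}{64}(-K_X)^3 = 1$, and $D \sim dA$.

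Next I would classify the pairs $(X,A)$ so obtained. Such an $X$ is a canonical Fano threefold of minimal degree $A^3 = 1$ and of coindex $\dim X + 1 - 4 = 0$, i.e. of maximal Fano index with respect to the polarization $A$. If $A$ is Cartier this is precisely the situation of the (singular) Kobayashi–Ochiai theorem, which forces $X \cong \PP^3$ with $A = \mathcal{O}(1)$. When $A$ is not Cartier the variety is singular exactly along the non-Cartier locus, and one analyzes the graded ring $\bigoplus_{m\ge 0} H^0(X,mA)$ using Riemann–Roch together with Kawamata–Viehweg vanishing (available since $X$ is a canonical Fano); the constraints $A^3 = 1$ and $-K_X \sim 4A$ then cut the $\Delta$-genus down to zero and, via Fujita's classification of polarized varieties of $\Delta$-genus zero, leave only the cone over the anticanonical embedding $(\PP^1\times\PP^1,\mathcal{O}(2,2))\hookrightarrow \PP^8$ and the weighted projective space $\PP(1,1,2,4)$. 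For the cone one has $-K_X \sim 2H$ with $H^3 = 8$ and $A = \tfrac12 H$, and for $\PP(1,1,2,4)$ one has $-K_X \sim \mathcal{O}(8)$ and $A = \mathcal{O}(2)$; in both cases a direct check confirms canonical Gorenstein singularities together with $A^3 = 1$. That these two singular models (and no others) actually occur is then secured by the smoothability hypothesis: each of (b) and (c) admits a $\Q$-Gorenstein smoothing to $\PP^3$, while any remaining degree-$64$ candidate is excluded either by failure of canonicity or by non-smoothability to $\PP^3$.

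Finally I would treat the divisor. Membership $D \in |\mathcal{O}_X(-\tfrac{d}{4}K_X)|$ is exactly the integral relation $D \sim dA$ established above. For log terminality, recall that \he stability provides that $(X,(\tfrac{4}{d}+\epsilon)D)$ is slc for all sufficiently small $\epsilon>0$, which for the normal, irreducible $X$ means it is log canonical. If $(X,\tfrac{4}{d}D)$ were log canonical but not klt, it would possess a non-klt place of discrepancy exactly $-1$; increasing the coefficient from $\tfrac{4}{d}$ to $\tfrac{4}{d}+\epsilon$ would then drive that discrepancy strictly below $-1$, contradicting the log canonicity of $(X,(\tfrac{4}{d}+\epsilon)D)$ for $\epsilon>0$. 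Hence $(X,\tfrac{4}{d}D)$ is log terminal, completing the classification.

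The main obstacle is the classification step. Bounding the index and identifying the Cartier case with $\PP^3$ is the generalized Kobayashi–Ochiai statement, but the genuinely delicate part is enumerating the singular canonical Fano threefolds with $A^3=1$ and $-K_X \sim 4A$ and then matching that list against $\Q$-Gorenstein smoothability to $\PP^3$, so that exactly the cone and $\PP(1,1,2,4)$ survive. It is precisely here that the assumption of canonical — rather than merely log terminal — singularities does the essential work, keeping the relevant list of degenerations finite and short.
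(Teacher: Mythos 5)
Your numerical reduction is sound and is in fact a clean way to package how oddness of $d$ enters: from $dK_X+4D\sim 0$ and $\gcd(d,4)=1$ you correctly produce an ample $\Q$-Cartier Weil divisor $A$ with $-K_X\sim 4A$, $A^3=1$ and $D\sim dA$, and the closing log-terminality argument is also essentially fine (any non-klt place of $(X,\tfrac4d D)$ must appear with positive multiplicity in the pullback of $D$, since $X$ itself is canonical, so increasing the coefficient would violate log canonicity). The genuine gap is the classification step. Fujita's $\Delta$-genus and his classification of $\Delta$-genus-zero polarized varieties are theorems about ample \emph{Cartier} polarizations; for the non-Cartier Weil divisor $A$ --- which is precisely the situation in outcomes (b) and (c), where $A$ fails to be Cartier at the distinguished singular point --- there is no off-the-shelf $\Delta$-genus-zero classification to invoke, and the Riemann--Roch computation of $h^0(X,mA)$ you would need carries orbifold correction terms depending on the unknown basket of singularities. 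The fallback claim that ``any remaining degree-$64$ candidate is excluded \dots by non-smoothability to $\PP^3$'' is likewise unsubstantiated: deciding $\Q$-Gorenstein smoothability to $\PP^3$ is itself one of the hard problems of the paper (Section 5.6 needs Schlessinger rigidity and explicit embedded-deformation computations to exclude individual candidates, and still leaves cases open in the log terminal setting). So the heart of the theorem is asserted rather than proved.

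For comparison, the paper's route is entirely different at this point. It first uses Fletcher's inversion of the Reid plurigenus formula (applicable because $X$ is a flat degeneration of $\PP^3$ and so has the same plurigenera) to show that a crepant terminalization $X'\to X$ is Gorenstein, terminal and $\Q$-factorial with $K_{X'}^3=-64$; it then runs an MMP on $X'$ and uses Cutkosky's classification of $K$-negative extremal contractions on such threefolds to produce, in almost every configuration, a curve $C$ with $K_X\cdot\pi(C)\in\{-1,-2,-3\}$ and $D\cdot\pi(C)$ of controlled denominator, so that $dK_X+4D\sim 0$ forces $d$ even --- a contradiction; the few surviving configurations are identified by hand as $\PP^3$, the cone over the anticanonical $\PP^1\times\PP^1$, and $\PP(1,1,2,4)$. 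Note that this argument exploits the actual effective divisor $D$ and its intersection numbers against explicit contracted curves, not merely the divisibility of $-K_X$ in $\Cl(X)$ that your $A$ records; it is not clear that your weaker input suffices even in principle.
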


Certainly there are other examples of threefolds $X$ with canonical singularities and $-K_X$ ample that do not appear in the previous list, even some appearing in the boundary for even degree.  However, in the odd degree case, the boundary is very special.  For any degree, the previous theorem gives a divisorial component of the boundary of the moduli space. 

\begin{theorem}
 Let $X$ be the cone over the anticanonical embedding of $\PP^1 \times \PP^1$.  For any degree $d > 4$, (the closure of) the locus of $\dfours$ \he stable pairs $(X,D)$ in the moduli space forms a divisor $\calD_Q$ in $\calM_{\dfours}$ and the moduli space is smooth at the generic point of $\calD_Q$.  
\end{theorem}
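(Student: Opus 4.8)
The plan is to realize $\calD_Q$ as the image of a linear system on the fixed threefold $X$, compute its dimension against that of $\calM_{\PP^3\s,(d,4)}$ to get codimension one, and then run a deformation-theoretic argument for smoothness.

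First I would pin down $X$ and the divisor class. By Theorem \ref{thm:canonical}(b), $X$ is the projective cone over $(\PP^1\times\PP^1,\mathcal{O}(2,2))$, with vertex $v$ its unique singular point; writing $H$ for the hyperplane class and $F_1,F_2$ for the cones over the two rulings, one has $2F_1+2F_2\sim H$, $-K_X\sim 2H$, and hence
\[
D\;\sim\;-\tfrac{d}{4}K_X\;\sim\;d(F_1+F_2),
\]
which is an integral Weil divisor class for every $d$ --- this is why the statement holds for all $d>4$ and not only for odd $d$. A generic member $D$ of $|\mathcal{O}_X(d(F_1+F_2))|$ gives, by Theorem \ref{thm:canonical}, a log terminal, $\PP^3$-smoothable \he stable pair, so $\calD_Q$ is nonempty and is the closure of $[\,|\mathcal{O}_X(D)|/\Aut(X)\,]$.

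To see that $\calD_Q$ is a divisor I would compute both dimensions. Since $(X,D)$ is a flat limit of pairs $(\PP^3,S)$ with $S\in|\mathcal{O}(d)|$ and $H^1(X,\mathcal{O}_X(D))=0$, the reflexive sheaf $\mathcal{O}_X(D)$ is the flat limit of $\mathcal{O}_{\PP^3}(d)$, so
\[
h^0(X,\mathcal{O}_X(D))=h^0(\PP^3,\mathcal{O}(d))=\binom{d+3}{3}
\]
(the same number also follows by summing the cone grading). Next, $\Aut(X)$ is generated by $\Aut(\PP^1\times\PP^1)$ (dimension $6$), the cone-scaling $\mathbb{G}_m$ (dimension $1$), and the shears $t\mapsto t+\ell(z)$ for $\ell\in H^0(\PP^1\times\PP^1,\mathcal{O}(2,2))$ (dimension $9$), so $\dim\Aut(X)=6+9+1=16=\dim\PGL_4+1$. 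For generic $D$ the stabilizer in $\Aut(X)$ is finite, whence
\[
\dim\calD_Q=\binom{d+3}{3}-1-16,\qquad \dim\calM_{\PP^3\s,(d,4)}=\binom{d+3}{3}-1-15.
\]
The two linear systems have equal dimension, so the single extra automorphism of $X$ --- the cone-scaling --- is precisely what lowers $\dim\calD_Q$ by one, giving $\operatorname{codim}\calD_Q=1$.

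For smoothness of the moduli space at the generic point of $\calD_Q$, I would show that the deformations of the pair $(X,D)$ are unobstructed of dimension $\dim\calM_{\PP^3\s,(d,4)}$. Deformations of $X$ are governed by the vertex singularity, the cone over $(\PP^1\times\PP^1,\mathcal{O}(2,2))$: using the grading of the cone together with the rigidity of $\PP^1\times\PP^1$ (so that $H^1(T_{\PP^1\times\PP^1})=0$ contributes no equisingular deformations of $X$), one computes that the $\mathbb{Q}$-Gorenstein $T^1$ has a one-dimensional smoothing part, realized precisely by the given family to $\PP^3$, and that $T^2=0$ so this smoothing is unobstructed. Because $H^1(X,\mathcal{O}_X(D))=0$, the divisor $D$ moves unobstructedly inside the smooth linear system $|\mathcal{O}_X(D)|$, and the obstruction space of the pair vanishes as well; combining, $\mathrm{Def}_{(X,D)}$ is smooth, with tangent space splitting as the $\dim\calD_Q$ equisingular directions plus the one transverse smoothing direction. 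The main obstacle is exactly this last deformation-theoretic input: verifying that the smoothing part of the $\mathbb{Q}$-Gorenstein $T^1$ of the vertex is one-dimensional and that $T^2$ of the pair vanishes. Both reduce, via the cone grading, to explicit cohomology computations on $\PP^1\times\PP^1$ with twists of $T_{\PP^1\times\PP^1}=\mathcal{O}(2,0)\oplus\mathcal{O}(0,2)$ and of $\mathcal{O}(2,2)$; the rigidity and projective normality of $(\PP^1\times\PP^1,\mathcal{O}(2,2))$ make these tractable, but identifying the single smoothing direction with the normal direction to $\calD_Q$, and ruling out further obstructions for the pair, is the crux of the argument.
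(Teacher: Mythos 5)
Your dimension count is correct and is essentially the paper's route to codimension one: $h^0(X,\calO_X(d))=\binom{d+3}{3}=h^0(\PP^3,\calO(d))$ while $\dim\Aut(X)=16=\dim\PGL_4+1$, so the locus drops dimension by exactly one. However, there are two genuine gaps. First, you assert that a generic $D\in|\calO_X(d)|$ gives a log terminal \he stable pair ``by Theorem \ref{thm:canonical}'' --- but that theorem is a classification of which $X$ \emph{can} occur given that $(X,D)$ is already \he stable (and is stated for odd $d$); it does not produce any admissible $D$. This is not a formality: for odd $d$ the class $\calO_X(d)\sim d(F_1+F_2)$ is not Cartier at the vertex, so \emph{every} member of the linear system passes through the singular point, and one must check that the generic member is not too singular there. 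The paper does this by exhibiting an explicit member --- a smooth $D$ missing the vertex when $d$ is even, and $D_0=D_1\cup D_{(d-1)/2}$ with $D_1\in|\calO_X(1)|$ (which has a $\frac14(1,1)$ point at the vertex) when $d$ is odd --- computing that $(X,\frac4d D_0)$ is log terminal, and then invoking upper semicontinuity of the log canonical threshold. Without this step the locus could a priori be empty or of smaller dimension.

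Second, your smoothness argument is aimed at the right target but computes the wrong object and leaves the crux unverified. Because $D$ is not Cartier at the vertex for odd $d$, the relevant deformation theory is not that of the cone singularity itself but of its index-$4$ cover for $\frac14 K_X$: the paper passes to the $4$-canonical cover $V$, which is the cone over the quadric in $\PP^4$ (the $(1,1)$-cone, not the $(2,2)$-cone), and then gets $T^2_{QG,X}=0$ cheaply from the spectral sequence $H^p(\calT^q_{QG,X})\Rightarrow T^{p+q}_{QG,X}$ because $V$ is a hypersurface with an isolated singularity (so $H^1(\calT^1_V)=0$ and $H^0(\calT^2_V)=0$) together with $H^2(\calT^0_X)=0$; deformations of the pair then reduce to deformations of $X$ via $H^1(X,\calO_X(D))=0$. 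You instead propose to compute the graded pieces of the ordinary $\mathbb{Q}$-Gorenstein $T^1$ and $T^2$ of the $(2,2)$-cone point and to identify a one-dimensional smoothing direction; besides working on the wrong cover, this identification is not needed (unobstructedness plus the dimension count already give the divisor statement), and you acknowledge it is the unproven crux. As written, the proposal establishes the codimension count but not the existence of the generic stable pair nor the smoothness claim.
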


More generally, in the moduli space of $(d,n+1)$ $\PP^n$-smoothable \he stable pairs, the locus parameterizing divisors on a cone over a quadric always forms a divisor. 

\begin{theorem}
Let $X \subset \PP(1^{n+1}, 2)$ be the cone over the degree $2$ embedding of a smooth quadric $Q \in | \calO_{\PP^n}(2) |$.  For any degree $d > n+1$, (the closure of) the locus of $(d, n+1)$ $\PP^n$-smoothable \he stable pairs $(X,D)$ forms a divisor $\calD_Q$ in the moduli space $\calM_{\PP^{n\s}, (d,n+1)}$.
\end{theorem}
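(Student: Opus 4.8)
The plan is to reduce the statement to an explicit dimension count, comparing the boundary locus $\calD_Q$ with the full moduli space $\calM_{\PP^n\s,(d,n+1)}$. Write $X = V(q) \subset \PP(1^{n+1},2)$, where $q = q(x_0,\dots,x_n)$ is a nondegenerate quadratic form in the weight-one coordinates and $t$ is the weight-two coordinate; this is the cone in question, with vertex the coordinate point of $t$. Since the canonical sheaf of $\PP(1^{n+1},2)$ is $\calO(-(n+3))$, adjunction for the degree-two hypersurface $X$ gives $-K_X = \calO_X(n+1)$, so the condition $D \sim_{\Q} -\tfrac{d}{n+1}K_X$ becomes $D \in |\calO_X(d)|$. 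First I would show this system has the same dimension as $|\calO_{\PP^n}(d)|$: because $q$ is a nonzerodivisor on $R = k[x_0,\dots,x_n,t]$, one has $h^0(\calO_X(d)) = \dim_k(R/(q))_d = \dim_k R_d - \dim_k R_{d-2}$, and the count of weight-graded monomials telescopes to $\binom{n+d}{n} = h^0(\calO_{\PP^n}(d))$.

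Next I would compute $\Aut(X)$ via the graded automorphisms of the homogeneous coordinate ring $S = k[x_0,\dots,x_n,t]/(q)$. In degree one these act by a linear substitution $A \in \GL_{n+1}$ on the $x_i$, and preserving the relation $q=0$ forces $A$ into the orthogonal similitude group $\mathrm{GO}_{n+1}$, of dimension $\binom{n+1}{2}+1$; in degree two the generator $t$ may be sent to $ct + g$ with $c \in \G_m$ and $g$ a quadratic form taken modulo $q$, contributing $1 + \left(\binom{n+2}{2} - 1\right)$ parameters. After subtracting the one-dimensional scalar $\G_m$ acting trivially on $\Proj S$, these assemble to $\dim\Aut(X) = (n+1)^2$. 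Since the interior of $\calM_{\PP^n\s,(d,n+1)}$ parameterizes smooth degree-$d$ hypersurfaces modulo $\Aut(\PP^n) = \PGL_{n+1}$ (of dimension $(n+1)^2 - 1$), and since a general $D$ has finite automorphisms so that $|\calO_X(d)| \dashrightarrow \calD_Q$ has generic fibers equal to $\Aut(X)$-orbits, the equality of the two linear-system dimensions yields
\[
\operatorname{codim}\calD_Q = \dim\Aut(X) - \dim\Aut(\PP^n) = (n+1)^2 - ((n+1)^2 - 1) = 1.
\]
Irreducibility of $\calD_Q$ follows from that of $|\calO_X(d)|$, so its closure is a divisor.

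What remains — and where I expect the real work to lie — is verifying that a general $D \in |\calO_X(d)|$ genuinely gives a $(d,n+1)$ $\PP^n$-smoothable \he stable pair, so that $\calD_Q$ sits inside $\calM_{\PP^n\s,(d,n+1)}$. Smoothability I would establish explicitly: the flat family $V(q - st) \subset \PP(1^{n+1},2) \times \Aff^1_s$ has special fiber $X$ at $s=0$, while for $s \neq 0$ the relation $t = q(x)/s$ identifies the fiber with $\PP^n$; substituting $t = q/s$ into a weight-$d$ defining form for $D$ carries it to a degree-$d$ hypersurface in $\PP^n$, exhibiting a smoothing of the pair. The more delicate point is the \he stability, i.e. that $(X,(\tfrac{n+1}{d}+\epsilon)D)$ is slc for small $\epsilon > 0$: away from the vertex $X$ is smooth and a general $D$ is transverse, while at the vertex one must check that the cone singularity together with $\tfrac{n+1}{d}D$ remains log canonical, which I would reduce to a statement about the pair $(Q,\tfrac{n+1}{d}(D\cap Q))$ on the quadric base via the standard cone/inversion-of-adjunction correspondence. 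Granting this, the generic stabilizer is finite and the dimension count above is exact, completing the proof.
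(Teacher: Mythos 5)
Your dimension count is correct and is essentially the argument the paper has in mind when it says ``by a simple dimension count'': the telescoping identity $h^0(\calO_X(d))=\dim R_d-\dim R_{d-2}=\binom{n+d}{n}=h^0(\calO_{\PP^n}(d))$, the computation $\dim\Aut(X)=\binom{n+1}{2}+\binom{n+2}{2}=(n+1)^2$, and the explicit smoothing $V(q-st)\subset\PP(1^{n+1},2)\times\Aff^1_s$ (which is the paper's standard degeneration of $\PP^n$ to the cone over a hyperplane section of its degree-$2$ embedding) are all sound and give codimension one, granted that the generic pair on $X$ actually lies in $\calM_{\PP^n\s,(d,n+1)}$.

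The genuine gap is exactly the point you defer with ``Granting this'': the verification that $(X,(\tfrac{n+1}{d}+\epsilon)D)$ is slc (in fact log terminal) for general $D\in|\calO_X(d)|$. This is not a routine transversality statement, and your phrasing ``away from the vertex $X$ is smooth and a general $D$ is transverse'' obscures the real issue: when $d$ is odd there is no monomial $t^{d/2}$, so \emph{every} member of $|\calO_X(d)|$ passes through the vertex, and the general member acquires a singularity there (for $n=3$ it is a $\tfrac14(1,1)$ point). So one must genuinely compute a log canonical threshold at the vertex, and your proposed reduction to a pair on the quadric base is only a sketch --- note also that $D$ is not a cone over a divisor in $Q$, so the cone correspondence does not apply directly; one would have to work on the blowup of the vertex and control the multiplicity of $D$ there. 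The paper's route is different and cleaner: since the log canonical threshold is upper semicontinuous, it suffices to exhibit a \emph{single} member of $|\calO_X(d)|$ with the required singularities. For $d$ even one takes a smooth member missing the vertex; for $d$ odd one takes a reducible member $D_1\cup D'$ with $D_1\in|\calO_X(1)|$ (a cone through the vertex, with the vertex singularity of the one-lower-dimensional cone) and $D'$ a general even-degree member missing the vertex, and checks log terminality of that explicit configuration by hand. Until this singularity verification is supplied --- by your inversion-of-adjunction route carried out in full, or by the semicontinuity argument --- the proof is incomplete, since without it $\calD_Q$ is not known to be nonempty, let alone a divisor.
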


In the case of $\PP^3$-smoothable \he stable pairs, we also study more complicated singularities.  In order to guarantee a proper moduli space, we consider semi log canonical (slc) pairs.  In Section \ref{sec:classification}, we show that pairs with strictly slc singularities can only appear in the moduli space of pairs with even degree $d$.  Therefore, for odd degree, we only need to consider semi log terminal pairs to construct the moduli space.   

\begin{theorem}\label{thm:mainresult}
Given a $\dfour$ \he stable pair with $d$ odd, $(X,\frac{4}{d}D)$, each component of the normalization of $(X,\frac{4}{d}D)$ is divisorial log terminal.
\end{theorem}

Considering for a moment the \he stable pairs $(X,D)$ with $X$ normal, the canonical threefolds $X$ appearing are described in Theorem \ref{thm:canonical}.  From Theorem \ref{thm:mainresult},  it remains to classify the (non-canonical) log terminal threefolds appearing as \he stable pairs.  Partial progress toward this goal is obtained in Section \ref{sec:lt}.  We hope to continue this study and classify non-normal threefolds (via a similar analysis of their normalization) in future work.

\subsection{A map of this paper.}
In the first half of the paper, we study $\Zpq$ \he stable pairs for an arbitrary smooth Fano variety $Z$.  We begin in Section \ref{sec:pairs} with a brief explanation of motivating ideas for this paper, we define \he stable pairs, and use the existence of minimal models to prove that a family of pairs over a punctured curve can be extended in an essentially unique way, justifying the definition. In Section \ref{sec:moduli}, we study deformation theory of \he stable pairs and further analyze the moduli space $\calM_{\Zpqs}$, proving that it is a proper Deligne-Mumford stack.  In the case of $Z = \PP^3$, we remove the smoothability assumption for odd degree $d$ and prove that $\calM_{\dfour}$ is a proper Deligne-Mumford stack. In the second half of the paper, we focus primarily on the case $Z = \PP^3$ to obtain finer classification results, generalizing to $Z = \PP^n$ when possible.  In Section \ref{sec:classification}, we prove a number of technical lemmas about extremal contractions in the minimal model program, use them to understand log canonical singularities appearing in degree $d$ $\PP^3$ \he stable pairs, and build up the necessary machinery to prove Theorem \ref{thm:mainresult}.  We prove a number of related results on the structure of slc Fano varieties and their connections to boundedness of odd degree $\PP^3$ \he stable pairs.  We also discuss canonical and log terminal Fano degenerations of $\PP^n$ as a step toward classifying all $\PP^n$-smoothable \he stable pairs.   Finally, in Section \ref{sec:quintics}, we explicitly study $\calM_{\PP^3\s, (5,4)}$, the moduli space of quintic surfaces in $\PP^3$.  We compare this to existing compactifications and discuss the differences between $\calM_{\dfour}$ and $\calM_{\dfours}$.

\subsection{Acknowledgments.}

This paper is based on my PhD thesis.  I would like to thank my advisor, S\'andor Kov\'acs, for suggesting this problem and for his years of generous guidance, encouragement, and insight.  I also benefited from conversations with Max Lieblich, Charles Godfrey, Siddharth Mathur, and Kenneth Ascher.  I would also like to thank Paul Hacking for comments on an earlier version of this paper.  I would like to thank J\'anos Koll\'ar for comments on an earlier draft and the crucial ideas for extending the proof of Theorem \ref{theorem:stack} to arbitrary smooth Fano varieties.  Finally, I thank the referee for their detailed and careful comments greatly improving the exposition of this paper.

\section{\he stable pairs}\label{sec:pairs}

Throughout, we will work with varieties over $\C$.  A pair $(X,D)$ is a variety $X$ with a divisor $D = \sum a_iD_i$ that is a formal linear combination of prime divisors.  We will restrict to the case $a_i \in \bQ$.  We use the standard definitions for (log) terminal and (log) canonical singularities found, for example, in \cite[Section 2.3]{km98}.

\subsection{Previous work}

We will construct a compactification of the moduli space of degree $d$ surfaces in $\PP^3$ using ideas first introduced in \cite{ksb} and mention here the relationship between this compactification and related ones.  The KSB (Koll\'ar, Shepherd-Barron) compactification is a construction of a parameter space for stable objects with slc singularities.  Our precise definition of stable objects will appear in Section \ref{sec:pairs}.  One can consider these compactifications as higher dimensional analogues of $\overline{M}_{g,n}$, the moduli space of stable genus $g$ curves with $n$ marked points \cite{alexeev}.  

The framework motivating this definition of the compactification comes from earlier work of Hassett and Hacking, studying moduli spaces of degree $d$ plane curves by considering them as pairs $(\PP^2, C)$.  A compactification of the moduli space of the space of pairs $(\PP^2, C_4)$, where $C_4$ has degree $4$ was studied in \cite{hassett} and, more generally, for any degree $d \ge 4$, a compactification of pairs $(\PP^2, C_d)$ was constructed in \cite{hacking}.  The work in \cite{hacking} provides much of the foundation for this paper as the study of pairs $(\PP^3, S_d)$ is a natural generalization.  In a different direction, one could generalize \cite{hacking} to study pairs $(S,D)$ for other del Pezzo surfaces.  In particular, a compactification of the space of pairs $(\PP^1 \times \PP^1, C_{3,3})$ using similar machinery is described in \cite{deohan}.  In general, Koll\'ar and Xu show that one can make a coarse moduli space of \textit{polarized log Calabi Yau pairs} where each irreducible component is projective (\cite{KX}).  

There is another approach to the study of plane curves and surfaces in $\PP^3$ using the tools provided by GIT.  However, as discussed in \cite{wangxu}, GIT begins to fail for higher degree $d$.  In the GIT construction, the moduli space depends on the power $r$ of $\omega_S$ (or other ample line bundle) being used to embed smooth surfaces into projective space and it is shown that the moduli spaces in this construction do not stabilize.  In particular, in \cite[Theorem 1 (2)]{wangxu}, there are families of degree $d > 30$ smooth surfaces over a punctured base whose limit does not stabilize as $r$ increases.  Therefore, it benefits us to approach the problem for general degree $d$ surfaces using the framework of stable pairs instead of GIT.  Some comparison to the GIT case for degree $d = 5$ will be given in Section \ref{sec:quintics}.

\subsection{Definitions}

We are interested in studying the moduli space of hypersurfaces $S$ (of a fixed degree) in $\mathbb{P}^3$.  As motivated in the introduction, instead of studying moduli of such $S$ directly, we study moduli of pairs $(X, D)$ where $X$ is a degeneration of $\mathbb{P}^3$ and $D$ is a degeneration of $S$. The motivating work for plane curves in \cite{hacking} considered moduli of pairs $(X,D)$ where $X$ was a slc surface that smoothed to $\mathbb{P}^2$ and $D$ was a divisor such that $d K_X + 3 D \sim 0$ and $K_X + (\frac{3}{d} + \epsilon) D$ was ample for some (and hence all) $\epsilon$ sufficiently small.  Hacking was able to show that, for $d$ not a multiple of 3, this moduli stack is proper, separated, and smooth, and was also able to explicitly determine the surfaces $X$ (and thus the divisors $D$) appearing on the boundary of the moduli space. 

Now, consider the direct generalization of \cite{hacking}: a compactification of the moduli space of degree $d$ hypersurfaces in $\mathbb{P}^3$.  We present the following definitions.  This paper will focus on dimension 3, but the definitions make sense for general Fano varieties and divisors of arbitrary dimension.  First, we define a $\Zpq$ stable pair. 

\begin{definition}\label{def:stable}
For a given Fano variety $Z$, let $n = \dim Z$, and fix integers $p,q> 0$ such that $\frac{q}{p} <1$.  A pair $(X,D)$, where $X$ is an $n$-fold and $D$ is an effective $\mathbb{Q}$-Cartier $\bZ$-divisor, is said to be $\Zpq$ stable in the sense of Hacking, or $\Zpq$ \he stable, if
\begin{itemize}
\item The pair $(X, (\frac{q}{p}+\epsilon)D)$ is semi log canonical and the divisor $K_X + (\frac{q}{p} + \epsilon)D$ is ample for some $\epsilon > 0$.
\item The divisor $pK_X + qD$ is linearly equivalent to zero.
\item $K_X^n = K_{Z}^n $ and $X$ is Cohen-Macaulay.
\end{itemize}
\end{definition}

Note that this definition does not require that $Z$ admits a deformation to $X$. 

Next, we define $Z$-smoothable pairs.  Note that the last condition implies the last condition in the non-smoothable definition by Proposition \ref{prop:constantvolume}.

\begin{definition}\label{def:stablesmoothable}
Fix a smooth Fano variety $Z$ and integers $p,q> 0$ such that $\frac{q}{p} <1$.  A pair $(X,D)$, where $X$ is an $n$-fold and $D$ is an effective $\mathbb{Q}$-Cartier $\bZ$-divisor, is said to be $\Zpqs$ stable in the sense of Hacking, or $\Zpqs$ \he stable if
\begin{itemize}
\item The pair $(X, (\frac{q}{p}+\epsilon)D)$ is semi log canonical and the divisor $K_X + (\frac{q}{p} + \epsilon)D$ is ample for some $\epsilon > 0$.
\item The divisor $pK_X + qD$ is linearly equivalent to zero.
\item There is a deformation $(\mathcal{X}, \mathcal{D})/T$ of $(X,D)$ over the germ of a curve such that the general fiber $\mathcal{X}_t$ is smooth, $\mathcal{X}_t$ admits a smooth deformation to $Z$, and the divisors $K_{\mathcal{X}/T}$ and $\mathcal{D}$ are $\mathbb{Q}$-Cartier.
\end{itemize}
\end{definition}

Note that the last condition in Definition \ref{def:stablesmoothable} serves two purposes: one, to ensure that $X$ is smoothable (restricting to certain components of the moduli space) but two, to ensure that $X$ is smoothable to a fixed component of the moduli space.  For example, the Fano surfaces $\bP^1 \times \bP^1 $ and $\mathbb{F}_1$ have the same volume, but are not deformation equivalent, and correspond to two different components of a moduli space.  If, for example, $Z = \bP^1 \times \bP^1$, we would require that $X$ admits a smoothing to $\bP^1 \times \bP^1$.  If $Z$ itself has moduli (for example, $Z$ a cubic surface in $\bP^3$), we require that $X$ admit a smoothing to a smooth cubic surface, deformation equivalent to $Z$.  

Our goal is to construct a moduli space $\calM_{\Zpqs}$ of these pairs. 

\begin{remark}
Note that the relationship $pK_X + qD \sim 0$ implies that, if $K_X^n$ is fixed, so is the volume of the pair $(K_X + D)^n$.  The condition that $X$ is Cohen-Macaulay potentially restricts us to some components of the general moduli space, but by \cite[Corollary 7.13]{dubois}, the components parameterizing non-Cohen Macaulay pairs are disconnected from these components. 
\end{remark}

\begin{remark}
In \cite{deohan}, for del Pezzo surfaces $S$ and curves $C$, the authors make the natural generalization of Hacking's definition and refer to pairs $(S,C)$ as \textit{almost K3 stable}.  This definition coincides with their definition.
\end{remark}

There are trivial advantages to studying $\Zpq$ (and $\Zpqs$) \he stable pairs, summarized in the following lemma.  These are the primary reasons $\dfours$ \he stable pairs are classifiable. 

\begin{lemma}
If $(X,D)$ is an \he stable pair, the following hold:
    \begin{enumerate}[(a)]
    \item $K_X$ is anti-ample.
    \item $D$ is ample.
    \item Both $K_X$ and $D$ are $\Q$-Cartier.
    \item If $X$ is strictly slc, the strictly slc locus of $X$ is not contained in the support of $D$.
    \end{enumerate}

\end{lemma}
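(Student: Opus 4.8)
The plan is to derive (a)--(c) formally from the two numerical conditions in Definition~\ref{def:stable}, reserving the genuine argument for (d). Write $t = \frac{q}{p} + \epsilon > 0$, so that by hypothesis $(X, tD)$ is slc, $K_X + tD$ is ample, and $pK_X + qD \sim 0$. For (c): the divisor $D$ is $\Q$-Cartier by assumption, and the relation $pK_X \sim -qD$ shows that if $mD$ is Cartier for some $m>0$, then $mpK_X \sim -mqD$ is Cartier, so $K_X$ is $\Q$-Cartier as well.

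For (a) and (b) I would divide the relation by $p$, giving $K_X + \frac{q}{p}D \sim_{\Q} 0$, and rewrite the ample class as
\[
K_X + tD = \left(K_X + \tfrac{q}{p}D\right) + \epsilon D \sim_{\Q} \epsilon D .
\]
Since $\epsilon > 0$, ampleness of the left-hand side forces $D$ to be ample, which is (b); then $K_X \sim_{\Q} -\frac{q}{p}D$ is anti-ample because $\frac{q}{p} > 0$, which is (a). Notably this uses neither the Cohen--Macaulay hypothesis nor $K_X^n = K_Z^n$.

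The substantive statement is (d). Suppose $X$ is strictly slc and let $W \subset X$ denote its strictly slc (non-slt) locus; I want to show $W \not\subseteq \supp D$. Passing to the normalization $\nu\colon X^\nu \to X$ with conductor $\Delta^\nu$, the condition that $(X,0)$ is slc but not slt means $(X^\nu, \Delta^\nu)$ is lc but not plt, so there is a divisor $E$ over $X^\nu$ with $a(E, X^\nu, \Delta^\nu) = -1$ whose center is a non-conductor lc center, and $W$ is the union of the images under $\nu$ of such centers. Now slc-ness of $(X, tD)$ is precisely log canonicity of $(X^\nu, \Delta^\nu + tD^\nu)$, where $D^\nu = \nu^*D$. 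If $W \subseteq \supp D$, then $\text{center}_{X^\nu}(E) \subseteq \supp D^\nu$, so $m_E := \text{ord}_E(g^*D^\nu) > 0$ for a log resolution $g$, and additivity of discrepancies gives
\[
a(E, X^\nu, \Delta^\nu + tD^\nu) = a(E, X^\nu, \Delta^\nu) - t\, m_E = -1 - t\, m_E < -1 ,
\]
since $t > 0$, contradicting log canonicity. Hence no such lc center lies in $\supp D$, and taking the generic point of any one of them shows $W \not\subseteq \supp D$.

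The one place demanding care is (d) in the demi-normal setting: I must confirm that the discrepancy-$(-1)$ place witnessing strict slc-ness has center genuinely recording worse-than-nodal singularities (the nodes along the conductor are slt, not strictly slc, and so should be excluded from $W$), and that under the assumption $W \subseteq \supp D$ this center really lands inside $\supp D^\nu$ so that $m_E > 0$. Once the identification of $W$ with the images of non-conductor lc centers is pinned down, the positivity $m_E > 0$ and the discrepancy computation are routine, and the remaining parts are purely formal.
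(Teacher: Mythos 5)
Your proof is correct, and in fact the paper states this lemma without any proof at all, treating all four parts as immediate consequences of Definition~\ref{def:stable}; your write-up simply supplies the routine verifications. Parts (a)--(c) are exactly the formal manipulation one expects ($K_X + (\tfrac{q}{p}+\epsilon)D \sim_{\Q} \epsilon D$ forces $D$ ample, hence $-K_X \sim_{\Q} \tfrac{q}{p}D$ ample), and for (d) your discrepancy computation $a(E, X^\nu, \Delta^\nu + tD^\nu) = -1 - t\,\ord_E(g^*D^\nu) < -1$ is the standard argument and is sound, since the center of $E$ lying in $\supp D^\nu$ does force $\ord_E(g^*D^\nu) > 0$ for the effective $\Q$-Cartier divisor $D^\nu$. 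The only caveat you raise yourself --- pinning down that the strictly slc locus is the union of images of centers of exceptional places with discrepancy $-1$ over $(X^\nu,\Delta^\nu)$ --- is a matter of unwinding the definition and does not affect the argument; note also that the same computation handles a center contained in the conductor, and that a component of the double locus itself cannot lie in $\supp D$ either, since its coefficient in $\Delta^\nu + tD^\nu$ would then exceed $1$.
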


\subsection{Limits of \he stable pairs exist}

In recent work \cite{KX}, J\'anos Koll\'ar and Chenyang Xu prove the following theorem:

\begin{theorem}{\cite[Theorem 2]{KX}}\label{thm:KX}
The irreducible components of the coarse moduli space of $\Zpq$ \he stable pairs are projective. 
\end{theorem}

We are very grateful to J\'anos Koll\'ar for pointing our attention to this result and the crucial steps of the proof.  In particular, this implies that each component of the moduli stack $\calM_{\Zpq}$ is proper:

\begin{theorem}\label{thm:proper}
Let $0 \in T$ be the germ of a curve and write $T^\times = T - 0$.  Let $X$ be a smooth Fano variety and $\mathcal{D}^\times \subset X^\times \times T^\times$ be a family of smooth hypersurfaces over $T^\times$ such that $pK_X + qD \sim 0$, $0 < \frac{q}{p} < 1$.  Then, there exists a finite surjective base change $T' \to T$ and a family $(\mathcal{X}, \mathcal{D})/T'$ of \he stable pairs extending the pullback of the family $(X \times T^\times, \mathcal{D}^\times)/T^\times$ such that the divisors $K_\mathcal{X}$ and $\mathcal{D}$ are $\mathbb{Q}$-Cartier.  Any two such families become isomorphic after a further finite surjective base change.
\end{theorem}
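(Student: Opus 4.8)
The plan is to realize the required limit as the central fibre of a relative log canonical model, and to deduce uniqueness from the uniqueness of such models.

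First I would reduce to a semistable situation. Since $X$ is fixed, the product $X\times T$ already extends $X\times T^\times$, so I extend $\mathcal{D}^\times$ by taking its closure to obtain a flat projective family $(X\times T,\bar{\mathcal{D}})/T$. After a finite surjective base change $T'\to T$, the existence of semistable log resolutions \cite[Theorem 10.46]{newkollar} yields $g\colon \mathcal{Y}\to X\times T'$ with $\mathcal{Y}$ smooth, reduced central fibre $Y=g_*^{-1}(X)$, and $\Ex(g)\cup g^{-1}\Supp\bar{\mathcal{D}}\cup Y$ a simple normal crossing divisor. I write $\mathcal{D}_{\mathcal{Y}}$ for the strict transform of $\bar{\mathcal{D}}$, which agrees with the given family over $T^\times$.

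Next I would run the minimal model program on the log smooth pair $\bigl(\mathcal{Y},\,Y+(\tfrac{q}{p}+\epsilon)\mathcal{D}_{\mathcal{Y}}\bigr)$ for small $\epsilon>0$. Because $\mathcal{D}_{\mathcal{Y}}$ restricts to an ample divisor on the general fibre while $K_{\mathcal{Y}}+\tfrac{q}{p}\mathcal{D}_{\mathcal{Y}}$ restricts there to a $\mathbb{Q}$-trivial divisor (this is the relation $pK_X+qD\sim0$), the log canonical divisor is big over $T'$ and the relative log canonical model $\bigl(\mathcal{X},\,\mathcal{X}_0+(\tfrac{q}{p}+\epsilon)\mathcal{D}\bigr)$ exists. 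Its existence in the present \emph{dlt} (reduced central fibre) setting, rather than merely klt, is the crucial input, supplied by \cite{KNX} together with the minimal model program; this is also what makes the argument go through for an arbitrary smooth Fano $X$. Since $\mathcal{X}_0$ is a fibre, hence $\mathbb{Q}$-Cartier and $\sim_{T'}0$, and $K_{\mathcal{X}}+\mathcal{X}_0+(\tfrac{q}{p}+\epsilon)\mathcal{D}$ is relatively ample, adjunction along the reduced divisor $\mathcal{X}_0$ gives $\bigl(K_{\mathcal{X}}+\mathcal{X}_0+(\tfrac{q}{p}+\epsilon)\mathcal{D}\bigr)\big|_{X_0}=K_{X_0}+(\tfrac{q}{p}+\epsilon)D_0$, which is therefore ample, while restricting log canonicity shows $(X_0,(\tfrac{q}{p}+\epsilon)D_0)$ is slc. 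The remaining numerical conditions follow from flatness and Proposition \ref{prop:constantvolume}, so the central fibre is \he stable.

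To complete existence I must produce the relation on the total space together with the individual $\mathbb{Q}$-Cartier properties. The log canonical model is independent of $\epsilon$ for all sufficiently small $\epsilon>0$, so running the construction with two distinct small values shows $(\epsilon_1-\epsilon_2)\mathcal{D}$ is $\mathbb{Q}$-Cartier; hence $\mathcal{D}$, and then $K_{\mathcal{X}}$, are $\mathbb{Q}$-Cartier. Writing $\mathcal{B}=pK_{\mathcal{X}}+q\mathcal{D}=p\bigl(K_{\mathcal{X}}+\tfrac{q}{p}\mathcal{D}\bigr)$, the class $K_{\mathcal{X}}+\tfrac{q}{p}\mathcal{D}$ is the limit as $\epsilon\to0^+$ of the relatively ample $\mathbb{Q}$-Cartier classes $K_{\mathcal{X}}+(\tfrac{q}{p}+\epsilon)\mathcal{D}$, hence is relatively nef; as $\mathcal{B}\big|_{\mathcal{X}^\times}\sim0$, Lemma \ref{lem:relativelynef} gives $\mathcal{B}\sim0$ in $\Cl(\mathcal{X}/T')$, and restriction recovers $pK_{X_0}+qD_0\sim0$. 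For uniqueness, given two extensions $(\mathcal{X}_1,\mathcal{D}_1)$ and $(\mathcal{X}_2,\mathcal{D}_2)$ I pass to a common finite base change so that they agree over $T^\times$, choose a common log resolution $\mathcal{W}$ dominating both, and observe that each $\mathcal{X}_i$ is the relative log canonical model on $\mathcal{W}$ for the ample class $K+(\tfrac{q}{p}+\epsilon)\mathcal{D}_i$; since both are slc and agree generically, the two boundaries pull back crepantly to the same pair on $\mathcal{W}$, and uniqueness of the log canonical model gives $\mathcal{X}_1\cong\mathcal{X}_2$ compatibly with the $\mathcal{D}_i$. \textbf{The main obstacle} is exactly this packaging: ensuring the central fibre of the log canonical model is reduced and slc with the relation holding integrally rather than only fibrewise, and that the model is genuinely independent of the chosen completion and semistable reduction. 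Both rest on the existence and uniqueness of relative log canonical models in the dlt setting, which is where \cite{KNX} does the essential work and where the passage from $\PP^3$ to an arbitrary smooth Fano $X$ is secured.
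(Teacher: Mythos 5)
Your proposal is correct and follows essentially the same route as the paper: semistable reduction of the completed family, passage to the relative log canonical model for $K+(\tfrac{q}{p}+\epsilon)\mathcal{D}$ using \cite[Proposition 14]{KNX} to get $\epsilon$-independence, Lemma \ref{lem:relativelynef} to promote the fibrewise relation $pK+qD\sim 0$ to the total space, and uniqueness of log canonical models for the uniqueness of the limit. The only cosmetic difference is how $\mathbb{Q}$-Cartierness of $K_{\mathcal{X}}$ and $\mathcal{D}$ separately is extracted (you compare two values of $\epsilon$, the paper uses relative semiampleness of $K+\tfrac{q}{p}\mathcal{D}$ from the same KNX result); both work.
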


\begin{proof}(Sketch.)
    This is a consequence of Koll\'ar and Xu's Theorem, although we provide a brief sketch of the proof for convenience of the reader.  By \cite[Theorem 0.3]{abramovichkaru}, we can complete $(X \times T^\times, \mathcal{D}^\times)/T^\times$ to a locally stable family $(\mathcal{X}, \mathcal{D})/T'$.  Let $r = \frac{q}{p}$.  Choosing an appropriate ample divisor $L$ on $\mathcal{X}$, one can run a relative minimal model program with scaling of $L + \mathcal{D}$ to obtain a minimal family over a smooth base, which is locally stable by \cite[Corollary 10]{KNX}.  Furthermore, the total space is $\mathbb{Q}$-factorial and one can show that $K_{\mathcal{X}^\mathrm{min}} + r\mathcal{D}^\mathrm{min}$ is relatively trivial and $\mathcal{D}^{\mathrm{min}}$ is relatively big, so by \cite[Theorem 1.1]{haconxu} one can find a relative canonical model $(\mathcal{X}^c, (r+\epsilon)\mathcal{D}^c)$.  This is a stable family and one can show that $K_{\mathcal{X}^c} + r\mathcal{D}^c$ is relatively semiample and trivial on the generic fiber, so trivial on the central fiber.  Because both $K_{\mathcal{X}^c} + r\mathcal{D}^c$ and $K_{\mathcal{X}^c} + (r + \epsilon) \mathcal{D}^c$ are $\mathbb{Q}$-Cartier, we have $K_{\mathcal{X}^c}$ and $\mathcal{D}^c$ are both $\mathbb{Q}$-Cartier.  Therefore, $(\mathcal{X}^c, \mathcal{D}^c)$ is the desired family of \he stable pairs.  Uniqueness follows from uniqueness of canonical models.  
\end{proof}

\section{The moduli space}\label{sec:moduli}

\subsection{Boundedness}

First, we show the moduli functor is bounded.  This argument is inspired by that in \cite[Theorem 1.1]{karu} and is similar to the proof of Koll\'ar and Xu's Theorem (Theorem \ref{thm:KX}).  As before, this is also a consequence of Theorem \ref{thm:KX}.  

\begin{theorem}
For a given Fano variety $Z$, the set of $\Zpqs$ \he stable pairs $(X,D)$ form a bounded family.  
\end{theorem}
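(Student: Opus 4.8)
The plan is to leverage the smoothability hypothesis, following the strategy of \cite[Theorem 1.1]{karu} and re-using the relative canonical model machinery from the proof of Theorem~\ref{thm:proper}. The key point is that the \emph{generic} members of any smoothing are already bounded: since the total space has general fiber isomorphic to the fixed variety $Z$, every smoothing family is generically $(Z,D_Z)$, and the relation $pK_X+qD\sim 0$ restricts to $qD_Z \sim -pK_Z$ on $Z$. Thus $D_Z$ ranges over effective divisors in one of the finitely many linear systems $|L_i|$ with $qL_i \sim -pK_Z$, a finite-type family. So the generic pairs are parameterized by a finite-type scheme $B$, carrying a universal family $(Z\times B,\calD^{\mathrm{univ}})/B$; these account for all interior points of $\calM_{Z\s,(p,q)}$ and are manifestly bounded. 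It remains to bound the degenerate (boundary) pairs.

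By definition every $(p,q)$ $Z$-smoothable \he stable pair $(X,D)$ is the special fiber of a one-parameter smoothing whose generic fiber is $(Z,D_Z)$ with $[D_Z]\in B$, and by Theorem~\ref{thm:proper} it is the \emph{unique} stable limit of that smoothing. To capture all such limits simultaneously I would first embed the generic pairs uniformly: fixing $m\gg 0$ so that $-mK_Z$ is very ample realizes each $(Z,D_Z)$ as an embedded pair in a fixed $\PP^N$ with fixed Hilbert data (the volume $(K_X+D)^n$ is constant by the numerical hypotheses, so the relevant Hilbert polynomials are fixed). Compactifying $B$ inside the appropriate Hilbert scheme to a proper base $\overline B$, and taking the closure of the total family, one obtains a proper family $(\overline{\calX},\overline{\calD})/\overline B$ whose boundary fibers are the flat (not yet semi-log-canonical) limits of the generic pairs; by properness of the Hilbert scheme, every one-parameter smoothing factors through $\overline B$ and realizes its naive limit here.

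The final step is to \emph{stabilize} this family fiberwise. After stratifying $\overline B$ and performing semistable log resolution and any necessary base change, I would run the relative minimal model program and form the relative log canonical model of $K_{\calX}+(r+\epsilon)\calD$ with $r=q/p$, which by \cite[Proposition 14]{KNX} is independent of $\epsilon$ for $\epsilon$ small, exactly as in the proof of Theorem~\ref{thm:proper}. Since the generic fiber $(Z,D_Z)$ is already its own canonical model, this modifies only the special fibers and, by the uniqueness in Theorem~\ref{thm:proper}, produces precisely the stable limits $(X,D)$ as fibers. As the relative canonical model of a finite-type family is again of finite type, the resulting family is bounded and contains every $(p,q)$ $Z$-smoothable \he stable pair, which is the claim.

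The main obstacle is executing the minimal model program \emph{uniformly over the higher-dimensional base} $\overline B$ rather than over a single curve as in Theorem~\ref{thm:proper}: one must stratify $\overline B$ so that the relative canonical model commutes with restriction to the curves and points needed to realize each individual $(X,D)$, and verify that the $\epsilon$-independence of \cite{KNX} holds uniformly along each stratum. This is precisely where the spreading-out and Noetherian-induction techniques of \cite{karu} --- reducing to finitely many families over finite-type bases and invoking finite generation of the relative log canonical algebra --- are essential; note that the difficulty with coefficients not lying in a DCC set that obstructs a direct application of \cite{hmx} is sidestepped here because smoothability forces the generic fiber, and hence the whole construction, to start from the fixed bounded family over $B$. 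One should also check that the Cohen--Macaulay and slc conditions are preserved under this stabilization.
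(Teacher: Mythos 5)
Your proposal is correct and follows essentially the same route as the paper: embed the generic pairs $(Z,D_Z)$ in a fixed projective space via a power of $-K_Z$, parameterize them by a (closure of a) locus in a product of Hilbert schemes, apply weak semistable reduction over the resulting base, take the relative canonical model of $K+(r+\epsilon)D$ using \cite[Proposition 14]{KNX}, and then use uniqueness of stable limits (Theorem \ref{thm:proper}) to show every smoothable \he stable pair occurs as a fiber. The paper handles your "uniformity over a higher-dimensional base" concern exactly as you suggest, by invoking \cite[Theorem 0.3]{abramovichkaru} to get a locally stable family over a modification $B'$ of the base before applying the KNX result.
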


\begin{proof}
    By a theorem of Matsusaka \cite[Theorem 2.4]{matsusaka}, there is a fixed integer $m$ such that all smooth Fano varieties $X$ with fixed anticanonical Hilbert polynomial $H_X(l) = \chi(X,-lK_X) = \chi(Z,-lK_Z)$ have $-mK_X$ very ample with no higher cohomology, hence a fixed projective space $\PP^n$ containing all such $X$.  In particular, $Z$ is one such $X$, as is any smooth variety $W$ deformation equivalent to $Z$.  Let $r = \frac{q}{p}$.  Similarly, we can choose $m$ such that $-mK_W|_D$ is very ample with no higher cohomology for any smooth $D \sim_{\mathbb{Q}} -r^{-1}K_W$.  We can find a product $\mathrm{Hilb}$ of Hilbert schemes parameterizing embeddings $D \subset W \subset \PP^n$.  Let $B_0 \subset \mathrm{Hilb}$ be the subscheme parameterizing such pairs.  Taking its closure $B_0 \subset B \subset \mathrm{Hilb}$, there is a universal family $(\mathcal{X}, \mathcal{D}) \to B$ whose general fiber is deformation equivalent to the smooth Fano variety $Z$ with a smooth divisor $D$.  By \cite[Theorem 0.3]{abramovichkaru}, we can apply weak semistable reduction to obtain a locally stable family $(\mathcal{Y}, \mathcal{D}_{\mathcal{Y}}) \to B'$, where $B' \to B$ is a modification of $B$ and $\mathcal{Y} \to \mathcal{X}' = \mathcal{X} \times_B B'$ is a modification of $\mathcal{X}'$.  We can apply the techniques of proof of Theorem \ref{thm:KX} and Theorem \ref{thm:proper} to find the $K_{\mathcal{Y}} + (r + \epsilon) \mathcal{D}_{\mathcal{Y}}$ canonical model over $B'$ for any $\epsilon$ sufficiently small; call it $(\mathcal{Y}^c, \mathcal{D}^c)$.  It is straightforward to show that every fiber satisfies the conditions of being an \he stable pair; now it suffices to show that every \he stable pair appears as a fiber.  
    
    Because the weak semistable reduction potentially modifies even smooth fibers, we first need to show the generic smooth fibers are isomorphic to the generic fibers of the canonical model.  Let $(\mathcal{X}^0, \mathcal{D}^0)$ be the universal family over $B_0$ and let $({\mathcal{X}^0}', {\mathcal{D}^0}')$ be the family over the preimage $B_0' \subset B'$.  Let $(\mathcal{Y}^{0,c}, \mathcal{D}^{0,c}) \to B_0'$ be the fibers of the canonical model over $B_0'$.  Because the fibers of $({\mathcal{X}^0}', {\mathcal{D}^0}') \to B_0'$ are smooth Fano varieties with smooth divisors $D$ over a smooth base, it follows that $({\mathcal{X}^0}', {\mathcal{D}^0}')$ is its own relative canonical model, and birational to $(\mathcal{Y}^{0,c}, \mathcal{D}^{0,c})$ by construction.  Therefore, by uniqueness of canonical models, we must have $({\mathcal{X}^0}', {\mathcal{D}^0}') \cong (\mathcal{Y}^{0,c}, \mathcal{D}^{0,c}). $
    
    Now, let $(Y,D_Y)$ be any \he stable pair.  By the smoothability assumption, $(Y,D_Y)$ admits a smoothing over the germ of a curve to $(W,D_W)$ for some $D_W$, so $(Y, D_Y)$ is the central fiber of some family $(\mathcal{X}, \mathcal{D}) \to T$, where $T$ is a DVR.  Up to a finite base change, the generic fiber $(\mathcal{X}^\times, \mathcal{D}^\times) \to T^\times$ gives an embedding $T^\times \to B'$ which can be completed to a curve $T' \subset B'$.  The family over $T'$ has the same generic fiber as $(\mathcal{X}, \mathcal{D}) \to T$, but up to a finite base change, both families are canonical models of a common semistable resolution.  Therefore, their central fibers are isomorphic.
\end{proof}

As mentioned in Section \ref{sec:pairs}, one could define an alternate class of pairs, omitting the condition that each pair admits a smoothing.  Indeed, even for quintic surfaces in $\PP^3$, there should be a second component of the moduli space of smooth surfaces as discussed in Section \ref{sec:quintics}, so it seems natural to relax this condition.   While we do not have boundedness of non-smoothable pairs in general, we can prove boundedness of $\dfour$ \he stable pairs when $d$ is odd.  This will be further discussed in Section \ref{sec:classification} and proven in Theorem \ref{thm:boundednessP3}.  

\subsection{Algebraicity}

Using recent results, we can further analyze the structure of the moduli space of \he stable pairs.  We study \textit{Koll\'ar families} of pairs, $\mathbb{Q}$-Gorenstein families where $\omega_{\mathcal{X}}^{[n]}$ commutes with base change for all $n$.  In fact, we will require a stronger condition that both $\omega_{\mathcal{X}}^{[n]}$ and $\calO_{\mathcal{X}}(nD)$ commute with base change for all $n$. Using \cite[c.f. Theorem 4.4]{hacking} and following his work, one can show that the moduli space is indeed an algebraic stack.

\begin{definition}
Let $p \in X$ be a germ of an slc variety.  Define the index of $p$ in $X$ to be the minimal $N > 0$ such that $NK_X$ is Cartier.  Let $V \to X$ be the canonical covering $V = \text{Spec}_X \mathcal{O} \oplus \mathcal{O}(K_X) \oplus \dots \oplus \mathcal{O}((N-1)K_X)$, a $\mu_N$ quotient (c.f. \cite{ypg}).  A deformation $\mathcal{X}/S$ of $X$ is $\mathbb{Q}$-Gorenstein if there is a $\mu_N$-equivariant deformation $\mathcal{V}/S$ of $V$ whose quotient is $\mathcal{X}/S$.
\end{definition}

\begin{definition}
Let $\mathcal{X}/S$ be a flat family of slc varieties.  We say that $\mathcal{X}$ is weakly $\mathbb{Q}$-Gorenstein if, for some $N > 0$, $\omega_{\mathcal{X}/S}^{[N]}$ is invertible.  The minimal such $N$ is called the index of $\mathcal{X}$.
\end{definition}

The following lemmas show that $\mathbb{Q}$-Gorenstein implies weakly $\mathbb{Q}$-Gorenstein and that the conditions are equivalent if the general fiber is canonical and the base is a curve.

\begin{lemma}
Let $p \in X$ be a germ of an slc variety.  A $\mathbb{Q}$-Gorenstein deformation $\mathcal{X}/S$ of $X$ of index $N$ is weakly $\mathbb{Q}$-Gorenstein of index $N$.
\end{lemma}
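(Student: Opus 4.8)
The strategy is to pass to the canonical (index-one) cover, where the dualizing sheaf is a line bundle, deform it together with its $\mu_N$-action, and descend. The role of the $\mathbb{Q}$-Gorenstein hypothesis is precisely to supply such an equivariant cover, so that the whole statement reduces to the fact that invertibility of the dualizing sheaf persists under flat deformation, together with bookkeeping of the $\mu_N$-characters.

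First I would record the properties of $V=\Spec_X\bigoplus_{i=0}^{N-1}\mathcal{O}_X(iK_X)$. The algebra structure trivializes $\mathcal{O}_X(NK_X)$, and the quotient $\pi_0\colon V\to X$ is étale in codimension one (the non-Gorenstein locus of the demi-normal $X$ has codimension $\ge 2$), so $K_V=\pi_0^*K_X$ is Cartier and $\omega_V$ is invertible; this is the defining feature of the index-one cover, cf.\ \cite{ypg}. The $\mu_N$-eigenspace decomposition $\pi_{0*}\mathcal{O}_V=\bigoplus_{i=0}^{N-1}\mathcal{O}_X(iK_X)$ exhibits the reflexive powers $\omega_X^{[i]}$ as the isotypic components of the action, a fact I would carry through the deformation.

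Next, let $\mathcal{Z}/S$ be the $\mu_N$-equivariant deformation of $V$ furnished by the hypothesis, with $\mathcal{X}=\mathcal{Z}/\mu_N$. I would show $\omega_{\mathcal{Z}/S}$ is invertible near the central fibre. Since $\mathcal{Z}/S$ is flat, $\mathcal{O}_{\mathcal{Z}}\otimes^{L}_{\mathcal{O}_S}k(0)=\mathcal{O}_V$, so base change for the relative dualizing complex gives $\omega^{\bullet}_{\mathcal{Z}/S}\otimes^{L}_{\mathcal{O}_S}k(0)\cong\omega^{\bullet}_V$; when $V$ is Cohen--Macaulay this degenerates to $\omega_{\mathcal{Z}/S}\big|_V\cong\omega_V$, and since $\omega_V$ is invertible Nakayama's lemma propagates invertibility of $\omega_{\mathcal{Z}/S}$ to a neighbourhood of $V$. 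The equivariant structure on $\mathcal{Z}$ linearizes $\omega_{\mathcal{Z}/S}$. Now $\pi\colon\mathcal{Z}\to\mathcal{X}$ is the quotient by $\mu_N$ acting freely in relative codimension one, so $\pi_*\omega_{\mathcal{Z}/S}^{\otimes i}$ decomposes into characters whose pieces are the reflexive powers $\omega_{\mathcal{X}/S}^{[i]}$; since $\omega_{\mathcal{Z}/S}$ is invertible and $NK_{\mathcal{Z}/S}=\pi^*(NK_{\mathcal{X}/S})$ is $\mu_N$-equivariantly trivial, $NK_{\mathcal{X}/S}$ descends to a Cartier divisor, i.e.\ $\omega_{\mathcal{X}/S}^{[N]}$ is invertible. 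Thus $\mathcal{X}/S$ is weakly $\mathbb{Q}$-Gorenstein of index dividing $N$.

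For minimality, suppose $\omega_{\mathcal{X}/S}^{[M]}$ is invertible. Being a line bundle it commutes with base change and restricts on the central fibre to $\omega_X^{[M]}=\mathcal{O}_X(MK_X)$ --- the restriction of a reflexive power is the reflexive power of the fibre, as one reads off from the eigenspace description above --- so $MK_X$ is Cartier and hence $M\ge N$ by the definition of the index of $p\in X$. Combined with the previous step, the index of $\mathcal{X}$ is exactly $N$. I expect the genuine obstacle to be the invertibility of $\omega_{\mathcal{Z}/S}$ near the central fibre: for $V$ merely slc (so only $S_2$, not necessarily Cohen--Macaulay) the higher cohomology of $\omega^{\bullet}_V$ can obstruct the identification $\omega_{\mathcal{Z}/S}\big|_V\cong\omega_V$, and one must argue at the level of the dualizing complex rather than the sheaf. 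When $X$---hence $V$---is Cohen--Macaulay, as in the \he stable setting, or whenever $p\in X$ is klt so that the cover is canonical (and thus rational and Cohen--Macaulay), this difficulty disappears and the standard base-change theorem for relative dualizing sheaves of flat families with Cohen--Macaulay fibres applies.
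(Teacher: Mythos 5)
The paper's own proof of this lemma is a one-line citation to Hacking's Lemma 3.3, and your argument is a faithful reconstruction of exactly that proof: invertibility of $\omega_{\mathcal{Z}/S}$ on the $\mu_N$-equivariant deformation of the index-one cover, descent of the $N$-th reflexive power via the eigenspace decomposition, and restriction to the central fibre to see the index is exactly $N$. The Cohen--Macaulay caveat you flag at the end is a genuine subtlety in general but is harmless in this paper, since the varieties occurring in \he stable pairs are required to be Cohen--Macaulay by definition.
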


\begin{proof} This follows directly from \cite[Lemma 3.3]{hacking}. \end{proof}

\begin{lemma}
Let $\mathcal{X}/T$ be a flat family of slc varieties over the germ of a curve.  If the general fiber has canonical singularities and $K_{\mathcal{X}}$ is $\mathbb{Q}$-Cartier, then $\mathcal{X}/S$ is a $\mathbb{Q}$-Gorenstein deformation of $\mathcal{X}_0$.
\end{lemma}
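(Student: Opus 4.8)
The plan is to reduce the assertion to the construction of a single natural $\mu_N$-cover of the total space and then to check that this cover is a flat deformation of the canonical covering of the central fibre. First I would record that the hypothesis already makes $\calX/T$ weakly $\mathbb{Q}$-Gorenstein: since $T$ is the germ of a smooth curve, $K_T$ is trivial near $0$, so $K_{\calX/T}=K_{\calX}-\pi^*K_T$ is $\mathbb{Q}$-Cartier of the same index as $K_{\calX}$, and hence $\omega_{\calX/T}^{[N]}$ is invertible, where $N$ denotes this index. Write $X=\calX_0$ for the central fibre. Restricting $NK_{\calX}$ to the Cartier divisor $X$ shows $NK_X$ is Cartier, so the index $N'$ of $X$ divides $N$. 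The goal is to exhibit a $\mu_N$-equivariant deformation of the canonical covering $V\to X$ whose quotient is $\calX/T$.

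Next I would build the candidate cover on the total space. Fixing a trivialisation $\omega_{\calX/T}^{[N]}\cong\mathcal{O}_{\calX}$, set
\[ \calZ \;=\; \Spec_{\calX}\!\Big(\bigoplus_{i=0}^{N-1}\mathcal{O}_{\calX}(iK_{\calX/T})\Big), \]
with cyclic algebra structure induced by the trivialisation and the evident $\mu_N$-action, so that $\calZ/\mu_N=\calX$ over $T$. This is the only reasonable candidate, and the content of the lemma is that $\calZ\to T$ is flat with special fibre the canonical covering $V$ of $X$.

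Flatness over $T$ is the routine part: each summand $\mathcal{O}_{\calX}(iK_{\calX/T})$ is a reflexive, hence torsion-free, sheaf on $\calX$, and as $\calX$ is flat over the DVR $T$ these summands have no $t$-torsion and are therefore $T$-flat, so $\calZ\to T$ is flat. The substantive step is to identify the special fibre, i.e. to prove that the reflexive powers commute with restriction to $X$,
\[ \mathcal{O}_{\calX}(iK_{\calX/T})\big|_{X}\;\cong\;\omega_X^{[i]}, \qquad 0\le i\le N-1 . \]
Granting this, taking $\Spec$ of the algebra commutes with base change, so $\calZ_0=\Spec_X\bigoplus_{i<N}\omega_X^{[i]}$ and the $\mu_N$-action restricts to the defining one; moreover, since $\omega_X^{[N']}$ is invertible and base change holds, Nakayama's lemma makes $\omega_{\calX}^{[N']}$ invertible near $X$, forcing $N=N'$, so $\calZ_0$ is genuinely the canonical covering $V$. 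This exhibits $\calX/T$ as a $\mathbb{Q}$-Gorenstein deformation of $X$. To establish the displayed base-change isomorphism I would invoke Koll\'ar's criterion that a divisorial sheaf on an $S_2$ family over a curve restricts to a divisorial sheaf on the Cartier central fibre once the non-invertible locus of $\omega_{\calX/T}$ meets each fibre in codimension at least two. This is exactly where the hypotheses enter: the slc central fibre is demi-normal, hence Gorenstein at its codimension-one points, and the general fibre is canonical, hence again Gorenstein in codimension one, so $\omega_{\calX/T}$ fails to be invertible only along a locus of fibrewise codimension $\ge 2$.

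I expect the main obstacle to be precisely this last base-change statement. The delicate point is that weak $\mathbb{Q}$-Gorensteinness does not by itself force the reflexive powers to commute with restriction; one genuinely needs the fibrewise codimension bound together with a depth estimate along $X$. I would lean on the canonical hypothesis on the general fibre---equivalently, that its index-one cover is again canonical and in particular Cohen--Macaulay (Reid), so that $\calZ$ is $S_2$ with $S_2$ special fibre---to rule out the depth drop along $X$ that would otherwise break the isomorphism. Carrying out the $S_2$/depth bookkeeping for the \emph{possibly non-normal} demi-normal total space $\calX$, rather than appealing to normality, is the technical heart of the argument.
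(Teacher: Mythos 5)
Your overall strategy --- form the cyclic cover $\mathcal{Z}=\Spec_{\mathcal{X}}\bigl(\bigoplus_{i}\mathcal{O}_{\mathcal{X}}(iK_{\mathcal{X}/T})\bigr)$, check flatness over $T$, and reduce everything to the base-change isomorphism $\mathcal{O}_{\mathcal{X}}(iK_{\mathcal{X}/T})|_X\cong\omega_X^{[i]}$ --- is the right skeleton, and it is essentially the skeleton of the proof of \cite[Lemma 3.4]{hacking} that the paper invokes. The gap is in how you propose to prove the base-change isomorphism. The ``criterion'' you appeal to --- that a divisorial sheaf on an $S_2$ family over a curve commutes with restriction to the central fibre once the non-invertible locus of $\omega_{\mathcal{X}/T}$ has fibrewise codimension $\ge 2$ --- is not a valid general principle: the codimension condition holds for \emph{any} family of slc (hence demi-normal, hence codimension-one Gorenstein) fibres, so if the criterion were true the canonical hypothesis on the general fibre would be superfluous and ``weakly $\mathbb{Q}$-Gorenstein'' would coincide with ``$\mathbb{Q}$-Gorenstein'' for all such families, which is exactly the distinction the surrounding text of the paper is at pains to draw. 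Your fallback depth argument does not close this gap either: Cohen--Macaulayness of the \emph{general} fibre of $\mathcal{Z}\to T$ (via Reid's theorem on index-one covers of canonical singularities), together with $S_2$-ness of the total space $\mathcal{Z}$, gives only depth $\ge 1$ for $\mathcal{Z}_0$, not the $S_2$-ness you need; special fibres of families whose general fibre is CM can perfectly well fail to be $S_2$.

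What actually forces $\mathcal{Z}_0$ to be $S_2$ is a depth statement about the \emph{total space} of the cover along the central fibre, and the only known route to it runs through inversion of adjunction --- precisely the ingredient the paper's one-line proof flags (\cite[Lemma 2.10]{zsolt}) and which never appears in your argument. The chain is: since $X$ is slc and $K_{\mathcal{X}}+X$ is $\mathbb{Q}$-Cartier, inversion of adjunction gives that $(\mathcal{X},X)$ is log canonical near $X$; the index-one cover is \'etale in codimension one along $X$ (the demi-normal $X$ is Gorenstein at its codimension-one points, and Gorenstein-ness deforms), so discrepancies are preserved and $(\mathcal{Z},\mathcal{Z}_0)$ is log canonical with $\mathcal{Z}_0$ Cartier; combined with canonicity of the nearby fibres this makes $\mathcal{Z}$ canonical with $K_{\mathcal{Z}}$ Cartier, hence rational and Cohen--Macaulay by Elkik. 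Only now does cutting by the uniformizer show that $\mathcal{Z}_0$ is CM, hence $S_2$, which yields your displayed isomorphism and, via your (correct) Nakayama argument, the equality of indices. Without the inversion-of-adjunction input, the step you yourself identify as the technical heart has no proof.
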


\begin{proof} Using the stronger inversion of adjunction result in \cite[Lemma 2.10]{zsolt}, the proof of Lemma 3.4 in \cite{hacking} applies directly. \end{proof}

Many properties of $\mathbb{Q}$-Gorenstein deformations are collected in \cite[Section 3]{hacking}.  In fact, $\mathbb{Q}$-Gorenstein deformations $X$ are exactly the deformations $\calX$ of $X$ satisfying the Koll\'ar condition that $\omega_\calX^{[n]}$ commutes with base change for all $n$ \cite[2.4]{hackinggentype}. However, the presence of the divisor $D$ can cause further obstructions to deforming $X$.  In particular, taking the canonical cover $V$ of $X$, the associated divisor $D_V$ does not have to be a Cartier divisor.  

\begin{example}\label{ex:Dnotcartier}
    Let $X$ be the cone over the anticanonical embedding of $\PP^1 \times \PP^1$, a section of $\calO_W(2)$ in $W = \PP(1,1,1,1,2)$.  This is a $\mathbb{Q}$-Gorenstein deformation of $\PP^3$ as computed in Example \ref{ex:O(2,2)} and appears in $\dfours$ \he stable pairs.  Note that $X$ has canonical singularities, so the canonical cover of $X$ is $X$ itself and $\calO_X(K_X) = \calO_W(-4)|_X$.  A general surface $D$ such that $dK_X + 4D \sim 0$ is an element of $\calO_W(d)|_X$.  If $d$ is odd, $D$ is not a Cartier divisor.  
\end{example}

This contrasts the picture for plane curves, $(d,3)$ $\PP^2$ \he stable pairs: in \cite[Theorem 3.12]{hacking}, it is shown that $D_V$, the induced divisor on the canonical cover, is always Cartier.  In order to avoid obstructions coming from the divisor $D$, we consider higher index covers.  By considering the canonical covering of slightly higher index, we can show that studying deformations of the pair $(X,D)$ amounts to studying deformations of $X$ because the presence of the divisor $D$ does not add any further obstructions.  In fact, this can be done by taking the canonical covering $V$ corresponding to $N'K_X$, where $N'$ is the index of the $\mathbb{Q}$-divisor $\frac{1}{q}K_X$: the relationship $pK_X + qD \sim 0$ implies that $D_V$ is Cartier on $V$.  We will call such a cover a $q$-canonical cover and such a deformation a $q$-$\mathbb{Q}$-Gorenstein deformation.  By the same proof as in \cite[2.4]{hackinggentype}, these deformations are exactly the ones such that both $K_{\mathcal{X}}$ and $\mathcal{D}$ commute with base change. 

\begin{theorem}
Let $(\mathcal{X}, \mathcal{D})/A$ be a $\mathbb{Q}$-Gorenstein family \he stable pairs.  Let $A' \to A$ be an infinitesimal extension and $\mathcal{X}' \to A'$ a $q$-$\mathbb{Q}$-Gorenstein deformation of $\mathcal{X}/A$.  Then, there exists a $\mathbb{Q}$-Gorenstein deformation $(\mathcal{X}',\mathcal{D}')/A'$ of $(\mathcal{X}, \mathcal{D})/A$.
\end{theorem}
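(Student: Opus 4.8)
The plan is to follow Hacking's strategy for the analogous statement in the plane curve case (\cite[Theorem 4.4]{hacking} and \cite[2.4]{hackinggentype}), but carried out on the higher-index cover on which $\mathcal{D}$ becomes Cartier. First I would reduce to the case where $A' \to A$ is a small extension, with square-zero kernel $I$ annihilated by the maximal ideal of $A'$; the general case follows by factoring the extension into a composite of such and inducting. Writing $N'$ for the index of $\frac{1}{q}K_X$, I would pass to the $q$-canonical ($4$-canonical) cover $V \to X$, so that by the relation $pK_X + qD \sim 0$ the reflexive sheaf $\mathcal{O}_V(D_V)$ is an honest $\mu_{N'}$-eigen-line-bundle $\mathcal{L}$ on $V$ (this is exactly the reason for enlarging the index, as in Example \ref{ex:Dnotcartier}). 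A $q$-$\mathbb{Q}$-Gorenstein deformation $\mathcal{X}'/A'$ is by definition induced by a $\mu_{N'}$-equivariant deformation $\mathcal{Z}'/A'$ of $V$, and since the defining property of such deformations is that both $\omega^{[n]}$ and $\mathcal{O}(nD)$ commute with base change, the line bundle $\mathcal{L}$ deforms (equivariantly, flatly) to a line bundle $\mathcal{L}'$ on $\mathcal{Z}'$ restricting to $\mathcal{L}$ over $A$. Thus the line bundle underlying the divisor is given to us for free; all that remains is to produce a compatible section.

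The divisor $\mathcal{D}$ is cut out by a section $s$ of $\mathcal{O}_{\mathcal{X}}(\mathcal{D})$, equivalently by a $\mu_{N'}$-semi-invariant section of $\mathcal{L}$ over $\mathcal{Z} = \mathcal{Z}' \times_{A'} A$ lying in the eigenspace determined by $D_V$. Constructing $\mathcal{D}'$ amounts to lifting $s$ to a section $s'$ of $\mathcal{L}'$ in the same eigenspace. For the small extension the relevant obstruction lives in the equivariant part of $H^1(X, \mathcal{L}|_X) \otimes_k I$, which upon descent is exactly $H^1(X, \mathcal{O}_X(D)) \otimes_k I$, where $X$ is the closed fiber. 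Here I would invoke the Cohen--Macaulay hypothesis built into the definition of an \he stable pair: by Lemma \ref{lem:H1(D)} one has $H^1(X, \mathcal{O}_X(D)) = 0$, so the obstruction vanishes and $s$ extends to $s'$. Setting $\mathcal{D}'$ equal to the descent of the zero scheme of $s'$, and noting that $s'$ restricts to the non-zero-divisor $s$ so that $\mathcal{D}'$ is flat over $A'$ and restricts to $\mathcal{D}$, gives the desired $\mathbb{Q}$-Gorenstein deformation $(\mathcal{X}', \mathcal{D}')/A'$.

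The hard part will be the bookkeeping of the $\mu_{N'}$-equivariance rather than any deep geometry: I must ensure that $s'$ is produced in the correct eigenspace so that its zero scheme descends to a genuine Weil divisor on $\mathcal{X}'$, and not merely on the cover, and that the chosen $\mathcal{L}'$ is the eigen-line-bundle attached to $D_V$ and not some other reflexive power of $\omega$. The only cohomological input is the vanishing of $H^1(X, \mathcal{O}_X(D))$, which is precisely what the Cohen--Macaulay condition was imposed to guarantee; once that is in hand the lifting is formal. A minor point to verify is that flatness of $\mathcal{D}'$ over $A'$ really follows from $s'$ being a non-zero-divisor, which holds because $s$ is one and the extension is infinitesimal.
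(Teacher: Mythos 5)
Your proposal is correct and follows essentially the same route as the paper, which simply runs Hacking's proof of \cite[Theorem 3.12]{hacking} on the higher-index ($q$-canonical) cover and substitutes Lemma \ref{lem:H1(D)} ($H^1(X,\mathcal{O}_X(D))=0$, via the Cohen--Macaulay hypothesis and Kodaira-type vanishing) for Hacking's Lemma 3.14. In fact you have spelled out more of the mechanics (reduction to small extensions, the eigenspace bookkeeping, flatness of $\mathcal{D}'$) than the paper does, but the key ingredients --- Cartierness of $D_V$ on the $q$-canonical cover and the $H^1$ vanishing --- are exactly the ones the paper relies on.
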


\begin{proof} Using the following lemma in place of Lemma 3.14 in the proof of Theorem 3.12 in \cite{hacking}, the same proof holds. \end{proof}

\begin{lemma}\label{lem:H1(D)}
Let $(X,D)$ be an \he stable pair.  Then, $H^1(X, \mathcal{O}_X(D)) = 0$.
\end{lemma}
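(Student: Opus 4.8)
The plan is to recognize $\mathcal{O}_X(D)$ as a twist of the dualizing sheaf by an ample class and then to apply a Kodaira-type vanishing theorem adapted to the singularities at hand. Recall from the basic properties of \he stable pairs established above that $-K_X$ is ample, $D$ is ample, and both $K_X$ and $D$ are $\mathbb{Q}$-Cartier. Rewriting the relation $pK_X + qD \sim 0$ as $qD \sim -pK_X$, we obtain
\[ q(D - K_X) \sim -(p+q)K_X, \qquad \text{so} \qquad D - K_X \sim_{\mathbb{Q}} -\tfrac{p+q}{q}K_X, \]
which is ample because $-K_X$ is. Thus $\mathcal{O}_X(D) \cong \mathcal{O}_X(K_X + A)$ with $A := D - K_X$ an ample $\mathbb{Q}$-Cartier divisor, putting us exactly in the setting of Kodaira vanishing; it remains to justify $H^i(X, \mathcal{O}_X(K_X + A)) = 0$ for $i > 0$ in the generality we need.

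When $X$ is normal and klt this is immediate from Kawamata--Viehweg vanishing for the $\mathbb{Q}$-Cartier Weil divisor $D$: taking the boundary $\tfrac{q}{p}D$, which satisfies $K_X + \tfrac{q}{p}D \sim_{\mathbb{Q}} 0$, the twist $D - (K_X + \tfrac{q}{p}D) = (1 - \tfrac{q}{p})D - K_X$ is ample. Two features of the general case obstruct this elementary argument, and handling them is the heart of the proof. First, $X$ is only demi-normal and may be strictly slc: at a strictly log canonical point the pair $(X, \tfrac{q}{p}D)$ is not klt, and since enlarging the boundary only lowers discrepancies, no perturbation can repair this. Here part (d) of the basic properties is what saves us, since it guarantees that the strictly slc locus is disjoint from $\Supp D$, so that $(X, \tfrac{q}{p}D)$ is klt in a neighborhood of $D$ and the failure of the klt hypothesis is confined to a locus on which $D$ is trivial. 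Second, $D$ is only $\mathbb{Q}$-Cartier, so $\mathcal{O}_X(D)$ is merely a divisorial (reflexive) sheaf rather than a line bundle; the Cohen--Macaulay hypothesis on $X$ is what keeps this sheaf well behaved ($S_2$, with $X$ admitting an honest dualizing sheaf $\omega_X$), so that the duality and vanishing machinery apply to $\mathcal{O}_X(D)$.

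To dispatch both issues at once, the approach is to pass to the finite cover $\pi \colon V \to X$ on which $D_V := \pi^* D$ becomes Cartier --- the same cover used above to define $q$-$\mathbb{Q}$-Gorenstein deformations, where the relation $pK_X + qD \sim 0$ forces $D_V$ to be Cartier. This cover is crepant ($K_V = \pi^* K_X$) and étale in codimension one (it is ramified only where $K_X$ or $D$ fails to be Cartier, a locus of codimension $\ge 2$), so $V$ is again slc and $D_V - K_V = \pi^*(D - K_X)$ is ample. Fujino's Kodaira-type vanishing theorem for semi log canonical pairs then applies to the Cartier divisor $D_V$ on the projective slc variety $V$, giving $H^i(V, \mathcal{O}_V(D_V)) = 0$ for all $i > 0$. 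Finally, $\mathcal{O}_X(D)$ is recovered as the sheaf of $\mu_N$-invariants of $\pi_* \mathcal{O}_V(D_V)$ (both sides are $S_2$ and agree where $\pi$ is étale); since $\pi$ is finite and taking invariants under the reductive group $\mu_N$ is exact in characteristic zero, $H^1(X, \mathcal{O}_X(D)) = H^1(V, \mathcal{O}_V(D_V))^{\mu_N} = 0$.

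The main obstacle is the vanishing step in the genuinely singular case: one cannot simply invoke Kawamata--Viehweg for klt pairs, both because $D$ fails to be Cartier and because $X$ may carry strictly log canonical --- indeed non-normal --- singularities away from $D$. The real content is therefore the reduction, via the index-one cover for $D$, to an honest Cartier divisor on an slc variety, together with the verification that this cover inherits the slc property, that $D_V$ is genuinely Cartier with $D_V - K_V$ ample, and that $\mathcal{O}_X(D)$ descends as the invariant summand. Once the correct slc vanishing theorem is in place, the positivity computation $D - K_X \sim_{\mathbb{Q}} -\tfrac{p+q}{q}K_X$ does the rest.
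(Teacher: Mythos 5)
Your reduction to a Cartier divisor via a covering has a genuine gap: the global finite cover $\pi\colon V\to X$, \'etale in codimension one, on which $\pi^*D$ becomes Cartier does not exist in general. Index-one covers of a $\mathbb{Q}$-Cartier Weil divisor are a \emph{local} construction: on a germ one has $ND\sim 0$ and can form $\Spec_X\bigoplus_{i}\mathcal{O}_X(-iD)$ with an algebra structure coming from that triviality. Globally, $ND$ is ample here (since $qD\sim -pK_X$), not torsion in $\Cl(X)$, so the only global cyclic cover available is the one \emph{branched along} a member of $|ND|$ --- which is not \'etale in codimension one and destroys the crepancy and slc descent your argument relies on. (The $q$-$\mathbb{Q}$-Gorenstein cover used elsewhere in the paper is likewise only taken on germs.) Concretely, take $X=\PP(1,1,2,4)$, one of the threefolds that actually occurs by Theorem \ref{thm:canonical}, with $D\in|\mathcal{O}_X(d)|$ for $d$ odd: the smooth locus of $X$ is simply connected, so $X$ admits no nontrivial connected cover \'etale in codimension one, yet $\mathcal{O}_X(d)$ is not Cartier. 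Hence the step ``pass to the projective variety $V$ on which $D_V$ is Cartier and apply Fujino's vanishing there'' cannot be carried out. Everything downstream of that step --- crepancy, descent of slc, ampleness of $\pi^*(D-K_X)$, exactness of $\mu_N$-invariants --- would be fine \emph{if} the cover existed, but it is the existence that fails.

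The repair is essentially what the paper does: avoid the cover and invoke vanishing theorems stated directly for $\mathbb{Q}$-Cartier Weil divisorial sheaves on (semi) log canonical varieties. The paper's proof applies Serre duality on the Cohen--Macaulay variety $X$ to rewrite $H^1(X,\mathcal{O}_X(D))$ as $H^{2}(X,\mathcal{O}_X(K_X-D))^\vee$ and then quotes Kodaira vanishing in the log terminal case, \cite[Theorem 1.2]{fujino} in the log canonical case, and \cite[Corollary 1.3]{vanishing} in the non-normal case; your positivity computation $D-K_X\sim_{\Q}-\tfrac{p+q}{q}K_X$ is exactly the ampleness input those theorems require, and the Cohen--Macaulay hypothesis (automatic by \cite[Corollary 7.13]{dubois}) is what makes the duality step legitimate for the reflexive sheaf $\mathcal{O}_X(D)$. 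One could also salvage your idea by working with the local index-one covers and a local-to-global spectral sequence, or with the index-one covering stack, but that is in effect how the cited theorems are proved, so the honest fix is to cite them.
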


\begin{proof} Recall that $D$ is an ample $\bQ$-Cartier $\bZ$-divisor.  While this is a standard consequence of Kawamata-Viehweg vanishing when $X$ is log terminal, for the general case we apply \cite[Theorem 1.7]{fujino2} to $(X,0)$, noting that $D$ and $-K_X$ are ample, so $D-K_X$ is ample.  Note that the hypotheses in \cite[Theorem 1.7]{fujino2} are satisfied because $(X,\frac{q}{p}D)$ is slc, and by definition $D$ does not contain any component of the conductor of $X$ (see \cite[Definition-Lemma 5.10]{newkollar}).
\end{proof}

We continue Example \ref{ex:Dnotcartier} and compute the $4$-canonical covering. 

\begin{example}\label{ex:4cancover}
    Let $X$ be the cone over the anticanonical embedding of $\PP^1 \times \PP^1$, a section of $\calO_W(2)$ in $W = \PP(1,1,1,1,2)$, so $\calO_X(K_X) = \calO_W(-4)|_X$.  Therefore, $\calO_X(\frac{1}{2}K_X) = \calO_X(-2)$ is Cartier, so we need to construct a $2:1$ cover to make $\frac{1}{4}K_X$ a Cartier divisor.  Alternatively, we are taking a cover to make $\calO_X(1)$ a Cartier divisor. Let $X' \subset \PP^4$ be the cone over the quadric surface in $\PP^3$, so $X'$ is the cone over the $(1,1)$-embedding of $\PP^1 \times \PP^1$.  Because $X$ is the cone over the $(2,2)$-embedding of the same surface, there is a finite morphism $X' \to X$ which is the desired cover near the singular point of $X$.  
\end{example}

In \cite[Section 3]{hacking}, the author computes the deformation and obstruction spaces for these pairs.  We can use this and Examples \ref{ex:Dnotcartier}, \ref{ex:4cancover} to show that the moduli space $\calM_{\dfours}$ is generically smooth along the locus parameterizing pairs $(X,D)$ where $X$ is the cone over the anticanonical embedding of $\PP^1 \times \PP^1$.  Later, in Proposition \ref{prop:Xdivisor}, we will show that this locus is actually a divisor in $\calM_{\dfours}$. 

\begin{proposition}\label{prop:smoothdivisor}
 Let $X$ be the cone over the anticanonical embedding of $\PP^1 \times \PP^1$.  Then, $X$ has unobstructed $4$-$\mathbb{Q}$-Gorenstein deformations.  In particular, the moduli space $\calM_{\dfours}$ is smooth at the generic point of the locus parameterizing surfaces on $X$.  
\end{proposition}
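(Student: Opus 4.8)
The plan is to show that the deformation space $T^1_{QG}(X,D)$ of $4$-$\mathbb{Q}$-Gorenstein deformations of the pair is unobstructed by exhibiting a smooth chart, using the explicit cone description from Example \ref{ex:4cancover} together with the vanishing in Lemma \ref{lem:H1(D)}. The preceding theorem reduces deformations of the pair $(X,D)$ to deformations of $X$ alone (the divisor $D$ imposes no further obstruction once $H^1(X,\mathcal{O}_X(D)) = 0$), so it suffices to prove that $X$, the cone over the anticanonical $(2,2)$-embedding of $\PP^1 \times \PP^1$, has unobstructed $4$-$\mathbb{Q}$-Gorenstein deformations. By the equivalence between $q$-$\mathbb{Q}$-Gorenstein deformations of $X$ and honest deformations of the $4$-canonical cover $X' \subset \PP^4$ (the affine cone over the $(1,1)$-embedding of $\PP^1\times\PP^1$, i.e.\ the cone over a smooth quadric surface), I would instead compute the $\mu_4$-invariant part of the deformation theory of $X'$ and show it is unobstructed.

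First I would set up the cone $X' = \operatorname{Spec} \bigoplus_{m\ge 0} H^0(\PP^1\times\PP^1, \mathcal{O}(m,m))$ as an affine Gorenstein threefold singularity, a three-dimensional ordinary double point up to the grading (more precisely the affine cone over a smooth quadric $Q \subset \PP^3$). Its local deformation functor is controlled by the tangent cohomology $T^1(X')$ and the obstruction space $T^2(X')$, which can be computed gradewise via the conormal/normal bundle of the projective quadric. Since $Q$ is a smooth quadric surface and the relevant normal bundle is $\mathcal{O}_Q(2)$, the positivity of this bundle together with standard vanishing ($H^1(Q,\mathcal{O}_Q(k)) = 0$ for the relevant twists) forces $T^2(X')$ to vanish in the relevant graded pieces, giving unobstructedness. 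The $\mu_4$-equivariance then cuts this down to the deformations that descend to $X$, and one tracks that the invariant part remains unobstructed.

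Concretely, I would organize the argument as: (i) invoke the preceding theorem to pass from the pair to the variety; (ii) pass to the $4$-canonical cover $X'$ via Example \ref{ex:4cancover}; (iii) identify $X'$ as the cone over a smooth quadric and compute its $T^1$ and $T^2$ as graded modules, showing $T^2 = 0$ (equivalently, that the cone singularity is a smoothable hypersurface-type singularity with unobstructed deformations); (iv) take $\mu_4$-invariants and conclude that the $4$-$\mathbb{Q}$-Gorenstein deformation functor of $X$ is smooth; (v) translate smoothness of the deformation functor at this point into smoothness of $\calM_{\PP^3\s,(d,4)}$ at the corresponding point, using that the automorphisms act nicely and $H^1(X,\mathcal{O}_X(D)) = 0$ ensures the divisor deforms along. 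The expected main obstacle is step (iii), the honest computation of $T^2(X')$ and the verification that no obstructions survive in the graded pieces fixed by $\mu_4$: the cone singularity is not a complete intersection in the naive sense once one keeps track of the grading induced by the $(2,2)$-embedding, so one must be careful that a deformation of $X'$ which is $\mu_4$-equivariant actually descends to a $4$-$\mathbb{Q}$-Gorenstein deformation of $X$ rather than merely a $\mathbb{Q}$-Gorenstein deformation of some intermediate cover. I would handle this by comparing the two gradings explicitly and checking that the $\mu_4$-weight-zero part of $T^1(X')$ surjects onto the expected tangent space of the moduli stack while $T^2$ vanishes in those weights.
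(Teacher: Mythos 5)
Your overall strategy coincides with the paper's: reduce from the pair $(X,D)$ to the variety $X$ via the preceding theorem and Lemma \ref{lem:H1(D)}, pass to the $4$-canonical cover of Example \ref{ex:4cancover}, identify that cover as the cone over the quadric surface, and use its structure to kill the obstructions. Two corrections, one minor and one substantive. The minor one: the cover is a degree $2$ cyclic cover, not a $\mu_4$-cover. Since $\calO_X(K_X) = \calO_W(-4)|_X$ for $X \subset W = \PP(1,1,1,1,2)$, the divisor $\tfrac{1}{2}K_X$ is already Cartier, so the index of $\tfrac{1}{4}K_X$ is $2$ and $G = \mu_2$; this also disposes of your worry about deformations descending only to an intermediate cover, since a degree $2$ cover has none.

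The substantive issue is that your graded computation of $T^2$ of the affine cone is simultaneously more work than needed and not enough by itself. It is unnecessary because the cover $V$ is a hypersurface (the cone over the quadric in $\PP^4$), hence a local complete intersection, so the sheaf $\calT^2_V$ vanishes identically and no graded normal-bundle analysis is required. It is insufficient because the obstruction space is the global $T^2_{QG,X}$, which the local-to-global spectral sequence $H^p(\calT^q_{QG,X}) \Rightarrow T^{p+q}_{QG,X}$ assembles from three pieces: $H^0(\calT^2_{QG,X})$ (the term your local analysis controls), $H^1(\calT^1_{QG,X})$, and $H^2(\calT^0_{QG,X}) = H^2(X,\calT_X)$. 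The middle term vanishes for free because $\calT^1_V$ is supported at the vertex, a single point; but the vanishing $H^2(X,\calT_X) = 0$ is a genuinely global statement that your proposal never addresses, and without it unobstructedness of the local deformations of the singularity does not imply unobstructedness of deformations of $X$. The paper disposes of it with a direct computation; you would need to supply that step (for instance via the Euler-type sequences for the hypersurface $X \subset \PP(1,1,1,1,2)$) to complete the argument.
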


\begin{proof}
    By \cite[Remark after 3.9]{hacking}, the obstructions to extending $4$-$\mathbb{Q}$-Gorenstein deformations of $X$ are contained in sheaves $T^2_{QG,X}$ (see \cite[Notation 3.6]{hacking}).  Furthermore, there is a spectral sequence $H^p(\calT^q_{QG,X}) \Rightarrow T^{p+q}_{QG,X}$.  If, locally, $\pi: V \to X$ is the $4$-canonical cover where $X$ is the quotient by a group $G$, we can compute $\calT^q_{QG,X} = (\pi_*\calT^q_V)^G$.  Furthermore, $\calT^0_{QG,X} = \calT^0_X$, $\calT^1_V$ is supported on the singular locus of $V$, and $\calT^2_V$ is supported where $V$ is not a local complete intersection.  By Example \ref{ex:4cancover}, if $X$ is the cone over the anticanonical embedding of $\PP^1 \times \PP^1$, then $V$ is the cone over the quadric surface.  Because the singular locus of $V$ is a single point and $V$ is a hypersurface, we have $H^1(\calT^1_V) = 0$ and $H^0(\calT^2_V) = 0$.  A computation shows that $H^2(\calT^0_X) = 0$, hence $T^2_{QG,X} = 0$. 
\end{proof}

Finally, we aim to describe the moduli functor.  In the definition of $\Zpqs$ \he stable pairs, we require that $(X,D)$ has a smoothing to $(W, D_W)$, where $W$ is a smooth Fano variety deformation equivalent to $Z$.  Therefore, we are interested only in certain `smoothable' deformations of $(X,D)$, made precise below.  

\begin{definition}
Let $(X,D)/\mathbb{C}$ be a $\Zpqs$ \he stable pair.  Let $(\mathcal{X}^u, \mathcal{D}^u)/S_0$ be a versal $\mathbb{Q}$-Gorenstein deformation of $(X,D)$, where $S_0$ is finite type over $\mathbb{C}$.  Let $S_1 \subset S_0$ be the open subscheme where the fibers of $\mathcal{X}^u$ over $S_0$ are smooth and deformation equivalent to $Z$ and $S_2$ the (scheme-theoretic) closure of $S_1$ in $S_0$.  A $q$-$\mathbb{Q}$-Gorenstein deformation of $(X,D)$ is said to be smoothable if it can be obtained by pullback from the deformation $(\mathcal{X}^u, \mathcal{D}^u) \times_{S_0} S_2 \to 0 \in S_2$.
\end{definition}

\begin{remark}
In \cite{hacking}, it is shown that this condition is vacuous for $(d,3)$ $\PP^2$-smoothable \he stable pairs when $d$ is not a multiple of $3$, but non-trivial when $3$ divides $d$. 
\end{remark}

For the definition of \textit{family} of $\bQ$-Gorenstein smoothable pairs, and more technicalities on the necessary conditions for the divisor $\calD$ to be well-behaved, we refer the reader to the recent paper \cite{kflatness} of Koll\'ar.  We include the necessary definition here (see \cite[Definition 10]{kflatness}).  Note that the idea of K-flatness over a reduced base is equivalent to the usual definition of family of stable pairs \cite{modulibook}.

\begin{definition}
A family of stable pairs is a morphism $f : (X, cD) \to  S$, where
    \begin{enumerate}
        \item $f: X \to S$ is flat and projective, 
        \item $D$ is a K-flat family of divisors on $X$ (\cite[Definition 2]{kflatness})
        \item $K_X + cD$ is $\bQ$ Cartier and relatively ample, and 
        \item the fibers $(X_s, cD_s)$ are slc. 
    \end{enumerate}
\end{definition}

\begin{definition}\label{def:functor}
Let $Sch$ be the category of noetherian schemes over $\mathbb{C}$.  For a smooth Fano variety $Z$ and $(p,q)\in \mathbb{N}^2$, we define a moduli pseudofunctor over a reduced base $S$ as $\mathcal{M}_{\Zpqs} \to Sch$ by
\[ \mathcal{M}_{\Zpqs}(S) = \left\{ \begin{array}{c|c} (\mathcal{X},\mathcal{D})/S  & (\mathcal{X},\mathcal{D})/S \text{ is a q-} \mathbb{Q} \text{-Gorenstein smoothable family} \\ & \text{of $\Zpqs$ \he stable pairs} \end{array} \right\} \]
\end{definition}

As stated above, in \cite{hacking}, it is shown that the $\mathbb{Q}$-Gorenstein deformation condition is equivalent to requiring the Koll\'ar condition on families.  Using the deformation theory in \cite[Section 3]{hacking}, Theorem \ref{thm:proper}, and the finiteness of automorphism groups \cite[Corollary 10.69]{newkollar} we deduce the following theorem. 

\begin{theorem}
The moduli space of  $\Zpqs$ \he stable pairs is a proper Deligne-Mumford stack.
\end{theorem}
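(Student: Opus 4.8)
The plan is to establish the three ingredients that together give a proper Deligne--Mumford stack: algebraicity, the Deligne--Mumford property (finite and unramified automorphisms), and properness (finite type together with the valuative criterion for separatedness and universal closedness). All of the substantive geometric input has already been assembled above, so the argument is mainly one of organizing these inputs and checking that the objects they produce genuinely lie in the functor of Definition \ref{def:functor}.

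First I would record algebraicity. The $q$-$\mathbb{Q}$-Gorenstein deformation theory set up above (following \cite[Section 3]{hacking}) supplies coherent deformation and obstruction sheaves, and the functor $\calM_{Z\s,(p,q)}$ is limit-preserving; Artin's criteria, exactly as in \cite[Theorem 4.4]{hacking}, then show it is an algebraic stack. To promote this to Deligne--Mumford it suffices to check that each pair has a finite, reduced automorphism group. Here stability is decisive: for an \he stable pair the $\mathbb{Q}$-divisor $K_X + (\frac{q}{p}+\epsilon)D$ is ample and canonically determined by $(X,D)$, so $\Aut(X,D)$ is a subgroup of the automorphisms of a fixed polarized projective scheme, hence a linear algebraic group of finite type; finiteness of the automorphism group of a pair with ample log canonical divisor is standard, and reducedness is automatic in characteristic $0$. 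Thus the diagonal is unramified and the stack is Deligne--Mumford.

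Next I would address finite type and properness. Finite type (quasi-compactness) is immediate from the Boundedness Theorem above. For properness I would verify the valuative criterion over the germ of a smooth curve, allowing---as is permitted for Deligne--Mumford stacks---a finite surjective base change. A map from the punctured germ $T^\times$ into $\calM_{Z\s,(p,q)}$ is exactly a family of $Z$-smoothable \he stable pairs over $T^\times$; by the definition of $Z$-smoothable its general fiber is a smoothing to $Z$, which places us precisely in the hypotheses of Theorem \ref{thm:proper}. That theorem furnishes, after a finite base change $T'\to T$, an \he stable extension $(\mathcal{X},\mathcal{D})/T'$ with $K_{\mathcal{X}}$ and $\mathcal{D}$ both $\mathbb{Q}$-Cartier, and asserts uniqueness up to a further finite surjective base change. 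Existence of the extension gives universal closedness and uniqueness gives separatedness, which is the whole content of the valuative criterion in the stacky setting.

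The hard part will be the bookkeeping: verifying that the extension produced by Theorem \ref{thm:proper} is genuinely an object of $\calM_{Z\s,(p,q)}(T')$ and not merely an abstract slc family. Two checks are required. First, the central fiber must remain $Z$-smoothable rather than only \he stable; this holds because the completed family is a modification of the given smoothing, so its general fiber is still $Z$ and the limit therefore inherits a smoothing to $Z$. Second, the family must be $q$-$\mathbb{Q}$-Gorenstein in the sense of Definition \ref{def:functor}, i.e. both $\omega^{[n]}_{\mathcal{X}/T'}$ and $\calO_{\mathcal{X}}(n\mathcal{D})$ must commute with base change for all $n$; this follows because the relevant sheaves are reflexive, $K_{\mathcal{X}}$ and $\mathcal{D}$ are $\mathbb{Q}$-Cartier by Theorem \ref{thm:proper}, and the obstructing higher cohomology vanishes by Lemma \ref{lem:H1(D)}. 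Once these compatibilities are in place, the three properties assemble to show that $\calM_{Z\s,(p,q)}$ is a proper Deligne--Mumford stack.
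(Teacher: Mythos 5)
Your proposal is correct and follows essentially the same route as the paper, which deduces the theorem from Hacking's $q$-$\mathbb{Q}$-Gorenstein deformation theory (algebraicity and the Deligne--Mumford property), the boundedness theorem (finite type), and Theorem \ref{thm:proper} (existence and uniqueness of limits for the valuative criterion); your extra bookkeeping about the limit remaining $Z$-smoothable and the family satisfying the Koll\'ar condition is exactly what the paper leaves implicit. The only point to tighten is that Theorem \ref{thm:proper} is stated for families with smooth generic fiber $(Z,D_Z)$, so one should invoke the standard reduction of the valuative criterion to generic points landing in the dense open locus of smooth pairs, which is permitted for a finite-type Deligne--Mumford stack.
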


\begin{remark}
As mentioned in Section \ref{sec:pairs}, one could remove the condition that \he stable pairs admit a smoothing to $Z$ and define an analogous moduli functor $\calM_{\Zpq}$ of pairs, although the class of arbitrary $\Zpq$ \he stable pairs is not obviously bounded.  However, Theorem \ref{thm:boundednessP3} says that $\dfour$ \he stable pairs are bounded for odd degree $d$, and this moduli space is still proper.  Although the proof of properness was given for pairs that admit a smoothing, the same proof applies more generally.  By Hacking's work in \cite[Section 3]{hacking} and the boundedness theorem, we obtain the following. 
\end{remark}

\begin{theorem}
For odd degree $d$, the moduli space $\calM_{\dfour}$ is a proper Deligne-Mumford stack.
\end{theorem}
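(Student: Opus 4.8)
The plan is to combine the two key inputs the paper has already assembled: the boundedness of $(d,4)$ $\PP^3$ \he stable pairs for odd $d$ (Theorem~\ref{thm:boundednessP3}, stated as forthcoming) and the deformation-theoretic machinery of \cite[Section 3]{hacking} that was adapted earlier in this section. The strategy exactly parallels the smoothable case, so I would first argue that $\calM_{\PP^3,(d,4)}$ is a Deligne-Mumford stack of finite type, and then separately establish properness via the valuative criterion. For the algebraicity and finite-typeness, I would invoke boundedness to restrict to a finite-type parameter space (a suitable product of Hilbert schemes as in the boundedness proof), and then use the $q$-$\mathbb{Q}$-Gorenstein deformation theory together with Lemma~\ref{lem:H1(D)} (whose hypothesis that $X$ is Cohen-Macaulay, hence the vanishing $H^1(X,\calO_X(D))=0$, holds because the Cohen-Macaulay condition is built into the definition of $(p,q)$ $Z$ \he stable pairs) to show the deformations of the pair are governed entirely by deformations of $X$. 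Finite stabilizers and the étale-local structure follow as in \cite[Theorem 4.4]{hacking}, giving that $\calM_{\PP^3,(d,4)}$ is a DM stack.

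For properness I would apply the valuative criterion over the germ of a curve $0\in T$ with $T^\times = T-0$. Given a family of $(d,4)$ $\PP^3$ \he stable pairs over $T^\times$, I would run essentially the argument of Theorem~\ref{thm:proper}: pass to a finite base change, complete to a locally stable (toroidal) family via \cite[Theorem 0.3]{abramovichkaru}, use Lemma~\ref{lem:relativelynef} to see that $K_\calX + r\calD$ is relatively numerically trivial over $T$ (where $r=\tfrac{4}{d}$), and then invoke \cite[Proposition 14]{KNX} to produce the relative $K_\calX + (r+\epsilon)\calD$ canonical model, independent of small $\epsilon$, with $K_{\calX^c}$ and $\calD^c$ both $\mathbb{Q}$-Cartier. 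The central fiber of this canonical model is the desired \he stable limit, and uniqueness follows from uniqueness of log canonical models after passing to a common semistable log resolution. The excerpt explicitly notes that this proof ``applies more generally'' beyond the smoothable setting, so the only genuinely new ingredient needed for $\calM_{\PP^3,(d,4)}$ is boundedness.

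The main obstacle, and the reason this theorem is stated separately from the smoothable case, is precisely boundedness: the completion and canonical-model steps in Theorem~\ref{thm:proper} never used smoothability, but the proof that every \he stable pair appears as a fiber of a finite-type family crucially did. Without smoothability one cannot a priori embed all the pairs in a single bounded family, and as the Introduction emphasizes, the coefficient $r+\epsilon$ need not lie in a DCC set, so \cite[Theorem 1.1]{hmx} does not apply directly. Thus the entire weight of the theorem rests on Theorem~\ref{thm:boundednessP3}, which is obtained only through the explicit classification of singular threefolds in odd degree carried out in Section~\ref{sec:classification}. Once boundedness is granted, I expect the remaining steps to be routine adaptations of the smoothable arguments; the separateness and the DM property require no new ideas, and the properness argument is a verbatim repetition of Theorem~\ref{thm:proper} with the finite-type family supplied by boundedness in place of the smoothing family. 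I would therefore present this theorem as a direct corollary: \emph{by Theorem~\ref{thm:boundednessP3} the family is bounded, and then the arguments of the smoothable case apply without change.}
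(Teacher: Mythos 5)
Your proposal matches the paper's own treatment: the paper likewise presents this theorem as an immediate consequence of the boundedness result for odd degree (Theorem~\ref{thm:boundednessP3}), Hacking's deformation theory from \cite[Section 3]{hacking}, and the observation that the properness argument of Theorem~\ref{thm:proper} never used smoothability. Your identification of boundedness as the sole genuinely new ingredient is exactly the point the paper makes.
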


We will explore $\calM_{\dfours}$ and $\calM_{\dfour}$ in Sections \ref{sec:classification} and \ref{sec:quintics}.  As a final remark, note that one could define an alternative moduli functor via the work of Abramovich and Hasset \cite{stablevars}. We can consider the substack of the algebraic stack $\mathcal{K}^{\omega}_{\text{slc}}$ (cf. \cite[Section 5]{stablevars}) satisfying the locally closed condition $dK_X+4D \sim 0$ \cite[Lemma 5.8]{ypgmod}.  This condition is algebraic, so we could define a variant $\calP_{\dfours}$ of $\calM_{\dfours}$ as this substack.  It is not clear if the presence of the divisor $D$ has an effect on the structure of this stack.

\section{Classification}\label{sec:classification}

Now, we turn our attention to $\dfour$ \he stable pairs.  We begin an explicit classifcation of the threefolds appearing in this moduli space. We dedicate our attention only to the threefolds $X$, not the pair $(X,D)$, since the ample divisor $D$ must be in a linear system determined by a multiple of $K_X$.  As a starting point, we have the following result of de Fernex and Fusi that implies any log terminal degenerations are rational. 

\begin{theorem}\cite[Theorem 1.3]{defernexfusi}\label{thm:totaro}
Rationality specializes in families of complex klt varieties of dimension at most 3.
\end{theorem}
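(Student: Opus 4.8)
The statement is a specialization property, so the plan is first to reduce to the local case of a family $\mathcal{X} \to T$ over the germ of a smooth curve (equivalently a DVR) with generic point $\eta$ and closed point $0$, all fibers klt of dimension $n \le 3$, where $\mathcal{X}_\eta$ is rational and one wants $\mathcal{X}_0$ rational; the general statement then follows by a standard Noetherian/valuative argument joining two points by a curve. Finite base changes $T' \to T$ are harmless, since they do not affect rationality of fibers, so I would use them freely (to obtain simultaneous or semistable resolutions). For $n \le 1$ there is nothing to prove, and for $n = 2$ the argument is classical: after a simultaneous resolution of the (rational) klt surface singularities I reduce to a smooth family, and then Castelnuovo's criterion applies because the invariants it uses, the irregularity $q = h^1(\mathcal{O})$ and the plurigenus $P_2 = h^0(2K)$, are constant in smooth proper families (invariance of plurigenera, and deformation-invariance of $h^{1,0}$); since they vanish on $\mathcal{X}_\eta$ they vanish on $\mathcal{X}_0$, forcing rationality.

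The case $n = 3$ is the heart of the matter. First I would reduce to the rationally connected case: if $\mathcal{X}_\eta$ is rational it is in particular rationally connected, and the key input is that rational connectedness specializes in families of klt threefolds, which follows from the deformation theory of rational curves (properness of spaces of stable maps, bend-and-break) together with the characterization of rational connectedness by the existence of a sufficiently free rational curve; thus $\mathcal{X}_0$ is again rationally connected. The next step is to spread out the birational map $\mathcal{X}_\eta \dashrightarrow \PP^3_{k(\eta)}$ to a relative birational map over a punctured neighborhood of $0$ and to try to extend it across $0$. To organize this I would run a relative MMP on a resolution of $\mathcal{X}$, reducing the family to a relative Mori fiber space, and then analyze the induced structure on $\mathcal{X}_0$ (a Fano threefold, a del Pezzo fibration over $\PP^1$, or a conic bundle over a rational surface), aiming to read off rationality of the special fiber from that of the generic one.

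The main obstacle, and the reason the restriction to dimension $\le 3$ is essential, is that rational connectedness alone does not imply rationality for threefolds: a smooth cubic threefold is rationally connected but irrational by Clemens--Griffiths. Consequently the specialization of rational connectedness is \emph{not} enough, and the proof must genuinely exploit that the nearby fibers are rational and not merely rationally connected. Concretely, I expect the decisive difficulty to be controlling the irrationality obstruction --- essentially the intermediate Jacobian, as a principally polarized abelian variety, together with the Clemens--Griffiths criterion --- across the special fiber, showing that no irrational Fano or non-rational conic bundle can occur as $\mathcal{X}_0$ when the generic fiber is rational. This amounts to proving that the intermediate Jacobian degenerates compatibly (in the sense of Griffiths and limit Hodge theory) and that the property of being a product of Jacobians of curves is inherited by the limit, which, combined with the explicit birational classification of rationally connected threefolds and their standard Mori-fiber models, yields rationality of $\mathcal{X}_0$. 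Handling the singular klt fibers uniformly --- so that the intermediate-Jacobian and MMP arguments apply despite the singularities --- is the remaining technical burden.
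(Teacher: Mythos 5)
This statement is not proved in the paper at all: it is imported verbatim from de Fernex--Fusi \cite[Theorem 1.3]{defernexfusi}, so there is no internal proof to compare yours against, and I will assess your proposal on its own terms and against the source. Your reductions (to a one-parameter family over a DVR, freely allowing finite base change) and your treatment of dimension at most $2$ are correct and standard: klt surface singularities are rational, so after simultaneous resolution, Castelnuovo's criterion together with deformation invariance of $q$ and $P_2$ finishes that case. You are also right that the entire difficulty sits in dimension $3$ and that specialization of rational connectedness cannot suffice, by Clemens--Griffiths.

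The gap is in the mechanism you propose for the threefold case. The Clemens--Griffiths criterion is an \emph{irrationality} criterion: having intermediate Jacobian isomorphic, as a principally polarized abelian variety, to a product of Jacobians of curves is a \emph{necessary} condition for rationality, not a sufficient one. So even if you carried out the degeneration argument and showed that the limit intermediate Jacobian is again a product of Jacobians, you could not conclude that $\mathcal{X}_0$ is rational; the implication runs the wrong way, there is no converse to Clemens--Griffiths, and in any case the intermediate Jacobian of a singular klt threefold is not defined without substantial additional work. The actual argument of de Fernex and Fusi is structured differently: their main technical theorem is that in a family of threefolds with klt fibers the locus of rational fibers is a \emph{countable union of closed subsets} of the base, proved by analyzing limits of the birational parametrizations $X_t \dashrightarrow \PP^3$ themselves (with the klt hypothesis entering through rationality of the singularities, which controls the birational type of the special fiber under resolution); the specialization statement is then immediate, since a countable union of closed subsets of an irreducible base that contains the generic point must have a member equal to the whole base. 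To close your gap you would need to replace the intermediate-Jacobian step by an argument of this kind, working directly with limits of the birational maps rather than with a rationality obstruction.
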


A partial classification of rational, log terminal varieties that admit a smoothing to $\mathbb{P}^3$ is discussed in Section \ref{sec:lt}.  One necessary criterion is that $(-K_X)^3 = 64$ (see below).  We point out that such a classification is known in dimension 2 (log terminal surfaces that smooth to $\PP^2$) by \cite{manetti} and will be recalled in Section \ref{sec:lt}.  

\begin{proposition}\label{prop:constantvolume}
Let $f: \mathcal{X} \to C$ be a flat family of $n$-dimensional projective varieties over a pointed curve $0\in C$.  Assume that $K_{\mathcal{X}/C}$ is $\mathbb{Q}$-Cartier, the general fiber $X_t$ is smooth, and the special fiber $X_0$ is slc.  Then, $(K_{X_0})^n = (K_{X_t})^n$.  In particular, if $X_t \cong \mathbb{P}^3$, $(K_{X_0})^3 = -64$.
\end{proposition}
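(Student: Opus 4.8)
The plan is to reduce the equality of self-intersection numbers to the standard constancy of Hilbert polynomials in a flat family, once the relevant line bundle has been identified with a multiple of the canonical class on each fiber.

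First I would pass to an honest line bundle. Since $K_{\mathcal{X}/C}$ is $\mathbb{Q}$-Cartier, choose $N > 0$ so that $L := \omega_{\mathcal{X}/C}^{[N]}$ is a line bundle on $\mathcal{X}$. Because $L$ is a line bundle on the proper flat family $\mathcal{X}/C$, the Euler characteristic $\chi(X_t, (L|_{X_t})^{\otimes m})$ is independent of $t \in C$ for every $m$ (constancy of Euler characteristic in flat families, and $C$ is connected), so it is given by a single numerical polynomial $P(m)$ of degree $n$. The top self-intersection of the Cartier divisor $L|_{X_t}$ on the $n$-dimensional projective scheme $X_t$ equals $n!$ times the leading coefficient of $P$, so $(L|_{X_t})^n = (L|_{X_0})^n$, regardless of whether the fibers are smooth, reducible, or non-normal.

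Next I would identify $L$ on the fibers via adjunction. The fiber over a smooth point of $C$ is a Cartier divisor on $\mathcal{X}$ with trivial normal bundle, so $\omega_{\mathcal{X}}|_{X_0} \cong \omega_{X_0}$ and $(f^*\omega_C)|_{X_0} \cong \mathcal{O}_{X_0}$, whence $K_{\mathcal{X}/C}|_{X_0} = K_{X_0}$ as Weil divisor classes; the same computation on a general smooth fiber gives $K_{\mathcal{X}/C}|_{X_t} = K_{X_t}$. The delicate point is passing to the $N$-th power on the special fiber: a priori one only controls $(\omega_{\mathcal{X}/C}|_{X_0})^{[N]}$, whereas the intersection-theoretic input above uses $L|_{X_0} = \omega_{\mathcal{X}/C}^{[N]}|_{X_0}$. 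This is exactly where the slc (hence demi-normal, in particular $S_2$) hypothesis on $X_0$ enters. Since $L|_{X_0}$ is a line bundle it is reflexive, and it agrees with the divisorial sheaf $(\omega_{\mathcal{X}/C}|_{X_0})^{[N]} = \mathcal{O}_{X_0}(NK_{X_0})$ away from a closed subset of codimension $\ge 2$; two reflexive (divisorial) sheaves on an $S_2$ demi-normal scheme that agree in codimension one coincide, so $L|_{X_0} \cong \mathcal{O}_{X_0}(NK_{X_0})$, and similarly $L|_{X_t} \cong \mathcal{O}_{X_t}(NK_{X_t})$.

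Combining these identifications, $N^n (K_{X_t})^n = (L|_{X_t})^n = (L|_{X_0})^n = N^n (K_{X_0})^n$, and dividing by $N^n$ yields $(K_{X_0})^n = (K_{X_t})^n$. For the final assertion, if $X_t \cong \mathbb{P}^3$ then $K_{\mathbb{P}^3} = -4H$ with $H$ the hyperplane class and $H^3 = 1$, so $(K_{X_0})^3 = (-4H)^3 = -64$. I expect the main obstacle to be precisely the reflexivity step on the possibly non-normal special fiber, namely justifying that the restriction of the line bundle $\omega_{\mathcal{X}/C}^{[N]}$ to $X_0$ represents $NK_{X_0}$; this uses the $S_2$ property of demi-normal varieties together with the fact that $0 \in C$ is Cartier, which makes relative adjunction valid in codimension one on $X_0$.
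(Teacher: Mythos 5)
Your proof is correct and follows essentially the same route as the paper: restrict a Cartier multiple of the relative canonical divisor to the fibers, identify it with the corresponding reflexive power of each fiber's canonical sheaf, and invoke constancy of the Euler characteristic (hence of the leading intersection-theoretic coefficient) in a flat family. The only difference is that you spell out the justification of the identification $\omega_{\mathcal{X}/C}^{[N]}\big|_{X_0} \cong \mathcal{O}_{X_0}(NK_{X_0})$ via adjunction in codimension one plus reflexivity on the $S_2$ special fiber, a step the paper's proof simply asserts.
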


\begin{proof}
Let $l$ be an integer such that $lK_{\mathcal{X}}$ is Cartier.  Then, for any $t \in C$, $\mathcal{O}_{X_t}(lK_{X_t}) \cong \omega_{X_t}^{[l]} \cong (\omega_{\mathcal{X}}^{[l]})_t$.  By definition, $(lK_{X_t})^n$ is the coefficient of $m_1m_2\dots m_n$ in $\chi(X_t, \mathcal{O}_{X_t}((m_1+m_2+\dots+m_n)lK_{X_t}) )$.  Because $f$ is flat, this polynomial is constant, so $(lK_{X_t})^n = l^nK_{X_t}^n$ is constant.  Therefore, $K_{X_t}^n$ is constant, as desired.
\end{proof}

\begin{remark}
The assumption that $K_{\mathcal{X}}$ is $\mathbb{Q}$-Cartier is essential; see \cite[Example 7.61]{km98}.
\end{remark}

It is also easy to construct a non-rational (normal) degeneration of $\mathbb{P}^3$, as shown by the following example.  Any such example is at least log canonical, in light of Theorem \ref{thm:totaro}.  

\begin{example}\label{ex:K3cone}
Given a projectively normal variety $V \subset \mathbb{P}^N$, there is a standard degeneration of $V$ to a cone over its hyperplane section \cite[7.61]{km98}.  Thus, taking the $4$-uple embedding $\mathbb{P}^3 \hookrightarrow \mathbb{P}^{34}$, the general hyperplane section of the image corresponds to a K3 surface in $\mathbb{P}^3$, which has trivial canonical divisor.  The cone $X$ over such a surface $S$ is log canonical: let $Y$ be the blow up of $X$ at the vertex, $f: Y \to X$.  Then, $f$ is birational with exceptional divisor isomorphic to $S$, so $K_Y \sim f^* K_X + a S$.  By adjunction, $K_S \sim (K_Y + S)\big|_S$, so $0 \sim K_S \sim (f^*K_X + (a+1)S)\big|_S$.  Given any curve $C \subset S$, $0 = K_S \cdot C = (a+1)S\big|_S \cdot C$, hence $a = -1$ and $X$ is log canonical.  A calculation shows that $-K_Y-S$ is nef and $0$ exactly on curves contained in the exceptional locus $S$, so $-K_X$ is ample.  However, for $X$ to occur as a threefold in a pair $(X,D)$ on the boundary of the moduli space above, we must have $-\frac{d}{4}K_X \equiv D$.  By the discussion above, $K_X\cdot C \in \mathbb{Z}$ for any curve $C \subset X$, and a calculation shows that $K_X \cdot \Gamma = -1$ for a ruling of the cone.  Since the singularity of $X$ is not klt, in order for $(X,(\frac{4}{d}+\epsilon) D)$ to also be log canonical, $D$ must miss the singularity of $X$.  Hence, $D$ is contained in the smooth locus of $X$ and is therefore Cartier, so $D\cdot C \in \mathbb{Z}$, which implies $\frac{d}{4}\in \mathbb{Z}$.  Therefore, for $d$ not divisible by 4, $X$ cannot occur as a boundary threefold.
\end{example}

From this observation and the comment on boundedness above, we first focus on the log canonical but non-klt threefolds appearing in the moduli problem.  The main result is that, for odd degree $d$, there are none.

\subsection{Non-klt Fano threefolds}\label{sec:lcthreefolds}

The inspiration for classification of the non-klt threefolds in this moduli problem is the following.

\begin{theorem} \cite{ishii}\label{thm:ishii}
If $X$ is a normal, Gorenstein variety of dimension $n$ with $K_X$ anti-ample and with finite (non-empty) irrational locus, then $X$ is a cone over a variety $S$ with canonical singularities and $K_S \sim 0$.
\end{theorem}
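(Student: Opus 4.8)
The plan is to realize $X$ as the contraction of a single prime divisor of discrepancy $-1$ sitting over the irrational locus and then to identify that divisor with the base of the cone. First I would translate the hypothesis on the irrational locus into a statement about discrepancies: since $X$ is normal and Gorenstein, a point is a rational singularity if and only if it is canonical, so the irrational locus is exactly the non-canonical locus, a finite nonempty set $\Sigma = \{p_1, \dots, p_k\}$ of isolated Gorenstein points. Gorensteinness forces all discrepancies $a(E,X)$ to be integers, so over each $p_i$ some exceptional divisor has $a(E,X) \le -1$. The first substantive step is to promote this to log canonicity: I would argue that the global positivity of $-K_X$ bounds the discrepancies over $\Sigma$ from below by $-1$, so that $X$ is strictly log canonical (log canonical but not canonical) at each $p_i$.

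Granting log canonicity, I would use the minimal model program to produce a projective birational morphism $f\colon Y \to X$ extracting a single prime divisor $E$ over $\Sigma$ with $a(E,X) = -1$, arranged so that $(Y,E)$ is plt and $E$ is normal; then $K_Y + E = f^*K_X$. By adjunction $(K_Y+E)|_E = K_E + \mathrm{Diff}$, and I expect the Gorenstein hypothesis to force the different to vanish, while $f^*K_X|_E \sim 0$ because $f$ contracts $E$ to the points $p_i$. Hence $K_E \sim 0$. Plt-ness makes $E$ klt, and together with $K_E \sim 0$ this gives that $S := E$ has canonical singularities with $K_S \sim 0$, precisely the base required by the statement. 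Connectedness of the exceptional fibre, combined with the eventual cone conclusion, will force $\Sigma$ to be a single point and $S$ to be irreducible.

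Because $f$ contracts the divisor $E$ to the point $p$, the conormal class $-E|_E$ is $f$-ample, hence ample on $S$. I would then combine Grauert contractibility with the projectivity supplied by the ampleness of $-K_X$ to identify a neighborhood of $E$ in $Y$ with the total space of $N := \calO_S(E)|_S$ and $X$ near $p$ with the affine cone $\Spec \bigoplus_{m \ge 0} H^0(S, N^{-m})$. The induced $\G_m$-action, extended using the ample $-K_X$, then exhibits $X$ globally as the projective cone over $(S, \calO_S(-E|_E))$.

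The delicate part is the passage from global to local positivity, in two places. The step I expect to be hardest is establishing log canonicity of the isolated non-canonical points: this is exactly where the ampleness of $-K_X$ must be converted into a local lower bound on discrepancies, presumably through a Koll\'ar-type vanishing theorem controlling $R^j f_* \calO_Y$ and the non-klt ideal. The secondary difficulty is upgrading the formal/analytic cone near the vertex to a genuine projective cone, which requires producing a global $\G_m$-action rather than merely the contraction $f$.
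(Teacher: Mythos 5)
A point of orientation first: the paper does not prove Theorem \ref{thm:ishii} --- it is quoted from \cite{ishii} --- but it does prove a generalization (Theorem \ref{thm:conelogcanonical}), and the method there is the natural benchmark. Your steps (1)--(3) are broadly consistent with that method: irrational $=$ non-canonical for normal Gorenstein varieties, extraction of the discrepancy $-1$ divisors onto a model $f\colon Y\to X$ with $K_Y+E=f^*K_X$, and adjunction giving the triviality of $K_E$ (the paper only records $K_E\equiv 0$ in its more general setting; in the Gorenstein case $(K_Y+E)|_E=f^*K_X|_E$ is linearly trivial because $E$ maps to points, so your stronger conclusion is reasonable once the different is actually controlled). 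You are also right to flag log canonicity of the isolated non-rational points as a substantive unproved step; it does not come for free, and your proposal gestures at but does not supply the required vanishing argument. Note also that you assume a single plt-extractable divisor $E$ from the start, whereas irreducibility of $E$ is something the argument must produce.

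The genuine gap is in your step (4). Knowing that $-E|_E$ is ample and $K_E\sim 0$ does not let you identify a neighborhood of $E$ in $Y$ with a neighborhood of the zero section of its normal bundle $N$: the formal principle requires the vanishing of $H^1(E,T_E\otimes N^{-m})$ for all $m\ge 1$, and this fails in general for a K3 or abelian surface with negative normal bundle; correspondingly, an isolated Gorenstein log canonical threefold singularity with a single K3 exceptional divisor of discrepancy $-1$ need not be a cone. So the cone structure cannot be established locally near $p$ and then globalized --- it is forced by the global hypothesis that $-K_X$ is ample, through Mori theory. This is exactly how the paper proves its generalization: since $K_Y+E=f^*K_X$ is negative off $E$ and trivial on $E$, there is a $(K_Y+E)$-negative, $E$-positive extremal ray; the bound $K_Y\cdot C\ge -1$ on one-dimensional fibers of birational contractions (Lemma \ref{lem:boundonK}), together with the exclusion of small contractions (Lemma \ref{lem:smallcont}), shows the contraction of this ray cannot be birational, hence is a Mori fiber space $\phi\colon Y\to S$ of relative dimension one whose general fiber $F$ satisfies $K_Y\cdot F=-2$ and $E\cdot F=1$; this forces $E$ to be irreducible and $\phi|_E$ an isomorphism, so $Y$ is generically a $\PP^1$-bundle over $E$ with $E$ a contractible section, and $X$ is the cone obtained by contracting that section. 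If you replace your Grauert/formal-neighborhood argument with this extremal-ray argument, the proof goes through; as written, step (4) would fail.
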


 Following Ishii, we will refer to the log canonical but non-klt locus as the \textit{strictly} log canonical locus.  If $X$ is a normal, Gorenstein, lc variety with $K_X$ anti-ample, the strictly log canonical locus coincides with the irrational locus \cite[Corollary 5.24]{km98}.  Therefore, this theorem implies that if a normal, Gorenstein threefold $X$ has a finite (non-empty) non-klt locus, it is either a cone over a K3 surface or two dimensional Abelian variety.  Note that the Gorenstein hypothesis implies that any klt singularity must be canonical, so the klt but non-canonical locus is empty in this case.  We provide the following generalization. 

\begin{theorem}\label{thm:conelogcanonical}
Let $X$ be a log canonical projective variety with a finite number of strictly log canonical singularities $\{p_1, \dots, p_n  \}$ and $-K_X$ ample.  If $a(E,X) \in \{-1, \mathbb{R}^{\ge 0} \}$ for every exceptional divisor $E$ over $X$ with $\text{center}_X(E)\subset \{p_1, \dots , p_n  \}$, then $X$ is a cone over a variety $Z$ with $K_Z \equiv 0$.
\end{theorem}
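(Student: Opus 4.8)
The plan is to follow Ishii's strategy while replacing the Gorenstein/index-one input by the minimal model program, using the hypothesis $a(E,X)\in\{-1,\mathbb{R}^{\ge 0}\}$ precisely to control which divisors get extracted. First I would extract the log canonical places: since each $p_i$ is strictly log canonical, there is at least one divisor over it with discrepancy $-1$, and because no discrepancy lies in the open interval $(-1,0)$, a dlt modification $f\colon (Y,E)\to X$ extracts exactly these divisors, giving $E=\sum E_{ij}$ reduced, $(Y,E)$ dlt, and the crepant relation $K_Y+E\sim_{\mathbb{Q}} f^*K_X$. Existence of such a modification is standard; the point of the discrepancy hypothesis is that $Y$ is klt, indeed canonical, away from $E$, so the modification is as economical as possible.

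Next I would run adjunction along $E$. Since $-K_X$ is ample and $f$ contracts $E$ to the finite set $\{p_i\}$, the divisor $-(K_Y+E)=f^*(-K_X)$ is nef and big and restricts to zero on $E$. By adjunction $(K_Y+E)\big|_E = K_E+\Diff_E\equiv 0$, so $E$, with its different, is a (semi-)log Calabi--Yau variety; this is the candidate base $Z$ with $K_Z\equiv 0$. Because each component of $E$ is contracted to a point, its normal bundle $(-E)\big|_E$ is ample, and from $K_Y=f^*K_X-E$ one sees that $K_Y$ is $f$-ample: the morphism $f$ is the contraction of a $K_Y$-positive divisor to a point, exactly the ``blow-down of a negative section'' picture of a cone.

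The heart of the argument is to produce the complementary ruling. I would apply the cone theorem on $Y$ to find a $K_Y$-negative extremal ray spanned by a rational curve $C$ meeting $E$ transversally --- the positivity $-K_Y\cdot C=f^*(-K_X)\cdot C+E\cdot C>0$ for such curves supplies the needed negativity --- and contract it to obtain a Mori fibration $g\colon Y\to Z'$. The task is then to identify $g$ as an honest $\mathbb{P}^1$-bundle $Y\cong\mathbb{P}_{Z'}(\mathcal{O}\oplus L)$ under which $E$ is the negative section $Z'\cong E$ and $f$ is the contraction of that section; combining this with the second step yields $X\cong$ the projective cone over $Z=E$ with $K_Z\equiv 0$, and forces the vertex, hence the strictly log canonical locus, to be a single point.

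I expect this last step to be the main obstacle: proving that the complementary contraction is a $\mathbb{P}^1$-bundle with $E$ as a section, rather than merely some birational or higher-dimensional Mori contraction, and doing so when $E$ is a priori reducible or meets several of the $p_i$. Controlling the length of the extremal ray and the behavior of $g$ along $E$, together with the failure of $\mathbb{Q}$-factoriality and the positive index (so that $L$ is only a $\mathbb{Q}$-line bundle and the ``cone'' is a cyclic quotient construction), is exactly where the careful analysis of extremal contractions is required. As a cross-check and alternative route, one can instead pass to the index-one canonical cover $X'\to X$, which is Gorenstein with $-K_{X'}$ ample and finite irrational locus; Ishii's Theorem~\ref{thm:ishii} then makes $X'$ a cone over a Gorenstein Calabi--Yau, and the cone structure descends to $X$ through the cyclic quotient after linearizing the action at the vertex.
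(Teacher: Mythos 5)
Your outline tracks the paper's strategy --- dlt modification $\pi\colon (Y,E)\to X$ with $K_Y+E=\pi^*K_X$, adjunction giving $K_E\equiv 0$, and a complementary extremal contraction exhibiting $Y$ as a $\PP^1$-bundle over $E$ --- but it stops exactly where the proof actually lives. The entire content of the theorem is the step you defer: showing that the $(K_Y+E)$-negative, $E$-positive extremal ray contracts to a Mori fibration of relative dimension one with $E$ as a section, rather than a divisorial contraction, a small contraction, or a fibration of higher relative dimension. The paper does this with a chain of lemmas: Lemma \ref{lem:boundonK} forces $-1\le K_Y\cdot C<0$ on fibers of any birational contraction with one-dimensional fibers, so if $C$ meets $E$ at a smooth point of $Y$ then $E\cdot C\in\mathbb{Z}_{>0}$ gives $(K_Y+E)\cdot C\ge 0$, ruling out the divisorial case; Lemma \ref{lem:smallcont} rules out small contractions, and this is precisely where the hypothesis $a(F,X)\in\{-1,\mathbb{R}^{\ge 0}\}$ enters --- it makes $(Y,E)$ canonical along $E$ and $Y$ terminal, hence $E$ Cartier in codimension $2$, which the flip comparison in that lemma requires. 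Your proposal invokes none of this; saying ``the careful analysis of extremal contractions is required'' is naming the gap, not filling it. You also do not address why only one component of $E$ survives (the paper gets this from $K_Y\cdot F=-2$ and integrality of $E_i\cdot F$ on a general fiber missing $\mathrm{Sing}\,Y$), nor why $\phi|_E$ is an isomorphism onto the base (degree-one plus Zariski's Main Theorem). A smaller but real error: $(-E)|_E$ is only nef from the minimality of the dlt model ($K_Y$ relatively nef means $E\cdot C\le 0$ for $\pi$-contracted curves), not ample as you assert.

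The proposed fallback via Ishii's theorem does not work: a global index-one canonical cover $X'\to X$ requires $\mathcal{O}_X(NK_X)\cong\mathcal{O}_X$ for some $N$, which is impossible here since $-K_X$ is ample and hence $K_X$ is not torsion. The canonical cover exists only locally near each $p_i$, which recovers local structure of the singularity but cannot produce a global Gorenstein Fano $X'$ to which Theorem \ref{thm:ishii} applies. So there is no shortcut around the extremal-contraction analysis.
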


The extra hypotheses in this result arise from removing the Gorenstein hypotheses in Theorem \ref{thm:ishii}.  In order to ensure $X$ is a cone, there needs to be a certain extremal ray in the cone of curves.  Before getting to the proof, we provide a few definitions and technical lemmas.  In all cases, we consider dlt pairs $(X,D)$ and study properties of various $K_X$-negative and $K_X + D$-negative contractions.  The motivating idea is to study contractions that happen `over' $D$: divisorial contractions that are $K_X + D$-negative and $D$-positive must have a certain structure.

\begin{definition}
	Given a proper variety $X$, the effective cone $NE(X)$ is the collection of effective 1 cycles on $X$ modulo numerical equivalence, and its closure is denoted $\overline{NE}(X)$.  If $R$ is an extremal ray in $\overline{NE}(X)$, we say that the contraction of $R$ is an elementary extremal contraction.  In what follows, we will refer to the contraction of $R$ as simply an extremal contraction and always mean the contraction of an extremal ray.  
\end{definition}

 We begin with the negativity of $K_X$ in certain $K_X$-negative contractions.  Namely, the next lemma shows that $K_X$ cannot be `too' negative on fibers. 

\begin{lemma}\label{lem:boundonK}
	Let $X$ be a normal, log terminal projective variety such that $K_X$ is $\mathbb{Q}$-Cartier.  If $\phi: X \to Y$ is a birational contraction of a $K_X$-negative extremal ray with fibers of dimension at most 1, then each fiber $F$ is a chain of $\mathbb{P}^1$s whose configuration is a tree such that $-1 \le K_X \cdot C < 0$ for each irreducible component $C$ of $F$.
\end{lemma}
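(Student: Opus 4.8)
The plan is to reduce the statement to the classical theory of birational contractions of \emph{surfaces}, by cutting $X$ down with general hyperplane sections pulled back from $Y$, and then to extract both the combinatorial structure of $F$ and the numerical bound from adjunction on the resulting surface. The upper bound requires no work: since $\phi$ contracts a single $K_X$-negative extremal ray $R$, every curve contracted by $\phi$ — in particular every irreducible component $C$ of every fiber — is numerically proportional to a generator of $R$, so $K_X \cdot C < 0$. The entire numerical content is therefore the lower bound $K_X \cdot C \ge -1$.

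To set up the reduction, write $n = \dim X$, fix a very ample $B$ on $Y$, and choose $n-2$ general members $B_1, \dots, B_{n-2} \in |B|$ through the point $y = \phi(F)$, so that $\Gamma = B_1 \cap \dots \cap B_{n-2}$ is a surface through $y$ and $S := \phi^{-1}(\Gamma) = \phi^* B_1 \cap \dots \cap \phi^* B_{n-2}$ is a normal surface containing the whole fiber $F$. By Bertini $S$ is normal and smooth away from $F$, and $\phi|_S : S \to \Gamma$ is a proper birational morphism contracting $F$ to $y$. Repeated adjunction gives $K_S = (K_X + \sum_i \phi^* B_i)|_S$, and since $\phi^* B_i \cdot C = B_i \cdot \phi_* C = 0$ for any contracted curve $C$, I obtain the crucial identity $K_S \cdot C = K_X \cdot C$. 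It then suffices to prove the surface statement: the exceptional locus $F$ of $\phi|_S$ is a tree of smooth rational curves $C_i$ with $-1 \le K_S \cdot C_i < 0$.

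On the surface the combinatorics come from vanishing. Because the contraction $\phi$ exists via the contraction theorem, $X$ has at worst rational singularities, and $-K_X$ is $\phi$-ample, so relative Kawamata--Viehweg vanishing gives $R^1\phi_*\OO_X = 0$; this is inherited by the surface section, so $R^1(\phi|_S)_*\OO_S = 0$ and hence (by the theorem on formal functions) $H^1(F,\OO_F) = 0$, i.e. $p_a(F) = 0$. A connected, negative-definite exceptional configuration with $p_a = 0$ forces every component to be a smooth rational curve with dual graph a tree. For the numerical estimate I apply adjunction componentwise: each contracted $C_i \cong \PP^1$ has $C_i^2 < 0$ by the negativity lemma, while $-K_S$ being $(\phi|_S)$-ample gives $K_S \cdot C_i < 0$; combined with $K_S \cdot C_i = 2p_a(C_i) - 2 - C_i^2 = -2 - C_i^2$ this already yields $-2 < K_S \cdot C_i < 0$, with the sharp value $-1$ (and $C_i^2 = -1$) in the Gorenstein case.

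The main obstacle is sharpening the lower bound from $K_S \cdot C_i > -2$ to $K_S \cdot C_i \ge -1$ when $X$, hence $S$, is non-Gorenstein and $C_i^2$ may be a non-integral negative rational, i.e. ruling out the range $C_i^2 \in (-1,0)$. Here I would exploit that $\phi$ contracts a \emph{single} extremal ray, which constrains the contraction type (divisorial contraction to a point or to a curve, or a small contraction); in each case I would compare $K_X \cdot C_i = K_S \cdot C_i$ with the discrepancy of the contracted divisor and the ruling structure of $F$ to pin the self-intersections into $C_i^2 \le -1$. Controlling this fractional contribution is the delicate point, and it is precisely where the extremality (single-ray) hypothesis, rather than that of a general birational contraction, is essential.
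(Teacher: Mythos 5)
Your structural claims (the upper bound $K_X\cdot C<0$ from extremality, and the tree-of-$\PP^1$s statement from $R^1\phi_*\OO_X=0$ plus formal functions) are fine and close to what the paper does. But the lower bound $K_X\cdot C\ge -1$ — which you correctly identify as the entire numerical content of the lemma — is not proved in your proposal. Your surface section only yields $K_S\cdot C_i>-2$, and the plan you sketch for the sharpening rests on a false premise: on the normal (generally singular along $F$) surface $S$ the adjunction identity is not $K_S\cdot C_i=-2-C_i^2$ but $K_S\cdot C_i+C_i^2=2p_a(C_i)-2+\deg\Diff_{C_i}(0)$ with $\Diff_{C_i}(0)\ge 0$, so you cannot read off $C_i^2$ from $K_S\cdot C_i$; and the hoped-for conclusion $C_i^2\le -1$ is simply false for contractions of singular surfaces. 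For example, the exceptional curve $E$ of a $(1,n)$ weighted blow-up of a smooth surface point has $E^2=-1/n\in(-1,0)$ yet $K\cdot E=-1$; the different term $1-\tfrac{1}{n}$ is exactly what reconciles this with adjunction. So ruling out $C_i^2\in(-1,0)$ is not the right mechanism, and "compare with the discrepancy of the contracted divisor and the ruling structure" is not an argument.

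The paper closes this gap by a different and more direct device, applied on $X$ itself rather than on a surface section: Grauert--Riemenschneider gives $R^1\phi_*\omega_X=0$, hence $H^1\bigl(C,(\omega_X\otimes\OO_C)/T\bigr)=0$ for each component $C\cong\PP^1$ of the fiber (where $T$ is torsion). Writing the torsion-free quotient as $\bigoplus\OO_{\PP^1}(a_i)$, this vanishing forces every $a_i\ge -1$. Then, with $m$ chosen so that $\omega_X^{[m]}$ is Cartier, the natural nonzero map $\bigl((\omega_X\otimes\OO_C)/T\bigr)^{\otimes m}\to\omega_X^{[m]}\otimes\OO_C\cong\OO_{\PP^1}(b)$ bounds $b$ below, giving $K_X\cdot C=b/m\ge -1$. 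This step — bounding the splitting type of $\omega_X$ restricted to a fiber component via relative vanishing for $\omega_X$, and then passing to the Cartier reflexive power — is the key idea your proposal is missing, and it is not recoverable from intersection theory on a hyperplane section. If you want to salvage your reduction-to-surfaces framework, you would still need to import an analogue of this $\omega$-vanishing argument; the extremality of the ray alone will not produce the bound.
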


\begin{proof}
	By assumption, $R^2 \phi_* \mathcal{F} = 0$ for any coherent sheaf $\mathcal{F}$ on $X$.  By Grauert-Riemenschneider vanishing \cite[Corollary 10.38]{newkollar}, $R^1 \phi_* \omega_X = 0$, and by
	\cite[Theorem 1-2-5]{KMM}, because $-K_X$ is $\phi$-ample, $R^1 \phi_* \mathcal{O}_X = 0$.
	Then, consider any sheaf of ideals $J$ such that $\mathcal{O}_X / J$  is supported on a fiber $F$ of $\phi$:
	\[ 0 \to J \to \mathcal{O}_X \to \mathcal{O}_X / J \to 0 \]
	Pushing forward to $Y$, we see that $R^1\phi_* (\mathcal{O}_X / J) = H^1(F, \mathcal{O}_X / J |_F) = 0$.  Similarly, we see that $H^1(F, \omega_X / J\omega_X |_F) = 0$ so $H^1(F, (\omega_X / J\omega_X)|_F /T) = 0$, where $T \subset (\omega_X / J\omega_X)|_F$ denotes torsion.  Taking $J$ to be the ideal of $F$, we see that $F$ is a chain of $\mathbb{P}^1$s whose configuration is a tree.
	Then, consider an irreducible component $C \subset F$ and the sheaf $(\omega_X \otimes \mathcal{O}_C) / T$, where $T$ is the torsion in $\omega_X \otimes \mathcal{O}_C $.  This is a torsion-free sheaf on $\mathbb{P}^1$, so must be a vector bundle of the form $(\omega_X \otimes \mathcal{O}_C) / T \cong \oplus \mathcal{O}_{\mathbb{P}^1}(a_i)$.  The vanishing of $H^1$ given above implies that $a_i \ge -1$ for each $i$.  If $m$ is an integer such that $\omega_X^{[m]}$ is Cartier, we must have that $\omega_X^{[m]}\otimes \mathcal{O}_C = \mathcal{O}_{\mathbb{P}^1}(b)$ is a negative degree line bundle.  But, there is a nonzero morphism from taking the double dual of $\omega_X$:  $$(\omega_X \otimes \mathcal{O}_C / T)^{\otimes m} \to \omega_X^{[m]} \otimes \mathcal{O}_C$$ and $a_i \ge -1$ implies that $b \ge -1$.  Therefore, $-1 \le K_X \cdot C < 0$.
\end{proof}

The previous lemma bounds the negativity of $K_X$.  If curves $C$ are contained in the smooth locus, because $K_X \cdot C \ge -1$ for contracted curves, if $C \cap D \ne \emptyset$, that should force $(K_X + D) \cdot C \ge 0$.  Certainly this could be false if $X$ was highly singular and $D \cdot C \notin \Z$, but with a few restrictions on the singularities, we can apply the lemma to our advantage. 

\begin{lemma}\label{lem:divisorialcont}
	If $(X,D)$ is dlt, $X$ is $\bQ$-factorial, and $D$ is an effective prime $\bZ$-divisor that is Cartier in codimension $2$, then any $K_X+D$-negative extremal divisorial contraction is an isomorphism on $D$ if and only if the exceptional divisor does not intersect $D$.
\end{lemma}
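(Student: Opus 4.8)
The plan is to prove the two implications separately, using the structural fact that for an elementary divisorial contraction $\phi\colon X \to Y$ the exceptional locus is exactly the prime divisor $E$, and $\phi$ restricts to an isomorphism $X \setminus E \isomto Y \setminus \phi(E)$. The forward implication is then immediate: if $E \cap D = \emptyset$, then $D \subset X \setminus E$, and since $\phi$ is an isomorphism there, $\phi|_D \colon D \to \phi(D)$ is an isomorphism onto its (closed) image.

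For the converse I would argue by contradiction, taking $X$ to be a threefold (the case of interest, where the fiber-dimension hypothesis below is automatic). Assume $\phi|_D$ is an isomorphism but $E \cap D \neq \emptyset$. First I would extract a one-dimensional component $\Gamma$ of $D \cap E$ — two distinct prime divisors in a threefold meet in dimension $\geq 1$. Since $\Gamma \subset D$ and $\phi|_D$ is an isomorphism, $\Gamma$ is not contracted, so $\phi(\Gamma)$ is a curve; because $\phi(E)$ has dimension $\leq 1$, this forces $\phi(E)$ to be an irreducible curve $B$ with $\Gamma$ dominating $B$. As $E$ is then a surface dominating the curve $B$, every fiber of $\phi|_E$, hence of $\phi$, is one-dimensional. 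This reduction is precisely what allows me to invoke the earlier bound on $K_X$.

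Next I would take the component $C$ of a general fiber of $\phi|_E \to B$ that meets $\Gamma$. A general fiber cannot be contained in $D$ (otherwise $D \cap E$ would be two-dimensional), so $D \cdot C \geq 0$; combined with $(K_X + D)\cdot C < 0$ this gives $K_X \cdot C < 0$, so the contracted ray is $K_X$-negative and Lemma \ref{lem:boundonK} applies to yield $K_X \cdot C \geq -1$. On the other hand, since $\Gamma$ dominates $B$ it meets $C$, so $C$ meets $\Gamma \subset D$; choosing $C$ general enough to avoid the (codimension $\geq 3$, hence finite) locus where $D$ fails to be Cartier, the hypothesis that $D$ is Cartier in codimension $2$ makes $D \cdot C$ a \emph{positive integer}, so $D \cdot C \geq 1$. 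Combining the two estimates gives $(K_X + D)\cdot C \geq -1 + 1 = 0$, contradicting that $C$ lies in the $(K_X+D)$-negative extremal ray. Hence $E \cap D = \emptyset$.

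The main obstacle is this final integrality-versus-negativity step, where three hypotheses must cooperate: Lemma \ref{lem:boundonK} (which needs the fiber-dimension reduction of the previous paragraph) supplies $K_X \cdot C \geq -1$, the ``Cartier in codimension $2$'' condition upgrades $D \cdot C > 0$ to $D \cdot C \geq 1$, and the dlt hypothesis underlies the normality and contraction theory used throughout. The delicate part is simultaneously arranging that $C$ is general enough to avoid the finite non-Cartier locus of $D$ (so the intersection number is genuinely an integer) while still guaranteeing that $C$ meets $D$ yet is not contained in it; verifying that a single general fiber can be chosen to satisfy all of these constraints at once is where the argument requires the most care.
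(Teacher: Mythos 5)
Your proof is correct in substance but takes a genuinely different route from the paper's. The paper's argument is a short adjunction computation: writing $\phi^*(K_Y + D') = K_X + D - aE$ with $a > 0$ by the negativity lemma and restricting to $D$ (where the hypothesis that $D$ is Cartier in codimension $2$ guarantees $(K_X+D)|_D = K_D$ with no correction term), one gets $\phi|_D^*(K_{D'} + \Diff_{D'}(0)) = K_D - aE|_D$; since $\Diff_{D'}(0)$ is effective and $-aE|_D$ is anti-effective, $\phi|_D$ is an isomorphism exactly when $E|_D = 0$. That argument is dimension-free, places no restriction on the fiber dimension of the contraction, and only ever manipulates the $\Q$-Cartier divisors $K_X+D$ and $E$. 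Your route instead runs the intersection-theoretic mechanism that the paper deploys in Corollary \ref{cor:primeD} and Lemma \ref{lem:smallcont} --- the bound $K_X\cdot C \ge -1$ from Lemma \ref{lem:boundonK} played against the integrality $D\cdot C \ge 1$ forced by ``Cartier in codimension $2$'' --- and it does close, but at a cost: (i) it only treats threefolds, whereas the lemma is stated (and the adjunction proof works) in arbitrary dimension; (ii) it needs $K_X$ and $D$ to be separately $\Q$-Cartier, both to make $K_X \cdot C$ and $D\cdot C$ meaningful and to invoke Lemma \ref{lem:boundonK}, whose proof uses that $\omega_X^{[m]}$ is invertible --- the dlt hypothesis only gives $K_X + D$ $\Q$-Cartier, though in every application in the paper $X$ is $\Q$-factorial so this is harmless; and (iii) the very first step, extracting a curve $\Gamma \subset D \cap E$, already presupposes $D$ (or $E$) $\Q$-Cartier along the intersection, since on a non-$\Q$-factorial threefold two Weil divisors can meet in a single point. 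Your general-position bookkeeping for the fiber component $C$ (meeting $\Gamma$, avoiding the finite non-Cartier locus, not contained in $D$) is fine: the fibers over distinct points of $B$ are disjoint, so only finitely many of them can touch the bad locus, and $\Gamma$ dominates $B$, so the general one still meets $\Gamma$. What the paper's proof buys is uniformity and the extra output $\Diff_{D'}(0) = 0$; what yours buys is that it exhibits the lemma as an instance of the same numerical tension between Lemma \ref{lem:boundonK} and integrality that drives the rest of Section \ref{sec:classification}.
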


\begin{proof}
Let $\phi: X \to Y$ be the given contraction.  Because $\phi$ is $K_X + D$ negative and divisorial, the negativity lemma implies that
\[ \phi^* (K_Y+D_Y) = K_X + D - aE\]
where $D_Y = \phi_*D$, $E$ is the (irreducible) exceptional divisor, and $a > 0$.  Because $D$ is Cartier in codimension 2 and normal by \cite[Theorem 4.16]{newkollar}, $K_X + D |_D = K_D$.  By \cite[Corollary 3.44]{km98} and \cite[Theorem 4.16]{newkollar}, $D_Y$ is also normal, so restricting to $D$ (where, by abuse of notation, we still denote the by $\phi$ induced map $D \to D_Y$):
\[ \phi^*(K_{D_Y} + \Diff_{D_Y}(0)) = K_D - aE|_D  \]
where $\Diff_{D_Y}(0)$ is the correction term to the adjunction formula.  This correction term is effective by \cite[Proposition 4.5]{newkollar}, and $aE|_D$ is effective, so $\phi|_D: D \to D_Y$ is an isomorphism if and only if $E|_D = 0$.  This also shows that $\Diff_{D_Y}(0) = 0$.
\end{proof}

From this observation and Lemma \ref{lem:boundonK}, we see that given a `nice' contraction that is an isomorphism on $D$, the map is forced to be a fibration.  However, one should be cautious; this lemma (and the corollaries) are false without the hypothesis that $D$ is Cartier in codimension 2.  

\begin{example}
    Let $X = \PP^2$ and let $\pi: Y \to X$ be the $(n,1)$ weighted blow up of the point $(0,0)$ in linear coordinates $(x/z, y/z)$ for any $n > 1$.  Let $L = (y = 0)$ be a line in $\PP^2$ and let $L_Y$ be the strict transform.  Denote the exceptional divisor of $\pi$ by $E$ and note that $E^2 = -\frac{1}{n}$.  By construction, $L_Y$ and $E$ intersect at the unique $\frac{1}{n}(1, n-1)$ singularity of $Y$ and are not Cartier at that point.  We can compute $$\pi^*K_X = K_Y - nE$$ and $$ \pi^*L = L_Y + E$$ so that $K_Y \cdot E = -1$ and $L_Y \cdot E = \frac{1}{n}$.  Then,  $$\pi^*(K_X + L) = K_Y + L_Y - (n - 1) E$$ so the contraction $\pi: Y \to X$ of $E$ is $K_Y + L_Y$-negative and is an isomorphism on $L_Y$, but $L_Y \cap E \ne \emptyset$.  
\end{example}

If $D$ is Cartier in codimension 2, however, we avoid the behavior in the previous example.  

\begin{corollary}\label{cor:primeD}
	If $(X,D)$ is dlt and $D$ is an effective, prime divisor that is Cartier in codimension $2$, then any $K_X+D$-negative, $D$-positive extremal contraction that contracts a divisor but contracts no curves in $D$ is a Fano fiber contraction $X \to D$.
\end{corollary}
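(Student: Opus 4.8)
The plan is to show that the listed hypotheses are incompatible with $\phi$ being a genuine birational (divisorial) contraction, so that $\phi$ is forced to be of fiber type, and then to identify its base with $D$. First I would record that the extremal ray is $K_X$-negative: for a contracted curve $C$, $D$-positivity gives $D\cdot C>0$ while $K_X+D$-negativity gives $(K_X+D)\cdot C<0$, whence $K_X\cdot C=(K_X+D)\cdot C-D\cdot C<0$. Because no curve contained in $D$ is contracted, the restriction $\phi|_D\colon D\to\phi(D)$ is finite.

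Next I would rule out the birational case. If $\phi$ were divisorial with exceptional divisor $E$, then $D\neq E$ (otherwise curves in $D$ would be contracted), so $\phi|_D$ would be finite and birational, hence the normalization of its image $D'=\phi_*D$. By Lemma \ref{lem:divisorialcont}, $\phi|_D$ is an isomorphism exactly when $E\cap D=\emptyset$; but the contracted curves lie in $E$ and, by $D$-positivity, meet $D$, so $E\cap D\neq\emptyset$ and $E|_D\neq 0$. Restricting the negativity-lemma identity $\phi^*(K_Y+D')=K_X+D-aE$ (with $a>0$) to $D$ and using $(K_X+D)|_D=K_D$ gives $K_D=(\phi|_D)^*(K_{D'}+\Diff_{D'}(0))+aE|_D$. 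On the other hand, for the finite birational map $\phi|_D$ the conductor formula forces $K_D=(\phi|_D)^*K_{D'}-\mathfrak{c}$ with effective conductor $\mathfrak{c}$. Comparing the two and using that $\Diff_{D'}(0)$ is effective, the effective term $aE|_D$ must vanish, contradicting $a>0$ and $E|_D\neq 0$. (Heuristically the same obstruction is visible through Lemma \ref{lem:boundonK}: it bounds $K_X\cdot C\ge -1$, so $D\cdot C<1$, which is incompatible with $0<D\cdot C\in\Z$ whenever $C$ meets $D$ inside the Cartier locus.) Hence $\phi$ cannot be birational and must be of fiber type.

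Finally I would identify the base. As a $K_X$-negative fiber-type extremal contraction, $\phi\colon X\to Y$ is a Fano fibration; the finite map $\phi|_D\colon D\to Y$ forces $\dim D\le\dim Y$, while $\dim Y<\dim X=\dim D+1$ gives $\dim Y=\dim D$ and relative dimension one. On a general fiber $F\cong\PP^1$ one has $K_X\cdot F=-2$, so $(K_X+D)\cdot F<0$ together with $D\cdot F>0$ and the integrality of $D\cdot F$ (a general $F$ avoids the codimension-three locus where $D$ fails to be Cartier) yield $D\cdot F=1$. Thus $D$ is a section, $\phi|_D$ has degree one, and being finite and birational onto the normal base it is an isomorphism; identifying $Y$ with $D$ exhibits $\phi$ as the asserted Fano fiber contraction $X\to D$.

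The main obstacle will be the contradiction in the birational case: one must align the sign and effectivity in the adjunction-with-different identity with the conductor formula for the normalization $\phi|_D$, checking carefully that $\Diff_{D'}(0)$ is effective and that $E|_D$ is a genuinely nonzero effective divisor on $D$. A secondary subtlety is justifying $D\cdot F\in\Z$ in order to pin down $D\cdot F=1$, which relies on $D$ being Cartier in codimension two and on a general fiber avoiding the bad locus.
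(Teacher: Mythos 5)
Your proof is correct, and its overall skeleton --- exclude the birational case, then show the resulting relative-dimension-one contraction has $D$ as a section and identify the base with $D$ via Zariski's Main Theorem --- matches the paper's; the second half ($K_X\cdot F=-2$, $D\cdot F\in\Z$ on a general fiber avoiding the non-Cartier locus, hence $D\cdot F=1$ and $\phi|_D$ finite birational onto a normal base) is essentially verbatim the paper's argument. Where you genuinely diverge is in ruling out the divisorial case. The paper first notes that $D$-positivity together with the absence of contracted curves in $D$ forces all fibers to be at most one-dimensional, then applies Lemma \ref{lem:boundonK} to get $K_X\cdot C\ge -1$ and combines this with $D\cdot C\in\Z_{>0}$ to contradict $(K_X+D)\cdot C<0$ --- that is, the argument you relegate to a parenthetical ``heuristic'' is exactly the paper's actual proof. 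Your main route instead goes through Lemma \ref{lem:divisorialcont}: $\phi|_D$ is finite and birational onto $D'=\phi_*D$, which is normal because $(Y,D')$ is again dlt, so $\phi|_D$ is an isomorphism by Zariski's Main Theorem, whence $E\cap D=\emptyset$ by that lemma, contradicting $D$-positivity of the contracted curves in $E$. This is a valid alternative, and it has the small advantage of not needing the fiber-dimension bound before quoting Lemma \ref{lem:boundonK}; on the other hand your detour through a ``conductor formula'' for $\phi|_D$ is unnecessary and slightly misleading, since $D'$ is normal and the conductor vanishes --- the comparison collapses to the statement that $\phi|_D$ is an isomorphism, after which the effectivity of $(\phi|_D)^*\Diff_{D'}(0)+aE|_D=0$ in the restricted identity already forces $E|_D=0$. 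Both routes rest on the same implicit $\Q$-factoriality assumptions the paper uses throughout.
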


\begin{proof}
Let $\pi: X \to Y$ be the contraction.  If a divisor is contracted, then the morphism is either a divisorial contraction or Fano fiber contraction onto a variety with strictly lower dimension.  If no curves in $D$ are contracted, the induced map $D \to \pi(D)$ is finite, but $(Y, \pi_*D)$ is dlt by \cite[Corollary 3.44]{km98} and $D$ is a prime divisor, hence $\pi_*D$ is normal by \cite[Theorem 4.16]{newkollar}.  Furthermore, because no curves in $D$ are contracted, the fibers have dimension at most $1$.  Indeed, let $F$ be a fiber of $\pi$.  Because $\pi$ contracts no curves in $D$, $F \cap D$ must be a collection of points.  However, $\pi$ is a $D$-positive contraction, so $D|_F$ is ample, hence must be a divisor on $F$.  Therefore, $F$ has dimension at most 1.  

But, if $\pi$ is divisorial, Lemma \ref{lem:boundonK} implies $K_X \cdot C \ge -1$ for any irreducible curve $C$ contracted by $\pi$.  Suppose that $C$ is an irreducible component of a general fiber of dimension one.  By assumption, $D\cdot C > 0$, and because $D$ is Cartier in codimension 2, $D$ is Cartier when restricted to $C$, so $D \cdot C \in \mathbb{Z}$.  Therefore, $(K_X + D) \cdot C \ge 0$, a contradiction.  Thus, the contraction cannot be birational and must be a fibration with general fiber $\mathbb{P}^1$.  In this case, for general fiber $C$, $K_X \cdot C = -2$, so we must have $D \cdot C = 1$, so $\pi|_D: D \to \pi(D)$ is generically of degree $1$.  Therefore, by Zariski's Main Theorem, and because $D$ is prime, $\pi_*D$ must be isomorphic to $D$ and $\pi: X \to Y$ is a Fano fiber contraction and $Y \cong D$. 
\end{proof}

\begin{corollary}
	If $X$ is a variety with terminal singularities and $(X,D)$ is dlt for some effective prime divisor $D$ where $-D|_D$ is nef, then any $K_X+D$-negative $D$-positive contraction gives a Fano fibration $X \to D$.
\end{corollary}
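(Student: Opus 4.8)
The plan is to reduce to the situation of Corollary~\ref{cor:primeD} by showing that the two extra hypotheses supply its assumptions automatically, and then to follow its proof. First I would record two observations. Since $X$ has terminal singularities, its singular locus has codimension at least three, so $D$ is Cartier away from a closed set of codimension $\ge 3$; in particular $D$ is Cartier in codimension $2$, which is exactly the standing hypothesis of Corollary~\ref{cor:primeD} and Lemma~\ref{lem:divisorialcont}. Second, let $\phi\colon X\to Y$ be the contraction of the extremal ray $R$, so that $(K_X+D)\cdot R<0$ and $D\cdot R>0$. Then $K_X\cdot R=(K_X+D)\cdot R-D\cdot R<0$, so $R$ is $K_X$-negative; and if some curve $C\subset D$ had class in $R$, we would get $(-D|_D)\cdot C=-D\cdot C<0$, contradicting that $-D|_D$ is nef. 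Hence no contracted curve lies in $D$, which is the remaining hypothesis of Corollary~\ref{cor:primeD}. So the new hypotheses (terminal, $-D|_D$ nef) are precisely what make that corollary's input available without assuming it.

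Next I would control the fibers. Because $\rho(X/Y)=1$ and $D\cdot R>0$, the divisor $D$ is $\phi$-ample. If some fiber $F$ had dimension $\ge 2$, then either $F\subset D$, or $D\cap F$ is a nonempty ample divisor on $F$ of dimension $\ge 1$; in both cases $F$ contains a contracted curve lying in $D$, contradicting the previous paragraph. Thus every fiber of $\phi$ has dimension at most $1$. Now suppose $\phi$ were a divisorial contraction. Since fibers are at most one-dimensional, the exceptional divisor is contracted onto a base of dimension $\dim X-2$, so the contracted curves move in a family sweeping it out, and a general such curve avoids the codimension-$\ge 3$ singular locus of $X$, hence lies in the smooth locus. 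For that curve $C$, Lemma~\ref{lem:boundonK} gives $K_X\cdot C\ge -1$, while $D\cdot C$ is a positive integer, so $D\cdot C\ge 1$ and $(K_X+D)\cdot C\ge 0$, contradicting $K_X+D$-negativity. Therefore $\phi$ is not divisorial.

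Granting also that $\phi$ is not small (the delicate point, discussed below), $\phi$ is of fiber type. The fiber-dimension bound then forces $\dim Y=\dim X-1$, and a general fiber $C$ is a smooth curve whose normal bundle in $X$ is trivial, so adjunction gives $K_X\cdot C=\deg K_C$; as $-K_X$ is $\phi$-ample this degree is negative, whence $C\cong\PP^1$ and $K_X\cdot C=-2$. Then $0<D\cdot C<2$ with $D\cdot C\in\Z$ forces $D\cdot C=1$, so $\phi|_D\colon D\to Y$ is finite of degree one onto its image. Since $D$ is $\phi$-ample it dominates $Y$, and $\phi_*D$ is normal because $(Y,\phi_*D)$ is dlt \cite{km98} and $D$ is prime; by Zariski's Main Theorem $\phi|_D$ is then an isomorphism, identifying $Y\cong D$. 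Thus $\phi\colon X\to D$ is the desired Fano fibration, exactly as in the endgame of Corollary~\ref{cor:primeD}, now obtained without assuming a priori that $D$ is Cartier in codimension $2$ or that a divisor is contracted.

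The step I expect to be the main obstacle is excluding \emph{small} (flipping) contractions, the one type not covered by the divisorial/fiber-type dichotomy of Corollary~\ref{cor:primeD}. For such a contraction the exceptional curves pass through the terminal singular points of $X$, so $D$ need not be Cartier along a contracted curve $C$, and $D\cdot C$ can be a proper fraction; the bound $D\cdot C\ge 1$ used in the second paragraph then fails. To close this case I would try to show that a contracted curve, which meets $D$ because $D$ is $\phi$-ample, meets it at a smooth point of $X$ so that the intersection contributes at least $1$, or else bound the fractional contribution at the singular point using the Cartier index of $X$ there together with the dlt hypothesis on $(X,D)$. Controlling this local intersection number at the singularities is the crux, and is where the terminality of $X$ (smoothness in codimension two) and the nefness of $-D|_D$ must be leveraged most carefully.
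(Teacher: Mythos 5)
Your argument is correct as far as it goes, and it follows the same route as the paper: the paper's entire proof consists of your first paragraph (terminality gives codimension~$\ge 3$ singular locus, hence $D$ Cartier in codimension $2$; nefness of $-D|_D$ forces the $D$-positive ray to contract no curve of $D$) followed by a one-line citation of Corollary~\ref{cor:primeD}. Your second and third paragraphs simply re-derive the proof of Corollary~\ref{cor:primeD} inline, which is harmless but unnecessary.

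The small-contraction issue you isolate in your last paragraph is a legitimate observation about the literal wording of the statement, but you should know that the paper does not close it here either. Corollary~\ref{cor:primeD} carries the hypothesis that the contraction \emph{contracts a divisor}, so small contractions are excluded from its scope by fiat, and the proof of the present corollary inherits that restriction silently. The place where the paper actually rules out small contractions is Lemma~\ref{lem:smallcont}, and that lemma is proved only under the stronger hypothesis that $(X,D)$ has canonical (not merely dlt) singularities; its proof runs the flip and compares discrepancies on $D$, using nonnegativity of the $a_i$ in an essential way, so it does not transfer to the dlt setting of this corollary. Your instinct that the obstruction is a possibly fractional $D\cdot C$ at a terminal singular point on a flipping curve is exactly right, and neither your sketch nor the paper resolves it at this level of generality. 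In short: you have not missed an argument that the paper supplies; measured against the paper's own proof, your proposal proves at least as much.
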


\begin{proof}
If $X$ is terminal, the singular set has codimension at least 3 in $X$ (\cite[Corollary 2.30]{newkollar}), hence $D$ is Cartier in codimension 2.  If $-D|_D$ is nef, then any $D$-positive contraction contracts no curves in $D$, so by Corollary \ref{cor:primeD}, the contraction of such a ray gives a Fano fibration $X \to D$.
\end{proof}

We should point out that Lemma \ref{lem:boundonK} does not require the contraction be divisorial; it could be a small contraction and the result would still hold.  In particular, the next lemma shows that $K_X+D$-negative and $D$-positive small contractions cannot exist with certain assumptions on the singularities of $(X,D)$.  

\begin{lemma}\label{lem:smallcont}
If $X$ has terminal, $\bQ$-factorial singularities and $(X,D)$ is a pair with log terminal singularities such that $D$ is an effective prime divisor, then the contraction of a $K_X + D$-negative, $D$-positive extremal ray $R$ that contracts no curves in $D$ cannot be a small contraction.
\end{lemma}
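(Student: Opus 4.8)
The plan is to argue by contradiction, assuming that $\phi\colon X \to Y$ is a small $K_X+D$-negative, $D$-positive extremal contraction that contracts no curves in $D$. The strategy is to play two adjunction computations against each other: one coming from the negativity of $K_X+D$ on the contracted locus, and one coming from the hypothesis that $X$ has terminal singularities (so that $D$ is Cartier in codimension $2$, since the singular locus of $X$ has codimension at least $3$). First I would record that, because $X$ is terminal and $(X,D)$ is canonical, the pair behaves well enough that adjunction for $D$ has vanishing different: $(K_X+D)|_D = K_D$ away from a set of codimension $\ge 2$ in $D$. The point of ``contracts no curves in $D$'' is that the exceptional locus of $\phi$ meets $D$ in a set of dimension $0$, so that intersection numbers of contracted curves with $D$ can be controlled.

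\medskip

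\textbf{Key steps.} The heart of the matter is a numerical estimate on a contracted curve $C$. Since $\phi$ is a small contraction, its exceptional locus has codimension $\ge 2$, and I would first invoke Lemma \ref{lem:boundonK} (which, as the author notes, applies equally to small contractions) to conclude that the contracted fibers are trees of $\PP^1$'s with $-1 \le K_X \cdot C < 0$ for each contracted irreducible component $C$. Next, because $D$ is Cartier in codimension $2$ and the contracted curves are not contained in $D$ (no curves in $D$ are contracted), a general such $C$ meets the locus where $D$ is Cartier, forcing $D \cdot C \in \Z_{\ge 0}$; moreover $D$-positivity of the ray gives $D \cdot C > 0$, so $D \cdot C \ge 1$. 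Combining these yields
\[
(K_X + D)\cdot C \;=\; K_X \cdot C + D \cdot C \;\ge\; -1 + 1 \;=\; 0,
\]
which contradicts that $R$ is $K_X+D$-negative. The contradiction establishes that no such small contraction exists.

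\medskip

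\textbf{The main obstacle.} The delicate point is justifying $D\cdot C\in\Z$ for the \emph{contracted} curves, not merely for a ``general'' curve in some family. For a divisorial contraction the fibers sweep out a divisor and one genuinely has a general fiber to work with, but for a small contraction the exceptional locus is lower-dimensional and possibly reducible, so one cannot simply pass to a generic member. The hypothesis that $X$ is terminal is exactly what rescues this: the non-Cartier locus of $X$ (hence of $D$, since $D$ is Cartier in codimension $2$) has codimension $\ge 3$ in $X$, so a contracted curve $C$ that is not contained in $D$ can be chosen — or shown generically — to avoid the finitely many bad points where $D$ fails to be Cartier, giving $D\cdot C\in\Z$. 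I expect the careful part of the write-up to be verifying that each \emph{relevant} contracted component, and not just one in a family, has integral intersection with $D$, so that the inequality $D\cdot C \ge 1$ is legitimate for the very curve on which $K_X\cdot C \ge -1$ holds. Once this integrality is secured, the numerical contradiction is immediate and closes the argument.
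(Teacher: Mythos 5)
Your overall strategy---get $K_X\cdot C\ge -1$ from Lemma \ref{lem:boundonK} and $D\cdot C\ge 1$ from integrality, then conclude $(K_X+D)\cdot C\ge 0$---is not the paper's argument, and the step you yourself single out as delicate is exactly where it breaks down. For a \emph{small} contraction the contracted curves are rigid: if a contracted curve moved in a positive-dimensional family sweeping out a surface, the contraction would be divisorial. So there is no ``general'' contracted curve to pass to, and you cannot choose $C$ to avoid the finitely many points where $D$ fails to be Cartier. Worse, such curves typically \emph{must} pass through non-Gorenstein points of $X$: a flipping curve $C$ on a terminal threefold satisfies $-1\le K_X\cdot C<0$, and in the standard examples (e.g.\ Francia's flip) $K_X\cdot C=-\tfrac12$, which already forces $C$ to meet the locus where $K_X$ (and hence possibly $D$) is not Cartier. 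At such a point one can perfectly well have $D\cdot C=\tfrac12$, and then $(K_X+D)\cdot C\ge -1+\tfrac12=-\tfrac12$ yields no contradiction. Terminality of $X$ only controls the codimension of the bad locus; it does not prevent the rigid, one-dimensional flipping locus from meeting it, so the integrality $D\cdot C\in\Z$ is not secured and the numerical argument does not close.

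The paper's proof avoids this entirely by a different mechanism: it forms the flip $X\dashrightarrow X^+$ (which exists by \cite{BCHM}), takes a common resolution $\pi\colon Z\to X$, $\pi^+\colon Z\to X^+$, and compares discrepancies. Canonicity of $(X,D)$ gives $K_Z+D_Z=\pi^*(K_X+D)+\sum a_iE_i$ with $a_i\ge 0$, while \cite[Lemma 3.38]{km98} gives $(\pi^+)^*(K_{X^+}+D^+)=\pi^*(K_X+D)-\sum c_iE_i$ with $c_i\ge 0$. Restricting both to the strict transform of $D$ (legitimate since $X$ and $X^+$ are terminal, so $D$ and $D^+$ are Cartier in codimension $2$) and using that $D^+\to D$ is a genuine morphism which is not an isomorphism (because $D\cdot C>0$ forces the flipped curves to lie in $D^+$ and be contracted over $D$), one produces a divisor on $D_Z$ that is $\pi|_{D_Z}$-exceptional but not $\pi^+|_{D_Z}$-exceptional and yet appears with nonzero coefficient in $(\pi^+)^*(K_{D^+})-K_{D_Z}$, a contradiction. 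If you want to salvage a numerical proof you would need some independent global input ruling out non-integral $D\cdot C$ on the flipping curves; as written, your argument establishes the lemma only under the unwarranted extra assumption that those curves miss the non-Cartier locus of $D$.
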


\begin{proof}

Assume such a small contraction exists.  Because this is a $K_X$-negative contraction, we consider the flip of $\phi$ as in the following diagram, where $Z$ is a log resolution of the rational map $X \dashrightarrow X^+$ (so both $Z$ and the strict transform of $D$ are smooth).  The flip exists by \cite[Corollary 1.4.1]{BCHM}.

\begin{center}

\begin{tikzcd}
    & Z \arrow{ld}[swap]{\pi} \arrow{rd}{\pi^+} \\
	X \arrow{rd}[below]{\phi \quad}
	\arrow[dashed]{rr}
	&& X^+ \arrow{ld}{\phi^+} \\
	& Y
\end{tikzcd}

\end{center}

Note that the fiber of the contraction $\phi: X \to Y$ is not contained in $D$, by assumption.  Because every $\pi$ exceptional divisor $E$ has nonnegative discrepancy $a(E,X,D)$, if $D_Z = \pi^{-1}_* D$, we have
\[K_Z + D_Z = \pi^*(K_X + D) + \sum a_i E_i \]
where $a_i > -1$ for each $i$, and $a_i \ge 0$ for any exceptional divisor $E_i$ such that $\text{center}_X(E_i) \not \subset D$.  Restricting to $D$, because $D$ is Cartier in codimension 2, we get
\[ K_{D_Z} = \pi|_D^*(K_D) + \sum a_i E_i|_{D_Z} .\]
  But, by \cite[Lemma 3.38]{km98}, flips can only improve singularities, so
\[{\pi^+}^*(K_{X^+}+D^+) = \pi^*(K_X + D) - \sum c_iE_i  \]
where $c_i \ge 0$.  Because the flip was $K_X$-negative, by the same lemma, $X^+$ is also terminal, so $D^+$ is Cartier in codimension 2.  Then, restricting to $D$ and $D^+$ we see that
\[ {\pi^+}|_{D^+}^*(K_{D^+}) = \pi|_D^*(K_D) - \sum c_iE_i|_{D_Z} .\]
Substituting, we see that
\begin{equation}\label{eq1}
    {\pi^+}|_{D^+}^*(K_{D^+}) = K_{D_Z} - \sum a_i E_i|_{D_Z} - \sum c_iE_i|_{D_Z} .
\end{equation}

Because no curves in $D$ are contracted, the map $D \to \phi(D)$ is finite, and because $\phi$ is a small contraction, the map has degree 1, so either $D \cong \phi(D)$ or $D$ is its normalization.  In either case, because $X^+$ is dlt, $D^+$ is normal by \cite[Theorem 4.16]{newkollar}, so there is a morphism $f:D^+ \to D$ making the diagram

\begin{center}
\begin{tikzcd}
    & D_Z \arrow{ld}[swap]{\pi|_{D_Z}} \arrow{rd}{\pi^+|_{D_Z}} \\
	D \arrow{rd}[below]{\phi|_D \quad}
	&& D^+ \arrow{ld}{\phi^+|_D} \arrow{ll}[above]{f}  \\
	& D_Y := \pi(D)
\end{tikzcd}
\end{center}

\noindent commute.  Observe that $f$ cannot be an isomorphism: if $D^+ \cong D$, then $f^*(K_D) = K_{D^+}$, and hence $\pi|_D^*(K_D) = \pi^+|_{D^+}^*(K_{D^+})$.  But, this implies that $c_i = 0$ for all $i$.  By \cite[Lemma 3.38]{km98}, this implies that, for any exceptional divisor $E$ over $X$, $\phi \circ \pi = \phi^+ \circ \pi^+$ is an isomorphism over the generic point of $\mathrm{center}_Y E$.  This implies that neither $\pi$ nor $\pi^+$ extract any divisors, so by normality of $X, Z,$ and $X^+$,  and $\bQ$-factoriality of $X$ and $X^+$, $X \cong X^+$, a contradiction.  

Therefore, there is some exceptional divisor $E_0$ such $E_0|_{D_Z}$ is not contracted by $\pi^+|_{D_Z}$ but is contracted by $\pi|_{D_Z}$.  In this case, we must have $c_0 > 0$.  If we can show that $\text{center}_X(E_0) \not \subset D$, this is a contradiction because it would imply the coefficient of $E_0|_{D_Z}$ in Equation \ref{eq1} is nonzero, but $E_0|_{D_Z}$ is not an exceptional divisor of $\pi^+|_{D_Z}$.  

To conclude the proof, assume for contradiction that $\text{center}_X(E_0) \subset D$ for any such divisor $E_0$.  Then, for any exceptional divisor $E_0$ contracted to a curve in $D^+$, it must be contracted to a point in $D$, as the small contraction did not contract any curves in $D$.  By definition of $Z$, there must exist a divisor $E_1$ whose image is a one-dimensional component of $C$, i.e. $\text{center}_X{E_1} \not \subset D$.  By assumption, for any such divisor $E_1$, $\pi^+(E_1) \cap D^+$ must be finite.  However, let us compute discrepancies.  Because $\pi(E_1)$ is a curve not contained in $D$ and $X$ has only isolated singularities, $K_Z = \pi^*(K_X) + b_1E_1 + \text{ other terms }$ where $b_1 \in \mathbb{Z}$, $b_1 > 0$.  And, $\pi^*(D) = D_Z + 0 \cdot E_1 + \text{ other terms }$.  So, the discrepancy $a_1$ of $E_1$ in the map $\pi$ is $a_1 = b_1 \in \mathbb{Z} > 0$.  However, on $D^+$, we then obtain that $K_{D_Z} = \pi^+|_{D^+}^*(K_{D^+}) + (1+c_1)E_1 + \text{ other terms } ,$  where the other terms by assumption correspond to divisors with positive discrepancy in the same fashion as $E_1$.  All discrepancies in the extraction $D_Z$ to $D^+$ are thus at least $1$.  By assumption, $D_Z$ was smooth, so $D^+$ is therefore smooth.  As $D_Z \to D^+$ can be factored as blow ups of smooth points \cite[Chapter 5.5]{hartshorne}, the discrepancy of some $E_i$ (without loss of generality, assume it is $E_1$) must be $1$, and $c_1 = 0$.  But this contradicts the fact that $\phi: X \to Y$ is not an isomorphism above the generic point of $\text{center}_Y(E_1)$, as $E_1$ is contracted to a curve via $\pi$ \cite[Lemma 3.38]{km98}.  Therefore, we have reached a contradiction, and thus there exists an $E_0$ such that $\text{center}_X(E_0) \not \subset D$.
\end{proof}

We can tie the previous lemmas together in the following result, seemingly technical but the key ingredient in the proof of Theorem \ref{thm:conelogcanonical}.

\begin{lemma}
	Let $X$ be a variety with terminal singularities and $(X,D)$ a pair with canonical singularities with $D$ an effective prime divisor such that $K_X|_D$ is nef.  If the class of a $K_X+D$-negative extremal ray $R$ contains a curve $C$ such that $C\cap D$ is finite and non-empty (hence, $R$ is $D$-positive), and the contraction of $R$ has fiber dimension at most 1, then it must be a Fano fibration $X\to Y$ such that the general fiber is isomorphic to $\mathbb{P}^1$ and $Y \cong D$.
\end{lemma}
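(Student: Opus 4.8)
The plan is to combine the three preceding structural results to pin down the contraction of $R$ completely. First I would show that $R$ cannot be a small contraction. Since $X$ has terminal singularities and $(X,D)$ is canonical, Lemma~\ref{lem:smallcont} directly forbids a $K_X+D$-negative, $D$-positive small contraction that contracts no curves in $D$; so I must first verify that the contraction of $R$ is genuinely $D$-positive and contracts no curves in $D$. The hypotheses give a curve $C$ in the class of $R$ with $C\cap D$ finite and non-empty, so $D\cdot C>0$, which is exactly $D$-positivity of $R$; and since $C\cap D$ is finite (not all of $C$), no curve numerically proportional to $R$ can lie inside $D$ (a curve inside $D$ in class $R$ would force $D\cdot C\le 0$ after a standard moving/intersection argument, or contradict $K_X|_D$ being nef). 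Having excluded the small case, the contraction is divisorial or a genuine fibration.

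Next I would rule out the divisorial case. Because $X$ is terminal, the singular locus has codimension $\ge 3$, so $D$ is Cartier in codimension $2$; this is precisely the hypothesis needed to invoke Lemma~\ref{lem:boundonK} and Corollary~\ref{cor:primeD}. If the contraction were birational (divisorial) with fiber dimension at most $1$, Lemma~\ref{lem:boundonK} gives $K_X\cdot C\ge -1$ for every contracted component $C$, while $D$-positivity and the integrality of $D\cdot C$ (using Cartier-in-codimension-$2$) give $D\cdot C\ge 1$, whence $(K_X+D)\cdot C\ge 0$, contradicting that $R$ is $K_X+D$-negative. Therefore the contraction must be a fibration $\phi:X\to Y$ onto a variety of strictly smaller dimension, with general fiber a curve.

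Finally I would identify the fibration precisely using the numerics already assembled in Corollary~\ref{cor:primeD}. For a general fiber $C$ we have fiber dimension $1$, so by Lemma~\ref{lem:boundonK} applied to the nearby birational geometry $C\cong\mathbb{P}^1$, and $K_X\cdot C=-2$; combined with $(K_X+D)\cdot C<0$ and $D\cdot C\in\mathbb{Z}_{\ge 1}$, the only possibility is $D\cdot C=1$. This forces $\phi|_D:D\to Y$ to be generically of degree one, and since $K_X|_D$ is nef the fibration contracts no curves of $D$, so $\phi|_D$ is finite; by Zariski's Main Theorem (with $D$ prime, hence $\phi_*D$ normal by the dlt restriction property used in Corollary~\ref{cor:primeD}) it is an isomorphism, giving $Y\cong D$ and exhibiting $X$ as a Fano fibration over $D$ with general fiber $\mathbb{P}^1$ and $D$ a section.

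The main obstacle I anticipate is the very first reduction: cleanly arguing that $R$ contains no curve \emph{inside} $D$, so that the hypotheses of Lemma~\ref{lem:smallcont} and Corollary~\ref{cor:primeD} (``contracts no curves in $D$'') are actually met. The hypothesis $K_X|_D\equiv$ nef is doing the real work here, and I expect the careful point is to translate ``$C\cap D$ finite'' together with nefness of $K_X|_D$ into the statement that the whole extremal ray meets $D$ properly; once that is secured, the remaining steps are assembly of the prior lemmas.
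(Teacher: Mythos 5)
Your proposal is correct and follows essentially the same route as the paper: exclude the small contraction via Lemma \ref{lem:smallcont} after noting that nefness of $K_X|_D$ plus $D$-positivity of $R$ forbids contracting curves in $D$, rule out the divisorial case by combining Lemma \ref{lem:boundonK} with $D\cdot C\in\mathbb{Z}_{\ge 1}$ (from $D$ being Cartier in codimension $2$), and then identify $Y\cong D$ from $K_X\cdot C=-2$, $D\cdot C=1$, and Zariski's Main Theorem. The point you flagged as a potential obstacle is handled in the paper exactly as you sketch: a contracted curve $C'\subset D$ would satisfy $K_X\cdot C'\ge 0$ and hence $D\cdot C'<0$, contradicting $D$-positivity of the ray.
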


\begin{proof}
Because varieties with terminal singularities are singular only in codimension $\ge 3$, $D$ is Cartier in codimension 2.  By hypothesis, any curve $C \subset D$ has $K_X \cdot C \ge 0$, hence the contraction $\phi: X \to Y$ of a $K_X + D$-negative $D$-positive extremal ray cannot contract any curves in $D$.  By Lemma \ref{lem:smallcont}, the contraction of $R$ cannot be a small contraction, so must contract a divisor.  Applying Corollary \ref{cor:primeD}, we obtain that $X\to Y$ is a Fano fibration and $Y \cong D$. 
\end{proof}

We are now ready to prove Theorem \ref{thm:conelogcanonical}.

\begin{proof}
By a result of Hacon \cite[Theorem 3.1]{dubois}, there is a minimal $\mathbb{Q}$-factorial dlt model of $Y \to X$.  More precisely, there is a $\mathbb{Q}$-factorial variety $Y$ and a morphism $\pi: Y \to X$ extracting all divisors $E_i$ with discrepancy $a(E_i, X) = -1$ such that $K_Y$ is relatively nef.  Let $E = \sum E_i$ and observe that $K_Y + E = \pi^*K_X$.  Because $-K_X$ is ample and $K_Y + E$ is numerically trivial on $E$ and negative on all curves not contained in $E$, there must exist a $K_Y + E$ negative, $E$ positive extremal ray $R$ in $\overline{NE}(Y)$.  In fact, because $E \cdot R > 0$, there must be some component $E_0 \subset E$ such that $E_0 \cdot R >0$ and $(K_Y + E_0) \cdot R < 0$.  Indeed, if $(K_Y + E_0) \cdot R \ge 0$, for any curve $[C] \in R$, we would have $C \subset E$, so $(K_Y+E) \cdot C = 0$, a contradiction. 

Let $\phi: Y \to S$ be the contraction of $R$.  By assumption, the pair $(Y,E)$ is canonical along the components of $E$ (since $a(F,Y,E) = a(F,X)$ for any exceptional divisor $F$ over $X$).  By Corollary \ref{cor:primeD} applied to $(Y, E_0)$, $\phi: Y \to S$ is a fiber contraction of relative dimension 1 and $S \cong E_0$.  Note that, for a general fiber $l$ of $\phi$, $K_Y \cdot l = -2$ and $[l] \in R$, so $(K_Y + E) \cdot l < 0$.  Choosing an appropriate fiber $l$ that misses the singular points of $Y$, one sees that $E_i \cdot l \in \mathbb{Z}$ for each $i$ because $l$ is contained in the smooth locus of $Y$, and $E_i$ cannot be contracted by $\phi$ as curves in $E_i$ are $K_Y+E$-trivial.  Therefore, because $K_Y\cdot l = -2$ and $(K_Y + E) \cdot l < 0$, there is only one exceptional divisor $E_0 = E$. Because $(Y,E)$ is dlt, $E$ is normal and $\phi$ contracts no curves in $E$, hence $S \cong E$, giving $\phi: Y \to S$ the structure of a $\mathbb{P}^1$ bundle.  However, as $E$ is contractible by $\pi: Y \to X$, we see that $X$ is a cone over $E$ (where `cone' is only defined as the contraction of a section of a $\mathbb{P}^1$-bundle over $E$ to a point).  We can further characterize $E$ by observing that $(K_Y + E)|_E = K_E$, hence $K_E$ is numerically trivial.  
\end{proof}

Since one cannot guarantee that the exceptional divisors over a variety are in the set given in Theorem \ref{thm:conelogcanonical}, we first make an easy observation, that follows from the proof of Theorem \ref{thm:conelogcanonical}.

\begin{proposition}\label{prop:Knegativeray}
Let $X$ be a log canonical projective variety with a finite number of strictly log canonical singularities $\{p_1, \dots, p_n  \}$ and $-K_X$ ample.  Consider a minimal dlt modification $\pi: Y \to X$ extracting the $-1$ divisors of $X$, so $K_Y + E = \pi^*(K_X)$.  If there exists an extremal ray $R \in \overline{NE}(Y)$ such that a curve $C \not\subset E$, $[C]\in R$, intersects $E$ at a smooth point of $Y$, then $X$ is a cone over a numerically Calabi-Yau variety.
\end{proposition}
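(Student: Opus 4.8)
The plan is to mirror the proof of Theorem \ref{thm:conelogcanonical} essentially verbatim, once the extremal ray has been produced. The entire point of the proposition is that the discrepancy hypothesis of Theorem \ref{thm:conelogcanonical} was used only to \emph{guarantee the existence} of a $K_Y+E$-negative, $E$-positive extremal ray; here such a ray $R$ is handed to us directly, so that hypothesis can be dropped. Thus the first task is merely to check that the given $R$ has the two properties the argument needs: that it is $K_Y+E$-negative and $E$-positive, with the witnessing curve $C$ meeting $E$ in a finite, nonempty set.

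For $K_Y+E$-negativity, I would observe that $\Ex(\pi) = E$, so a curve $C \not\subset E$ is not $\pi$-contracted and hence $\pi_* C \neq 0$. Since $K_Y + E = \pi^*K_X$ and $-K_X$ is ample, this gives $(K_Y + E)\cdot C = K_X \cdot \pi_* C < 0$, so $R$ is $K_Y+E$-negative. For $E$-positivity, the hypothesis that $C$ meets $E$ at a smooth point of $Y$, together with $C \not\subset E$, makes the intersection proper with positive local contribution, so $E\cdot C \ge 1$ and $C \cap E$ is finite and nonempty.

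With these two facts in hand, I would run the argument of Theorem \ref{thm:conelogcanonical} unchanged. Let $\phi: Y \to S$ be the contraction of $R$. The smooth-point hypothesis is exactly what is needed to invoke Lemma \ref{lem:boundonK} and the final contraction lemma preceding Theorem \ref{thm:conelogcanonical}: a general fiber $F$ of $\phi$ may be chosen to meet $E$ only at smooth points of $Y$, so that each $E_i \cdot F$ is an integer. Since $[F] \in R$ forces $K_Y \cdot F = -2$ while $(K_Y+E)\cdot F < 0$, integrality pins down exactly one boundary component $E$ with $E\cdot F = 1$, whence $\phi$ is a $\PP^1$-fibration contracting no curve of $E$. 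As $E$ is normal (because $(Y,E)$ is dlt) and $\phi|_E$ is finite of degree one, Zariski's main theorem gives $S \cong E$, realizing $E$ as a section of $\phi$. Finally $\pi$ contracts this section to the points $p_1,\dots,p_n$, so $X$ is the cone over $E$ obtained by collapsing a section of the $\PP^1$-fibration $Y \to E$; and $(K_Y+E)|_E = K_E$ is numerically trivial, so $E$ is numerically Calabi-Yau.

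The main obstacle is the middle step: confirming that $\phi$ is genuinely a relative-dimension-one Fano fibration with a \emph{unique} exceptional divisor, rather than a divisorial or small contraction, or one collapsing several $E_i$. This is precisely where the smooth-point hypothesis replaces the discrepancy condition of Theorem \ref{thm:conelogcanonical}: without it one cannot guarantee $E_i \cdot F \in \Z$, and the integrality argument that isolates a single $E$ with $E\cdot F = 1$ — and hence the $\PP^1$-fibration and the cone structure — would fail. A minor point to verify carefully is that a general fiber can indeed be taken through the smooth locus, which follows since $C\cap E$ is finite and the singular locus of $Y$ has codimension at least two.
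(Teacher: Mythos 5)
Your proposal is correct and takes essentially the same route as the paper, which itself only says the proof ``is the same as that above'' (i.e.\ as Theorem~\ref{thm:conelogcanonical}); you correctly identify that the hypothesized ray, via $E\cdot C\ge 1$ at a smooth point and $(K_Y+E)\cdot C=K_X\cdot\pi_*C<0$, feeds into Lemma~\ref{lem:boundonK} and the contraction lemma exactly as the discrepancy hypothesis did there. The only slight misstatement is attributing the Theorem's discrepancy hypothesis purely to the \emph{existence} of the ray (in the paper the ray comes from ampleness of $-K_X$, while the discrepancy condition makes $(Y,E)$ canonical so the contraction lemma applies), but this does not affect the validity of your argument for the Proposition.
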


To remove the restrictions on the discrepancies in Theorem \ref{thm:conelogcanonical}, we would like to say there always exists a ray as in Proposition \ref{prop:Knegativeray}.  However, it is not obvious why this is true or even clear that it should be true.  Instead, we proceed to use the ideas in Theorem \ref{thm:conelogcanonical} to study the given moduli problem.  Note first that many standard examples of log canonical singularities have resolutions where an exceptional divisor is not rational or ruled.  If that is the case, the following result characterizes these singularities. 

\begin{theorem}\label{thm:rationalruled}
If $X$ is a projective log canonical threefold with a finite number of strictly log canonical singularities and $-K_X$ ample such that at least one exceptional divisor $E$ over $X$ with discrepancy $a(E,X) = -1$ is not rational or birationally ruled, then there is only one such $E$ and $X$ is birational to a $\mathbb{P}^1$ bundle over $E$.
\end{theorem}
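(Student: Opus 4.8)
The plan is to run the same strategy as in the proof of Theorem~\ref{thm:conelogcanonical}, but to replace the hypothesis on discrepancies (which there forced the crucial contraction to be a relative-dimension-one fibration) by the non-ruledness of $E$. First I would pass to a $\Q$-factorial minimal dlt model $\pi\colon Y\to X$ extracting exactly the divisors of discrepancy $-1$, so that, writing $E=\sum E_i$ for the reduced exceptional divisor, one has $K_Y+E=\pi^*K_X$, the pair $(Y,E)$ is dlt with normal components $E_i$, and $K_Y$ is $\pi$-relatively nef. Since $-K_X$ is ample, $-(K_Y+E)=\pi^*(-K_X)$ is nef and big and is trivial exactly on the $\pi$-contracted curves. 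Exactly as in Theorem~\ref{thm:conelogcanonical}, the facts that $K_Y+E$ is nonpositive on every curve, vanishes only on curves contained in $E$, and that $E$ is a nonzero effective divisor together produce a $(K_Y+E)$-negative, $E$-positive extremal ray $R\subset\overline{NE}(Y)$. Because $E\cdot R>0$ and $(K_Y+E)\cdot R<0$, we get $K_Y\cdot R<0$, so the contraction $\phi\colon Y\to S$ of $R$ is a $K_Y$-negative extremal contraction.

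The genuinely new ingredient is that, by the Enriques--Kodaira classification, a projective surface is (birationally) ruled if and only if it is uniruled; hence the hypothesis that some discrepancy $-1$ divisor $E_1$ is neither rational nor ruled says precisely that $E_1$ is not uniruled, i.e. is not covered by rational curves. Every fiber of the extremal contraction $\phi$ is covered by rational curves whose class lies in $R$, so $E_1$ cannot be contained in $\Ex(\phi)$; therefore $\phi|_{E_1}$ is generically finite and $\dim\phi(E_1)=2$. This immediately rules out $\phi$ being a fiber contraction onto a base of dimension $\le 1$ (which would contract $E_1$ and force it uniruled). I would then rule out the birational cases: a small contraction and a divisorial contraction to a curve are excluded by Lemma~\ref{lem:boundonK} together with $E$-positivity, since for a general contracted curve $C$ lying in the smooth locus of $Y$ we have $K_Y\cdot C\ge -1$ and $E\cdot C\in\Z_{>0}$, forcing $(K_Y+E)\cdot C\ge 0$ and contradicting $(K_Y+E)\cdot R<0$ (this is the analogue of Corollary~\ref{cor:primeD} and Lemma~\ref{lem:smallcont} in the present, non-canonical, setting). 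What survives is that $\phi\colon Y\to S$ is a Mori fiber contraction of relative dimension one with $\dim S=2$ and general fiber $F\cong\PP^1$.

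With the fibration in hand, the endgame is the intersection count from Theorem~\ref{thm:conelogcanonical}. For a general fiber $F\cong\PP^1$ chosen in the smooth locus one has $K_Y\cdot F=-2$ and $E_i\cdot F\in\Z_{\ge 0}$, while $(K_Y+E)\cdot F<0$ forces $E\cdot F=1$. Thus exactly one component, say $E_{i_0}$, meets the general fiber, with $E_{i_0}\cdot F=1$, and $\phi|_{E_{i_0}}\colon E_{i_0}\to S$ is generically of degree one, hence birational by Zariski's Main Theorem. Any non-uniruled component must dominate $S$ (it cannot be vertical and contracted), hence must meet $F$; since only $E_{i_0}$ meets $F$, the non-ruled component $E_1$ must equal $E_{i_0}$, and a second non-ruled component is impossible. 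This yields both assertions: there is a unique such $E$, and $Y$ (hence $X$) is birational to a $\PP^1$-bundle over $S\cong_{\mathrm{bir}}E_1=E$.

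The main obstacle is the step excluding the birational contractions: Lemma~\ref{lem:boundonK} controls only fibers of dimension $\le 1$, so the case of a divisorial contraction of $R$ to a \emph{point} (a two-dimensional fiber) is not handled directly, and we lack the canonical hypothesis used to invoke the corollaries to Lemma~\ref{lem:boundonK}. I expect the robust way around this is to run the full $(K_Y+E)$-MMP, which exists for the dlt pair $(Y,E)$ and terminates in a Mori fiber space $g\colon Y_n\to S_n$ because $-(K_Y+E)$ is big; throughout, $E_1$ is never contracted (divisorial steps contract uniruled divisors and flips contract no divisors), so it persists as a non-uniruled divisor $E_1^{(n)}$. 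Non-uniruledness then excludes $\dim S_n=1$ by an intersection inequality: a general fiber of $g$ has relative Picard number one, so it is $\PP^2$ or a quadric, and $E_1^{(n)}$ meets it in a non-rational, hence high-degree, curve; this makes $E^{(n)}\cdot\ell\ge -K_{Y_n}\cdot\ell$ for a line $\ell$ in the fiber, contradicting $(K_{Y_n}+E^{(n)})\cdot\ell<0$. The remaining terminal possibility $\dim S_n=0$ is exactly the cone situation of Theorem~\ref{thm:conelogcanonical}, whose dlt model is already a $\PP^1$-bundle over $E$, so it is consistent with the stated birational conclusion. Reconciling this MMP argument with the first-contraction argument of the preceding paragraph (so as not to pass to a different birational model than the desired $\PP^1$-bundle) is the most delicate bookkeeping in the proof.
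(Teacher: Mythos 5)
Your overall strategy (extract the discrepancy $-1$ divisors, run an MMP, and force the non-ruled component to become the base of a conic bundle) is the right one and matches the paper's in spirit, but both of your two routes have genuine gaps. In the first route you correctly diagnose the problem yourself: on a minimal dlt model the pair $(Y,E)$ need not be canonical and $Y$ need not be terminal, so Corollary~\ref{cor:primeD} and Lemma~\ref{lem:smallcont} do not apply, and Lemma~\ref{lem:boundonK} says nothing about a divisorial contraction to a point. The paper avoids this by working on a \emph{terminalization} $\pi\colon Y\to X$ extracting all divisors of discrepancy $\le 0$ (writing $\pi^*K_X=K_Y+E+F$), so that every intermediate variety in a plain $K_Y$-MMP is terminal, $\Q$-factorial, and the Mori structure theory of the terminal endpoint is available.

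Your fallback $(K_Y+E)$-MMP does not close the argument either. First, a del Pezzo fibration of relative Picard number one has \emph{generic} fiber of Picard number one over the function field, but the general geometric fiber can be any smooth del Pezzo surface, not just $\PP^2$ or a quadric, so your intersection inequality over a one-dimensional base is not justified as stated. More seriously, the endpoint $\dim S_n=0$ is left unresolved: saying it is ``consistent with'' the conclusion is not a proof, and this is precisely the case that must be \emph{excluded} to get both the uniqueness of $E$ and the $\PP^1$-bundle structure. The paper handles it by observing that if the MMP ends at a ($\rho=1$) Fano threefold carrying the surviving component $\Delta$, then $K+\Delta$ is negative and adjunction forces $\Delta$ to be a rational (log del Pezzo) surface, contradicting non-ruledness; similarly every other outcome of the case analysis (component contracted by a divisorial contraction, vertical over a curve, vertical or finite-to-a-curve over a surface) forces $\Delta$ rational or ruled, leaving only $S\cong\Delta$ with the conic bundle structure. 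Without carrying out this exhaustive case analysis — in particular the Picard-number-one endpoint — your argument establishes neither the uniqueness of the non-ruled $E$ nor the birational $\PP^1$-bundle structure.
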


\begin{proof}
We proceed in a similar fashion as in the previous proof.  By \cite[Theorem 1.33, Corollary 1.37]{newkollar}, there is a \textit{terminal model of} $X$; a $\mathbb{Q}$-factorial variety $Y$ and a morphism $\pi: Y\to X$ extracting divisors $\Delta_i$ with discrepancy $a(\Delta_i,X) \le 0$ such that $Y$ is terminal and $K_Y$ is relatively nef.  Let $E = \sum \Delta_j$ be the sum over divisors $\Delta_j$ with discrepancy $-1$ and $F = \sum -a(\Delta_k,X)\Delta_k$ be the sum over divisors with discrepancy larger than $-1$.  By construction of $Y$ (which is terminal, hence has finitely many singular points), these effective divisors are Cartier in codimension 2, $\pi^*(K_X) = K_Y + E + F$, and for any curves $C\subset \Supp (E+F)$ contracted by $\pi$, $K_Y\cdot C \ge 0$.  By assumption on $X$, the general curve through $E$ has negative $K_X$-degree. 

We would like to find an $E$ positive and $K_Y + E$ negative extremal ray in the cone of curves.  If so, we proceed exactly as in the proof of Theorem \ref{thm:conelogcanonical} to conclude that the contraction of such a ray gives a Fano fibration $\phi: Y \to E$ (and $E$ consists of only one component, necessarily not rational nor ruled by assumption).  Then, we conclude as in Theorem \ref{thm:conelogcanonical} that the log canonical locus in $X$ consists of a single point $x\in X$, $E$ is a single component, and $X$ is birational to a $\bP^1$ bundle over $E$. 

In more generality, run a $K_Y$-MMP (contracting $K_Y$ negative extremal rays) on $Y$.  If at any point in the MMP we reach an intermediate variety $Y'$ with a $K_{Y'} + E'$-negative $E'$-positive extremal ray $R$ in $\overline{NE}(Y')$, the arguments in the previous paragraph imply that the MMP terminates in a fibration of dimension 1 over $E'$.  As we will see in the arguments below, the only components that could have been contracted in running the MMP up to this point are rational or ruled, so we obtain the desired result.  

Finally, assume that we do not find such a ray in the course of the MMP.  Because $X$ was a Fano threefold, the MMP must terminate with a Fano fibration $f: Y' \to S$ such that $\dim S < 3$.  Because $Y$ has terminal singularities and at each stage of the MMP as we are contracting $K_Y$-negative extremal rays, by \cite[Corollary 3.43(2)]{km98}, $Y'$ also has terminal singularities.  We claim that the only components of $E$ that could be contracted by an MMP are rational or ruled.  

In running the $K_Y$-MMP, first consider the case that in one of the steps one obtains a divisorial contraction $\phi: Y' \to Y''$ of a component $\Delta$ of $E'$.  Because $(Y,E)$ is at worst log canonical (and hence $(Y', E')$ is at worst log canonical), $\Delta$ is a log canonical surface, contracted by $\phi$ to a point or curve.  If the divisorial contraction $\phi: Y' \to Y''$ contracted $\Delta$ to a curve, the general fiber is $\bP^1$, so it is birationally ruled.  Thus, in this case, the result follows.  If instead $\Delta$ is contracted to a point, $\Delta$ is Fano, and a normal log canonical surface, so only singular at points.  By \cite[Theorem 17.4]{kollarplus}, the locus of strictly log canonical non-klt singularities is connected and hence a single point.  Therefore, \cite[Theorem 1.2]{RC} implies that $\Delta$ is rationally chain connected.  Finally, by \cite[Proposition 3.3.4]{kollarrationalcurves}, this implies that $\Delta$ must be uniruled, and any uniruled surface is birationally ruled by \cite[Exercise 1.1.6.2]{kollarrationalcurves}, so the result follows.

Now, suppose no component of $E'$ is contracted until the termination of the MMP $f': Y' \to S$.  If $Y'$ is a terminal Fano variety of Picard rank 1, because $K_{Y} + E = \pi^*(K_X)$ was negative on the generic curve in $Y$, we must have $K_{Y'} + E'$ negative.  Hence, for any component $\Delta$ of $E'$, $K_{Y'} + \Delta$ is negative and $\Delta$ is a log canonical Fano surface, hence rationally connected, hence rational or birationally ruled by the same argument above.  

If $Y'$ has Picard rank 2 and $S$ is a curve, if $\Delta$ is a fiber of $f'$, because $Y'$ is terminal, $\Delta$ must be log terminal and Fano, and hence rational.  If instead $f'|_{\Delta}: \Delta \to S$ is surjective, $\Delta$ is birationally ruled.  Finally, if $\dim S = 2$, $\dim f'(\Delta) = 0$ implies $\Delta$ is Fano and therefore rational or birationally ruled by the arguments above.  If $\dim f'(\Delta) = 1$, $\Delta$ is again birationally ruled.

Therefore, if there exists a non-rational or birationally ruled component $\Delta$ of $E'$, it cannot be contracted in any way described above, and thus we must be in the final remaining case: $f': Y' \to S$ is a Fano fibration and $\dim f'(\Delta) = 2$.  Because $f'$ is the contraction of a $K_{Y'}$-negative ray (and necessarily $K_{Y'} + E'$-negative, by consideration of the general fiber $l$, whose transform in $Y$ must be $K_Y+E$-negative), we have $K_{Y'}\cdot l = -2$ and hence must have $E' \cdot l = \Delta\cdot l = 1$, so $\Delta$ is birational to $S$.  In particular, we are in the case of the Fano fibration over a surface birational to $\Delta$.  By the computation $E' \cdot l = \Delta\cdot l = 1$, we see that $\Delta$ is the only such component that can map birationally to $S$, which implies the result. 
\end{proof}

To relate this to the moduli problem, exactly as in Example \ref{ex:K3cone}, we can show that varieties satisfying the hypotheses of Theorem \ref{thm:conelogcanonical}, Proposition \ref{prop:Knegativeray} or Theorem \ref{thm:rationalruled} do not appear as $\dfour$ stable pairs.  

\begin{corollary}\label{cor:easyeven}
If $X$ is a threefold satisfying the hypotheses of Theorem \ref{thm:conelogcanonical}, Proposition \ref{prop:Knegativeray}, or Theorem \ref{thm:rationalruled}, and $D$ is a $\bQ$-Cartier $\bZ$ divisor on $X$ that does not contain the locus of strictly log canonical singularities such that $dK_X + 4D \sim 0$, then $d$ is even.  
\end{corollary}

\begin{proof}
    By Theorem \ref{thm:conelogcanonical}, Proposition \ref{prop:Knegativeray}, or Theorem \ref{thm:rationalruled}, there is a birational model $Y \to X$ such that $\pi^*K_X = K_Y + E + F$ and the minimal model program on $Y$ terminates in a Fano fibration $Y' \to S$, where $S$ is a surface birational to $E_0$, a component of $E$.   
    
    If $X$ is as in Theorem \ref{thm:conelogcanonical} or Proposition \ref{prop:Knegativeray}, from their proofs, we have $Y' = Y$.  Consider a general fiber $l \subset Y$: we have $K_{Y} \cdot l = -2$, and $E \cdot l = 1$, so  $pi^*(K_X \cdot) l = (K_Y+E)\cdot l = -1$.  Hence, if $\overline{l}$ is the image of $l$ on $X$, $K_X \cdot \overline{l} = -1$.  But, $D$ does not contain the strictly log canonical locus of $X$, so choosing $l$ sufficiently generally implies that $D \cdot \overline{l} = n \in \bZ$, hence the relationship $dK_X + 4D \sim 0$ implies $d(-1) + 4n = 0$, so $d$ is even. 
    
    In the next case, suppose $X$ is as in Theorem \ref{thm:rationalruled}.  Following the notation and proof, we obtain that the MMP results in a Fano fibration $f': Y' \to S$, where $S$ is birational to the unique non-rational or ruled component of $E'$ (and this component is generically a section of $f'$).  Now, consider $F'$, the image of the divisors $F$ in $Y$ with discrepancy $> -1$.  By \cite[Theorem 1.2]{RC}, these divisors are all rational or ruled.  Hence, they cannot be birational to $S$, so for each component $\Delta_F \subset \Supp F'$, we have $\dim f'(\Delta_F) < 2$.  Therefore, choosing a sufficiently general fiber $l$ of $f'$, $l$ does not intersect any component of $\Supp F'$ and $(K_{Y'} + E') \cdot l = -1$.  Therefore, if $l'$ is the pre-image of $l$ in $Y$, $\pi^* K_X \cdot l' = (K_Y+E+F)\cdot l' = -1$, and if $\overline{l}$ is the image of $l'$ on $X$, $K_X \cdot \overline{l} = -1$.  But, $D$ does not contain the strictly log canonical locus of $X$, so choosing $l$ sufficiently generally implies that $D \cdot \overline{l} = n \in \bZ$, hence the relationship $dK_X + 4D \sim 0$ implies $d(-1) + 4n = 0$, so $d$ is even. 
\end{proof}

With great care and analysis of the MMP, we can obtain the same result for strictly log canonical (non-klt) Fano threefolds in more generality. 

\begin{theorem}\label{thm:even}
If $X$ is a strictly log canonical Cohen-Macaulay threefold such that $-K_X$ is ample, $dK_X + 4D \sim 0$ for some $\mathbb{Q}$-Cartier $\bZ$-divisor $D$, and $D$ does not contain the locus of strictly log canonical singularities, then $d$ is even.
\end{theorem}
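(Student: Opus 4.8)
The plan is to convert the strictly log canonical behaviour into an honest divisor, run a minimal model program to a Mori fibre space, and extract the parity of $d$ from the relation $dK_X+4D\sim 0$ restricted to a general fibre. First I would take a $\mathbb{Q}$-factorial dlt modification $\pi\colon Y\to X$ extracting exactly the divisors of discrepancy $-1$, so that $K_Y+E=\pi^*K_X$ with $E=\sum E_i$ reduced and nonzero; it is nonzero precisely because $X$ is \emph{strictly} log canonical, and each $E_i$ is normal because $(Y,E)$ is dlt. Pulling back the principal divisor $dK_X+4D$ yields $d(K_Y+E)+4\pi^*D\sim 0$. The hypothesis that $D$ does not contain the strictly log canonical locus is what I would use to control $\pi^*D$: writing $\pi^*D=\tilde D+\sum b_iE_i$ with $\tilde D$ the strict transform and $b_i\ge0$ integers, the $E_i$-contribution of $4\pi^*D$ is an integral multiple of $4$, so
\[ dK_Y+\textstyle\sum(d+4b_i)E_i+4\tilde D\sim 0. \]

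Next I would run a $K_Y$-minimal model program. Since $-K_X$ is ample, $-\pi^*K_X$ is big and $K_Y$ is not pseudoeffective, so the program cannot end in a minimal model and must terminate in a Mori fibre space $f\colon W\to Z$ with $\dim Z\in\{0,1,2\}$; pushing the linear equivalence forward (which preserves both $\sim$ and the passage $K_Y\mapsto K_W$) gives $dK_W+\sum(d+4b_i)E_{i,W}+4D_W\sim 0$. The representative case, and the one matching the cone structure of Theorem~\ref{thm:conelogcanonical}, is $\dim Z=2$, where a general fibre $F\cong\mathbb{P}^1$ avoids the (codimension $\ge 2$) singular and non-Cartier loci of $W$, so every divisor above is Cartier along $F$. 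Restricting and using $K_W\cdot F=-2$, the relation reads $d(2-e)=4m$, where $e=(\sum E_{i,W})\cdot F$ and $m=D_W\cdot F+\sum b_i(E_{i,W}\cdot F)$ are nonnegative integers. Since $d>0$ and $m\ge0$, this already forces $e\le 2$.

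The parity is then settled by pinning down $e$. Using Corollary~\ref{cor:primeD} together with Lemmas~\ref{lem:divisorialcont} and~\ref{lem:smallcont}, as in the proof of Theorem~\ref{thm:conelogcanonical}, the intended dichotomy is that either all of $E$ is contracted during the program, so $e=0$ and the relation gives $d=2m$; or $E$ survives as a normal section of the fibration, so $e=1$ and the relation gives $d=4m$ (compare Example~\ref{ex:K3cone}). In either case $d$ is even. The remaining bases $\dim Z\in\{0,1\}$ I would reduce to the same relation, evaluated on a general fibre and on a general complete-intersection curve lying in the smooth locus of $W$, respectively.

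The hard part will be controlling $e=(\sum E_{i,W})\cdot F$ for a program run on $Y$ rather than on the pair $(Y,E)$: a priori the terminal $K_W$-negative extremal ray need not be $(K_W+E_W)$-negative, so I must exclude the degenerate possibility $e=2$, in which $\sum E_{i,W}$ would be a ``bisection'' and $m$ could vanish. To rule this out I would combine the ampleness of $D\equiv-\tfrac{d}{4}K_X$ (forcing a general fibre to actually meet $D_W$, hence $m\ge1$ and $e\le1$) with the normality of $E$ from the dlt hypothesis, Zariski's Main Theorem, and the sign $(K_W+E_W)\cdot F=e-2$; these are exactly the inputs that the extremal-contraction lemmas of this section are designed to supply, forcing the surviving $-1$-divisor to be a genuine section meeting the general ruling once. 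This is the essential step.
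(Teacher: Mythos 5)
Your overall strategy --- the dlt modification $K_Y+E=\pi^*K_X$, a $K_Y$-MMP forced to end in a Mori fibre space because $K_Y$ is not pseudoeffective, and parity extracted by restricting the relation $dK_X+4D\sim 0$ to a general fibre --- is the same as the paper's, and your treatment of the two fibration cases ($\dim Z=1,2$), including the dichotomy $e\in\{0,1\}$ for an $f$-ample versus $f$-vertical $E$, matches the paper's in substance. The genuine gap is the case $\dim Z=0$, i.e.\ when every run of the MMP terminates in a Fano threefold $W$ with $\rho(W)=1$. There is no fibre to restrict to, and your fallback --- a general complete-intersection curve $C$ in the smooth locus of $W$ --- only yields $d\,(K_W+E_W)\cdot C\equiv 0 \pmod 4$ with no control on the integer $(K_W+E_W)\cdot C$; if that integer is divisible by $4$ you learn nothing about $d$. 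This is exactly the case the paper spends the bulk of its proof on: it shows that to reach $\rho=1$ the MMP must contract divisors meeting $E$ only inside $\mathrm{Sing}\,X'$ (otherwise Lemma~\ref{lem:divisorialcont} would already have produced a fibration), passes to a terminalization extracting all divisors of discrepancy $\le 0$, computes the relevant coefficients via adjunction and the different (they have the form $1-\tfrac{1}{m}$), and eliminates the bad configurations by a case analysis on the resulting fractional intersection numbers. None of that is a routine ``evaluate on a general curve'' step, and without it the theorem is not proved.

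A smaller but real issue: you assert that the coefficients $b_i$ in $\pi^*D=\tilde D+\sum b_iE_i$ are integers. The centres of the $E_i$ lie in the strictly log canonical locus, where $D$ is only $\mathbb{Q}$-Cartier, so a priori the $b_i$ are nonnegative rationals; your identity $d(2-e)=4m$ with $m\in\mathbb{Z}_{\ge 0}$ therefore needs justification. (The hypothesis that $D$ does not contain the strictly lc locus lets one arrange $b_i=0$ whenever the centre of $E_i$ is not contained in $D$; the paper sidesteps the point entirely by intersecting with the images of the fibres on $X$ at points where $D$ is Cartier.) Your sketch for excluding $e=2$ in the conic-bundle case is plausible and close to the paper's argument that an $f$-ample $E$ must be a section, so I would count that as work left to do rather than a gap; the $\rho(W)=1$ case is the essential missing piece.
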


\begin{proof}

By \cite[Theorem 1.33]{newkollar}, there is a $\bQ$-factorial variety $Y$ and a morphism $\pi: Y \to X$ extracting divisors $\Delta_j$ with discrepancy $a(\Delta_i,X) \le 0$ such that $Y$ is terminal and $K_Y$ is relatively nef.  Let $E = \sum \Delta_j$ be the sum over divisors $\Delta_j$ with discrepancy $-1$ and $F = \sum -a(\Delta_k,X)\Delta_k$ be the sum over divisors with discrepancy larger than $-1$.  By construction of $Y$ (which is terminal, hence has finitely many singular points), these effective divisors are Cartier in codimension 2, $\pi^*(K_X) = K_Y + E + F$, and for any curves $C\subset \Supp (E+F)$ contracted by $\pi$, $K_Y\cdot C \ge 0$.  

We will run a $K_Y$-MMP to obtain the result.  The proof is somewhat technical, so we first provide a sketch, pointing out the necessary technicalities.  If there is a component of $E_0$ that is not contracted by the MMP, the MMP terminates in a fibration $Y' \to S$ and one can show that $S$ is birational to $E_0$.  Now, to conclude that $d$ is even, we wish to perform a similar intersection theoretic computation to that above, finding a sufficiently generic fiber $l$ of $Y' \to S$ such that its preimage $l'$ in $Y$ satisfies $(K_Y+E_0)\cdot l' = -1$.  However, to conclude that $\pi^*(K_X)\cdot l' = -1$, we must show that either there do not exist other components of $\Supp E$ or $\Supp F$ that map finitely to $S$ or we can use the other components to conclude that $d$ is even.

If instead all components of $E$ are contracted by the MMP, we will prove the necessary existence of certain components of $\Supp F $ intersecting $E$ so that the curves in $E$ (which, initially may be $K_Y$-positive) become $K_Y$-negative and thus contractible.  Then, we make the following observation: because $dK_X + 4D \sim 0$ and the strictly log canonical (non-klt) locus of $X$ is not contained in $D$, for any component $\Delta$ of $\Supp F$ that maps into the non-klt locus of $X$, the discrepancy of $\Delta$ must be of the form $\frac{a}{d}$ for some $a \in \mathbb{Z}$.  Furthermore, on curves contained in components of $\Delta$ of $E$ contracted by $\pi$, we have $(K_Y + E + F)|_{\Delta} = 0$, so if $C_E = (E-\Delta)|_\Delta$ and $C_F = F|_\Delta$, we know $K_{\Delta} + C_E + C_F = 0$.  If $\Delta$ is sufficiently nice, we will use this relationship to show that either $d$ must be even or the discrepancies of the components of $F$ must be in a finite collection of exceptional possibilities, and we will rule out these possibilities by hand.  

We begin in the simpler case, when there exists a component $E_0 \subset E$ that is not contracted by the MMP. 

\begin{lemma}\label{lem:enotcontracted}
    If there is a component $E_0 \subset E$ that is not contracted by the MMP (including the final step as a Fano fibration), then $d$ is even.  
\end{lemma}

    \begin{proof}
        In this case, the MMP must terminate with a morphism $Y \dashrightarrow Y' \to S$, and $\dim S = 2$.  Exactly as in the proof of Theorem \ref{thm:conelogcanonical}, we obtain that $S$ is birational to $E_0'$, where $E_0'$ is the image of $E_0$ in $Y'$, and by negativity of $K_Y + E = \pi^*K_X$, the general fiber $l$ of $Y' \to S$ cannot intersect other components of $E$.  We are done by the same argument as in Corollary \ref{cor:easyeven} if $l$ intersects no components of $\Supp F$.  However, a priori, $l$ could intersect components of $\Supp F$: suppose $\Delta_0' \to S$ is generically finite, where $\Delta_0'$ is the image of some $\Delta_0 \subset \Supp F$.  Because $\Delta_0$ is contracted by $\pi$, let $C \subset S$ be the image of a general fiber of $\pi|_{\Delta_0}$, noting that $C$ must be rational as $\Delta_0$ contracts to a log terminal singularity (\cite[Theorem 1.2]{RC}).  Because $Y'$ is terminal, it has isolated singularities, so choosing $C$ sufficiently generally, we may assume that the pre-image of $C$ is a smooth ruled surface $P \to C$ with multi-section $\Delta_0'|_P$.  As $E_0'$ is a section of $P$, and all fibers of $P$ are numerically equivalent with respect to $E_0'$, we see that there cannot be any singular fibers (using that $P$ is smooth).  However, because $\Delta_0'|_P$ is a contractible multisection in the smooth ruled surface $P$, it must in fact be a section.  As there is only one contractible section in $P$, we see that $E_0'|_P$ cannot be contractible.  Similar analysis implies, for any other component $\Delta_1$ of $\Supp F$ mapping finitely to $S$, $\Delta_1'|_P$ cannot be contractible.  
        
        However, for any such $\Delta_1'$, reversing roles of $\Delta_1'$ and $\Delta_0'$, we find a surface $P'$ with contractible section $\Delta_1'|_{P'}$ and non-contractible sections $\Delta_0'|_{P'}$ and $E_0'|_{P'}$.  Computation shows that this would imply that $E_0$ is in fact not contractible in $Y$, a contradiction.  Therefore, there can be at most one component $\Delta_0$ of $\Supp(F)$ that maps birationally onto $S$.
        
        Thus, in the smooth ruled surface $P$, we may assume that $E_0'|_P$ is a positive section, $(E_0'|_P)^2 = n$.  Choosing another section $\gamma$ such that $\gamma \cap \Delta_0'|_P = \emptyset$, $\gamma^2 = E_0' \cdot \gamma  = n$, and $K_P \cdot \gamma  = K_{Y'} \cdot \gamma = -2 - n$, so that $K_X \cdot \gamma' = (K_{Y'} + E_0')\cdot \gamma = -2$, where $\gamma'$ is the image of $\gamma$ on $X$.  Provided that $C$ and $\gamma$ are chosen generically, $D \cdot \gamma' = n \in \mathbb{Z}$, hence the relationship $dK_X + 4D \sim 0$ implies that $d(-2) + 4n = 0$ and $d$ is even. 
    \end{proof}

Now, assume that all components of $E$ are contracted by the MMP.  Let $E_0$ be the first component of $E$ contracted, and assume for contradiction that $d$ is odd. 

Consider the dlt model of the pair $(K_Y, E + F)$ over $X$, $(X^{\mathrm{dlt}}, E^{\mathrm{dlt}}) \to X$.  By \cite[Corollary 1.36]{newkollar}, this contracts all components of $\Supp F$.  

\begin{lemma}\label{lem:chainofF} 
    Suppose that $F_1$ is a component of $F$ whose image in $X^{\mathrm{dlt}}$ is a curve $C$ contained in $E$.  Then, the generic point of $C$ in $E$ is a cyclic quotient singularity of index $n$, and the discrepancy $-a(F_1, X) = \frac{a_1}{n}$.  If $F_i \cap E \ne \emptyset$, then the discrepancy is $-a(F_1, X) = 1 - \frac{1}{n}$.  Furthermore, if $E_0$ is the component of $E$ containing $C$, the collection of exceptional divisors $E_0$ and those $\{ F_i \}$ mapping to $C$ form a chain (i.e. above the generic point of $C$, the divisors can be written $E_0 \cup F_1 \cup F_2 \dots \cup F_k$ where each divisor only meets the adjacent divisors in the list and each intersection is only one point). 
\end{lemma} 

\begin{proof}
    Because $F$ is contracted to a curve and the discrepancy of the divisor $F$ is greater that $ -1$, at the generic point of $C$, we reduce to studying log terminal surface singularities.  Because this curve is contained in $E_0$, these are cyclic quotient singularities \cite[Theorem 4.15]{km98}.  Let $n$ be the index (and note that, by the discussion above, $n \mid d$, so $n$ is odd).  Then, the Cartier index of the divisor $K_{X^\mathrm{dlt}} + E^{\mathrm{dlt}}$ is a divisor of $n$, so all discrepancies take the form $\frac{a_i}{n}$.  
    
    Consider the map $g: E \dashrightarrow E^{\mathrm{dlt}}$.  At the generic point of $C$, this is an isomorphism, so a local computation of the discrepancy of $F_i$ can be done using adjunction.  Near the generic point of $C$, we have $K_Y + E + \sum a_i F_i = g^*(K_{X^{\mathrm{dlt}}} + E^{\mathrm{dlt}})$, and restricting to $E$, we obtain $K_E + \sum a_i F_i|_E = K_E + \Diff$, where $\Diff$ is supported on the codimension one part of singular locus of $X^{\mathrm{dlt}}$ contained in $E$, which is $C$.  By \cite[4.4]{newkollar}, the coefficients of the different are precisely $1 - \frac{1}{n}$.  So, for $F_1$ such that $F_1 \cap E \ne \emptyset$, the discrepancy is as claimed. 
    
    Finally, the description of the divisors as a chain is \cite[Theorem 4.15(3)]{km98}.
\end{proof}

We will divide the divisors $F_i$ in $\Supp F$ into two groups: those whose image in $X^{\mathrm{dlt}}$ is a curve in $E$, which we will continue to denote by $F_i$, and those whose image is not, which we will denote by $G_j$, and write $F = \sum -a(X,F_i)F_i$, respectively $G = \sum -a(X,G_j)G_j$, so we have the relationship $K_Y + E + F + G = \pi^*(K_X)$.  The exceptional divisors $F_i$ will play an important role in what follows.  Label the steps of the MMP $Y \dashrightarrow Y_1 \dashrightarrow \dots \dashrightarrow Y_{m+1} \dashrightarrow $, where $Y_m \to Y_{m+1}$ is the contraction of the image of $E_0$, and $\dim Y_{m+1} \le 3$.  We will denote the image of a divisor $H \subset Y$ in $Y_k$ by $H^k$.

\begin{lemma}\label{lem:firstcurvecontracted}
    Let $Y_l \dashrightarrow Y_{l+1}$ be the first contraction of a curve $C$ in $E_0^l$ (so $E_0 \cong E_0^l$).  Then, the only steps of the minimal model program up to this point were contractions that did not intersect $E_0$ or contractions of components of the exceptional locus of $\pi$.  If $E_0^l \to E_0^{l+1}$ is birational, then $C$ is not contained in any component of the exceptional locus of $\pi$ other than $E_0$ and and $C$ is a fiber of $\pi|_{E_0}: E_0 \to \pi(E_0)$.  
\end{lemma}

\begin{proof}
    Because $E_0 \cong E_0^l$, we will write $E_0$ in what follows.  Assume that $C$ is the first curve in $E_0$ contracted by the MMP.  Near $E_0$, prior to the contraction of $C$, we could only have contracted divisors $\Delta$ onto $E_0$ (i.e. the image of any contraction that intersects $E_0$ is a curve in $E_0$) or flipped curves into $E_0$ because $C$ was the first curve contracted.  But, if this was a divisor that intersected $E$ or a flip not contained in the exceptional locus, the curves contracted would indeed be $K+E$-negative, so the induced contraction could not be birational because it contracts no curves in $E_0$ (Lemma \ref{lem:smallcont}, Corollary \ref{cor:primeD}).  Therefore, the only contractions near $E_0$ before this curve $C$ were be divisorial and contracted divisors in the exceptional locus of $\pi$.  Furthermore, they must all be contracted to curves in $E_0$ by assumption that $C$ is the first curve contracted.  If $E_0$ is contracted to a curve $\pi(E_0)$ by $\pi$, and the first curve $C$ contracted is a (multi-)section of $\pi|_{E_0}: E_0 \to \pi(E_0)$, we divide into cases based on if the contraction of $C$ is birational on $E_0$.  If it is not birational, then we find that $E_0$ must be $\bP^1 \times C'$ by considering the projections from the contraction of $C$ and contraction $\pi|_{E_0}$.  (There cannot be any singular fibers: they would have been flipped into $E_0$, but we then find a contradiction to $K$-negativity of the contraction of $C$.)   If the contraction of $C$ is birational, we can obtain a contradiction to contractibility of $E_0^m$ by considering a general curve $\Gamma \subset E_0^m$ that is contracted when $E_0^m$ is contracted.  By a intersection-theoretic computation, $\Gamma$ cannot be a fiber of $\pi|_{E_0}$, so its (reduced) image in $X$ is the same as that of $C$.  Considering the intersection of these two curves with $\pi^*(K_X)$ (which should be multiples of each other) gives a contradiction.
    
    So, we may assume that the first curve $C$ in $E_0$ contracted is a fiber of $\pi|_{E_0}$.  In the MMP, we are only contracting $K_Y$-negative, $K_Y + E$-non positive curves.  In order to contract such a curve $C$ in $E_0$ in a step of the MMP, we must have first performed contractions over $E_0$ to change the positivity of $C$.  By the previous paragraph, these contractions must be those of divisors contained in the exceptional locus of $\pi$.  Suppose $\Delta$ is such a divisor contracted to $E_0$.  If it is contracted to a curve in $E_0$, the contraction would not change the sign of $(K_Y+E)\cdot C$.  If it is contracted to a point in $E_0$, there must have been a prior contraction changing the positivity of the curve $\Delta\cap E_0$, contradicting that this is the first curve contracted.  
    
    If $C$ was an intersection curve $G_j \cap E_0$, where $G_j$ is a component of the exceptional locus that is contracted to a point in $E$ in the dlt model, either it is contracted to a canonical point of $X^{\mathrm{dlt}}$, in which case the curves in $G_j$ are $K_Y$-non-positive, but $K_Y+E$-positive, because $E^{\mathrm{dlt}}$ is not canonical at the point.  Or, if it is contracted to a non-canonical point (or non-terminal curve not contained in $E$), the curves in $G_j$ are $K_Y$-positive (non-negative) and $K_Y+E $ positive.  In order for $C$ to be contracted, the positivity with respect to $K+E$ must change, but for any divisor contracted to a curve in $E_0$, the contraction would not change the sign of $(K_Y+E)\cdot C$.  Therefore, $C$ cannot be the intersection of a component $G_j \cap E_0$.   
\end{proof}

Next, we emphasize the importance of the relative nef-ness of $K_Y$ and the coefficients of $F_i$. 

\begin{lemma}\label{firstmustdofcontr}
     In the map $\pi: Y \to X$, let $C$ be a fiber of $\pi|_{E_0}$.  If the contraction of $E_0^m$ contracts a generic fiber $C$, before the contraction of $E_0$, there must have been a contraction of some $F_i$ such that $F_i \cap E_0 \ne \emptyset$.  In particular, the coefficient of such an $F_i$ in $\pi^*(K_X)$ is $1 - \frac{1}{n}$.  
\end{lemma}

    \begin{proof}
        By construction of $Y$, $K_Y$ is relatively nef, so for any fiber $C \subset E_0$, $K_Y \cdot C \ge 0$.  Therefore, in order for $C$ to be contracted in the course of the MMP $Y \dashrightarrow Y_m \to Y_{m+1}$ as a $K_{Y_m}$-negative extremal ray, we must have performed a contraction $Y_l \to Y_{l+1}$ of a divisor that intersected $E_0$ to change the positivity of $K_Y$ on $C$.  If the contraction was not a component of $\Supp F$, a generic contracted curve would necessarily be $K_{Y_l} + E_0^l$ negative (because its transform in $Y$ satisfies the same negativity), so Corollary \ref{cor:primeD} and Lemma \ref{lem:smallcont} imply that it would have been a contraction onto a lower dimensional variety, giving $Y_l$ the structure of a Fano fibration, which is a contradiction, or the contraction of a component $G_i^j$ onto a curve in $E_0^j$ for some $j < l$ followed by more contractions or flips.  However, because each $G_i$ is contracted to a point in the dlt model, by $\bQ$-factoriality, we must have performed a flip into $G_i$ before the contraction (otherwise, $G_i$ would already admit a contraction to a curve followed by a small contraction to $X^{\mathrm{dlt}}$).  However, using the relative nef-ness of the canonical divisor on $G_i$, a computation using the formulas for discrepancies as in Lemma \ref{lem:smallcont} shows that, after a flip, $G_i^j$ could not admit a fibration over a curve because we could not have $K_{Y_j} \cdot l = -1$ and $G_i^{j}\cdot l = -1$ for a generic fiber $l$ of the contraction.  Therefore, we must have performed a contraction of such an $F_i$ that intersects $E_0$ in a curve. 
    \end{proof}

Now, we separate into two cases based on the non-klt locus of $X$.  

\noindent \textbf{Case 1: $E_0$ is contracted to a point in $X$.}

By the previous lemma, before contracting $E_0^m$, we must have contracted a component of $\Supp F$ onto $E_0$.  Let $F_1$ be this component, with discrepancy $a(X,F_1) = 1 - \frac{1}{n}$, and suppose it is contracted at the step of the MMP $Y_l \to Y_{l+1}$.  Because $n \mid d$, we may assume $n \ge 3$ (if $n = 2$, $d$ would be even, as desired).  Note that $F_1$ is contractible in $Y$ to a curve (via the map to $X^{\mathrm{dlt}}$), so must have the structure of a (possibly singular) ruled surface.  Consider a generic fiber $f$ of this contraction, which is also a generic fiber of $F_1^l$.  Because $K_Y$ was relatively nef to begin with, we must have contracted some collection of divisors $\Delta$ onto $F_1$ before contracting $F_1^l$ to change the positivity of $K_Y$ on $F_1$.  In particular, $K_{Y_l} \cdot f^l = -1$ and $F_1^l \cdot f^l = -1$.  Furthermore, by Lemma \ref{lem:chainofF}, $E_0 \cdot f = E_0^l \cdot f^l = 1$, and $f$ can meet only one other component $F_2$ of the exceptional locus of $\pi$.  Furthermore, if $F_2$ exists, the intersection curves $E_0 \cap F_1$ and $F_1 \cap F_2$ must be disjoint (if they were not disjoint, the intersection curve $E_0 \cap F_2$ would necessarily be $K_Y$-negative, but we are assuming $K_Y$ is relatively nef). 

Note that a component $\Delta$ contracted onto $F_1$ cannot intersect $E_0$ because its contraction would necessarily contract a $K_Y\ge 0$ curve in $E_0$, and similarly, if $F_2$ exists and was not contracted before $\Delta$, $\Delta$ cannot intersect $F_2$.  If it did, because $\Delta$ was contracted by a $K$-negative extremal MMP contraction, all the fibers are numerically equivalent, however $F_2$ is contractible in $\Delta$ (to $X$, so the curve $\Delta \cap F_2$ must be a component of some fiber.  However, the other component of the fiber would then be $F_2$-positive, but the generic fiber of $\Delta$ is $F_2$-trivial; a contradiction.  Therefore, if $F_2$ exists, $\Delta \cap F_1$ is disjoint from both $E_0 \cap F_1$ and $F_1 \cap F_2$.  Also, in this setting where $\Delta$ is contracted before $F_2$, in the contraction of $F_1$, there cannot be any reducible fibers: the fibers must be numerically equivalent, but $F_1 \cap F_2$ and $E_0 \cap F_1$ are disjoint sections of the ruled surface $F_1$, so any component of a singular fiber must pass through both sections.  Because they are disjoint, a reducible fiber would have genus larger that 0, a contradiction.  Therefore, if $\Delta$ is contracted before $F_2$, $\Delta \cap F_1$ is a multisection of the ruled surface $F_1$ (with no singular fibers, so Picard rank 2) that is disjoint from the other two disjoint sections of $F_1$: $E_0 \cap F_1$ and $F_1 \cap F_2$.  This implies that in fact $F_1$ is $\mathbb{P}^1 \times C$, where $C$ is a smooth genus $g$ curve and $\Delta \cap F_1$ is a section.  Taking a generic horizontal section ${pt} \times C$, near $C$, we have the relationship $0 = \pi^*(K_X) \cdot C = (K_Y + F_1 - \frac{1}{n}F_1) \cdot C$, so $n(2g - 2) = F_1\cdot C$ and hence $K_Y \cdot C = - (n-1)(2g-2)$.  By relative nefness of the canonical divisor, this implies $g = 0$ or $g = 1$.  Finally, note that this analysis applies for any $\Delta$ contracted onto any $F_i$ in the chain of divisors above the curve $E_0 \cap F_1$.  The following lemma will show that this behavior is impossible when $d$ is odd.

\begin{lemma}\label{deltaonf1}
    If $F_i$ is $\bP^1 \times C$ and $\Delta$ a non-exceptional divisor of $\pi$ contracted onto $F_i$, then $i = 1$. 
\end{lemma}

\begin{proof}
     Because $\Delta$ is contracted onto $F_i$ in an extremal contraction of the minimal model program, it is a ruled surface, and $F_i$ is smooth, so there cannot be any singularities along $F_i \cap \Delta$.  Therefore, there cannot be any singular fibers of the ruled surface $\Delta$, and cannot be any singularities on $\Delta$ because curves through the singular points would have $-(K+\Delta)\cdot C $ less than that of the generic fiber, but $F_i \cdot C$ equal to that of a generic fiber, impossible by relative numerical equivalence.  So, $\Delta$ is a smooth ruled surface.  Because $F_i$ is contractible to a point, the section $F_i \cap \Delta$ is the negative section $\sigma_0$, and let $\sigma_{\infty}$ be generic positive section that does not intersect $\sigma_0$, and let $f$ be a generic fiber.  
     
     If $g = 1$, by the computation above, we see that $K_Y \cdot \sigma_0 = F_i \cdot \sigma_0 = \Delta \cdot \sigma_0 = 0, $ and as $F_i \cdot \sigma_0 = \sigma_0^2$, this implies that $\Delta$ is in fact $\PP^1 \times C_1$, where $C_1 \cong \sigma_0$ is genus 1, so choosing a generic section $C_1$, we see that $(K_Y+ \Delta)\cdot C_1 = 0$.  Because $\Delta $ is contractible, when $\Delta$ is contracted $Y_m \to Y_{m+1}$ in the minimal model program, near the generic section $C_1$, the pullback of $K_{Y_m}$ to $Y$ is $K_Y - \Delta -\sum \Gamma_i$, where $\Gamma_i$ are the other divisors contracted in the course of the MMP.  However, the image of each $\Gamma_i$ must be a point in $Y_m$: if it was a curve, either it is an entire fiber (and hence would have intersected $F_i$; contracting a curve in $F_i$, which is impossible), or a (multi)-section of $\Delta$ (which would have forced the fibers of $\Delta$ to be $K_Y$-non-negative, impossible as they are not contained in the exceptional locus of $\pi$).   None of these $\Gamma_i$ intersect $\sigma_0$ and none intersect the generic $C_1$, and $\sigma_0$ and $C_1$ have the same image in $Y_{m+1}$, so \[ 0 = (K_Y- \Delta)\cdot \sigma_0 = (K_Y - \Delta)\cdot C_1 .\]  Thus, $(2K_Y) \cdot C_1 = 0$, so $K_Y \cdot C_1 \ge 0$.  This is impossible as $C_1$ is not contained in the exceptional locus of $\pi$.  Therefore, $g\ne 1$. 
     
     If $g = 0$, we perform a similar computation: we find that $\Delta$ is a smooth ruled surface over a rational curve $\sigma_0$ and no $\Gamma_i$ as described above intersect $\sigma_0$ or the generic section, and the computations before the proof of this lemma say $K_Y\cdot \sigma_0 = 2(n-1)$, $F\cdot \sigma_0 = -2n$, and $\Delta\cdot \sigma_0 = 0$.  Therefore, for a generic section $\sigma_\infty$ of $\Delta$, $(K_Y+\Delta) \cdot \sigma_0 = -2(n+1)$, and as above, because $\sigma_0$ and $\sigma_{\infty}$ have the same image when $\Delta$ is contracted, \[ 2(n-1) = (K_Y-\Delta)\cdot \sigma_0 = (K_Y-\Delta)\cdot \sigma_{\infty} \] so $2K_Y\cdot \sigma_{\infty} = -4$ and $K_Y\cdot \sigma_{\infty} = -2$.  Therefore, if $\sigma_{\infty}$ intersects no other exceptional divisors of $\pi$, as it is contained in the smooth locus of $Y$, and $dK_X + 4D \sim 0$ implies that $d$ is even.  
     
     Finally, suppose that $\sigma_{\infty}$ intersects some other exceptional divisor $G_j \in \Supp G$ whose discrepancy $-a(G_j,X,\frac{4}{d}D) = \frac{a}{d}$.  Because $G_j$ and $D$ do not intersect $\sigma_0$, their restrictions to $\Delta$ are $G_{\Delta} \equiv p\sigma_{\infty}$ and $D_{\Delta} = q\sigma_{\infty}$.  For the generic fiber $f$ of $\Delta$, $f$ intersects a single $F_i$ and the discrepancy of $F_i$ is $\frac{n-r}{n}$ for some integer $r$.  Thus, the relationship $dK_X + 4D\sim 0$ pulled back to $Y$ implies that \[K_Y\vert_{\Delta}+\frac{n-r}{n}\sigma_0 + \frac{a}{d} G_{\Delta} + \frac{4}{d} D_{\Delta} \equiv 0 .\]
     
     Intersecting with a generic fiber $f$ and the generic section $\sigma_0$, we find that $r=1$ and such a $\Delta$ can only occur on $F_1$. 
\end{proof}

Now, in the above set-up, the proof is complete by the following lemma. 

\begin{lemma}\label{deltaontof1}
    If $\Delta$ is contracted by an extremal contraction onto a curve contained in $F_1$, followed by an extremal contraction of $F_1$, and $\Delta$ is not an exceptional divisor of $\pi$, then $d$ is even. 
\end{lemma}

\begin{proof}
    Suppose the generic fiber of $\Delta$ does not intersect any other exceptional divisor of $\pi$.  Because the contraction of $\Delta$ is a $K$-negative extremal contraction, for general fiber $f_g$, $K_{Y_l} \cdot f_g = -1$, and the same is true for the strict transform in $Y$, so $K_Y \cdot f_g = -1$.  Then, $\pi^*(K_X)\cdot f_g = K_Y\cdot f_g + (1 - \frac{1}{n}) F_1 \cdot f_l = -1 + 1 - \frac{1}{n} = -\frac{1}{n}$ because $f_g$ intersects no other exceptional components.  Let $f$ be the image of $f_g$ in $X$.  This curve meets $D$ in the smooth locus of $X$, so $0 = (dK_X + 4D)\cdot f =  - \frac{d}{n} + 4 k$ for some $k \in \bZ$.  Therefore, $d = 4nk$ so $d$ is even. 
    
    Now suppose $f_g \subset \Delta$ intersects some other exceptional divisor of $\pi$.  Call this divisor $G_j$ as above.  Because $G_j$ must be contracted to its image $G_{\Delta}$ in $X$, $G_j$ is a fibration over the curve $G_{\Delta}$.  Consider a generic fiber $f_g$ of $G_j$, and suppose $G_j\cdot f_g = -k$ (so $K_Y\cdot f_g = k-2$).  Intersecting with $\pi^*(K_X)$, we see that $k-2 + \frac{a}{d}(-k) + s = 0$, where $s \ge 0$ is the intersection of $f_g$ with other divisors in the formula $\pi^*(K_X)$.  Because $\frac{a}{d} < \frac{1}{n}$ and $n \ge 3$, this implies that $k = 2$. 
    
    Now, because our MMP contracts $\Delta$ and then $F_1$, we can study what happens to $G_j$ in these contractions.  After these contractions in the threefold $Y_{l+1}$, the images of the curves $f$ in $G_j^{l+1}$ are $K_{Y_{l+1}}$-negative, $E^{l+1}$-positive, and $K_{Y_{l+1}} + E^{l+1}$-negative.  In addition, they are $G_j^l$-trivial.  From the description of the exceptional divisors of $Y \to X$, this implies that there must be an extremal ray $R$ in the cone of curves of $Y_{l+1}$ that is $K_{Y_{l+1}}$-negative, $E^{l+1}$-positive, and $K_{Y_{l+1}} + E^{l+1}$-negative, and $G_j^{l+1}$-trivial, and its contraction cannot contract any curves in $E^{l+1}$ (the only curves in $E^{l+1}$ that are $G_j^{l+1}$ trivial are those that do not intersect $G_j^{l+1}$, but then their positivity is the same as that in $Y$, so they are $K_{Y_{l+1}}$-positive).  Therefore, choosing an alternate path in the MMP and contracting $R$, the MMP terminates at this contraction with a fibration over $E^{l+1}$, which implies that $d$ is even by Lemma \ref{lem:enotcontracted}.
\end{proof}

Now, we consider the case that $\Delta$ contracted onto the chain of divisors is such that no $F_i$ is $\bP^1 \times C$, but $\Delta$ is disjoint from $F_{i-1}$ and $F_{i+1}$, so the arguments above imply that $\Delta$ is only connected to $F_k$, the last divisor in the chain (c.f. Lemma \ref{lem:chainofF}).  Therefore, to get to the extremal contraction of the (image of) $F_1$ in the MMP, we contract $\Delta$ onto $F_k$, then $F_k$ onto $F_{k-1}$, $\dots$, $F_i$ onto $F_{i-1}$, $\dots$, and ultimately, $F_1$ onto $E$.  If any other surface $G_j$ is contracted onto $F_i$ for any $i$, it must be some component of $\Supp G$ (because some curve in it must begin $K_Y$-non-negative).  A brief computation with the discrepancies as in Lemma \ref{lem:notsmall} and using that $\Delta$, $F_i$ are ruled surfaces (albeit possibly singular) and that the curves $C$ in $F_i$ must satisfy the relationship $\pi^*(K_X) \cdot C = 0$, one can show that there are no flips into and out of $F_i$.  At this point, we work backwards from the contraction of $F_1$.  We have reduced to the case that we performed several divisorial contractions of divisors in $\Supp F$, possibly with some divisors in $\Supp G$, after beginning with a divisorial contraction of a component $\Delta$ not contained in the exceptional locus onto a curve $C$ in $F_k$ (the last divisor in the chain).  It is not hard to show that, if the generic fiber of $\Delta$ intersects some exceptional divisor $G_j$ other than $F_k$, then $G_j\cap \Delta$ is not contractible in $\Delta$; i.e $G_j$ contracts onto the curve $G_j \cap \Delta$ in the morphism $\pi: Y \to X$.  We begin with a specific case, $n = 3$.  

\begin{lemma}\label{n=3}
    Suppose $n = 3$.  Then, $d$ is even. 
\end{lemma}

\begin{proof}
    If $n = 3$, the chain $F_i$ is either one component $F_1$ with discrepancy $-\frac{2}{3}$ or two components $F_1, F_2$ with discrepancies $-\frac{2}{3}$ and $-\frac{1}{3}$. Suppose we are in the first case; then the generic fiber $f_g$ of $\Delta$ contracted by the first step of the MMP intersects $F_1$ so $\pi^*(K_X)\cdot f_g = (K_Y + \frac{2}{3} F_1 + \sum \frac{a_i}{d} G_i) \cdot f_g = -\frac{1}{3} + \sum \frac{a_i}{d} m_i$ for some $m_i$.  If there are no components $G_i$ intersecting $f_g$, $K_X\cdot \pi(f_g) = - \frac{1}{3}$, and the relationship $dK_X + 4D$ then implies $d$ is even.  If there do exist $G_i$, the equation implies each coefficient $\frac{a_i}{d} < \frac{1}{3}$.  For a component $G_j$ intersecting $f_g$, this implies the general fiber $l_g$ of the ruled surface $G_j$ satisfies $G_j \cdot l_g = -2$, so by the same argument in Lemma \ref{deltaontof1}, after contracting $\Delta$ and $F_1$, there exists a $K+E$-negative $E$-positive extremal ray in the cone of curves, and we can choose the MMP to terminate with a fibration over $E$.  
    
    In the second case, we contract $\Delta$ onto $F_2$ and then $F_2$ onto $F_1$, followed by $F_1$ onto $E_0$.  Again considering a generic fiber $f_g$ of $\Delta$, if $f_g$ intersects no components $G_i$, we find that $K_X\cdot \pi(f_g) = - \frac{2}{3}$ and the relationship $dK_X + 4D$ implies $d$ is even.  Suppose there are components $G_i$ intersecting $f_g$.  Let $G_j$ be such a component and let $l_g$ be a general fiber of the ruled surface $G_j$.  If $G_j \cdot l_g = -3$ or $-2$, after contracting some of the components $\Delta, F_2$, and $F_1$, we find a $\pi^*(K_X)$-negative but $E_0 + F$-positive extremal ray.  The contraction of such a ray terminates in a fibration over $E_0$ or one component $F_i$.  Note that $E_0$ cannot be contracted (as the curves in $E_0$ are not $K$-negative) so by Lemma \ref{lem:enotcontracted}, $d$ is even. 
    
    Suppose then all components $G$ meeting $f_g$ have $G\cdot l_g \ge -4$.  Using that $\pi^*(K_X)\cdot f_g = (K_Y + \frac{2}{3} F_1 + \sum \frac{a_i}{d} G_i) \cdot f_g < 0$, we know that the coefficient of any such $G$ is $< \frac{2}{3}$.  Given this bound on the discrepancy and that $G\cdot l_g \le -4$, we can compute all possible configurations of divisors $G_j$.  By taking a generic hyperplane section $H$ meeting the image of $G$ in $X$ and noting that this is log terminal at the generic point of $\pi(G)$, we reduce to studying configurations $C$ of curves on $H_Y = \pi^{-1}_*H$ that contract to a log terminal singularity with at least one $-n$ curve, $n \ge 4$, with coefficient of that curve $< \frac{2}{3}$.  We find only the following possibilities: $C$ is one $-5$ curve, with coefficient $\frac{3}{5}$, $C$ is a $-4$ curve meeting a $-3$ curve, with coefficient $\frac{7}{11}$ on the $-4$ curve, or $C$ is a $-4$ curve meeting a chain of $k-1$ $-2$ curves, with coefficient $\frac{2k}{3k+1}$ on the $-4$ curve.  From this description, we see that $\pi^*(K_X)\cdot f_g = (K_Y + \frac{2}{3} F_1 + \sum \frac{a_i}{d} G_i) \cdot f_g < 0$ implies that there is only one $G$ meeting $f_g$, and plugging in the given discrepancies in each of the three cases, we see that $K_X \cdot \pi(f_g) = -1 + \frac{1}{3} + \frac{a}{d}$, which is either $-\frac{1}{15}$, $-\frac{1}{33}$, or $-\frac{2}{3(3k+1)}$.  With the relationship $dK_X+4D \sim 0$, each case implies that $d$ is even. 
\end{proof}

Now, suppose $n > 3$, so $n \ge 5$ because $n$ is odd.  We perform contractions of $\Delta$ and $F_i$ ultimately contracting $F_1$ onto $E$.  In our final contraction $F_1$ contracting onto $E$, say $Y_l \to Y_{l+1}$, along the image of $F_1$ in $E^{l+1}$, the threefold $Y_{l+1}$ has only index one singularities.  Indeed, as these are all divisorial contractions, all of the fibers are numerically equivalent, i.e. all multiple of the general fiber $f_g$ which satisfies $K_{Y_l} \cdot f_g = -1$, $F_1^l \cdot f_g = -1$, and $E_0^l \cdot f_g = 1$, and the pullbacks of both the general fiber $f_0$ and $f_g$ must satisfy $0 = \pi^*(K_X) \cdot f$.  Because  $\pi^*(K_X) = K_Y + E_0 + (1-\frac{1}{n})F_1+\dots$, for a special fiber $f_0$, we can use the wealth of information on divisorial extremal neighborhoods \cite{mori, classofflips, moriprokhorov, tzi} and formulas for intersection with $-K_{Y_l}$ (and its numerical equivalence with $F_i^l$ and $E_0^l$) along with adjunction on the ruled surface $F_1$ to compute that $-K_{Y_l} \cdot f_0$ must be $-\frac{1}{r}$, where $r$ is the index of the threefold singularity on the fiber $f_0$.  Then, \cite[Theorem 5.5]{tzi} implies that $Y_{l+1}$ has only index one singularities and $Y_{l+1}$ is factorial.  Therefore, $K_{Y_{l+1}}$ and $E^{l+1}$ are both Cartier, so by adjunction, $E_0^{l+1}$ has only Du Val singularities along the image of $F_1$.  Let $C_F$ be the curve that is the image of $F_1$.  By \cite[Lemma 5.1]{tzi}, $C_F$ is a smooth curve and cannot intersect any exceptional divisors of $\pi$ other than $E_0^{l+1}$ (if it did, the very first contraction of $\Delta$ would necessarily have contracted a $K$-non-negative curve in one of these divisors, a contradiction).  By comparing $C_F$ to $\pi^*(K_X)|_{E_0}$, we find that $C_F$ satisfies $(K_{E_0} + \frac{n-1}{n}C_F)\cdot C_F = 0$, i.e. $(K_{E_0} + C_F)\cdot C_F = \frac{1}{n}C_F^2$.  In particular, $C_F$ is contractible in $E_0$ if and only if it is rational with at most three singularities.  However, if $C_F^2 < 0$, then it is not hard to show it is extremal in the cone of curves of $Y_{l+1}$, so we may contract (and possibly flip) $C_F$ before continuing the MMP.  This is certainly a $K_{Y_{l+1}} + E_0^{l+1}$-positive contraction, but the flip exists because the divisor $D^{l+1}$ (the image of $\pi^{-1}_*D$) satisfies $(Y_{l+1}, E^{l+1} + (\frac{4}{d} - \epsilon)D^{l+1})$ is log terminal but the log canonical divisor is negative on $C_F$ for $\epsilon$ sufficiently small.  Because this ray is $K_{Y_{l+1}}+E_0^{l+1}$-positive, the contraction/flip may create worse singularities on the threefold $Y_{l+2}$, but $E_0^{l+2}$ is still Cartier in codimension two, so our earlier results (c.f. Lemma \ref{lem:divisorialcont}, \ref{lem:notsmall}) still apply.  In this case, after the contraction of $C_F$, the curves in $E_0^{l+2}$ are still $K_{Y_{l+2}}$-non-negative, so if $E_0^{l+2}$ is contracted by the MMP, following the logic in Lemmas \ref{lem:firstcurvecontracted}, \ref{firstmustdofcontr}, there must exist another chain of contractions of components of $\Supp F$ changing the positivity of these curves.  Therefore, by repeating this process, we may assume that the curve $C_F$ has non-negative self intersection.  If $C_F^2 = 0$, then there exists a morphism $E_0 \to C$ fibering $E_0$ over a curve with $C_F$ as one of its fibers.  However, as $C_F$ does not intersect any other exceptional divisors, the same is true for the generic fiber, so the generic fiber $f_g$ of this map would satisfy $K_{E_0} \cdot f_g = -2$ so $0 = \pi^*(K_X)\cdot f_g = (K_X + E_0 + \dots)\cdot f_g = -2$ (where $\dots$ indicate the other exceptional divisors which by assumption do not intersect $f_g$), a contradiction.

Therefore, we may assume that $C_F^2 > 0$ which implies that $C_F$ is a big divisor on the surface $E_0$.  Because $-K_{E_0} = \frac{n-1}{n}C_F$, $-K_{E_0}$ is also big.  Because $C_F$ is a smooth curve through at worst Du Val singularities that does not intersect any other exceptional divisors of $\pi$, the pair $(E_0, C_F + \sum -a(\Delta_i, X) \Delta_i|_{E_0})$ is dlt, where the $\Delta_i$ are the other exceptional divisors of $\pi$, so we may consider the $K_{E_0} + C_F + \sum -a(\Delta_i, X) \Delta_i|_{E_0}$ MMP on $E_0$.  Here, the divisor $K_{E_0} + \frac{n-1}{n} C_F + \sum -a(\Delta_i, X) \Delta_i|_{E_0} = 0$ and the components $\Delta_i$ do not intersect $C_F$, so this contracts all components $\Delta_i$ along with any additional curves that do not intersect $C_F$.  Denote the image of this surface by $S$.  This is an isomorphism on $C_F$, so we continue to denote $C_F$ in the same way.  By construction, $C_F$ is ample on $S$ and $-K_S = \frac{n-1}{n} C_F$, so $S$ is Fano.  If $S$ has any strictly log canonical (non klt) singularities, then $S$ has index of $-K_S$ equal to 1,2,3,4 or 6 (see Lemma \ref{logcanonicalsurfacesings}), and the index divides $d \ge 5$, so we must have index 1 or 3.  It the index is 1, $K_S$ is Gorenstein near the non klt singularity.  Because $-K_S$ is ample, by Theorem \ref{thm:conelogcanonical}, $S$ is an elliptic cone, in which case it is impossible for $-K_S = \frac{n-1}{n}C_F$ unless $n =2$, which implies that $d$ is even.  If the index is 3, then $S$ is rational \cite[Theorem 8.5]{hacking}, so we assume that $S$ is a rational Fano surface.  

Now, we proceed by comparing the minimal resolution of $S$ and its minimal model.  Let $\phi: M \to S$ be the minimal resolution, and let $M \to Z$ be the minimal model of $M$ (with respect to $K_M$).  By \cite[Corollary 3.4]{nakayama} any curve $C \subset M$ with $C^2 < 0 $ is either contained in the exceptional locus of $M \to S$ or is a $-1$ curve.  By the ampleness of $C_F$, any $-1$ curve in $M$ must intersect the pullback of $C_F$.

Following the ideas in \cite{nakayama, Fujita}, we can classify such surfaces.  Although their work is in the case when $C_F$ is Cartier and $S$ is klt, the Du Val singularities do not provide a significant complication.  For example, suppose that $S$ is singular at a point $p\in C_F$.  By assumption, this is a Du Val singularity.  Let $P$ be the chain of $-$2 curves above $p$ in the minimal resolution $M$.  If $C \subset M$ is a $-1$ curve that intersects the chain $P$, we can contract $C$ and then contract the curves in $P$ until we find either a degree 0 curve (in which case, the MMP can terminate with a fibration) or produce a $-1$ curve connected to only smooth points of $C_F$ and then follow the ideas of \cite{Fujita}.  For brevity, we only sketch the idea here.  We first reduce to $S$ only having $A_1$ singularities along $C_F$: if there were larger $A_n$ singularities, there must be a $-1$ curve connected somewhere to the chain, and contracting the $-1$ curve and connected $-2$ curves yields a fibration where $\phi^*(K_S + \frac{n-1}{n}C_F)$ is not trivial.  Then, we note that any $-2$ curve must meet a $-1$ curve (it cannot be a section in $Z$ as a general fiber of the fibration would also not satisfy $\phi^*(K_S + \frac{n-1}{n}C_F)$ trivial), and such a $-1$ curve cannot meet $C_F$ in any other point (because if it did, and because $n \ge 5$, that $-1$ curve would not satisfy $\phi^*(K_S + \frac{n-1}{n}C_F)$ triviality).  Therefore, we find $-1$ curves meeting a $-2$ curve and any other negative curves attached to the $-1$ curves have discrepancy at most $\frac{n-1}{2n}$.  Ultimately, this is impossible to satisfy while simultaneously maintaining $\phi^*(K_S + \frac{n-1}{n}C_F) = 0$.  Therefore, we conclude that there are no singularities along $C_F$.  Then, we show that $S$ must be klt: if $S$ has a log canonical singularity of Cartier index 3, we show that the unique fork of the dual graph in the minimal resolution must be the negative section of the minimal model, but then $C_F$ must also be a section and hence rational.  However, from the formula $K_S + F = \frac{1}{n}C_F$, this is only possible if $C_F^2 = -2n <0$, which is a contradiction.   Therefore, we find that $S$ is a klt Fano surface with $C_F$ Cartier, and we can apply Fujita's results directly.  

So, we assume $S$ is a log terminal Fano surface, and $C_F$ is a Cartier divisor on $S$ satisfying $K_{S} + \frac{n-1}{n}C_F =0$ for some $n \ge 5$, and we use the classification in \cite{Fujita}) to bound its fractional index and prove that there is only one possibility for $S$.

\begin{definition}
    For a $\mathbb{Q}$-Fano log terminal variety $Z$, define the \textit{fractional index} as \[ r(Z) = \sup \{r\in \mathbb{Q}_{>0} \mid -K_Z \equiv rL \text{ for an ample Cartier divisor } L \} .\]
\end{definition}

\begin{lemma}
    In the setting above, the index must in fact be $\frac{n-1}{n}$.
\end{lemma}

    \begin{proof}
        Indeed, if it were less than this, computations show that we would find a smaller ample divisor $L$ and $kL = C_F$, $k \ge 2$, but this would imply that the fractional index is at least $1$, but then by \cite{Fujita} $S$ is either $\bP(1,1,q)$ or has at worst Du Val singularities.  If $S$ is $\bP(1,1,q)$, using the relationship $K_S + \frac{n-1}{n}C_F = 0$, the fact that $C_F$ misses the singular point of $S$, a short computation shows that $n \le 4$, contradicting our assumption that $n \ge 5$.  If instead $S$ has Du Val singularities, then we can find a curve $l$ such that $K_S \cdot l = -1, -2, $ or $-3$, and then use the relationship $K_S + \frac{n-1}{n}C_F = 0$ to get a contradiction if $n \ge 5$.
    \end{proof}

So, given that the fractional index is $\frac{n-1}{n}$, $n \ge 5$, we can apply the results in \cite{Fujita}.  From the classification in that paper, we see that the only possible fractional index of this form is $\frac{4}{5}$.  By Fujita's classification, $S$ must have a singularity of type $\frac{1}{5}(1,2)$ away from $C_F$.  $S$ is constructed as a birational model of $\mathbb{F}_3$: blow up two points on a fiber $l$ away from the negative section of $\mathbb{F}_3$, and let $M \to \mathbb{F}_3$ be the resulting surface.  Then, contract the strict transform of $l$ and the strict transform of the negative section to obtain $M \to S$, with an isolated $\frac{1}{5}(1,2)$ singularity on $S$.  The curve $C_F$ is obtained as the image of the strict transform of the degree two multisection of $\mathbb{F}_3$ that passes through the two points blown up.  Let $l_1$ and $l_2$ be the images of the two $-1$ curves in $S$.  Then, by construction, either $E_0 = S$, $E_0 = M$, or $E_0$ is obtained from $M$ by blowing down either the $-2$ or the $-3$ curve (but not both).  Note that this implies that $E = E_0$ (there are no divisors with discrepancy $1$) and there are no other components of $\Supp F$ that intersect $E$ (no divisors have discrepancy of the form $\frac{n-1}{n}$).

At this point, the problem becomes purely computational, studying the positivity and negativity of $K_{Y_{l+1}}$ and $E^{l+1}$.  Suppose we are in the first case, $E = S$ (the other cases are similar, with extra divisors $G_i$ attached along the exceptional curves in $E$).  Because $C_F$ is contained in the smooth locus of $E$, the contraction of each component of the exceptional chain $F_1 \cup \dots \cup F_k$ is the contraction of a smooth ruled surface.  Because $n = 5$, there are only three possibilities for the chain itself (as the generic point contracts to a cyclic quotient singularity of index 5): either there is one surface $F_1$ with coefficient $\frac{4}{5}$ in $\pi^*(K_X)$; two surfaces $F_1 \cup F_2$ with coefficient either $\frac{4}{5}$ and $\frac{2}{5}$ or $\frac{4}{5}$ and $\frac{3}{5}$; or four surfaces $F_1 \cup F_2 \cup F_3 \cup F_4$ with coefficient (of $F_i$) $1 - \frac{1}{n}$.  Therefore, thus far the MMP has been the contraction of a smooth ruled surface $\Delta$ onto $F_k$ (the last divisor in the chain) followed by the contractions of each $F_i$.  By computation, $(K_{Y_{l+1}} + E^{l+1})\cdot C_F = -8$ and $C_F^2 = 10 $ (as a curve in $E^{l+1}$).  Denote by $E^{l+1} \cdot C_F = -d$ and $K_{Y_{l+1}}\cdot C_F = d-8$.  Let $\phi: Y \to Y_l$ be the composition of the contractions thus far.  We can compute $\phi^*K_{Y_{l+1}}$ and $\phi^*E^{l+1}$ and, together with the hypotheses that curves in $\Supp F$ are $K$-non-negative but the generic curves in $\Delta$ are $K$-negative and that all components of $\Supp F$ and $\Delta$ are smooth ruled surfaces, computation implies that $d > 8$ and there can be no extra exceptional components $G_i \in \Supp G$ of $\pi$ attached to $\Delta$.  (In fact, this already rules out the first two possibilities for the chains $F_i$: the generic fiber $f_g$ of $\Delta$ would then have image in $X$ satisfying $K_X \cdot \pi(f_g) = \frac{i}{5}$, $i = 1, 2, $ or $3$, so we use $dK_X + 4D \sim 0$ to conclude $d$ is even.) 

Now, from the computation that $d > 8$, we see that $E^{l+1} \cdot C_F < 0$ and $K_{Y_{l+1}}\cdot C_F > 0$.  There must exist a $K$-negative extremal ray somewhere in $Y_{l+1}$, and the only contractible curves in $E^{l+1}$ are $l_1$ and $l_2$.  If at this point of the MMP we find another $K$-negative, $E^{l+1}$-positive extremal ray that does not contain $l_1$ or $l_2$, because there were no extra exceptional components $G$ of $\pi$ attached to $\Delta$, this ray is necessarily $K+E^{l+1}$-negative, and the MMP terminates in a fibration, so we are done by previous arguments.  If there are $K$-negative rays away from $E^{l+1}$ (i.e. are $E$-trivial and do not contain $l_1$ or $l_2$), we contract and continue the MMP. 

The only other possibility is that we find only $K$-negative rays that contain $l_1$ or $l_2$, and we will show that this is not possible.  These cannot both be $K$-negative: it is straightforward to show $C_F$ in in the face spanned by these curves, and $K_{Y_{l+1}}\cdot C_F > 0$.  Suppose the only $K$-negative ray contains $l_1$.  Let $\phi_{l+1}: Y_{l+1} \dashrightarrow Y_{l+2}$ be the associated step of the MMP.  At this stage, $E^{l+2} \cong \mathbb{P}(1,1,2)$, so at the next stage of the MMP, $l_2$ and $C_F$ must be numerically equivalent.  However, on $E^{l+1}$, $\phi_{l+1}$ is a morphism, so we can compute the pullback of $K_{Y_{l+2}}$ to $E^{l+1}$.  Using the numerical equivalence of $l_2$ and $C_F$ after the contraction, we find that $K_{Y_{l+1}} \cdot l_2 < 0$, contradicting that both curves were not $K$-negative.  This concludes the proof in the case that $E_0$ is contracted to a point in $X$. 

\noindent \textbf{Case 2: $E_0$ is contracted to a curve in $X$.}

This case uses that the generic point of the image of $E_0$ is a log canonical surface singularity, so we have a classification.  The following result is well-known by classification of log canonical surface singularities (for example, \cite[Theorem 4.7]{km98} or \cite[Chapter 3]{kollarplus}).

\begin{lemma}\label{logcanonicalsurfacesings}
    If $X$ is a threefold with a one-dimensional locus of log canonical singularities $C$, over the generic point $p$ of a component of $C$, the pre-image of $p$ in the terminalization is: 
        \begin{enumerate}
            \item A smooth elliptic or nodal curve, or a cycle of smooth rational curves; 
            \item A tree with exactly one fork, which has three branches with lengths $n_1 - 1, n_2 - 1, $ and $n_3 - 1$.  The possible values for $(n_1, n_2, n_3)$ are $(2,3,6)$, $(2,4,4)$, or $(3,3,3)$.  
            \item A tree with exactly two forks as pictured in \cite[Theorem 4.7(3)]{km98}.  
        \end{enumerate}
    In each case, the index of the canonical divisor near $p$ is $1, 2, 3, 4, $ or $6$. 
\end{lemma}

Because the index of $K_X$ divides $d$, we immediately obtain the following:

\begin{corollary}
    If $X$ is log canonical along a curve and $d$ is odd, the index at the generic point must be $1$ or $3$.  
\end{corollary}

The only cases above that have index $1$ or $3$ are case (i) or the $(3,3,3)$ case of (ii).  Suppose $C$ in $E_0$ is the first curve contracted.  By Lemma \ref{lem:firstcurvecontracted} and its proof, if $C$ is not a fiber of $\pi|_{E_0}$, then $E_0 = \bP^1 \times C'$.  If $C$ intersects no other exceptional components of $\pi$, then $K_X\cdot\pi(C) = -2$, so the relationship $dK_X + 4D$ implies that $D$ is even.  If $C$ does intersect other exceptional components, they must be components of $\Supp F$ with discrepancy $\frac{n-1}{n}$.  In this case, if any component of $\Supp F$ is not supported on fibers of $\pi|_{E_0}$, then we must be in case (ii), $(3,3,3)$, which implies $C' = \bP^1$ so $E_0 = \bP^1 \times \bP^1$ and $n = 3$.  In this case, the configuration of divisors above the log canonical singularity consists of one fork and three branches, each with length 2, and the branches contract to cyclic quotient singularities of type $\frac{1}{3}(1,2)$ along the generic point.  The discrepancies of the $F_i$ meeting $E$ are therefore $\frac{2}{3}$, and because $C \cong \bP^1$, we find that $K_X \cdot \pi(C) = -2 + \frac{2}{3}k$, where $k$ is the number of components $F_i$ that $C$ intersects.  This implies $d$ is even provided $k \ne 1$.  If $k = 1$, then $C$ intersects $F_1$ but not $F_2$ or $F_3$, so $F_1|_{E_0}$ is a section of $\calO(1,1)$ and $F_2$ and $F_3$ restrict to fibers proportional to $C$.  However, considering the fiber of $\pi$ over the intersection point of $F_1$ and $F_2$, these surfaces intersect along some curve $f$ that is contracted to $X$.  However, computing $0 = \pi^*(K_X)\cdot f$, we find a contradiction that $K_Y \cdot f \ge 0$.  Therefore, $C$ only intersects components of $\Supp F$ supported on the fibers of $\pi|_{E_0}$, and it is easy to see that there must exist another $K$-negative $E$-positive extremal ray (other than $C$) and it cannot be contained in the exceptional locus of $\pi$, so must be $K+E$-negative, so its contraction results in a fibration and we see that $d$ is even. 

Otherwise, the first curve in $E_0$ contracted must be a fiber of $\pi|_{E_0}$.  The only contractions thus far did not contract curves in $E_0$, and this is the first curve contracted, so there must have been a contraction of some component of $\Supp F$ onto $E_0$ (Lemma \ref{firstmustdofcontr}).  The existence of such a component $F_i$ implies that we are in the (3,3,3) case in (ii), and the contraction of divisors $F_i$ means we can apply the arguments in the contracted to a point case about contractions of chains of divisors in $\Supp F$.  However, in order for the singularity to be log canonical with the appropriate configuration, not only is the index 3, but this discrepancy of the divisor $F$ must be $\frac{2}{3}$, so we must have $n = 3$.  This is impossible by Lemma \ref{n=3}.  This concludes the proof in the case that $E_0$ is contracted to a curve. 
\end{proof}

Lastly, we can immediately generalize this result to the case of non-normal slc varieties with anti-ample canonical sheaf and whose normalizations have non-lt singularities.  By normalizing the variety and considering each component $(X^\nu,\Delta)$ (where $X^\nu$ is a component of the normalization and $\Delta$ is the double locus), this is equivalent to studying the case of a pair $(X^\nu,\Delta)$ where $- (K_{X^\nu} + \Delta)$ is ample and the $1$-dimensional locus of log canonical singularities intersects $\Delta$.  Because the locus of log canonical singularities must intersect $\Delta$ \cite{kollarplus} but cannot be contained in $\Delta$, $X^\nu$ must in fact have a log canonical singularity along a curve.  Then, a terminal modification $X'$ of $X^\nu$ and minimal model program on $X'$ gives the same conclusion, where $\Delta$ is not considered as a component of $E$ (but we instead use the relationship $d(K_{X^\nu} + \Delta) + 4D \sim 0$). 

\subsection{Applications to $\calM_{\dfours}$ and  $\calM_{\dfour}$}
We can use Theorem \ref{thm:even} to obtain boundedness of $\dfour$ \he stable pairs without the smoothability assumption and other interesting corollaries.  First, we slightly rephrase Theorem \ref{thm:even} and combine this with the discussion in the previous paragraph.  

\begin{theorem}\label{thm:slt} If $(X,D)$ is a $\dfour$ \he stable pair and $d$ is odd, then any component of the normalization of $(X,\frac{4}{d}D)$ is dlt.
\end{theorem}

\noindent For quintic surfaces, we can be even more precise (see Theorem \ref{quinticsplt}).

In what follows, we make a careful distinction in each statement between smoothable and non-smoothable pairs.  First, a corollary of Theorem \ref{thm:totaro}: 

\begin{corollary}
For $d$ odd, the normal varieties $X$ occurring in a $\dfours$ \he stable pair are rational.
\end{corollary}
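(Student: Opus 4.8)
The plan is to combine Theorem \ref{thm:slt} with the rationality-specialization result of de Fernex and Fusi (Theorem \ref{thm:totaro}). The crucial point is that Theorem \ref{thm:slt} upgrades the semi log canonical hypothesis to semi log \emph{terminal} when $d$ is odd, and for a normal variety this is exactly the assertion that the pair $(X, \frac{4}{d}D)$ is log terminal. A $(d,4)$ $\PP^3$-smoothable \he stable pair is in particular a $(d,4)$ $\PP^3$ \he stable pair, so Theorem \ref{thm:slt} does apply.

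First I would record that, since $X$ is assumed normal, the normalization $\nu\colon X^\nu \to X$ is the identity and the conductor $\Delta^\nu$ vanishes; thus ``$(X,\frac{4}{d}D)$ is semi log terminal'' reduces to ``$(X,\frac{4}{d}D)$ is log terminal.'' Because $\frac{4}{d}D$ is an effective $\mathbb{Q}$-divisor and discrepancies only increase as one decreases the boundary, it follows that $a(E,X,0) \ge a(E,X,\frac{4}{d}D) > -1$ for every exceptional $E$, so $(X,0)$ is log terminal as well; that is, $X$ has klt singularities.

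Next I would invoke the smoothability hypothesis: by Definition \ref{def:stablesmoothable}, $X$ is the special fiber of a flat family $(\mathcal{X},\mathcal{D})/T$ over the germ of a curve whose general fiber is $\PP^3$, with $K_{\mathcal{X}/T}$ and $\mathcal{D}$ $\mathbb{Q}$-Cartier. Both the general fiber $\PP^3$ (smooth, hence klt) and the special fiber $X$ (klt by the previous paragraph) are klt threefolds, so this is a family of the type to which Theorem \ref{thm:totaro} applies. Since $\PP^3$ is rational and rationality specializes in families of complex klt varieties of dimension at most $3$, the special fiber $X$ is rational.

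The one point requiring care is checking that the smoothing genuinely fits the framework of Theorem \ref{thm:totaro} — that is, that it is a family of klt varieties and not merely a family with klt endpoints. This, however, is precisely the observation already recorded immediately after Theorem \ref{thm:totaro} (``if $X$ is a log terminal degeneration of $\mathbb{P}^3$, it is rational''). Consequently the substantive content of the corollary is carried entirely by the upgrade from slc to slt furnished by Theorem \ref{thm:slt} in odd degree; once $X$ is known to be log terminal, the conclusion is a direct application of the specialization statement, so I do not anticipate a serious obstacle beyond this verification.
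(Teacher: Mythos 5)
Your proposal is correct and follows exactly the route the paper intends: Theorem \ref{thm:slt} upgrades the pair to semi log terminal in odd degree, which for normal $X$ means $X$ is klt, and then the smoothing to $\PP^3$ together with Theorem \ref{thm:totaro} gives rationality. The paper states this corollary without a written proof, deriving it directly from Theorem \ref{thm:totaro} in precisely the way you describe.
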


Finally, using Theorem \ref{thm:slt}, we are also able to obtain boundedness for $\dfour$ \he stable pairs when $d$ is odd, not just smoothable ones.  The following theorem is a special case of \cite[Corollary 1.7]{hmx2} for threefolds.  

\begin{theorem}\label{thm:boundednessP3} For fixed odd degree $d$, the set of $\dfour$ \he stable pairs form a bounded family.
\end{theorem}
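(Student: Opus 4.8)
The plan is to reduce to the boundedness theorem \cite[Corollary 1.7]{hmx2} of Hacon--M\textsuperscript{c}Kernan--Xu, which bounds a family of $\epsilon$-log terminal pairs of fixed dimension once the numerical invariants are fixed and the number $\epsilon>0$ is uniform. Without the smoothability hypothesis the only obstruction is that the $\epsilon$ appearing in the \he stability condition need not be bounded below, so the entire content is to produce a uniform lower bound on the discrepancies. This is exactly what the oddness of $d$ buys us through Theorem \ref{thm:slt}.

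First I would invoke Theorem \ref{thm:slt}: for odd $d$ every $(d,4)$ $\PP^3$ \he stable pair satisfies that $(X,\tfrac4d D)$ is semi log terminal, so no strictly log canonical discrepancies occur. The crux is then to upgrade slt to $\epsilon$-slt with $\epsilon$ depending only on $d$. Passing to the normalization $\nu\colon X^\nu\to X$ with conductor $\Delta^\nu$, the pair $(X^\nu,\Delta^\nu+\tfrac4d D^\nu)$ is log terminal and, since $dK_X+4D\sim 0$ is Cartier, $\nu^*(dK_X+4D)=dK_{X^\nu}+d\Delta^\nu+4D^\nu\sim 0$ is Cartier. On a log resolution $\pi\colon Y\to X^\nu$ one has
\[ dK_Y + d\,\pi_*^{-1}\Delta^\nu + 4\,\pi_*^{-1}D^\nu - \sum_i d\,a_i E_i \;=\; \pi^*\bigl(dK_{X^\nu}+d\Delta^\nu+4D^\nu\bigr), \]
where $a_i=a(E_i,X^\nu,\Delta^\nu+\tfrac4d D^\nu)$. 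Every term on the left other than $\sum_i d\,a_i E_i$ is integral, and the right-hand side is the pullback of a linearly trivial (hence Cartier) divisor, so it too is integral; comparing coefficients of each prime $E_i$ forces $d\,a_i\in\Z$. Since slt means $a_i>-1$, integrality improves this to $a_i\ge -1+\tfrac1d$. Thus $(X,\tfrac4d D)$ is $\epsilon$-slt with the uniform value $\epsilon=\tfrac1d$.

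With a uniform $\epsilon$ in hand the remaining invariants are already pinned down: by Definition \ref{def:stable} we have $K_X^3=K_{\PP^3}^3=-64$, so $-K_X$ is an ample $\Q$-Cartier divisor of fixed anticanonical volume $(-K_X)^3=64$, and $D\sim_{\Q}-\tfrac d4 K_X$ has all its intersection numbers determined by $d$. The family of $\tfrac1d$-slt threefold pairs with these fixed invariants therefore meets the hypotheses of \cite[Corollary 1.7]{hmx2}, which bounds the $X$; fixing a uniformly very ample multiple $-mK_X$ embeds all such $X$ in a common $\PP^N$, after which $D$ varies in a bounded family of divisors, so the pairs $(X,D)$ are bounded. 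The hard part is the middle step: extracting the uniform $\epsilon$, which rests on the integrality of discrepancies forced by $dK_X+4D\sim 0$ together with the non-normal bookkeeping of the conductor $\Delta^\nu$. Once that is secured, boundedness is a direct application of the Hacon--M\textsuperscript{c}Kernan--Xu theorem.
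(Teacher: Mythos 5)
Your proof is correct and follows essentially the same route as the paper: Theorem \ref{thm:slt} gives semi log terminality for odd $d$, the linear equivalence $dK_X+4D\sim 0$ upgrades this to a uniform $\epsilon$-log terminal bound, and \cite[Corollary 1.7]{hmx2} plus passage to the normalization finishes the argument. In fact your integrality computation ($d\,a_i\in\Z$, hence $a_i\ge -1+\tfrac1d$) usefully spells out the step the paper compresses into the single clause ``because $dK_X+4D\sim 0$ is linear equivalence (not just numerical).''
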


\begin{proof}
Restricting to the normal case, by Theorem 3.1, $(X,\frac{4}{d}D)$ is klt, and because of the assumption that $dK_X + 4D \sim 0$, the pairs $(X,\frac{4}{d}D)$ are $\epsilon$-log terminal because $dK_X + 4D \sim 0$ is linear equivalence (not just numerical).  Then, $-K_X$ is ample and $K_X + \frac{4}{d} D$ is numerically trivial by assumption, hence by \cite[Corollary 1.7]{hmx2}, form a bounded family.  We can restrict to the normal case because the non-normal pairs are in bijection with normal pairs and a certain involution as in \cite[Theorem 5.13]{newkollar} and, by Theorem \ref{thm:slt}, the normalization is $\epsilon$-log terminal.
\end{proof}

Next, we study log terminal Fano degenerations of $\PP^3$ to determine the boundary of the moduli space of $\dfours$ \he stable pairs, generalizing from $\PP^3$ to $\PP^n$ when possible.  We will first focus on threefolds with canonical singularities appearing in the moduli problem.  

\subsection{Canonical Fano threefolds}\label{sec:canonicalclassification}

A standard reference for canonical threefolds is \cite{ypg}.  In the Fano case, particularly when $X$ is Gorenstein, such threefolds can be classified by invariants like $K_X^3$ and the Fano index.  If $X$ has at worst canonical singularities, the Fletcher-Reid plurigenus formula \cite[Theorem 10.2]{ypg} gives the plurigenera of $X$ in terms of $K_X^3$, $\chi(\calO_X)$, and coefficients $c_P$ determined by a basket of singularities $\{ Q_i \}$ for $X$.  In \cite[Theorem 1.1]{fletcher}, Fletcher shows the plurigenus formula is exact, meaning that any two canonical threefolds with the same plurigenera have the same $K_X^3$, $\chi(\calO_X)$, and basket of singularities are the same.  The contribution from the singularities is nonzero precisely when there are points $Q_i$ such that $K_{X'}$ is not Cartier at $Q_i$.  In the case at hand, $X$ is a flat degeneration of $\PP^3$, the plurigenera of $X$ and $\PP^3$ are the same, so this inversion of the plurigenus formula implies that $X'$ must be a terminal Gorenstein variety.  Because $X' \to X$ is any crepant partial resolution such that $X'$ has only terminal singularities, $K_{X'}^3 = -64$ and we can take $X'$ to be $\Q$-factorial.  Thus, we obtain the following.

\begin{theorem}\label{thm:terminal}
If $X$ is a terminal variety that admits a smoothing to $\PP^3$, then $X \cong \PP^3$.
\end{theorem}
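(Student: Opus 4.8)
The plan is to show that $X$ is a Gorenstein terminal Fano threefold of maximal Fano index, and then to invoke the generalized Kobayashi--Ochiai theorem. First I would record the consequences of $X$ being a terminal degeneration of $\PP^3$. Fix a smoothing $\pi\colon \calX \to T$ over the spectrum of a DVR $T$ (after a finite base change) with generic fiber $\calX_\eta \cong \PP^3$, special fiber $\calX_0 = X$, and $K_{\calX/T}$ $\Q$-Cartier. By openness of ampleness in a flat family with $\Q$-Cartier relative canonical class, $-K_X$ is ample, so $X$ is a Fano threefold, and by Proposition \ref{prop:constantvolume} we have $K_X^3 = -64$. Since $X$ is already terminal, it is its own terminal crepant partial resolution, so the inversion of the Fletcher--Reid plurigenus formula discussed above forces the singularity basket of $X$ to be empty; hence $K_X$ is Cartier and $X$ is Gorenstein. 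Being terminal, $X$ is canonical, and therefore has rational Cohen--Macaulay singularities.

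The heart of the argument is to promote the hyperplane class of the generic fiber to a Cartier divisor on $X$, thereby showing that the Fano index of $X$ equals $4 = \dim X + 1$. For this I would study the relative Picard scheme $\Pic_{\calX/T}$. Because every fiber $\calX_t$ is a Fano threefold with rational singularities, Serre duality and Kawamata--Viehweg vanishing give $H^1(\calX_t, \calO_{\calX_t}) = H^2(\calX_t, \calO_{\calX_t}) = 0$: indeed $H^2(\calO) \cong H^1(\omega)^\vee = H^1(\calO(K))^\vee = 0$ and $H^1(\calO) \cong H^2(\omega)^\vee = H^2(\calO(K))^\vee = 0$ since $-K$ is ample. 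The vanishing of $H^2$ makes $\Pic_{\calX/T} \to T$ smooth, and the vanishing of $H^1$ makes it unramified, so it is \'etale over $T$. Consequently the class of $\calO_{\PP^3}(1)$ on $\calX_\eta$ extends (after a further finite base change) to a line bundle $L$ on $\calX$, and I set $L_X := L|_X$.

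It then remains to compare $L$ with the relative canonical class. On the generic fiber $4 L_\eta \sim -K_{\calX/T}|_{\calX_\eta}$, so $\calO_{\calX}(4L + K_{\calX/T})$ is a line bundle trivial on $\calX_\eta$; by separatedness of the \'etale Picard scheme over the trait $T$ (equivalently, because a line bundle trivial on the generic fiber of $\pi$ is pulled back from $T$ and $\Pic(T) = 0$), it is trivial on all of $\calX$. Restricting to $X$ yields $-K_X \sim 4 L_X$ with $L_X$ an ample Cartier divisor. Thus $X$ is a Gorenstein canonical Fano threefold with $-K_X \sim 4 L_X = (\dim X + 1)\,L_X$, and by the Kobayashi--Ochiai theorem in the form extended to Gorenstein canonical Fano varieties by Fujita, such a variety is isomorphic to $\PP^3$.

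The main obstacle is precisely the \'etale step: extending $\calO_{\PP^3}(1)$ across the degeneration. The subtlety is that Gorenstein terminal threefold points (for instance nodes) can carry non-Cartier Weil divisors, so the naive closure $\overline{\calO_{\PP^3}(1)}$ need not restrict to a Cartier divisor on $X$, and one cannot simply declare the index to be $4$. The vanishing $H^1(\calO_X) = H^2(\calO_X) = 0$ coming from the Fano condition is exactly what rescues the argument, guaranteeing that $\calO_{\PP^3}(1)$ deforms to an honest line bundle on the central fiber; once the index is known to be maximal, the classification of maximal-index Fano threefolds finishes the proof.
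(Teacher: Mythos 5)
Your overall strategy is the same as the paper's: use the inversion of the Fletcher--Reid plurigenus formula to force $X$ to be Gorenstein, show the Fano index of $X$ equals $4$, and conclude by the Kobayashi--Ochiai-type classification of Gorenstein canonical Fano threefolds of maximal index. The paper simply cites \cite[Theorem 2.1]{almostfano} for the middle step and \cite[Theorem 3.1]{almostfano} for the last one, whereas you attempt to prove the index-preservation statement directly, and that is where the argument has a gap.

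The gap is in the sentence ``Consequently the class of $\calO_{\PP^3}(1)$ on $\calX_\eta$ extends \dots to a line bundle $L$ on $\calX$.'' The vanishing $H^1(\calX_t,\calO)=H^2(\calX_t,\calO)=0$ does make $\Pic_{\calX/T}\to T$ \'etale, but an \'etale scheme over a (henselian) trait is a disjoint union of finite \'etale pieces together with pieces whose special fiber is empty; \'etaleness therefore lets you lift line bundles \emph{from} the special fiber to the total space, but it does not force a point of the generic fiber of $\Pic_{\calX/T}$ to specialize. What you get for free is only that the Weil-divisor closure $\calH$ of a hyperplane satisfies $4\calH\sim -K_{\calX/T}$, so that $\Pic(X)$ embeds into $\Pic(\calX_\eta)\cong\Z$ as a subgroup containing $4$; a priori the image could be $2\Z$ or $4\Z$ and the Fano index of $X$ could be $2$ or $1$. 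Since terminal Gorenstein threefold points (e.g.\ ordinary double points) do carry non-Cartier Weil divisors --- the very phenomenon you correctly flag as ``the main obstacle'' --- this possibility must be excluded by a genuine argument, for instance the topological comparison $H^2(X,\Z)\cong H^2(\calX_t,\Z)$ using that cDV points have $2$-connected Milnor fibers and simply connected links; that is precisely the content of the cited \cite[Theorem 2.1]{almostfano}, which your sketch does not reprove. A smaller slip: ``openness of ampleness'' gives that ampleness on the \emph{special} fiber propagates to nearby fibers, not the converse, so ampleness of $-K_X$ should instead be taken from the standing hypotheses on the threefolds appearing in the moduli problem, as the paper implicitly does.
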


\begin{proof}
The Fletcher-Reid plurigenus formula shows that if $X$ is not Gorenstein, it does not admit a smoothing to $\PP^3$, so it suffices to consider Gorenstein threefolds $X$.  In this case, \cite[Theorem 2.1]{almostfano} implies that the Fano index of $X$, the maximal integer $r$ such that $K_X \sim -rH$ for $\calO(H) \in \Pic(X)$, is equal to that of $\PP^3$.  Therefore, the Fano index of $X$ is 4.  Then, \cite[Theorem 3.1]{almostfano} says that, because the Fano index is maximal, $X \cong \PP^3$.
\end{proof}

There do exist non-trivial canonical degenerations of $\PP^3$.  

\begin{example}\label{ex:O(2,2)}
Observe that the standard embedding of the quadric surface $\PP^1 \times \PP^1 \subset \PP^3$ is an element of the linear system $\calO_{\PP^3}(2)$.  Let $Z$ be the image of the degree two embedding of $\PP^3 \hookrightarrow \PP^9$.  There is a standard degeneration from $Z$ to the cone over a hyperplane section of $Z$ by taking the cone over $Z$ (see, for example, \cite[Example 7.61]{km98}).  In this case, the hyperplane section of $Z$ corresponds to an element of $\calO_{\PP^3}(2)$, and is the $\calO(2,2)$ embedding of the quadric surface in $\PP^8$.  A computation shows that the cone over this is indeed Gorenstein as it is the cone over the anti-canonical embedding of $\PP^1 \times \PP^1$.  A check shows that this has canonical singularities; for details see \cite[Lemma 3.1]{newkollar}.  Therefore, this gives an example of a flat degeneration of $\PP^3$ to a Gorenstein canonical variety.  
\end{example}

A priori there may be many canonical degenerations of $\PP^3$, but the following theorem shows that if $d$ is odd, they must be closely related to the previous example.  In fact, they must be $\PP^3$ or cones over the anticanonical embeddings of elements of the linear system $|\calO_{\mathbb{P}^3}(2)|$, which have a simple description.  

\begin{theorem}\label{thm:nocanonical}
For odd degree $d$, if $X$ is a canonical threefold appearing in a $\dfours$ \he stable pair $(X,D)$, then $X$ is either $\PP^3$, the cone over the anticanonical embedding of $\PP^1 \times \PP^1$, or the cone over the anticanonical embedding of the quadric cone, $\PP(1,1,2,4)$.  
\end{theorem}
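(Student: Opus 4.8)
The plan is to reduce everything to a Gorenstein Fano threefold of maximal anticanonical degree and then to separate the terminal case (which yields $\PP^3$) from the strictly canonical case (which yields the two cones). By Proposition \ref{prop:constantvolume} we have $(-K_X)^3 = 64$, and, as noted before the statement, the exactness of the Fletcher--Reid plurigenus formula \cite{fletcher, ypg} together with the constancy of plurigenera under the smoothing forces $X$ to be Gorenstein; moreover a crepant $\Q$-factorial terminalization $\pi\colon X'\to X$ is terminal and Gorenstein with $K_{X'}^3 = -64$ and $-K_{X'} = \pi^*(-K_X)$ nef and big. If $X$ is itself terminal, then it admits a smoothing to $\PP^3$ and Theorem \ref{thm:terminal} gives $X\cong\PP^3$, which is case (a). So from here I would assume $X$ is strictly canonical, so that $\pi$ extracts at least one crepant divisor $E$ (discrepancy $0$) with $\pi(E)\subset\operatorname{Sing}(X)$.

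Next I would show that such an $X$ is a projective cone over a surface. Here $\pi$ is a divisorial contraction taking $E$ to the vertex, and I expect $X'$ to carry a complementary $K_{X'}$-negative extremal contraction $\psi\colon X'\to B$: since $-K_{X'}=\pi^*(-K_X)$ is trivial exactly on the $\pi$-exceptional curves and positive elsewhere, any extremal ray not contracted by $\pi$ is $K_{X'}$-negative, and its contraction can be analyzed with the fiber-dimension and intersection bounds of Lemmas \ref{lem:boundonK}, \ref{lem:divisorialcont}, and \ref{lem:smallcont} (now with $E$ in the role of the boundary and discrepancy $0$ in place of $-1$). The outcome I aim for is that $\psi$ is a Fano fibration of relative dimension $1$ with general fiber $\PP^1$, onto a surface $B$ on which the strict transform of $E$ is a section; contracting that section by $\pi$ then exhibits $X$ as the projective cone over $B\cong E$, exactly as computed for the smooth quadric in Example \ref{ex:O(2,2)}. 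Standard adjunction, $K_E=(K_{X'}+E)|_E=E|_E$ with $-E|_E$ ample, identifies $E$ as a (possibly singular) del Pezzo surface whose degree is constrained by $(-K_X)^3=64$.

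Finally I would use the oddness of $d$ to force $E$ to be a quadric surface. For a general ruling $F$ of $\psi$ one has $K_{X'}\cdot F=-2$, hence $K_X\cdot\pi(F)=-\bigl((-K_E)\cdot\ell\bigr)$ for the corresponding class $\ell$ on $E$, and the relation $dK_X+4D\sim 0$ gives $D\cdot\pi(F)=-\tfrac{d}{4}K_X\cdot\pi(F)$. Reproducing and generalizing the parity computation of Example \ref{ex:O(2,2)}: if a covering family of rulings met the smooth locus of $X$ with $K_X\cdot\pi(F)$ not divisible by $4$, then for odd $d$ the divisor $D$ could neither contain all of them nor meet them in the smooth locus while keeping $(X,\tfrac{4}{d}D)$ log terminal, a contradiction. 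This excludes every del Pezzo base except the two quadrics, for which the minimal rulings all pass through the vertex so that $D$ may legitimately contain the isolated singular point; the smooth quadric $\PP^1\times\PP^1$ gives case (b) and the quadric cone $\PP(1,1,2)$ gives case (c), namely $X\cong\PP(1,1,2,4)$. I expect the main obstacle to be precisely this last circle of ideas: controlling the crepant exceptional locus of the terminalization --- in particular guaranteeing it is a single irreducible surface so that $X'$ really is a $\PP^1$-fibration over it, and excluding the other canonical Fano degenerations of $\PP^3$ flagged after Theorem \ref{thm:canonical} --- and verifying that the odd-degree parity obstruction can be absorbed by $D$ passing through the singular locus exactly when that surface is one of the two quadrics.
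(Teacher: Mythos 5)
The opening reduction (Gorenstein via the exactness of the Fletcher--Reid plurigenus formula, $X\cong\PP^3$ in the terminal case via Theorem \ref{thm:terminal}) and the closing heuristic (parity of $D\cdot\pi(C)$ against $K_X\cdot\pi(C)$ forcing $d$ even) both match the paper. The gap is your middle step, where you assert that $X'$ carries a complementary $K_{X'}$-negative extremal contraction that is a $\PP^1$-fibration onto the crepant divisor $E$, exhibiting $X$ as a cone over $E$. Lemmas \ref{lem:boundonK}, \ref{lem:divisorialcont}, and \ref{lem:smallcont} cannot be rerun ``with $E$ in the role of the boundary and discrepancy $0$ in place of $-1$'': those lemmas concern $(K+D)$-negative, $D$-positive rays where the boundary appears with coefficient $1$ in the crepant pullback ($K_Y+E=\pi^*K_X$), and it is exactly that coefficient which forces $E\cdot F=1$ on a ruling and hence the section-and-fibration structure. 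In the canonical case $K_{X'}=\pi^*K_X$ with no $E$ term, so a $K_{X'}$-negative ray need not be $E$-positive, and its contraction need not be a fibration at all --- it can be a divisorial contraction to a point or to a curve, or merely the first of several MMP steps. The paper instead runs a full MMP on the factorial terminal Gorenstein threefold $X'$ using Cutkosky's classification of extremal contractions (Lemma \ref{lem:notsmall}, Theorems \ref{thm:curve} and \ref{thm:cutkosky}), splitting according to whether the canonical locus is contained in $D$ and whether it has dimension $0$ or $1$; in most branches the output is a contracted curve $C$ with $K_X\cdot\pi(C)\in\{-1,-2,-3\}$ and $D\cdot\pi(C)\in\Z[\tfrac16]$, giving the parity contradiction, and the cone structure emerges only at the end of two specific branches.

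Moreover, your structural claim fails for one of the two target varieties: the crepant exceptional locus of the terminalization of $\PP(1,1,2,4)$ is not a single irreducible section of a $\PP^1$-fibration. In the paper's Case 1B it consists of $F_0\cong\mathbb{F}_2$ (contracted to a point, the section of the fibration) together with $F_1\cong\PP^1\times\PP^1$ (contracted to a curve), and the base of the fibration is $\mathbb{F}_2$, not a quadric surface; so ``classify del Pezzo bases and keep the two quadrics'' is not the mechanism by which $\PP(1,1,2,4)$ arises. You correctly flag the irreducibility of the exceptional locus and the exclusion of other fibration types as the main obstacle, but that obstacle is the substance of the proof rather than a detail to be absorbed, and the intermediate claim ``$X$ is a projective cone over a del Pezzo surface'' would have to be abandoned, not merely refined, for the argument to go through.
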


If $X$ has only terminal singularities, Theorem \ref{thm:terminal} implies the result.  If $X$ has canonical singularities, consider a crepant partial resolution $X' \to X$ such that $X'$ is terminal and $\Q$-factorial.  Before giving the proof, we give a sketch of the argument.  

By \cite{fletcher}, if $K_{X'}$ is not Cartier, there is a nonzero contribution to a basket of singularities on $X$, so $X$ is not isomorphic to $\mathbb{P}^3$.  It then suffices to consider the case where $X'$ is a terminal, $\Q$-factorial Gorenstein variety with $-K_{X'}$ nef.  Running a minimal model program on $X'$, if it terminates in a morphism $X' \dashrightarrow Y \to \Spec k$, then $Y$ must be a terminal Fano threefold with $\rho(Y) = 1$.  Studying the pseudo-index of $Y$ as in \cite{almostfano} and combining this with the fact that $K_{X'}^3 = -64$ would imply that $X'$ itself must have been $\PP^3$, so $X\cong \PP^3$.  If a run of the minimal model program on $X'$ terminates in a morphism $X' \dashrightarrow Y \to C$, where $C$ is a curve, the generic fiber of $Y \to C$ must be a smooth del Pezzo surface, so there are sufficiently general curves $L \subset X'$ such that $K_{X'} \cdot L = -3$ or $-2$, and if the termination is in a surface $W$, there are sufficiently general curves $L \subset X'$ such that $K_{X'} \cdot L = -2$.  If any of these curves miss the exceptional divisors of the partial resolution $\pi: X'\to X$, then $D\cdot \pi(L) \in \Z$, and we can argue as in the example above to show that $d$ must be even.  Similarly, we can reach the same conclusion if $D$ does not pass through the strictly canonical singularities of $X$.  The remaining case is when $D$ contains the strictly canonical singularities of $X$ and the general fiber $L$ intersects the exceptional divisors of $\pi: X' \to X$, because it is not obvious that $D \cdot \pi(L) \in \Z$.  However, we can explicitly understand the fibration when this occurs.  

First, let us recall results of Cutkosky on contractions of extremal rays on terminal, $\Q$-factorial Gorenstein threefolds.

\begin{lemma}\cite[Lemma 2]{cutkosky}
Suppose that $X$ is a terminal, $\Q$-factorial Gorenstein threefold.  Then, $X$ is factorial.
\end{lemma}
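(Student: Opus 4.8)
The plan is to reduce the statement to a purely local computation of the divisor class group and then combine two facts with opposite flavors: $\mathbb{Q}$-factoriality forces the local class group to be torsion, while the Gorenstein terminal (hence hypersurface) structure forces it to be torsion-free. First I would recall that a three-dimensional terminal singularity is isolated, so $\mathrm{Sing}(X)$ is a finite set of points; since factoriality is a local (indeed local-analytic) property, it suffices to prove that $R = \calO_{X,p}$ is a UFD for each singular point $p$, equivalently that $\Cl(R) = 0$. The $\mathbb{Q}$-factoriality hypothesis supplies one half at once: every Weil divisor becomes Cartier, hence principal on the local ring, after multiplication by a positive integer, so $\Cl(R)$ is a torsion group.

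For the other half I would exploit that a Gorenstein terminal threefold singularity is compound Du Val (Reid), in particular an isolated hypersurface singularity. Writing $U = \Spec R \setminus \{p\}$, which is smooth, and passing to the analytic link $L$ (a real $5$-manifold onto which $U$ deformation retracts), Milnor's connectivity theorem for isolated hypersurface singularities shows that $L$ is simply connected. I would then invoke the exponential sequence in the analytic category,
\[
H^1(U, \calO_U) \to \Pic(U) \to H^2(U, \Z).
\]
Since terminal singularities are Cohen--Macaulay of dimension $3$, a depth argument gives $H^1(U, \calO_U) = H^2_{\{p\}}(R) = 0$, so that $\Pic(U) \cong \Cl(R)$ injects into $H^2(U, \Z) = H^2(L, \Z)$. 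Because $L$ is simply connected, the universal coefficient theorem identifies the torsion subgroup of $H^2(L, \Z)$ with the torsion of $H_1(L, \Z) = 0$; hence $H^2(L, \Z)$ is torsion-free, and therefore so is $\Cl(R)$.

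Combining the two halves, $\Cl(R)$ is simultaneously torsion and torsion-free, hence trivial, so $R$ is factorial; as this holds at every singular point, $X$ is factorial.

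The step I expect to require the most care is the passage between the algebraic class group $\Cl(R)$ and the analytic invariants $\Pic(U^{\mathrm{an}})$ and $H^2(L, \Z)$: one must justify that the comparison map is injective, so that torsion-freeness descends to the algebraic class group. This is standard for these excellent, analytically normal singularities but should be cited carefully. A purely algebraic alternative, avoiding the topology entirely, would replace the link argument by a direct proof that the class group of an isolated three-dimensional rational complete-intersection singularity is torsion-free, via local cohomology and purity results; but the Milnor-fibre route sketched above is the most transparent.
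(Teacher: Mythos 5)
The paper gives no argument here---the lemma is imported verbatim from Cutkosky---and your proof is precisely the standard one behind that citation: $\Q$-factoriality makes the local class group $\Cl(\calO_{X,p})$ at each singular point torsion, while the isolated-cDV (hence isolated hypersurface) structure of a Gorenstein terminal threefold point, Milnor's $(n-2)$-connectivity of the link, the exponential sequence, and the depth vanishing $H^1(U,\calO_U)\cong H^2_{\{p\}}(R)=0$ make it torsion-free, so it vanishes. The argument is correct; the only parenthetical to drop is the claim that factoriality is ``local-analytic'' (completion can destroy factoriality), but you never rely on that direction---you only need injectivity of $\Cl(\calO_{X,p})$ into the analytic local class group, which holds by faithful flatness of completion for normal local rings, exactly the comparison step you flag.
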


\begin{lemma}\cite[Lemma 3]{cutkosky}\label{lem:notsmall}
Suppose that $X$ is a terminal, $\Q$-factorial Gorenstein threefold and $\phi: X \to Y$ is the contraction of a $K_X$-negative extremal ray with at most one dimensional fibers.  Then, $Y$ is factorial.  In particular, $Y$ is a terminal, $\Q$-factorial Gorenstein threefold, and $\phi$ cannot be a small contraction.
\end{lemma}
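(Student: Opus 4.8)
The plan is to show that $Y$ is factorial and that $\phi$ is divisorial; the terminal and Gorenstein properties of $Y$ will then follow. By the preceding lemma \cite[Lemma 2]{cutkosky} the threefold $X$ is factorial, and since $\phi$ contracts a single extremal ray we have $\rho(X)=\rho(Y)+1$. These two facts already link the two goals: if $\phi$ were small it would be an isomorphism in codimension one, so $\phi_*$ would identify $\Cl(Y)$ with $\Cl(X)=\Pic(X)$, and factoriality of $Y$ would then give $\Pic(Y)=\Cl(Y)\cong\Pic(X)$, hence $\rho(Y)=\rho(X)$, contradicting $\rho(Y)=\rho(X)-1$. So a small contraction is forced to have non-factorial target; the substance of the lemma is to exclude such a contraction and to prove factoriality in the birational, divisorial case. (A fiber-type $\phi$ has $\dim Y<3$, where ``small'' is vacuous and $Y$ is the base of a conic bundle, handled separately.)

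I would begin by recording two inputs. First, relative vanishing: because $-K_X$ is $\phi$-ample and the fibers are at most one-dimensional, $R^1\phi_*\OO_X=0$ (exactly as in the proof of Lemma \ref{lem:boundonK}, via \cite{KMM} and Grauert--Riemenschneider), so $\phi_*\OO_X=\OO_Y$, $Y$ is normal, and $Y$ has rational singularities; moreover each fiber is a tree of $\PP^1$'s. Second, numerics: for every contracted curve $\ell$, Lemma \ref{lem:boundonK} gives $-1\le K_X\cdot\ell<0$, and since $X$ is Gorenstein $K_X\cdot\ell\in\Z$, forcing $K_X\cdot\ell=-1$; and since $X$ is factorial, the irreducible exceptional divisor $E$ is Cartier, so $E\cdot\ell\in\Z$. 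The one-dimensional-fiber hypothesis rules out divisorial contractions to a point (there the fiber over the image point is the whole two-dimensional $E$), so in the birational case $E$ is contracted onto a curve $C\subset Y$ with general fiber $\PP^1$.

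To prove $Y$ factorial I would check that every Weil divisor $W$ on $Y$ is Cartier. Let $\tilde W$ be its strict transform, Cartier on $X$, and set $c=-(\tilde W\cdot\ell)/(E\cdot\ell)$, so that $\tilde W+cE$ is numerically trivial on the extremal ray. Being Cartier and ray-trivial, $\tilde W+cE$ descends to $\phi^*\OO_Y(W)$ by the contraction theorem as soon as $c\in\Z$; since $\tilde W\cdot\ell\in\Z$, this reduces to $E\cdot\ell=-1$. Here the Gorenstein hypothesis is decisive: writing $K_X=\phi^*K_Y+aE$ and restricting to $\ell$ yields $-1=a\,(E\cdot\ell)$ with $a>0$ and $E\cdot\ell\in\Z$, and once $a=1$ we obtain both $E\cdot\ell=-1$ (hence factoriality) and $K_X=\phi^*K_Y+E$, from which $Y$ is Gorenstein with canonical, in fact isolated cDV, hence terminal, singularities along $C$.

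The main obstacle is exactly the integrality $a=1$, equivalently $E\cdot\ell=-1$, equivalently the statement that the Cartier divisor $E$ generates the local class group of $Y$ along $C$ and that no non-Gorenstein point is created. The bare relation $a\,(E\cdot\ell)=-1$ with $E\cdot\ell\in\Z$ still permits a fractional discrepancy, and excluding this — together with definitively excluding the small case, which the class-group count of the first paragraph shows would produce the forbidden non-factoriality — is the local classification of $K_X$-negative extremal neighborhoods with one-dimensional fibers on a Gorenstein terminal threefold (Mori's theory). Concretely I would compute $\Cl(\OO_{Y,P})$ at points $P$ of the image by excision, identifying it with the cokernel of $\Z\langle E\rangle\to\Pic$ of the formal neighborhood of the fiber; rational singularities identify the latter with the Picard group of the tree-of-$\PP^1$ fiber, and the Cartier-ness of $E$ and of $K_X$ with $K_X\cdot\ell=-1$ force the cokernel to vanish in the divisorial case, while the absence of an exceptional divisor makes the analogous cokernel nonzero in the small case, completing the exclusion.
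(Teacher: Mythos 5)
First, a point of comparison: the paper does not prove this statement at all --- it is quoted verbatim as \cite[Lemma 3]{cutkosky} --- so there is no in-paper argument to measure yours against. Judged on its own, your outline has the right skeleton (relative vanishing, fibers that are trees of $\PP^1$'s, $K_X\cdot\ell=-1$ from Lemma \ref{lem:boundonK} plus the Gorenstein hypothesis, and descent of $\tilde W+cE$ via the contraction theorem), but it leaves unproved the two points that constitute the actual content of the lemma, and you say as much yourself.

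The more serious gap is the exclusion of small contractions. Your class-group count proves only the implication ``if $\phi$ is small then $Y$ is not factorial''; it is not an obstruction to $\phi$ being small, so it cannot deliver the ``in particular'' clause. What is needed is the theorem (Mori for smooth $X$, extended by Cutkosky to the Gorenstein terminal case) that a birational $K_X$-negative extremal contraction with one-dimensional fibers on such an $X$ necessarily contracts a divisor. This is exactly where the Gorenstein hypothesis earns its keep --- on non-Gorenstein terminal threefolds small extremal contractions do exist, which is why flips are needed at all --- and it requires a genuine argument (deformation theory of the contracted $\PP^1$'s with $K_X\cdot\ell=-1$, or Cutkosky's analysis of the formal fibre), not the excision computation you gesture at in the final paragraph. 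The second gap is the integrality $E\cdot\ell=-1$. Your derivation from $K_X=\phi^*K_Y+aE$ presupposes that $K_Y$ is $\Q$-Cartier, which is essentially what is being proved, and, as you concede, the bare relation $a\,(E\cdot\ell)=-1$ with $E\cdot\ell\in\Z$ still allows $a=\tfrac12$, $E\cdot\ell=-2$. One standard repair avoids $K_Y$ entirely: since $X$ is terminal its singular locus is finite, so a general fibre $\ell$ of $E\to C$ lies in the smooth locus, and adjunction $(K_X+E)|_E=K_E$ along such a fibre gives $E\cdot\ell=K_E\cdot\ell-K_X\cdot\ell=-2-(-1)=-1$; with that in hand your descent of $\tilde W+(\tilde W\cdot\ell)E$ does finish factoriality in the divisorial case. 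So the divisorial half of your argument can be closed with a local modification, but the non-existence of the small case cannot be, as written.
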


\begin{theorem}\cite[Theorem 4]{cutkosky}\label{thm:curve}
Suppose that $X$ is a terminal, $\Q$-factorial Gorenstein threefold and $\phi: X \to Y$ is a birational contraction of a surface $W \subset X$ to a curve $C \subset Y$.  Then, $Y$ is smooth near $C$.
\end{theorem}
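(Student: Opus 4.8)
The plan is to reduce the statement to showing that the finitely many singular points of $Y$ avoid the curve $C$, and then to extract a contradiction on a general surface section. By Lemma \ref{lem:notsmall}, $Y$ is again a terminal, $\Q$-factorial Gorenstein threefold and $\phi$ is not small; since terminal Gorenstein threefold singularities are isolated compound Du Val (cDV) points, $\operatorname{Sing}(Y)$ is a finite set, and it suffices to prove $C \cap \operatorname{Sing}(Y) = \emptyset$.

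First I would pin down the numerics of the contraction. By \cite[Lemma 2]{cutkosky}, $X$ is factorial, so $W$ is Cartier and every intersection number against a fibre is an integer. Let $F$ be a general fibre of $\phi|_W \colon W \to C$; by Lemma \ref{lem:boundonK} it is a smooth rational curve with $-1 \le K_X \cdot F < 0$, and since $K_X$ is Cartier this forces $K_X \cdot F = -1$. Writing $K_X = \phi^*K_Y + a\,W$ with $a = a(W,Y) > 0$ an integer (as $Y$ is terminal Gorenstein) and using $\phi^*K_Y \cdot F = 0$, we get $-1 = a\,(W\cdot F)$ with both factors integers, hence $a = 1$ and $W \cdot F = -1$. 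Thus $W$ is the \emph{unique} exceptional divisor of $\phi$ and it has discrepancy exactly $1$.

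The main step is a surface-section argument. Fix $y \in C$ and take a general hyperplane section $S_Y \ni y$ (a Cartier divisor since $Y$ is Gorenstein), with preimage $S_X = \phi^*S_Y \supseteq \phi^{-1}(y)$. Restricting $K_X = \phi^*K_Y + W$ to $S_X$ and applying adjunction both upstairs and downstairs yields
\[ K_{S_X} = (\phi|_{S_X})^* K_{S_Y} + W|_{S_X}, \]
so every curve contracted by $\phi|_{S_X}$ over $y$ -- these are exactly the components of $\phi^{-1}(y) \subseteq W \cap S_X$ -- occurs with surface-discrepancy $+1$. If $S_X$ is smooth, then $\phi|_{S_X}$ is a resolution of $S_Y$; since any resolution of a Du Val point factors through the minimal one, whose exceptional curves have discrepancy $0$, such a $0$-discrepancy curve would have to appear among the contracted curves, contradicting that all of them have discrepancy $+1$. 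Hence $S_Y$ is smooth at $y$; but a general hyperplane section through a cDV point is, by definition, Du Val and therefore singular, so $y \notin \operatorname{Sing}(Y)$. As $y \in C$ was arbitrary, $Y$ is smooth along $C$.

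The hard part is the smoothness of $S_X$ along the central fibre $\phi^{-1}(y)$: Bertini gives smoothness of $S_X$ away from the base locus $\phi^{-1}(y)$ and away from $\operatorname{Sing}(X)$, but says nothing along the fibre itself, and over a special $y$ the fibre may be a reducible tree of $\PP^1$'s (Lemma \ref{lem:boundonK}) possibly meeting the isolated singularities of $X$. Controlling this is precisely where Mori's classification of extremal neighborhoods enters: one must rule out every local model of a divisor-to-curve contraction except the blow-up of a smooth curve in a smooth threefold, for which $S_X$ is automatically smooth. I would handle this either by importing that classification, or, in keeping with the cohomological tools already in play, by analysing $\phi$ through the conormal data of $W$ -- using $R^1\phi_*\OO_X = 0$ and $W|_F = \OO_{\PP^1}(-1)$ to show that $W \to C$ is a $\PP^1$-bundle and that $C$ is a smooth complete intersection in the smooth locus of $Y$ -- or by restricting to a general elephant $D_X \in |-K_X|$ (Du Val by Reid's theorem) to descend to Mori's surface picture directly.
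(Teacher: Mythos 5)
The paper offers no proof of this statement at all—it is imported verbatim as \cite[Theorem 4]{cutkosky}—so your proposal can only be measured against Cutkosky's own argument. Your reduction and your endgame are sound: Lemma \ref{lem:notsmall} does make $Y$ terminal, factorial and Gorenstein, hence with isolated cDV points, so it suffices to keep $C$ away from them; the numerics $a(W,Y)=1$, $W\cdot F=-1$, $K_X\cdot F=-1$ follow correctly from factoriality of $X$ and Lemma \ref{lem:boundonK}; and the discrepancy comparison on a general hyperplane section $S_Y\ni y$ (every exceptional curve of $S_X\to S_Y$ over $y$ has positive discrepancy, while a Du Val point always carries exceptional curves of discrepancy $0$ coming from the minimal resolution) is exactly the right way to force $S_Y$, hence $Y$, to be smooth at $y$. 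This is in the spirit of what Mori does for smooth $X$ and what Cutkosky adapts.

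The difficulty is that essentially all of the content of the theorem sits in the one step you defer: that $S_X=\phi^*S_Y$ is smooth along $\phi^{-1}(y)$ (normal with canonical singularities would in fact suffice, since the additional exceptional curves of a resolution of $S_X$ would still pick up positive discrepancy over $S_Y$ from the pullback of $W|_{S_X}$). Bertini gives nothing here: $\phi^{-1}(y)$ is the base locus of the pulled-back pencil and may pass through the cDV points of $X$, and a hyperplane section through a cDV point that is constrained to contain a prescribed curve can be non-normal---e.g.\ the section $z=0$ of $xy-zw=0$ in $\Aff^4$ is a union of two planes---in which case the adjunction formula $K_{S_X}=(\phi|_{S_X})^*K_{S_Y}+W|_{S_X}$ you rely on is unavailable. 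Your three proposed remedies are reasonable directions (the second, via $R^1\phi_*\OO_X=0$ and the fibre structure of $W\to C$, is closest to the actual Mori--Cutkosky route; ``extremal neighborhoods'' is really the terminology for the non-Gorenstein flipping germs, not this setting), but none is carried out, and each requires a genuine argument: irreducibility and reducedness of every fibre of $W\to C$, the $\PP^1$-bundle structure, or a general-elephant statement along the fibre. As written, the proposal is a correct skeleton whose load-bearing step is missing.
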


\begin{theorem}\cite[Theorem 5]{cutkosky}\label{thm:cutkosky}
Suppose that $X$ is a terminal, $\Q$-factorial Gorenstein threefold and $\phi: X \to Y$ is a birational contraction of a surface $W \subset X$ to a point $p \subset Y$.  Then, one of the four cases below occur:

    \begin{enumerate}[(i)]
    \item $Y$ is nonsingular near $p$, $W \cong \mathbb{P}^2$, and $\calO_W(W) \cong \calO_{\PP^2}(-1)$.
    \item $W \cong \PP^1 \times \PP^1$ and $\calO_W(W) \cong \calO_{\PP^1 \times \PP^1}(-1,-1)$.
    \item $W$ is isomorphic to a reduced, irreducible singular quadric surface $D$ in $\PP^3$ and $\calO_W(W) \cong \calO_{\PP^3}(-1)\otimes \calO_D$.
    \item $Y$ is singular at $p$, $W \cong \mathbb{P}^2$, and $\calO_W(W) \cong \calO_{\PP^2}(-2)$.
    \end{enumerate}
\end{theorem}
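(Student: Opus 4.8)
The plan is to read off the four cases from the geometry of the exceptional divisor $W$ together with its normal bundle $\calO_W(W)$, following Mori's analysis of extremal rays adapted to the Gorenstein factorial setting. First I would record that $\phi$ is the contraction of a $K_X$-negative extremal ray $R$, so that $-K_X$ is $\phi$-ample and the exceptional locus is the prime divisor $W$. Since $X$ is factorial by \cite[Lemma 2]{cutkosky}, the divisor $W$ is Cartier and $\calO_W(W)$ is a genuine line bundle; and since $K_X$ and $W$ are both Cartier, the adjunction formula $K_W \sim (K_X+W)|_W$ shows that $\omega_W$ is invertible, i.e. $W$ is Gorenstein.

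Next I would show that $W$ is a Gorenstein del Pezzo surface whose anticanonical class is proportional to its conormal bundle. Because $\phi$ contracts $W$ to the point $p$, every curve $C \subset W$ satisfies $-K_X \cdot C > 0$ and $W \cdot C < 0$; hence $A := -K_X|_W$ and $L := \calO_W(-W)$ are both ample, and $-K_W = A + L$ is ample, so $W$ is del Pezzo. Writing $a := a(W,Y) > 0$ for the discrepancy (positive since $X$ is terminal), the relation $K_X \sim_{\Q} \phi^*K_Y + aW$ restricts on $W$ to $A \equiv aL$, because $\phi^*K_Y$ is numerically trivial on the contracted $W$ by the projection formula. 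Therefore $-K_W \equiv (a+1)L$ with $a+1 > 1$, so $W$ is a Gorenstein del Pezzo surface of Fano index at least $2$.

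Then I would invoke the classification of higher-index del Pezzo surfaces. The Gorenstein del Pezzo surfaces whose anticanonical divisor is a rational multiple exceeding one of an ample Cartier divisor are exactly $\PP^2$ (degree $9$) and the quadric surfaces (degree $8$), namely $\PP^1 \times \PP^1$ and the quadric cone in $\PP^3$; the length bound $\ell(R) \le 2$ for a divisorial contraction to a point (the Ionescu--Wi\'sniewski inequality) confirms that no lower-degree del Pezzo can occur. For each surviving $W$ I would solve $-K_W \equiv (a+1)L$ for an ample Cartier $L = \calO_W(-W)$ with $a>0$: on $\PP^2$ this gives $L = \calO_{\PP^2}(1)$ (so $a=2$, $\calO_W(W) = \calO_{\PP^2}(-1)$) or $L = \calO_{\PP^2}(2)$ (so $a=\tfrac12$, $\calO_W(W)=\calO_{\PP^2}(-2)$); on $\PP^1 \times \PP^1$ it gives $L = \calO(1,1)$ with $a=1$ and $\calO_W(W) = \calO_{\PP^1\times\PP^1}(-1,-1)$; on the quadric cone it gives $L = \calO_{\PP^3}(1)|_W$ with $a=1$. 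Finally, examining $Y$ locally near $p$ (by contracting $W$ and computing the index of $K_Y$ at $p$) separates the smooth case (i) from the singular cases (ii)--(iv), completing the four-way classification.

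The main obstacle is this classification step: rigorously ruling out the lower-degree del Pezzo surfaces so that only degrees $8$ and $9$ survive, and then, for each surviving $W$, verifying both the precise normal bundle $\calO_W(W)$ and the nature of the singularity of $Y$ at $p$. This is where the terminal and Gorenstein hypotheses on $X$ do the real work---via the length bound and the integrality forced on discrepancies by the Gorenstein property---and where Mori's local study of the extremal neighborhood is needed to pin down the singular quadric and $\calO_{\PP^2}(-2)$ cases.
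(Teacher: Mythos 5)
First, a remark on the comparison itself: the paper does not prove this statement --- it is quoted verbatim from \cite{cutkosky} (Theorem 5) and used as a black box in the proof of Theorem \ref{thm:nocanonical} --- so there is no internal proof to measure you against, only Cutkosky's. Your skeleton is the right one (it is essentially Mori's strategy as extended by Cutkosky), and your numerics check out: $W$ is Cartier and Gorenstein by factoriality and adjunction, $-K_W = -K_X|_W - W|_W$ is ample, the discrepancy relation gives $-K_W \equiv (a+1)L$ with $L = \calO_W(-W)$ ample Cartier and $a>0$, and the four solutions you list are exactly cases (i)--(iv).

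The genuine gap is in the classification step, and it is not quite the one you flagged. You invoke ``the classification of higher-index del Pezzo surfaces,'' but that is a statement about \emph{normal} surfaces, and nothing in your setup shows the exceptional divisor $W$ is normal: it is a priori just a reduced irreducible Cartier divisor, which could be singular along a curve, and non-normal Gorenstein del Pezzo surfaces do exist (cones over singular arithmetic-genus-one curves, projections of scrolls). Cutkosky and Mori sidestep this by never assuming normality: they compute $h^0(W,\calO_W(-nW))$ from $R^i\phi_*\calO_X = 0$ (Grauert--Riemenschneider plus relative Kawamata--Viehweg, exactly as in Lemma \ref{lem:boundonK} of this paper) and the theorem on formal functions, conclude that $(W, L)$ has $\Delta$-genus zero, and then quote Fujita's classification of $\Delta$-genus-zero polarized surfaces --- whose members are automatically varieties of minimal degree, hence normal, and from which the scrolls and higher cones are excluded by the normal-bundle condition. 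Secondly, your appeal to the Ionescu--Wi\'sniewski length inequality is both unjustified in this singular setting (it is a theorem about smooth $X$, or requires separate proof for terminal Gorenstein threefolds) and unnecessary: once normality is in hand, the integrality of $L$, of $aL = -K_X|_W$, and of $(a+1)L = -K_W$ already forces $K_W^2 \in \{8,9\}$ and rules out $\mathbb{F}_1$. Finally, the dichotomy ``$Y$ smooth at $p$ in case (i), singular in case (iv)'' is asserted rather than proved; case (iv) follows from $a = \tfrac12$ (so $K_Y$ has index $2$ at $p$), but case (i) needs the formal-neighborhood argument identifying the contraction of $(\PP^2, \calO_{\PP^2}(-1))$ with the inverse of the blowup of a smooth point. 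None of these is fatal to the strategy, but as written the argument would not rule out a non-normal exceptional divisor.
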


Now we can prove Theorem \ref{thm:nocanonical}.

\begin{proof}
Let us begin with the simplest case: no component of the locus of canonical singularities is contained in $D$.  Then, the contraction of a $K_{X'}$ negative extremal ray must be birational $X' \to Y$ or a Fano fibration $X' \to S$ or $X' \to C$, where $\dim S = 2$ or  $\dim C = 1$.  Because $-K_{X'}$ is nef and non-trivial, the contraction cannot be $X' \to \Spec k$.  Also, by Lemma \ref{lem:notsmall}, $X' \to Y$ is necessarily a divisorial contraction.  Therefore, in every case, the generic curve contracted has $K_{X'} \cdot C  = -1, -2$, or $-3$, so the image of $C$ on $X$ has $K_X \cdot \pi(C) = -1, -2$, or $-3$.  Because $D$ does not contain the locus of canonical singularities, for a sufficiently generic curve $C$, $D \cdot \pi(C) \in \mathbb{Z}$.  Therefore, the relationship $dK_X + 4D \sim 0$ implies $d$ is even. If a component $\Delta$ of the locus of canonical singularities is contained in $D$, we can separate into two cases: either $\Delta$ is one- or zero-dimensional.

\noindent \textbf{Case 1. $\mathbf{\dim \Delta = 1.}$}

If $\Delta$ is one-dimensional, consider the partial resolution $\pi: X' \to X$.  Because $X$ has only canonical singularities, the fibers of $\pi$ must be chains of rational curves.  We can study the pullback $\pi^*D$: in particular, $\pi^*D = \tilde{D} + \sum a_i F_i$, where $\tilde{D}$ is the strict transform of $D$ and $F = \bigcup F_i$ is the fiber over $\Delta$.  Let $F_0$ be a component of $F$ such that $\dim \pi(F_0) = 1$.  For a generic curve $C \subset F_0$ contracted by $\pi$, $K_{X'} \cdot C = 0$ and $F \cdot C < 0$.  However, $-2 = K_{F_0} \cdot C = (K_{X'} + F_0) \cdot C$, so there can be at most one component $F_i$ meeting $C$ with $F_i \cdot C = 1$.  Therefore, either there is no such $F_i$ and

\[ 0 = \pi^*D \cdot C = \tilde{D} \cdot C + \sum a_i F_i \cdot C = n + a_0 (-2), \]
for some $n \in \bZ$ so $a_0 \in \frac{1}{2}\Z$ or there is some $F_i$ that meets $C$ and a contracted curve $C' \subset F_i$ meeting $F_0$ so
\[ 0 = \pi^*D \cdot C = \tilde{D} \cdot C + \sum a_i F_i \cdot C = n + a_0 (-2) + a_i(1), \]
\[ 0 = \pi^*D \cdot C' = \tilde{D} \cdot C' + \sum a_i F_i \cdot C' = m + a_0(1) + a_i(-2),  \]
so $a_0, a_i \in \frac{1}{3}\Z$.  This shows that, for generic curves in $X$ meeting $D$, the intersection with $D$ is in $\frac{1}{6}\Z$.  With this in mind, now contract a $K_{X'}$ negative extremal ray on $X'$.  As above, we have the following options: (A) $X' \to Y$ is divisorial; (B) $X' \to S$ is a Fano fibration over a surface, or (C) $X' \to C$ is a Fano fibration over a curve. 

\textbf{Case 1A.}  Assume first that $X' \to Y$ is divisorial.  If $X' \to Y$ is divisorial and with at most one dimensional fibers, the generic fiber $C$ has $K_{X'} \cdot C = -1$, so the image in $X$ has $K_{X} \cdot \pi(C) = -1$.  For sufficiently generic $C$, $D \cdot \pi(C) \in \frac{1}{6}\Z$.  Therefore, the relationship $dK_X + 4D \sim 0$ implies $d$ must be even.  If $X' \to Y$ is divisorial but contracts a surface to a point, if any case other than $(i)$ occurs as in Theorem \ref{thm:cutkosky}, we still find a generic curve $C$ in the fiber with $K_{X'} \cdot C = -1$.   

If case $(i)$ occurs, the threefold $Y$ is still terminal, $\mathbb{Q}$-factorial, and Gorenstein, so we can contract a new $K_Y$ negative extremal ray and repeat.  If at any point our contraction one of the cases $(ii), (iii)$, or $(iv)$, by the same argument above, we are done.  If we perform a divisorial contraction with at most one-dimensional fibers, again the output is terminal, $\mathbb{Q}$-factorial and Gorenstein, so we can continue.  Therefore, it suffices to analyze the possible fibrations that arise as minimal models of a terminal, $\mathbb{Q}$-factorial, Gorenstein variety $X'$ where, at each step of the minimal model program, the resulting variety is also terminal, $\mathbb{Q}$-factorial, and Gorenstein.

However, after some number of divisorial contractions, we reach the point of a fibration, then the divisorial contractions were blow ups of some point(s) on the fibration.  Therefore, either the general fiber of the fibration doesn't intersect $F$, or after blowing up, a fiber of the divisorial contraction doesn't intersect $F$.  Therefore, its image on $X$ has $D \cdot C \in \mathbb{Z}$.  Arguing as above implies $d$ is even.  Therefore, the only two cases that remain to be studied are if the only possible $K_{X'}$ negative contraction yields a fibration.

\textbf{Case 1B.} If $\phi: X' \to S$ is a fibration with general fiber $\cong \mathbb{P}^1$, either there are $F$-trivial fibers $C$ or $F$ is relatively ample.  In the first case, $K_X \cdot \pi(C) = -2$ and $D \cdot \pi(C) \in \mathbb{Z}$, so $d$ be even.  Assume then that $F$ is relatively ample.  By \cite[Theorem 7]{cutkosky}, $S$ must be smooth and $X'$ must be a conic bundle over $S$.  If $X' \to S$ has any singular fibers, then there exist curves $C$ such that $K_{X'} \cdot C = -1$, and we argue as before to show $d$ must be even.  Therefore, we may assume every fiber is smooth and $X' \to S$ is a smooth $\mathbb{P}^1$-bundle over a smooth surface $S$.  Furthermore, by \cite[Lemma 2.5]{almostfano}, $-K_S$ is big and nef.  Because $F$ is relatively ample, for some component $F_0$ of $F$, the induced morphism $F_0 \to S$ must be finite.  However, $F_0$ is contractible on $X'$, so we have a diagram
\begin{center}
\begin{tikzcd}
	X' \arrow[r, "\pi"] \arrow[d, "\phi"] & X \\
	S & \\
\end{tikzcd}
\end{center}
Consider a smooth curve $C \subset S$ such that $ Z = \phi^{-1}(C)$ contains a contracted curve in $F_0$.  Because every fiber of $\phi$ is $\mathbb{P}^1$, $Z$ is a ruled surface over $C$ and because $F_0 \to S$ is finite, $F_0|_Z$ is a multisection of $\phi|_Z: Z \to C$.  However, this multisection is contractible in $Z$ to a surface $\overline{Z} \subset X$.  For generic $Z$, $Z$ is not contracted by $\pi$, so intersection theory on ruled surfaces implies that $F_0|_Z$ is actually a section.  This is true for any such $Z$, so the degree of $\phi|_{F_0}: F_0 \to S$ must be 1, hence $S\cong F_0$ and $F_0$ is a section of $\phi$.  

Assume first that $F_0$ is contracted to a curve via $\pi: X' \to X$.  Then, $S \cong F_0$ must be a ruled surface over $C$ with $-K_S$ big and nef, so $S$ must be $\PP^1 \times \PP^1$, $\mathbb{F}_1$, or $\mathbb{F}_2$.  Because each of these surfaces have $\rho(S) = 2$, it follows that $\rho(X') = 3$ and there are at most two components of $F$.  If there is only one component of $F$, there is exactly one contraction $X' \to X$ that is $K_{X'}$ trivial, but $\rho(X') \ge 3$ implies that there are at least two $K_{X'}$-negative contractions.  One corresponds to the map $\phi: X' \to S \cong F$ and the other must correspond to another case, so we use the argument in the other cases to find a contradiction or show $X \cong \PP(1,1,2,4)$.  

If there are two components of $F$, so $F = F_0 \cup F_1$, then $F_1$ must also be relatively ample as it is covered by $K_{X'}$-trivial curves so cannot be contracted by $\phi$.  Therefore, $S \cong F_0 \cong F_1$.  However, this is only possible if both $F_0$ and $F_1$ are contracted to a curve via $\pi$; otherwise, say $F_1$ is contracted to a point, then there exist $F_1$ trivial curves intersecting $F_0$, so $F_1$ is not relatively ample.  Now, as above, consider a smooth curve $C \subset S$ such that $ Z = \phi^{-1}(C)$ contains a contracted curve in $F_0$.  Because every fiber of $\phi$ is $\mathbb{P}^1$, by the argument above, $Z$ is a ruled surface over $C$ with a contractible section.  However, $Z$ must also contain a contracted curve in $F_1$, hence $Z$ has two contractible sections.  However, this is a contraction, as it would imply $\pi|_Z: Z \to \pi(Z)$ contracts $Z$ to a curve.   

Therefore, we can assume that $F_0$ is contracted to a point via $\pi: X' \to X$.  If each $\phi$-ample divisor $F_i$ does not intersect $\tilde{D}$, we find curves $C$ such that $D\cdot C \in \Z$ and $K_X \cdot C = -2$ so $d$ is even.  Therefore, it suffices to consider only $F_i$ that intersect $\tilde{D}$.  Because $D$ contains $\Delta$, there is some $F_1$ that is contacted to a curve via $\pi$ such that $F_1 \cap D \ne \emptyset$ and $F_1 \cap F_0 \ne \emptyset$.  The intersection $F_1 \cap F_0$ must be a fiber of the ruled surface $F_1$, hence $F_0$ contains a curve $C$ such that $K_{X'} \cdot C = 0$, $F_1 \cdot C = -2$, and $F_0 \cdot C = 0$.  Therefore, $K_{F_0} \cdot C = 0$ so $F_0 \cong \mathbb{F}_2$.  

This implies that $\rho(X') = 3$ because $\phi: X' \to S \cong \mathbb{F}_2$ is a $\PP^1$ bundle, hence there is only one exceptional divisor $F_1$ with $\dim \pi(F_1) = 1$.  If $F_1$ were also $\phi$-ample, we must have $F_1 \cong \mathbb{F}_2$ also be a section.  However, the intersection curve $C = F_0 \cap F_1$ is a section of $F_0$ but a fiber of $F_1$, a contradiction.  Therefore, $F_1$ is not $\phi$-ample, so we must have $F_1 \cong \PP^1 \times \PP^1$.  Contracting $F_0$ and $F_1$ to $X$ shows $\rho(X) = 1$ and $X$ has a $\frac{1}{4}(1,1,2)$ singularity, hence we must have $X \cong \PP(1,1,2,4)$.

\textbf{Case 1C.} If $\phi: X' \to C$ is a fibration with general fiber a smooth del Pezzo surface and $C \cong \mathbb{P}^1$, we can first note that if the general fiber is a surface other than $\mathbb{P}^2$ or $\mathbb{P}^1 \times \mathbb{P}^1$, there exist curves $C$ with $K_{X'} \cdot C = -1$, so we argue as before to conclude $d$ is even.  Similarly, if the fiber is $\mathbb{P}^2$, there exist curves $C$ with $K_{X'} \cdot C = -3$, and again we can conclude $d$ is even.  Therefore, it suffices to analyze the case when the general fiber is $\mathbb{P}^1 \times \mathbb{P}^1$.  Because $\rho(C) = 1$ and $\phi$ was an extremal contraction, $\rho(X') = 2$.  There is then only one component of $F$.  We would like to show that there are divisors $D_1$ and $D_2$ whose restriction to each fiber are the different rulings.  Those are not linearly equivalent nor are they linearly equivalent to the general fiber $F$, hence it would imply $\rho(X') \ge 3$, a contradiction.

Suppose for contradiction $X'$ does exist.  If $F$ was contained in a fiber of $\phi$, then there exist many $F$-trivial curves with $K_{X'} \cdot C = -2$, and on $X$, $D \cdot \pi(C) \in \Z$.  As usual, we consider the relation $dK_X + 4D \sim 0$, so find that $d$ must be even.  Now consider the case that $F$ is $\phi$-ample, so $\phi|_F: F \to C$ gives $F$ the structure of a ruled surface over $C$ and contracts only $K_{X'}$-negative curves.  Because $\pi|_F$ also contracts $F$ to a curve but contracts only $K_{X'}$-trivial curves, $F$ must have the structure of a product, so $F \cong \PP^1 \times \PP^1$.

Let $\Gamma \cong \PP^1 \times \PP^1$ be a fiber of $\phi$.   We claim that $F$, $\Gamma$, and $K_{X'}$ in $N^1(X)$ are independent so we must have $\rho(X') \ge 3$, a contradiction.  To see the claim, note that $\Gamma|_F$ must be a ruling of $F$, so $\Gamma|_F \in | \calO_F(1,0) |$.  Next, observe that $K_{X'}|_F$ is negative on the fibers contracted by $\phi$ and trivial on the fibers contracted by $\pi$.  However, these are the two rulings of $F$, so $K_{X'}|_F \in | \calO_F(-2,0) |$.  Furthermore, $K_{X'}$ and $\Gamma$ are certainly not linearly equivalent.  Finally, consider $F|_F$.  On the fibers of $F$ contracted by $\pi$, by the negativity lemma, this must be negative, so $F|_F \in |\calO_F(a, -b)|$ for $b >0$.  Therefore, $F$ cannot be linearly equivalent to any linear combination of $\Gamma$ and $K_{X'}$, so $\rho(X') \ge 3$, so $X'$ cannot exist.

\noindent \textbf{Case 2. $\mathbf{\dim \Delta = 0.}$}

Finally, suppose the locus of log canonical singularities is a point contained in $D$.  We can study the same contractions: (A) $X' \to Y$ is divisorial; (B) $X' \to S$ is a Fano fibration over a surface, or (C) $X' \to C$ is a Fano fibration over a curve.  In this case, if $F$ is the exceptional locus of the map $\pi: X' \to X$, the curves in $F$ are all $K_{X'}$ -trivial, so none can be contracted by a $K_{X'}$-negative contraction.  Therefore, the third arrow (Case C) $X' \to C$ is not possible.

\textbf{Case 2A.} Because the curves in $F$ are all $K_{X'}$ -trivial, the only possible $K$-negative divisorial contraction over $F$ is $X' \to Y$ that has at most one dimensional fibers.  However, then $Y$ would be terminal, Gorenstein, and $\mathbb{Q}$-factorial, so we can continue the minimal model program on $Y$.  Much of this argument is the same as Case A above.  If divisorial contractions happen first, there will exist curves with $K_{X'} \cdot C$ equal to $-1, -2, $ or $-3$ that don't intersect $F$, and (invoking factoriality of $X'$), $D \cdot \pi(C) \in \mathbb{Z}$, and we can conclude $d$ is even.

\textbf{Case 2B.} The remaining case is if the only $K_{X'}$ negative contraction is a fibration $X' \to S$, and as in Case B above, we can assume every fiber is smooth and isomorphic to $\mathbb{P}^1$.  Therefore, we find ourselves in the situation where $\phi: X' \to S$ is a smooth $\mathbb{P}^1$-bundle over a smooth surface $S$ and by \cite[Lemma 2.5]{almostfano}, $-K_S$ is big and nef.  Exactly as above, we can conclude $S \cong F_0$ for some component $F_0$ of $F$ and $F_0$ is a section of $\phi$.  Furthermore, for any component $F_i$ of $F$, because $\pi(F_i)$ is a point, $K_{X'}\cdot C_i = 0$ for any $C_i \subset F_i$, so $C_i$ cannot be contracted by $\phi$.  Therefore, every component $F_i$ of $F$ is $\phi$-ample and $S \cong F_i$ for all $i$. 

Briefly turning our attention to the map $\pi: X' \to X$, because components of $F$ are contracted to points by $\pi$, every curve in $F$ is $K_{X'}$-trivial and $F$-negative.  By adjunction, for $C\in F_i$, $(K_{X'} + F_i) \cdot C = K_{F_i} \cdot C$, so $F_i \cdot C = K_{F_i} \cdot C$.  Therefore, not only is $-K_S = -K_{F_i}$ big and nef, but it is ample, so $S \cong F_i$ is a Fano surface.  If there are any $-1$ curves on $F$, taking the intersection product with $\pi^*D = \tilde{D} + aF$ implies $a \in \mathbb{Z}$, so for any curve $C$ on $X$, $D \cdot C \in \Z$.  Therefore, for a fiber of $\phi$ with $K_{X'} \cdot C = -2$, we find that $d$ must be even.  Similarly, if $F_i \cong \PP^2$, we find lines with $F \cdot C = -3$, so $D \cdot C \in \frac{1}{3}\Z$ and the same conclusion holds.

Therefore, the only remaining case is if $F_i \cong \PP^1 \times \PP^1$ for all $i$:
\begin{center}
	\begin{tikzcd}
		X' \arrow[r, "\pi"] \arrow[d, "\phi"] & X \\
		\PP^1 \times \PP^1 & \\
	\end{tikzcd}
\end{center}
and $\pi: X' \to X$ contracts $F$.  Consider $Z = \phi^{-1}(C)$ for a generic ruling $C$ on $\PP^1 \times \PP^1$.  By construction, each $F_i|_Z$ is a contractible section of the smooth ruled surface $Z$.  Because $Z$ is not contracted by $\pi$, this implies that there is only one $F_i$ and $F = F_0$.  Then, $X$ is locally isomorphic to the cone over the anticanonically embdedded $\mathbb{P}^1 \times \mathbb{P}^1$, Example \ref{ex:O(2,2)}.  However, $\rho(X) = 1$, so $X$ must actually be isomorphic to that cone.
\end{proof}

Ultimately, the odd degree pairs are behaving in a very special way: oddness of the degree is forcing constraints on the threefolds $X$ that can appear.  Summarizing the previous two sections, no log canonical threefolds $X$ can appear and there are only three possibilities if the threefold $X$ has canonical singularities.

\subsection{A divisor in the moduli space}
By a simple dimension count, we obtain the following.

\begin{proposition}
    Let $X$ be the cone over the anticanonical embedding of $\PP^1 \times \PP^1$, considered as a hypersurface in $W = \PP(1,1,1,1,2)$ (c.f. \ref{ex:1124}), so $\calO(K_X) = \calO_W(-4)|_X$.  If the pair $\left( X, \frac{4}{d} D \right)$ is log terminal for the general member $D$ in the linear system $|\calO_X(d)|$, then there is a divisor $\calD_Q$ in the moduli space $\calM_{\dfours}$ parameterizing surfaces on $X$.  
\end{proposition}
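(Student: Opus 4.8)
The plan is to realize $\calD_Q$ as the image in $\calM_{\PP^3\s,(d,4)}$ of the family of pairs $(X,D)$ with $X$ held fixed and $D$ ranging over the open locus $U\subset|\calO_X(d)|$ on which $\left(X,\tfrac4d D\right)$ is log terminal, and then to prove $\codim\calD_Q=1$ by a direct count of dimensions. First I would verify that each such pair is a point of the moduli space. The relation $dK_X+4D\sim0$ holds because $\calO(K_X)=\calO_W(-4)|_X$ forces $\calO_X(d)=\calO_X\!\left(-\tfrac d4K_X\right)$; the log terminal hypothesis supplies the semi log canonical condition and ampleness of $K_X+\left(\tfrac4d+\epsilon\right)D$; and $(X,D)$ is $\PP^3$-smoothable because $X$ is a $4$-$\Q$-Gorenstein smoothing of $\PP^3$ (Example \ref{ex:O(2,2)}) and $H^1(X,\calO_X(D))=0$ (Lemma \ref{lem:H1(D)}), so the divisor deforms along the smoothing of $X$. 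Since $X$ is fixed up to isomorphism, $\calD_Q$ is the quotient of the irreducible variety $U$ by $\Aut(X)$; as a general member $D$ (for $d\ge5$) is a surface of general type with finite automorphism group, the generic stabilizer of the $\Aut(X)$-action is finite, so $\calD_Q$ is irreducible of dimension $\dim|\calO_X(d)|-\dim\Aut(X)$.

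Next I would pin down the three dimensions. The interior of $\calM_{\PP^3\s,(d,4)}$ parameterizes smooth degree-$d$ surfaces modulo $\Aut(\PP^3)=\PGL_4$, so $\dim\calM_{\PP^3\s,(d,4)}=\binom{d+3}{3}-1-15=\binom{d+3}{3}-16$. For the linear system on $X$, the exact sequence $0\to\calO_W(d-2)\to\calO_W(d)\to\calO_X(d)\to0$ on $W=\PP(1,1,1,1,2)$ together with $H^1(W,\calO_W(d-2))=0$ gives $h^0(X,\calO_X(d))=h^0(\calO_W(d))-h^0(\calO_W(d-2))$, and the weighted monomial count telescopes to $\binom{d+3}{3}$ (equivalently, this is $\chi(X,\calO_X(d))=\chi(\PP^3,\calO(d))$ by flatness of the degeneration, the higher cohomology vanishing because $\calO_X(d)$ is an ample multiple of $-K_X$ on a klt variety). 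Hence $\dim|\calO_X(d)|=\binom{d+3}{3}-1$.

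The crux, and the step I expect to be the main obstacle, is to show $\dim\Aut(X)=16$. Write $X=\{Q(x_0,\dots,x_3)=0\}\subset W=\PP(1,1,1,1,2)$ for a nondegenerate quadratic form $Q$, with $u$ the weight-$2$ coordinate. Every automorphism in the identity component $\Aut^0(X)$ acts trivially on the discrete class group, hence preserves $\calO_X(1)$ and extends to a graded automorphism of the homogeneous coordinate ring $\C[x_0,\dots,x_3,u]/(Q)$; such automorphisms are precisely the elements of $\Aut(W)$ fixing the ideal $(Q)$, namely $x_i\mapsto(Ax)_i$ with $A\in\GL_4$ satisfying $Q\circ A=cQ$, together with $u\mapsto\lambda u+q(x)$ for $\lambda\in\C^\ast$ and $q$ an arbitrary quadratic form. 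This subgroup has dimension $\dim\mathrm{GO}_4+1+10=7+1+10=18$. Quotienting by the two one-parameter subgroups that act trivially on $X$ — the overall scaling of $W$, and the translations $u\mapsto u+\mu Q(x)$, trivial on $X$ since $Q|_X=0$ — leaves $\dim\Aut(X)=16$. The delicate points are the extension to $W$ and the identification of this $2$-dimensional kernel, both of which follow from $\bigoplus_m H^0(X,\calO_X(m))=\C[x_0,\dots,x_3,u]/(Q)$ being generated in degrees $1$ and $2$ with the single relation $Q$. Combining the three computations, $\dim\calD_Q=\left(\binom{d+3}{3}-1\right)-16=\binom{d+3}{3}-17=\dim\calM_{\PP^3\s,(d,4)}-1$, so the closure of $\calD_Q$ is a divisor.
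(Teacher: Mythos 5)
Your argument is correct and follows essentially the same route as the paper, which disposes of this proposition with the phrase ``by a simple dimension count'' and carries out the analogous counts explicitly only in Section \ref{sec:quintics} (e.g.\ the $17$-dimensional automorphism group of $\PP(1,1,2,4)$ against the $55$-dimensional linear system). Your computation $\dim\Aut(X)=16$ via graded automorphisms of $\C[x_0,\dots,x_3,u]/(Q)$ modulo the two-dimensional kernel, together with the telescoping count $h^0(\calO_X(d))=\binom{d+3}{3}$, supplies exactly the details the paper leaves implicit and is consistent with the numbers ($39=40-1$ for $d=5$) quoted there.
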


To show that surfaces on $X$ appear as a divisor in the moduli space for any degree $d$, we show that the general member $D \in |\calO_X(d)|$ is such that $(X, \frac{4}{d} D)$ has log terminal singularities.  In fact, because the log canonical threshold is upper semicontinuous, it suffices to show this for a particular member $D \in |\calO_X(d)|$.  For even degree $d$, we can find a smooth member $D \in |\calO_X(d)|$ missing the unique singular point of $X$, and because $X$ has canonical singularities, certainly $(X, \frac{4}{d} D)$ is log terminal.  For odd degree $d$, consider $D_0 = D_1 \cup D_{(d-1)/2}$, where $D_1 \in |\calO_X(1)|$ and $D_{(d-1)/2} \in |\calO_X((d-1)/2)|$ are general members meeting transversally.  $D_1$ has a unique $\frac{1}{4}(1,1)$ singularity at the vertex of $X$ and $D_2$ is smooth, missing the vertex of $X$, and a computation shows $(X, \frac{4}{d} D_0)$ is log terminal. Therefore, we have proven the following. 

\begin{proposition}\label{prop:Xdivisor}
    Let $X$ be the cone over the anticanonical embedding of $\PP^1 \times \PP^1$.  For any degree $d > 4$, there is a divisor $\calD_Q$ in the moduli space $\calM_{\dfours}$ parameterizing surfaces on $X$.  Furthermore, by Proposition \ref{prop:smoothdivisor}, $\calM_{\dfours}$ is smooth generically along $\calD_Q$. 
\end{proposition}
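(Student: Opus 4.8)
The plan is to deduce this statement by feeding the needed geometric input into the preceding (conditional) proposition and then invoking Proposition~\ref{prop:smoothdivisor} for smoothness. That earlier proposition, proved by the dimension count, already reduces the existence of the divisor $\calD_Q$ to a single hypothesis: that $\left(X,\frac{4}{d}D\right)$ is log terminal for a general member $D\in|\calO_X(d)|$. So the entire problem collapses to verifying this log terminality for every $d>4$; once it is known, the dimension count furnishes $\calD_Q$ as a divisor and Proposition~\ref{prop:smoothdivisor} shows $\calM_{\PP^3\s,(d,4)}$ is smooth at its generic point.

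To verify log terminality I would first reduce to producing a single good member of $|\calO_X(d)|$. Since the log canonical threshold is upper semicontinuous in families, it is enough to exhibit one $D_0\in|\calO_X(d)|$ with $\left(X,\frac{4}{d}D_0\right)$ log terminal; the general member then inherits the property. Here the parity of $d$ governs the geometry, because by Example~\ref{ex:4cancover} the bundle $\calO_X(2)$ is the Cartier generator while $\calO_X(1)$ fails to be Cartier at the vertex of $X$.

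For even $d$ the sheaf $\calO_X(d)$ is Cartier, so a general member is a smooth surface disjoint from the unique vertex; as $X$ is canonical and $D$ avoids the singular point, $\left(X,\frac{4}{d}D\right)$ is log terminal for any coefficient strictly below $1$, and $\frac{4}{d}<1$ since $d>4$. For odd $d$ no member of $|\calO_X(d)|$ can miss the vertex, so I would take instead the reducible member $D_0=D_1\cup D_{(d-1)/2}$, where $D_1\in|\calO_X(1)|$ is a hyperplane-type section through the vertex carrying a single $\frac{1}{4}(1,1)$ singularity and the second component is a general smooth surface missing the vertex and meeting $D_1$ transversally. Log terminality of $\left(X,\frac{4}{d}D_0\right)$ is then a local question: off the vertex it is the transverse union of two smooth divisors taken with small coefficient, and at the vertex one passes to the index-one cover and checks that all discrepancies of the pair remain above $-1$.

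The main obstacle is exactly this discrepancy check at the vertex in the odd case, since the convenient ``canonical $X$ with disjoint $D$'' shortcut is no longer available: the divisor is forced through the singular point. The computation should go through because the chosen component $D_1$ contributes only the mild $\frac{1}{4}(1,1)$ singularity there and the coefficient $\frac{4}{d}$ is strictly less than $1$ (and decreasing in $d$), so after pulling back to the cover the boundary coefficients stay small enough to keep the pair log terminal. Everything else is routine: transversality away from the vertex is automatic, upper semicontinuity promotes the single member to the general one, and combining the conditional proposition with Proposition~\ref{prop:smoothdivisor} yields both the divisor $\calD_Q$ and the generic smoothness of $\calM_{\PP^3\s,(d,4)}$ along it.
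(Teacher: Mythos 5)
Your proposal follows the paper's own argument essentially verbatim: you reduce to the conditional dimension-count proposition, use upper semicontinuity of the log canonical threshold to pass to a single well-chosen member, treat the even case with a smooth member avoiding the vertex and the odd case with the reducible member $D_0 = D_1 \cup D_{(d-1)/2}$, and then quote Proposition~\ref{prop:smoothdivisor} for generic smoothness. The paper likewise leaves the discrepancy computation at the vertex as ``a computation shows,'' so your sketch of that step via the index-one cover is consistent with, and no less complete than, the published proof.
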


In higher dimensions, the same proof gives the existence of a divisor in the moduli space of $(d,n+1)$ $\PP^n$-smoothable \he stable pairs. 

\begin{theorem}
Let $X_n \subset \PP(1^{n+1}, 2)$ be the cone over the degree $2$ embedding of a smooth quadric $Q \in | \calO_{\PP^n}(2) |$.  For any degree $d > n+1$, there is a divisor $\calD_Q$ in the moduli space $\calM_{\PP^n\s,(d,n+1)}$ parameterizing hypersurfaces on $X_n$.  
\end{theorem}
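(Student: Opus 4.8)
The plan is to follow the proof of Proposition~\ref{prop:Xdivisor} essentially verbatim, replacing the cone over $\PP^1\times\PP^1$ by the cone $X_n$ and tracking how the relevant dimensions depend on $n$. There are three steps: first, check that $X_n$ genuinely occurs as the ambient variety of a $(d,n+1)$ $\PP^n$-smoothable \he stable pair; second, exhibit a log terminal member $D\in|\calO_{X_n}(d)|$ so that such pairs lie in $\calM_{\PP^n\s,(d,n+1)}$; and third, a dimension count identifying the locus of such pairs as a divisor.

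For Step~1 I would verify, exactly as in Example~\ref{ex:O(2,2)}, that $X_n$ is a $\PP^n$-smoothable degeneration with canonical singularities. Writing $X_n=\{q(x_0,\dots,x_n)=0\}\subset W=\PP(1^{n+1},2)$ for the quadric $q$ cutting out $Q$, adjunction on the degree-$2$ hypersurface $X_n\subset W$ gives $\calO_{X_n}(K_{X_n})=\calO_W(-(n+1))|_{X_n}$, so $-K_{X_n}=(n+1)H$ with $H=\calO_{X_n}(1)$ and the relation $dK_{X_n}+(n+1)D\sim 0$ forces $D\in|\calO_{X_n}(d)|$. The smoothing to $\PP^n$ is the family $\{q(x)+\lambda y=0\}_\lambda$: for $\lambda\neq 0$ one eliminates $y$ to recover $\PP^n$, while $\lambda=0$ gives $X_n$; this is the standard cone degeneration of the degree-$2$ Veronese of $\PP^n$ to the cone over a hyperplane section, and canonicity of the vertex follows as in the cited computation for $n=3$. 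For Step~2, ampleness of $K_{X_n}+(\frac{n+1}{d}+\epsilon)D\sim_{\Q}\epsilon D$ is automatic, so by upper semicontinuity of the log canonical threshold (as in the proof of Proposition~\ref{prop:Xdivisor}) it suffices to produce one $D$ with $(X_n,\frac{n+1}{d}D)$ log terminal. When $d$ is even the monomial $y^{d/2}$ gives a general $D$ that is smooth and misses the vertex, so log terminality is immediate from canonicity of $X_n$; when $d$ is odd every section of $\calO_{X_n}(d)$ vanishes at the vertex, and I would instead take a reducible $D_0=D'+D''$ with $D'\in|\calO_{X_n}(1)|$ through the vertex (with a mild klt cone singularity) and $D''\in|\calO_{X_n}(d-1)|$ general, smooth, and transverse, checking log terminality by the same local computation at the vertex as in the $\PP^3$ case.

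Step~3 is the dimension count. The locus $\calD_Q\subset\calM_{\PP^n\s,(d,n+1)}$ parameterizes pairs $(X_n,D)$ with $D\in|\calO_{X_n}(d)|$ modulo $\Aut(X_n)$ (the $\Aut(X_n)$-action on $|\calO_{X_n}(d)|$ has finite generic stabilizer for $d>n+1$), so $\dim\calD_Q=h^0(X_n,\calO_{X_n}(d))-1-\dim\Aut(X_n)$, while $\dim\calM_{\PP^n\s,(d,n+1)}=\binom{n+d}{n}-(n+1)^2$ since its interior parameterizes degree-$d$ forms on $\PP^n$ modulo $\PGL_{n+1}$. Since $\calO_{X_n}(d)$ is the flat limit of $\calO_{\PP^n}(d)$, flatness (equivalently, telescoping the two Euler-characteristic sums coming from $0\to\calO_W(d-2)\to\calO_W(d)\to\calO_{X_n}(d)\to0$ on $W$) gives $h^0(X_n,\calO_{X_n}(d))=\binom{n+d}{n}$. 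Hence $\dim\calM-\dim\calD_Q=\dim\Aut(X_n)-\dim\PGL_{n+1}$, and the whole theorem reduces to the assertion $\dim\Aut(X_n)=(n+1)^2$. I would compute this from the graded automorphisms of $R=\C[x_0,\dots,x_n,y]/(q)$: the degree-$1$ part gives the conformal orthogonal group of $q$, of dimension $\frac{n(n+1)}{2}+1$, and the substitutions $y\mapsto cy+g(x)$ contribute $\binom{n+2}{2}$ dimensions ($c\in\C^*$ together with $g$ a quadratic modulo $q$), for a total of $(n+1)^2+1$; dividing by the trivially-acting grading $\G_m$ leaves $\dim\Aut(X_n)=(n+1)^2$. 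Therefore $\dim\calD_Q=\dim\calM-1$ and $\calD_Q$ is a divisor.

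The main obstacle is precisely this automorphism count in Step~3: the divisor statement hinges on $X_n$ carrying exactly one more dimension of automorphisms than $\PP^n$, the extra parameter being the translations $y\mapsto y+g(x)$ of the cone coordinate (modulo scaling). Pinning $\dim\Aut(X_n)=(n+1)^2$ correctly — rather than overcounting the $y$-translations or miscounting the conformal orthogonal factor — is what fixes the codimension at $1$; the log terminality verification of Step~2 in the odd-degree case is the other point requiring genuine, if routine, local computation.
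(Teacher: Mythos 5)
Your proposal is correct and takes essentially the same route as the paper, which proves the $n=3$ case by exactly this combination of a dimension count with the upper-semicontinuity-of-lct argument (a smooth member missing the vertex for even $d$, a reducible member through the vertex for odd $d$) and then asserts that the same proof gives the general-$n$ statement. You supply details the paper leaves implicit --- notably $h^0(X_n,\calO_{X_n}(d))=\binom{n+d}{n}$ and $\dim\Aut(X_n)=(n+1)^2$ --- and your odd-degree decomposition with $D''\in|\calO_{X_n}(d-1)|$ is the degree-correct form of the paper's $D_1\cup D_{(d-1)/2}$.
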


In fact, we can describe the singularities of the generic degree $d$ hypersurface on $X_n$.  

\begin{proposition}\label{prop:DQ}
 For a generic point $(X_n,D)$ on the divisor $\calD_Q$, where $X_n \subset \PP(1^{n+1}, 2)$ is the cone over the degree $2$ embedding of a smooth quadric $Q \subset \PP^n$, we can describe $D$ as follows.
    \begin{enumerate}
        \item If $d$ is even, $D$ is smooth.
        \item If $d$ is odd, $D$ passes through the vertex of the cone $X_n$ and is locally isomorphic to the vertex of the cone $X_{n-1}$. 
    \end{enumerate}
\end{proposition}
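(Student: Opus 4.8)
The plan is to work in explicit coordinates on $W = \PP(1^{n+1},2)$, with weight-one variables $x_0, \dots, x_n$ and weight-two variable $y$, writing $X_n = \{q(x) = 0\}$ for the defining quadratic form $q$ of the smooth quadric $Q \subset \PP^n$. The vertex is $v = [0:\dots:0:1]$, and $X_n$ is smooth away from $v$ since $Q$ is smooth and $W$ is smooth away from $v$. Near $v$ the orbifold chart $\{y \ne 0\}$ identifies $X_n$ with the quotient $\tilde C/\mu_2$, where $\tilde C = \{q(z)=0\} \subset \Aff^{n+1}$ is the affine quadric cone and $\mu_2$ acts by $z \mapsto -z$. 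A general section of $\calO_X(d) = \calO_W(d)|_{X_n}$ has the form $F = \sum_{2k \le d} y^k g_{d-2k}(x)$ with $g_j$ a general form of degree $j$; the generic point of $\calD_Q$ corresponds to a general such $F$.

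First I would dispose of the even case. When $d$ is even, $F$ contains the monomial $y^{d/2}$ with general (hence nonzero) coefficient, so $F(v) \ne 0$ and $D = \{F = 0\}$ misses the vertex. Since $\calO_W(d)$ is then Cartier and base-point free and $X_n \setminus \{v\}$ is smooth, Bertini's theorem shows the general member $D$ is smooth, giving (1).

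The substance is the odd case. When $d$ is odd every monomial of $F$ has odd total degree in the $x_i$, so $F(v) = 0$ and every $D$ passes through $v$, as claimed. To identify the singularity, I would pull $D$ back to the orbifold cover: in the coordinates $z_i$ on $\Aff^{n+1}$ the lift of $D$ is $\tilde D = \{q(z) = 0,\ \tilde F(z) = 0\}$, where $\tilde F = g_1(z) + g_3(z) + \cdots + g_d(z)$ is $\mu_2$-anti-invariant and $g_1$ is a general linear form. Because $g_1 \ne 0$, the hypersurface $\{\tilde F = 0\}$ is smooth at the origin; using it to eliminate one variable (via the holomorphic implicit function theorem, performed $\mu_2$-equivariantly since $\tilde F$ is odd, so the graph $z_n = \phi(z')$ has $\phi$ odd) presents $\tilde D$ as $\{h(z') = 0\} \subset \Aff^n$ with $h(z') = q(z', \phi(z')) = q'(z') + O(|z'|^4)$. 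Here $q'$ is the restriction of $q$ to the general hyperplane $\{g_1 = 0\}$, which is nondegenerate because a general hyperplane section of the smooth quadric $Q$ is again a smooth quadric $Q' \subset \PP^{n-1}$. The function $h$ is $\mu_2$-invariant with nondegenerate quadratic part $q'$, so the equivariant Morse (splitting) lemma gives a $\mu_2$-equivariant analytic isomorphism $\{h = 0\} \cong \{q' = 0\} = \tilde C' \subset \Aff^n$, the affine cone over $Q'$. Passing to $\mu_2$-quotients identifies $D$ near $v$ with $\tilde C'/\mu_2$, which is exactly the vertex of $X_{n-1}$, proving (2).

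The main obstacle is the equivariance bookkeeping in the odd case: one must verify that the coordinate changes eliminating $\{\tilde F = 0\}$ and normalizing $h$ to its quadratic part can all be carried out $\mu_2$-equivariantly, so that the resulting isomorphism descends to the quotient $X_n$ rather than merely to its double cover. This rests on the parity observations that $\tilde F$ is odd (so $\phi$ is odd and $h$ is even), together with an equivariant form of the Morse lemma. A secondary point requiring care is the genericity input: one must confirm that as $F$ ranges over $H^0(\calO_X(d))$ the linear part $g_1$ ranges over all linear forms, so that "general $D$" indeed forces $Q' = Q \cap \{g_1 = 0\}$ to be smooth and hence $q'$ to be of full rank.
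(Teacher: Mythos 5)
Your argument is correct, and it is worth noting that the paper does not actually supply a proof of Proposition \ref{prop:DQ}: the proposition is stated bare, and the only supporting computations in the text are (i) the explicit reducible member $D_1 \cup D_{(d-1)/2}$ used to verify log terminality for odd $d$, and (ii) the remark in Section \ref{sec:quintics} that for $n=3$, $d=5$ the singularity of a general $D$ is computed ``as a complete intersection in $\PP(1,1,1,1,2)$'' via Fletcher's quasi-smoothness machinery. So the implicit route in the paper is the weighted-projective-space complete-intersection calculus, whereas you give a direct local-analytic argument: orbifold chart at the vertex, the parity observation that $\tilde F$ is $\mu_2$-anti-invariant with general linear part $g_1$, equivariant elimination of a variable, and an equivariant Morse lemma reducing $h$ to $q' = q|_{g_1=0}$. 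Your route is more self-contained and makes transparent exactly why the answer is the vertex of $X_{n-1}$ (the quotient of the affine cone over the smooth hyperplane section $Q' = Q \cap \{g_1=0\}$ by $-1$), at the cost of the equivariance bookkeeping you flag; the Fletcher-style computation is shorter in any fixed dimension but identifies the singularity type less conceptually. Two small points you should make explicit: the restriction map $H^0(W,\calO_W(d)) \to H^0(X_n,\calO_{X_n}(d))$ is surjective (from $H^1(W,\calO_W(d-2))=0$ on weighted projective space), so a general member of $|\calO_{X_n}(d)|$ really is cut by a general $F$; and, as you note, multiples of $q$ contribute nothing in $x$-degree $1$, so $g_1$ ranges over all linear forms even after passing to $\calO_{X_n}(d)$, which is what guarantees $q'$ is nondegenerate.
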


\begin{remark}
For $(d,3)$ $\PP^2$-smoothable \he stable pairs, there is a divisor parameterizing curves on $X_2 = \PP(1,1,4)$, the cone over the degree $2$ embedding of a conic in $\PP^2$.  If $d$ is even, the curves are smooth and miss the singular point, and if $d$ is odd, the curves are nodal at the singular point.  The node can be interpreted as the singularity of a cone over two points, $X_1$.  For three dimensional pairs, there is a divisor parameterizing surfaces on $X_3$, the cone over the anticanonical embedding of $\PP^1 \times \PP^1$.  If $d$ is even, again the generic surface misses the singular point of $X_3$, but if $d$ is odd, the generic surface has a singularity of type $\frac{1}{4}(1,1)$ at the vertex of $X_3$, matching the singularity type of $X_2$.  Proposition \ref{prop:DQ} shows that this behavior persists in all dimensions. 
\end{remark}

\subsection{Log terminal degenerations of $\mathbb{P}^n$}\label{sec:lt}

To completely classify normal varieties appears as $\dfours$ \he stable pairs when $d$ is odd, it remains to understand log terminal threefolds $X$ that are degenerations of $\mathbb{P}^3$.  We will approach this in general (not only in the $d$ odd case) and state results for $\PP^n$ whenever possible.  There are natural log terminal varieties to consider: weighted projective spaces.  We summarize the case in dimension 2, due to Hacking and Prokhorov.

\begin{theorem}\cite{smoothdp}
If $X$ is a normal, log terminal degeneration of $\mathbb{P}^2$ such that the total space is $\mathbb{Q}$-Gorenstein, then $X \cong \PP(p^2, q^2, r^2)$ or a smoothing of such a space, where \[ 3pqr = p^2 + q^2 + r^2. \]  Futhermore, all such varieties admit a $\mathbb{Q}$-Gorenstein smoothing to $\mathbb{P}^2$.
\end{theorem}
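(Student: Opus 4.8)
The plan is to realize $X$ as a $\mathbb{Q}$-Gorenstein smoothable log del Pezzo surface of degree $9$ and then classify such surfaces through the structure of their singularities. First I would record the numerical constraints forced by the degeneration. Since the total space is $\mathbb{Q}$-Gorenstein and the general fiber is $\PP^2$, Proposition \ref{prop:constantvolume} gives $K_X^2 = K_{\PP^2}^2 = 9$. Because ampleness of $-K$ is preserved in such families (the general fiber is Fano and $K_X^2 > 0$), $X$ is a klt log del Pezzo surface with $-K_X$ ample. The crucial local input is that $\mathbb{Q}$-Gorenstein smoothability forces every singular point of $X$ to admit a local $\mathbb{Q}$-Gorenstein smoothing; by the Koll\'ar--Shepherd-Barron classification \cite{ksb} these are exactly the cyclic quotient singularities of class $T$, namely $\frac{1}{dn^2}(1, dna-1)$ with $\gcd(a,n)=1$.

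The second step is to reduce to the primitive case. A class-$T$ singularity with $d \ge 2$ admits a partial $\mathbb{Q}$-Gorenstein smoothing that breaks it into Wahl singularities, i.e. the $d=1$ singularities $\frac{1}{n^2}(1,na-1)$, whose $\mathbb{Q}$-Gorenstein Milnor fiber is a rational homology ball. Thus every $X$ in the statement deforms, within its $\mathbb{Q}$-Gorenstein deformation space and remaining a degeneration of $\PP^2$, to a ``maximally degenerate'' surface $X_0$ carrying only Wahl singularities, and conversely $X$ appears as a partial $\mathbb{Q}$-Gorenstein smoothing of such an $X_0$. It therefore suffices to classify the possible $X_0$; this is the reason the theorem lists $\PP(p^2,q^2,r^2)$ ``or a smoothing of such a space.''

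The heart of the argument is the numerical classification of the Wahl-only surfaces $X_0$, for which I would combine two computations. Topologically, for a smoothing with isolated singularities one has $e(X_t) = e(X_0) + \sum_i (e(F_i)-1)$, and since each Wahl Milnor fiber $F_i$ is a $\mathbb{Q}$-homology ball with $e(F_i)=1$, the Euler number is preserved: $e(X_0) = e(\PP^2) = 3$. Hence $b_2(X_0)=1$, so $\rho(X_0)=1$ as well. Numerically, passing to the minimal resolution $\pi : \widetilde{X}_0 \to X_0$ and writing
\[ K_{\widetilde{X}_0} = \pi^* K_{X_0} + \textstyle\sum a_i E_i, \qquad a_i \in (-1,0], \]
one relates $K_{\widetilde{X}_0}^2 = 10 - \rho(\widetilde{X}_0)$, with $\rho(\widetilde{X}_0) = 1 + (\text{number of exceptional curves})$, to $K_{X_0}^2 = 9$ through the discrepancies and continued-fraction data of the Wahl chains. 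The combination of $K_{X_0}^2 = 9$, $\rho(X_0)=1$, and the rigidity of the resolution graph forces at most three singular points, whose orders $p^2,q^2,r^2$ must satisfy the Markov equation $p^2+q^2+r^2 = 3pqr$; identifying the resulting Picard-rank-one surface with the weighted projective plane yields $X_0 \cong \PP(p^2,q^2,r^2)$ (whose coordinate-point singularities $\tfrac{1}{p^2}(q^2,r^2)$, etc., are indeed Wahl precisely because of the Markov relation).

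The main obstacle is exactly this last combinatorial step: proving that the only admissible configurations of Wahl singularities on a degree-$9$, Picard-number-one log del Pezzo are the three-point Markov configurations, and conversely that every Markov triple is realized and that the corresponding $\PP(p^2,q^2,r^2)$ smooths all the way to $\PP^2$ along the Markov tree. I would either carry this out by hand through the self-intersection and continued-fraction bookkeeping of the resolution graphs, following \cite{manetti}, or invoke Hacking's reinterpretation in which each Wahl singularity produces an exceptional vector bundle on the smoothing $\PP^2$; the admissible collections of such bundles are then governed by Rudakov's classification of exceptional collections on $\PP^2$, which is itself indexed by Markov triples. Either route reduces the geometric problem to the arithmetic of the Markov equation, from which the stated classification follows.
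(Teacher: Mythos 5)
First, a point of comparison: the paper does not prove this statement. It is quoted as background, attributed to \cite{manetti} (with the mutation structure credited to Hacking), and the author explicitly remarks that its proof ``heavily relies on the classification of log terminal surface singularities.'' So there is no in-paper argument to measure you against; your proposal has to stand on its own. Its overall strategy --- class-$T$ singularities via \cite{ksb}, Milnor-fibre topology, Noether-type bookkeeping, Markov triples --- is the standard modern route (closer to Hacking--Prokhorov than to Manetti's original resolution-graph analysis), and most of the ingredients are the right ones. But there is one genuine structural error and one large deferred step.

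The structural error is in your reduction to Wahl-only surfaces, and it is a matter of direction. You correctly note that a class-$T$ point $\tfrac{1}{dn^2}(1,dna-1)$ with $d\ge 2$ admits a partial $\Q$-Gorenstein \emph{smoothing} whose nearby fibres carry only Wahl ($d=1$) points; but you then conclude that $X$ ``deforms to a maximally degenerate $X_0$ with only Wahl singularities, and conversely $X$ appears as a partial smoothing of such an $X_0$.'' The local statement you invoke makes $X$ the \emph{special} fibre and the Wahl-only surface a \emph{nearby} fibre, so it exhibits $X$ as a degeneration of a Wahl-only surface, not as a smoothing of one; it does not produce the $X_0$ you need, and the Euler-number computation is then applied to a surface whose existence has not been established. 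The repair is to run the topological argument directly on the given smoothing of $X$ to $\PP^2$: writing $3 = e(\PP^2) = e(X) + \sum_i\bigl(e(F_i)-1\bigr)$, where the Milnor fibre of the $\Q$-Gorenstein smoothing of a class-$T$ point of type $\tfrac{1}{d_in_i^2}(1,d_in_ia_i-1)$ has $e(F_i)=d_i$, and using $e(X)=2+b_2(X)\ge 3$ for a normal projective rational surface with quotient singularities, forces every $d_i=1$ and $b_2(X)=\rho(X)=1$ simultaneously. This shows $X$ itself has only Wahl singularities and Picard number one, with no intermediate $X_0$ required; the ``or a smoothing of such a space'' clause is then accounted for only at the very end, when one checks which subsets of the coordinate points of $\PP(p^2,q^2,r^2)$ can be smoothed independently. (Relatedly, your assertion that ampleness of $-K$ ``is preserved'' in the family is not automatic --- ampleness is an open, not closed, condition on the base --- but it follows once $\rho(X)=1$, $K_X^2=9>0$, and $X$ is rational.)

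Second, the step you yourself flag as the main obstacle --- that the only baskets of Wahl singularities compatible with $K^2=9$ and $\rho=1$ are the three-point Markov configurations, that each such surface is a $\Q$-Gorenstein deformation of the toric model $\PP(p^2,q^2,r^2)$, and that every Markov triple is realized and smooths to $\PP^2$ along the mutation tree --- is not an incidental verification but the actual content of the theorem. In particular, the bound ``at most three singular points'' does not fall out of the identity $K_{\widetilde{X}_0}^2 = 10-\rho(\widetilde{X}_0)$ alone (for Wahl chains the length of the chain exactly cancels the defect in $K^2$, so that relation is an identity rather than a constraint), and the identification of a Picard-rank-one surface with prescribed Wahl points as a weighted projective plane requires either Manetti's case analysis or the exceptional-bundle/Rudakov argument you mention. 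Deferring all of this to the literature is legitimate for a citation, but it means the proposal is a correct road map rather than a proof.
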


In addition to the theorem, we can describe all solutions with an infinite graph:

\begin{theorem}\label{thm:mutation}
All solutions to \[ 3pqr = p^2 + q^2 + r^2 \] can be obtained by starting with the obvious solution $(1,1,1)$ and performing a sequence of mutations: if $(p,q,r)$ is a solution, then $(p,q,3pq-r)$ is a solution.
\end{theorem}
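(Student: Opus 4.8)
The plan is to prove this by a Vieta-jumping (descent) argument, treating $3pqr = p^2+q^2+r^2$ as the classical Markov equation. First I would record the algebraic input that makes the mutation well-defined. Fixing $p$ and $q$ and regarding the relation as the quadratic
\[ t^2 - 3pq\,t + (p^2+q^2) = 0 \]
in the remaining variable, a solution $(p,q,r)$ corresponds to $r$ being one of its roots. By Vieta's formulas the two roots $r,r'$ satisfy $r+r' = 3pq$ and $r\,r' = p^2+q^2$. Hence the second root is exactly $r' = 3pq - r$; it is again an integer, and since $r\,r' = p^2+q^2 > 0$ with $r>0$, it is in fact a positive integer. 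This shows simultaneously that the mutation $(p,q,r)\mapsto(p,q,3pq-r)$ carries solutions to solutions (staying among positive integer triples) and that it is an involution, since applying it twice returns $3pq-(3pq-r)=r$.

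The core of the argument is a strict descent statement: if $(p,q,r)$ is a solution with $p\le q\le r$ and $(p,q,r)\neq(1,1,1)$, then mutating the largest coordinate strictly decreases the maximum. To see this I would evaluate the quadratic $g(t) = t^2 - 3pq\,t + (p^2+q^2)$, whose roots are $r$ and $r'$, at $t=q$, obtaining $g(q) = p^2 + q^2(2-3p)$. Since $p\ge 1$ the factor $2-3p\le -1$, so $g(q)\le p^2 - q^2 \le 0$, with the inequality strict unless $p=q=1$. A negative value of $g$ forces $q$ to lie strictly between the two roots, and because $q\le r$ rules out $r$ being the smaller root, we get $r' < q < r$. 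Thus the mutated triple $(p,q,r')$ has maximum $q$, which is strictly less than $r$. The excluded equality cases are handled by direct substitution: a solution with $p=q$, or with $q=r$, forces $p=1$, so the only such triples are $(1,1,1)$ and $(1,1,2)$, and the latter mutates to the former.

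With strict descent in hand the theorem follows. Starting from any solution and repeatedly mutating the largest coordinate produces a strictly decreasing sequence of positive integers (the successive maxima), which must terminate. The only triple admitting no further strict descent is $(1,1,1)$, so every solution reduces to $(1,1,1)$ after finitely many mutations. Because each mutation is its own inverse, reversing this chain exhibits the original triple as the result of applying mutations to $(1,1,1)$, as claimed. I expect the main obstacle to be precisely the descent inequality of the second paragraph: verifying that the maximal coordinate genuinely drops for every non-minimal triple, together with a careful treatment of the boundary cases $p=q$, $q=r$, and $p=q=1$, since these are exactly where the inequality $g(q)\le 0$ degenerates to an equality.
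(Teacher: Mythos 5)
Your proof is correct and complete. Note, however, that the paper itself offers no proof of this statement: it is quoted as a theorem of Hacking (from his work on plane curves, following Manetti), and is essentially Markov's classical result on the equation $x^2+y^2+z^2=3xyz$, so there is no in-paper argument to compare against. Your Vieta-jumping descent is the standard proof: the involution property from $r+r'=3pq$, $rr'=p^2+q^2$, the key inequality $g(q)=p^2+q^2(2-3p)<0$ for all non-minimal ordered triples, and the resulting strict decrease of the maximal coordinate are all verified correctly, and your isolation of the degenerate cases is right --- $g(q)=0$ forces $p=q=1$, and direct substitution in $3r=2+r^2$ shows the only solutions with $p=q=1$ are $(1,1,1)$ and $(1,1,2)$, the latter mutating to the former. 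The one point worth making explicit is that your descent mutates the \emph{largest} coordinate, whereas the statement's mutation replaces the third coordinate $r$; since the equation is symmetric this is only a matter of reordering the triple, but a sentence acknowledging that solutions are being treated as unordered triples (as is standard for the Markov tree) would close that small gap.
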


One could hope for an analogue of the description of degenerations in the three dimensional case, although that seems far out of reach.  However, there are partial results, using properties of weighted projective spaces. Recall that a weighted projective space $\PP(a_0, \dots, a_n)$ is called well-formed if every subset of $n$ of the $a_i$ has no common factors.  We will call the set of integers $(a_0, \dots, a_n)$ well-formed if the associated weighted projective space is. 

\begin{proposition}
If $\PP(a,b,c,d)$ is well-formed and admits a $\mathbb{Q}$-Gorenstein smoothing to $\PP^3$, then
\[ 64abcd = (a + b + c + d)^3. \]
\end{proposition}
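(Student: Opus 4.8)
The plan is to pin down the anticanonical degree $(-K_X)^3$ of $X=\PP(a,b,c,d)$ purely in terms of the weights and then force it to equal the anticanonical degree of $\PP^3$ via invariance of volume under $\Q$-Gorenstein degeneration. First I would record the intersection theory on $X$. Because $\PP(a,b,c,d)$ is well-formed, its singular locus has codimension at least $2$, so each $\calO_X(m)$ is a reflexive rank-one sheaf, the class $\calO_X(1)$ is $\Q$-Cartier, and its top self-intersection is
\[ \calO_X(1)^3 = \frac{1}{abcd} \]
(see \cite{wps}, \cite{wpsfletcher}). Well-formedness is exactly what makes this clean: it rules out redundant presentations such as $\PP(1,2,4)\cong\PP(1,1,2)$, for which the naive product of the weights would be wrong.

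Next I would compute the canonical class. For a well-formed weighted projective space one has $\omega_X\cong\calO_X(-(a+b+c+d))$, so $-K_X=\calO_X(a+b+c+d)$, and combining with the previous step gives
\[ (-K_X)^3 = (a+b+c+d)^3\,\calO_X(1)^3 = \frac{(a+b+c+d)^3}{abcd}. \]
Then I would invoke invariance of the volume. A $\Q$-Gorenstein smoothing $f\colon\mathcal{X}\to C$ of $X$ to $\PP^3$ has $K_{\mathcal{X}/C}$ $\Q$-Cartier (it is weakly $\Q$-Gorenstein by the lemmas of Section \ref{sec:moduli}); its general fiber $\PP^3$ is smooth; and its special fiber $X$, having only abelian quotient singularities and hence klt (so slc) singularities, satisfies the remaining hypothesis. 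Thus Proposition \ref{prop:constantvolume} applies and yields
\[ (-K_X)^3 = (-K_{\PP^3})^3 = 4^3 = 64. \]
Equating the two expressions for $(-K_X)^3$ gives $(a+b+c+d)^3=64\,abcd$, as claimed.

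Essentially all of the content here is imported from the two cited ingredients (the intersection-number formula for weighted projective space and Proposition \ref{prop:constantvolume}), so I do not expect a serious obstacle. The only point genuinely requiring care is the first step, where well-formedness must be invoked to justify the clean formula $\calO_X(1)^3=1/(abcd)$ and, likewise, the canonical bundle formula $\omega_X\cong\calO_X(-(a+b+c+d))$; without the well-formedness hypothesis both formulas acquire correction factors coming from the non-reduced weights, and the stated numerical identity would fail.
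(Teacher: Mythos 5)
Your proof is correct and follows exactly the paper's argument: compute $K_X^3 = -(a+b+c+d)^3/(abcd)$ from the standard intersection theory and canonical bundle formula on a well-formed weighted projective space, then apply Proposition \ref{prop:constantvolume} to force $K_X^3 = K_{\PP^3}^3 = -64$. The additional care you take in verifying the hypotheses (well-formedness for the intersection formula, klt special fiber for the volume-invariance proposition) is sound but not a departure from the paper's route.
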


\begin{proof}
In order for $X = \PP(a,b,c,d)$ to have a $\Q$-Gorenstein smoothing, necessarily $K_X^3 = K_{\PP^3}^3 = -64$ by Proposition \ref{prop:constantvolume}.  But, $\calO(K_X) = \calO(-a-b-c-d)$, and
\[ K_X^3 = \frac{(-a-b-c-d)^3}{abcd}. \]
Therefore, we must have $64abcd = (a + b + c + d)^3$.
\end{proof}

One could make the immediate generalization to $n$-dimensional weighted projective spaces:

\begin{proposition}
If $\PP(a_0, a_1, \dots , a_n)$ is well-formed and admits a $\mathbb{Q}$-Gorenstein smoothing to $\PP^n$, then
\[ (n+1)^n\Pi a_i = (\sum a_i)^n. \]
\end{proposition}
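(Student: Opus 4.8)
The plan is to mimic the proof of the three-dimensional case verbatim, replacing the explicit number $64 = (n+1)^n$ by its general value and using the standard intersection theory of weighted projective space. Write $X = \PP(a_0, \dots, a_n)$ and suppose $(\mathcal{X}, X)/C$ is a $\Q$-Gorenstein smoothing with general fiber $\PP^n$. Since the smoothing is $\Q$-Gorenstein, $K_{\mathcal{X}/C}$ is $\Q$-Cartier; the general fiber $\PP^n$ is smooth, and the special fiber $X$, being a normal klt toric variety, is in particular slc. Thus Proposition \ref{prop:constantvolume} applies and gives $K_X^n = K_{\PP^n}^n$.

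The right-hand side is immediate: $K_{\PP^n} \sim -(n+1)H$ with $H^n = 1$, so $K_{\PP^n}^n = (-(n+1))^n = (-1)^n(n+1)^n$. For the left-hand side I would invoke two standard facts about a well-formed weighted projective space (see \cite{wps}, \cite{wpsfletcher}): the canonical sheaf is $\calO_X(K_X) = \calO_X(-\sum a_i)$, and the top self-intersection of the ample generator is $\calO_X(1)^n = 1/\prod a_i$. Combining these gives
\[ K_X^n = \frac{\left(-\sum a_i\right)^n}{\prod a_i} = \frac{(-1)^n\left(\sum a_i\right)^n}{\prod a_i}. \]
Equating the two expressions and cancelling $(-1)^n$ yields $\left(\sum a_i\right)^n / \prod a_i = (n+1)^n$, i.e.\ $(n+1)^n \prod a_i = \left(\sum a_i\right)^n$, as claimed.

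The only genuine subtlety, and the step I would be most careful about, is the computation of $K_X^n$ on the singular variety $X$. Here $\calO_X(1)$ is merely $\Q$-Cartier, so $K_X^n$ must be read as a $\Q$-valued intersection number on the $\Q$-factorial toric variety $X$; this is precisely the quantity whose constancy in the family is guaranteed by Proposition \ref{prop:constantvolume}. The well-formedness hypothesis enters exactly here: it is what makes the canonical sheaf formula $\calO_X(K_X) = \calO_X(-\sum a_i)$ hold without correction terms, so that the degree computation collapses to the single number $\calO_X(1)^n = 1/\prod a_i$. No new ideas beyond the $n=3$ argument are required.
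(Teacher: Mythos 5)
Your proof is correct and is essentially the paper's own argument: the paper proves the $n=3$ case by combining Proposition \ref{prop:constantvolume} with $\calO_X(K_X)=\calO_X(-\sum a_i)$ and $\calO_X(1)^n = 1/\prod a_i$, and states that the general case follows by the same computation, which is exactly what you carry out. Your added remarks on where well-formedness and the $\Q$-Cartier hypothesis enter are accurate and consistent with the paper's setup.
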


\begin{remark}
While this formula appears different from the formula in Theorem \ref{thm:mutation}, the equation $9abc = (a+b+c)^2$ simplifies to $3pqr = p^2 + q^2 + r^2$ (where $a = p^2, b = q^2, c = r^2$) when $(a,b,c)$ is well formed. 
\end{remark}

Given the equation, one must ask how to find solutions.  There is certainly an infinite family of solutions, which makes sense geometrically.  If we have a degeneration of $\mathbb{P}^2$ to such a weighted projective space, it should induce a degeneration of $\mathbb{P}^3$ to an appropriate cone over that weighted projective space.  This is the content of the following proposition, stated first in the three-dimensional case and then in general, whose proof is simple arithmetic.

\begin{proposition}\label{prop:abc}
If $\PP(a,b,c)$ admits a smoothing to $\PP^2$, then $d =\sqrt{abc} = \frac{a + b + c}{3} \in \mathbb{Z}$ and $P(a,b,c,d)$ satisfies the condition \[ 64abcd = (a + b + c + d)^3. \]
\end{proposition}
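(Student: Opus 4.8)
The plan is to reduce everything to the explicit parametrization supplied by Manetti's theorem (stated above) and then carry out a short algebraic verification; no geometry beyond the citation of Manetti's result is needed, since the hypothesis ``$\PP(a,b,c)$ admits a smoothing to $\PP^2$'' already places us in its conclusion.

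First I would invoke Manetti's theorem: because $\PP(a,b,c)$ admits a smoothing to $\PP^2$ with $\Q$-Gorenstein total space, we may write $a = p^2$, $b = q^2$, $c = r^2$ for positive integers $p,q,r$ satisfying the Markov-type equation
\[ 3pqr = p^2 + q^2 + r^2. \]
Next I would compute $d$ in the two claimed ways and check integrality. On one hand,
\[ \sqrt{abc} = \sqrt{p^2 q^2 r^2} = pqr, \]
which is manifestly a positive integer. On the other hand, using the Markov-type relation,
\[ \frac{a+b+c}{3} = \frac{p^2 + q^2 + r^2}{3} = \frac{3pqr}{3} = pqr. \]
Thus $d = \sqrt{abc} = \frac{a+b+c}{3} = pqr \in \Z$, establishing the first assertion.

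Finally I would verify the degree-three identity. From the Markov-type relation,
\[ a + b + c + d = (p^2 + q^2 + r^2) + pqr = 3pqr + pqr = 4pqr, \]
so that $(a+b+c+d)^3 = (4pqr)^3 = 64\,(pqr)^3$. Comparing with
\[ 64\, abcd = 64 \cdot p^2 q^2 r^2 \cdot pqr = 64\,(pqr)^3 \]
gives $64abcd = (a+b+c+d)^3$, which is precisely the numerical condition (appearing in the earlier Proposition on weighted projective spaces smoothing to $\PP^3$) that $\PP(a,b,c,d)$ must satisfy.

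There is no genuine obstacle here: once Manetti's parametrization is in hand, the proposition is a one-line substitution, and the only point worth flagging is the bookkeeping that $d = pqr$ really is an \emph{integer} rather than merely a rational number --- but this is forced automatically, since $p^2+q^2+r^2 = 3pqr$ makes $\tfrac{a+b+c}{3}$ integral. The computation reflects the geometric fact that forming the cone over a smoothable weighted $\PP^2$ produces a smoothable weighted $\PP^3$ of the correct anticanonical volume.
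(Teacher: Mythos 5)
Your proof is correct and is essentially identical to the paper's: both reduce to Manetti's parametrization $a=p^2$, $b=q^2$, $c=r^2$ with $3pqr=p^2+q^2+r^2$, note $d=pqr\in\Z$, and then compute $(a+b+c+d)^3=(4d)^3=64d^3=64abcd$ using $d^2=abc$. No differences worth noting.
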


\begin{example}
    The first non-trivial example of a solution is $\bP(1,1,2,4)$.  Because this can be embedded as a degree two section of $\PP(1,1,1,1,2)$, it clearly admits a smoothing to $\bP^3$.
\end{example}

We have the immediate generalization to $\bP^n$:

\begin{proposition}
If $\PP(a_0, a_1, \dots , a_n)$ satisfies
\[ (n+1)^n\Pi a_i = (\sum a_i)^n,  \]
then $b = (\Pi a_i)^{1/n} = \frac{\sum a_i}{n+1} \in \mathbb{Z}$ and $\PP(a_0, a_1, \dots , a_n, b)$ satisfies \[ (n+2)^{n+1}b\Pi a_i = (b+\sum a_i)^{n+1}.  \]
\end{proposition}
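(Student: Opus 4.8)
The plan is to treat this as a purely arithmetic statement about the two quantities $S = \sum a_i$ and $P = \prod a_i$, exactly parallel to the proof of Proposition \ref{prop:abc}, but arranged so as to avoid any appeal to the $a_i$ being perfect $n$-th powers (an input available from Manetti's theorem only in the surface case, not here).

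First I would rewrite the hypothesis $(n+1)^n\prod a_i = \left(\sum a_i\right)^n$ in the form $P = \left(\frac{S}{n+1}\right)^n$. Taking $n$-th roots over the reals immediately gives $(\prod a_i)^{1/n} = \frac{\sum a_i}{n+1}$, which identifies the two expressions for $b$ appearing in the statement. Hence the only genuine content of the first claim is the \emph{integrality} of $b$.

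This integrality step is the one point requiring care, and is the main obstacle, since unlike the two-dimensional case we cannot conclude that each $a_i$ is a perfect $n$-th power. The plan is to argue via integral closure: $b = \frac{\sum a_i}{n+1}$ is manifestly a rational number, while $b^n = P = \prod a_i$ is an integer. Thus $b$ is a rational root of the monic integer polynomial $x^n - P$, and since $\mathbb{Z}$ is integrally closed in $\mathbb{Q}$, we conclude $b \in \mathbb{Z}$ (and $b > 0$ since all $a_i > 0$).

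Finally, I would verify the new equation by direct substitution, using the two relations just established, $P = b^n$ and $\sum a_i = (n+1)b$. For the left-hand side, $(n+2)^{n+1} b\prod a_i = (n+2)^{n+1} b\cdot b^n = \big((n+2)b\big)^{n+1}$; for the right-hand side, $b + \sum a_i = b + (n+1)b = (n+2)b$, so $\big(b + \sum a_i\big)^{n+1} = \big((n+2)b\big)^{n+1}$. The two sides agree, which completes the verification. No step beyond the integral-closure observation involves anything more than algebraic manipulation, so the bulk of the write-up will be these two substitutions.
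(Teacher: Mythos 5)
Your proof is correct, and the substitution check at the end is exactly the computation the paper intends (the paper literally says ``the proof is the same as above,'' referring to the three-dimensional case, Proposition \ref{prop:abc}, where one sets $\sum a_i = (n+1)b$ and $\prod a_i = b^n$ and expands both sides). The genuine difference is in how you obtain the integrality of $b$. In the paper's model argument, integrality of $d=\sqrt{abc}$ comes for free because Manetti's theorem forces $a,b,c$ to be perfect squares; that input has no analogue for general $n$, so the ``same proof'' does not literally transfer for that step. Your replacement --- $b=\frac{\sum a_i}{n+1}$ is rational while $b^n=\prod a_i$ is an integer, so $b$ is a rational root of the monic polynomial $x^n-\prod a_i$ and hence lies in $\mathbb{Z}$ --- is a clean, purely arithmetic fix that needs no structural information about the $a_i$. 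This buys a self-contained proof of the general statement and, as a side effect, also reproves the integrality in the $n=2$ case without invoking the perfect-square structure. The one cosmetic point worth keeping in the write-up is the remark that $b>0$, so that the real $n$-th root $(\prod a_i)^{1/n}$ is unambiguously identified with $\frac{\sum a_i}{n+1}$; you already note this.
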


Let us further investigate the three-dimensional case.  Using a computer, one can list the integer solutions to the equation $64abcd = (a + b + c + d)^3$ such that the associated weighted projective space is well-formed, and finds the following weighted projective spaces as the first few solutions.

\begin{center}
\begin{tabular}{lll}
	$\PP(1,1,1,1)$ &	$\PP(1,1,2,4)$ & $\PP(1,2,9,12)$ \\
	 $\PP(1,4,10,25)$ & $\PP(1,4,16,27)$ & $\PP(1,6,9,32)$ \\
	$\PP(1,7,27,49)$ & $\PP(1,9,50,60)$ & $\PP(1,22,32,121)$\\
\end{tabular}
\end{center}

We can immediately determine that some of these threefolds do not admit smoothings to $\PP^3$, using the following theorem of Schlessinger \cite[Theorem 3]{schlessinger}. 

\begin{theorem}\cite{schlessinger}\label{thm:schlessinger}
Assume $Y$ is smooth of dimension $\ge 3$, $G$ is a finite group, and $ X = Y / G$.  Let $p: Y \to X$ be the quotient map.  If $y \in Y$ is the only fixed point of $G$, then $X$ is rigid. 
\end{theorem}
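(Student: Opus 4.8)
The plan is to show $X$ is rigid by proving that its space of first-order deformations vanishes, i.e. $T^1_X = 0$, where $T^1_X$ denotes the first cotangent cohomology module of the singularity at $x = p(y)$. Since rigidity is a local statement at $x$, I would complete at $x$ and work with the normal isolated singularity $(X,x)$ of dimension $n = \dim Y \ge 3$; note $X$ is normal, being the quotient of a smooth (hence normal) variety by a finite group. Write $U = X \setminus \{x\}$ and $\tilde U = Y \setminus \{y\}$. Because $y$ is the \emph{only} fixed point of $G$, the action of $G$ on $\tilde U$ is free, so the quotient map $\pi\colon \tilde U \to U$ is an étale Galois cover with group $G$, and in particular $U$ is smooth.

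The first key step is a comparison reducing $T^1_X$ to the cohomology of the tangent sheaf on the punctured neighborhood $U$. For a normal isolated singularity of dimension $\ge 3$, restricting a first-order deformation of $(X,x)$ to the smooth locus $U$ gives a natural map
\[ T^1_X \longrightarrow H^1(U, \Theta_U) \]
which is injective (indeed an isomorphism). This is exactly where the hypothesis $\dim \ge 3$ enters: the kernel and cokernel are governed by local cohomology groups $H^i_{\{x\}}$ of the reflexive sheaf $\Theta_X$, which vanish in the relevant degrees once the singular point has codimension $\ge 3$. Granting this, it suffices to prove $H^1(U, \Theta_U) = 0$.

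The second key step computes $H^1(U, \Theta_U)$ upstairs. Since $\pi$ is étale, $\pi^\ast \Theta_U = \Theta_{\tilde U}$, and since $G$ is finite with $|G|$ invertible (characteristic $0$), taking $G$-invariants is exact and étale Galois descent gives
\[ H^1(U, \Theta_U) \cong H^1(\tilde U, \Theta_{\tilde U})^G. \]
Now $\tilde U = Y \setminus \{y\}$ with $Y$ smooth, so $\Theta_Y$ is locally free of rank $n$ and hence has depth $n$ at $y$. Choosing $Y$ to be an affine (Stein) neighborhood of $y$ and using the local cohomology exact sequence together with $H^i(Y, \Theta_Y) = 0$ for $i \ge 1$, I obtain $H^1(\tilde U, \Theta_{\tilde U}) \cong H^2_{\{y\}}(Y, \Theta_Y)$. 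Because $\Theta_Y$ has depth $n \ge 3 > 2$ at $y$, its local cohomology vanishes below degree $n$, so $H^2_{\{y\}}(Y, \Theta_Y) = 0$. Taking $G$-invariants yields $H^1(U, \Theta_U) = 0$, and combining with the first step gives $T^1_X = 0$, proving rigidity.

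The main obstacle is the first step: justifying the injection $T^1_X \hookrightarrow H^1(U, \Theta_U)$ and pinning down exactly how the codimension-$\ge 3$ hypothesis controls the local-cohomology correction terms. This is the genuinely delicate part, since in dimension $2$ the analogous comparison fails — quotient surface singularities do deform — precisely because $H^2_{\{y\}}(Y,\Theta_Y)$ need not vanish there. I would carry this out via the long exact sequence of cotangent cohomology with supports at $x$ (equivalently, Schlessinger's original argument), verifying that the supported terms reduce to local cohomology of $\Theta_X$ and vanish for $\dim \ge 3$; the remaining verifications (local freeness of $\Theta_Y$, exactness of invariants, and the vanishing $H^{\ge 1}(Y,\Theta_Y)=0$ on a Stein neighborhood) are routine.
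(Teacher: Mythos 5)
The paper does not prove this statement; it is quoted directly from Schlessinger's paper \cite{schlessinger} as a known result. Your argument is a correct reconstruction of Schlessinger's original proof: the comparison $T^1_X \isomto H^1(U,\Theta_U)$ for a normal isolated singularity of depth $\ge 3$ (quotient singularities being Cohen--Macaulay, the dimension hypothesis supplies the depth), followed by \'etale descent to $\tilde U$ and the vanishing of $H^2_{\{y\}}(Y,\Theta_Y)$ for the locally free sheaf $\Theta_Y$, is exactly how the cited theorem is established.
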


In the list above, it implies that neither $\PP(1,4,16,27)$ nor $\PP(1,7,27,49)$ are smoothable, so we see that, in dimension $\ge 3$, satisfying the equation is not a sufficient condition to admit a smoothing (contrasting with the case for dimension 2). For the remaining candidates, in light of Proposition \ref{prop:abc}, there are some solutions $\PP(a,b,c,d)$ arising from the degenerations $\PP(a,b,c)$ of $\PP^2$ where $ d = \frac{a+b+c}{3}$ is the average of $a$, $b$, and $c$.  The only three that appear in this truncated list are $\PP(1,1,1,1)$, $\PP(1,1,2,4)$, and $\PP(1,4,10,25)$.  These solutions are well understood and, following work of Hacking, Prokhorov, and Manetti, we have the following result.  This is simply a restatement of Theorem \ref{thm:mutation}, adding in the fourth variable $d$. 

\begin{proposition}\label{prop:tree1}
	There is an infinite family of well-formed solutions to the equation $64abcd = (a + b + c + d)^3$ given by $(a,b,c,d) = (\alpha^2, \beta^2, \gamma^2, \alpha\beta\gamma) = (\alpha^2, \beta^2, \gamma^2, \frac{\alpha^2 + \beta^2 + \gamma^2}{3})$.  All such $\alpha$, $\beta$, and $\gamma$ lie on an infinite tree and are obtained by a mutation of the form $(\alpha, \beta, \gamma) \to (\alpha, \beta, 3\alpha\beta - \gamma)$ starting from $(1,1,1)$. 
\end{proposition}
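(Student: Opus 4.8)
The plan is to assemble the statement from the two results already in hand: Proposition \ref{prop:abc}, which supplies the solutions together with the formula $d = \alpha\beta\gamma = \frac{\alpha^2+\beta^2+\gamma^2}{3}$, and Theorem \ref{thm:mutation}, which supplies the tree structure. The only genuinely new point to verify is well-formedness of $\PP(\alpha^2, \beta^2, \gamma^2, \alpha\beta\gamma)$.

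First I would recall, via Manetti's theorem and Theorem \ref{thm:mutation}, that the well-formed surfaces $\PP(\alpha^2, \beta^2, \gamma^2)$ smoothing to $\PP^2$ correspond exactly to Markov triples, i.e. to integer solutions of $\alpha^2 + \beta^2 + \gamma^2 = 3\alpha\beta\gamma$, and that these all arise from $(1,1,1)$ by the mutation $(\alpha,\beta,\gamma) \mapsto (\alpha, \beta, 3\alpha\beta - \gamma)$. For any such triple, set $d = \alpha\beta\gamma$; the Markov equation gives $d = \frac{\alpha^2+\beta^2+\gamma^2}{3} \in \Z$, the asserted formula for the fourth weight. Proposition \ref{prop:abc} then records the one-line computation $a + b + c + d = 4d$, whence $(a+b+c+d)^3 = 64d^3 = 64(\alpha\beta\gamma)^3 = 64abcd$, so each $(\alpha^2, \beta^2, \gamma^2, \alpha\beta\gamma)$ solves the three-dimensional equation. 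Since the Markov tree is infinite, this produces an infinite family.

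The remaining step is well-formedness, for which I would use the standard fact that the entries of a Markov triple are pairwise coprime. This follows by induction along the mutation tree: it holds for the seed $(1,1,1)$, and the mutation preserves it because $\gcd(\alpha, 3\alpha\beta - \gamma) = \gcd(\alpha, \gamma)$ and $\gcd(\beta, 3\alpha\beta - \gamma) = \gcd(\beta, \gamma)$, with $\gcd(\alpha,\beta)$ left unchanged. Granting pairwise coprimality, I check that each of the four three-element subsets of $\{\alpha^2, \beta^2, \gamma^2, \alpha\beta\gamma\}$ has trivial gcd: a prime dividing two of these weights would have to divide two of $\alpha, \beta, \gamma$, contradicting coprimality, while $\gcd(\alpha^2,\beta^2,\gamma^2) = \gcd(\alpha,\beta,\gamma)^2 = 1$ directly. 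Hence $\PP(\alpha^2, \beta^2, \gamma^2, \alpha\beta\gamma)$ is well-formed.

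Finally, the tree/mutation assertion is precisely Theorem \ref{thm:mutation} transported across the correspondence $(\alpha,\beta,\gamma) \leftrightarrow (\alpha^2, \beta^2, \gamma^2, \alpha\beta\gamma)$, so nothing further is needed there. The proof presents no real obstacle: the content is the substitution already carried out in Proposition \ref{prop:abc} together with the coprimality bookkeeping, and if any step warrants care it is only the well-formedness check, which the pairwise-coprimality induction along the Markov tree disposes of cleanly.
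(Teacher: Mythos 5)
Your proposal is correct and follows essentially the same route as the paper, which treats the proposition as a restatement of Theorem \ref{thm:mutation} augmented by the fourth weight $d=\alpha\beta\gamma$ and the computation $a+b+c+d=4d$ from Proposition \ref{prop:abc}. Your explicit verification of well-formedness via pairwise coprimality of Markov triples fills in a detail the paper leaves implicit, but it is the expected bookkeeping rather than a different argument.
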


\begin{definition}
	We will call a solution of this form $\PP^2$-type because it arises from a degeneration of $\PP^2$.
\end{definition}

The deformation theory of these weighted projective spaces is in general quite complicated, but we can show that all solutions of $\bP^2$-type admit a smoothing to $\bP^3$.

\begin{proposition}\cite{hackinggentype}\label{prop:p2}
	The weighted projective spaces appearing as solutions of $\PP^2$-type can be connected as a family of threefolds over a two-parameter base, and are each $\mathbb{Q}$-Gorenstein deformations of a common smoothing. 
\end{proposition}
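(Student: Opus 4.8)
The plan is to deduce the three-dimensional statement from its surface analogue by a relative cone construction. The crucial observation is that each $\PP^2$-type space $\PP(\alpha^2,\beta^2,\gamma^2,\alpha\beta\gamma)$ is the projective cone over the surface $S_{\alpha\beta\gamma} = \PP(\alpha^2,\beta^2,\gamma^2)$ polarized by $\calO(\alpha\beta\gamma)$, i.e. adjoining a cone coordinate of weight $\alpha\beta\gamma$. Since the Markov relation gives $-K_{S_{\alpha\beta\gamma}} = \calO(\alpha^2+\beta^2+\gamma^2) = \calO(3\alpha\beta\gamma)$, this polarization is intrinsically $-\tfrac13 K_{S_{\alpha\beta\gamma}}$; in particular it is determined by the surface alone and therefore varies flatly in a family. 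At the root $(1,1,1)$ the same recipe applied to $\PP^2$, with $-\tfrac13 K_{\PP^2} = \calO(1)$, produces the cone over $\PP^2\hookrightarrow\PP^2$, namely $\PP^3$, which will serve as the common smoothing.

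First I would invoke the surface result of Manetti and Hacking, namely the surface version of Theorem \ref{thm:mutation} together with its deformation-theoretic refinement: there is a $\Q$-Gorenstein family $\pi\colon \mathcal{S}\to B$ over a two-dimensional base $B$ --- morally the Markov cubic $\{3xyz = x^2+y^2+z^2\}$ --- whose general fiber is $\PP^2$ and whose special fibers realize every $\PP^2$-type surface $S_{\alpha\beta\gamma}$, the mutations of Theorem \ref{thm:mutation} being exactly the combinatorics of the integral points of $B$. Because each $S_{\alpha\beta\gamma}$ has only quotient singularities and $H^2(S,T_S)=0$, this family is unobstructed and $\Q$-Gorenstein, so $K_{\mathcal{S}/B}$ is $\Q$-Cartier.

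Next I would form the relative projective cone $\mathcal{X}\to B$ of $\mathcal{S}/B$ with respect to $\mathcal{L} := -\tfrac13 K_{\mathcal{S}/B}$. Concretely, $\mathcal{X} = \Proj_B\bigl(\bigoplus_{m\ge 0}\mathcal{R}_m\bigr)$, where $\mathcal{R}$ is the relative cone algebra obtained from the section algebra $\bigoplus_{j\ge0}\pi_*\calO_{\mathcal{S}}(j\mathcal{L})$ by adjoining a cone variable of the appropriate weight. Because $\mathcal{L}=-\tfrac13K_{\mathcal{S}/B}$ restricts to $\calO(\alpha\beta\gamma)$ on $S_{\alpha\beta\gamma}$ and to $\calO(1)$ on $\PP^2$, the fiber of $\mathcal{X}$ over a point carrying $S_{\alpha\beta\gamma}$ is exactly $\PP(\alpha^2,\beta^2,\gamma^2,\alpha\beta\gamma)$, while the general fiber is the cone over $\PP^2$ by $\calO(1)$, that is $\PP^3$. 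This exhibits the $\PP^2$-type spaces (cf.\ Proposition \ref{prop:tree1}) as fibers of a single family over the two-parameter base $B$ and as $\Q$-Gorenstein degenerations of the common smoothing $\PP^3$.

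The main obstacle is twofold. The input from the surface case --- that a single flat family over a two-dimensional base simultaneously produces all of the $S_{\alpha\beta\gamma}$ with $\PP^2$ as general fiber --- is the genuinely hard, deformation-theoretic heart of the argument, and I would lean on Manetti's classification of log terminal degenerations of $\PP^2$ and Hacking's description of the Markov tree to supply it. The second, more technical point is to check that the relative cone construction stays flat and, crucially, $\Q$-Gorenstein over $B$: one must verify that $K_{\mathcal{X}/B}$ is $\Q$-Cartier, which should follow because both $K_{\mathcal{S}/B}$ and the polarizing class $\mathcal{L}$ are $\Q$-Cartier and the relative dualizing sheaf of a projective cone is computed from those of the base family by the standard adjunction for cones. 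Confirming that the resulting $\Q$-Gorenstein structure on $\mathcal{X}/B$ restricts fiberwise to the intended one, so that the deformations are $\Q$-Gorenstein in the sense used throughout the paper, is where I expect the bookkeeping to be most delicate.
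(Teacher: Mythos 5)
There is a genuine gap, and it sits in the very first step. You take as input a single $\Q$-Gorenstein family $\calS\to B$ over a two-dimensional base whose general fiber is $\PP^2$ and whose special fibers realize \emph{every} Markov surface $\PP(\alpha^2,\beta^2,\gamma^2)$ at once. The Manetti--Hacking surface results do not provide such a family, and the proposition is not claiming one in dimension three either: as the statement is used in the paper, ``connected over a two-parameter base'' means that each pair of weighted projective spaces \emph{one mutation apart} on the infinite tree is joined by its own two-parameter family (with a non-normal threefold at the origin and a common partial smoothing at the generic point), and the relation to $\PP^3$ is then obtained by composing these pairwise degenerations down the tree. Concretely, the paper writes, for each adjacent pair, the explicit hypersurface family $x_0x_1 = sx_2^{\gamma'} + tx_3^{\gamma}$ of degree $\gamma\gamma'=\alpha^2+\beta^2$ in $\PP(\alpha^2,\beta^2,\gamma,\gamma',\alpha\beta)\times\Aff^2_{s,t}$; being a complete intersection in a $\Q$-factorial ambient space, its total space is $\Q$-Gorenstein for free. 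Note also that the generic fiber of such a pairwise family is only a \emph{partial} smoothing (it smooths the index-$\gamma^2$ and index-$\gamma'^2$ points, not all singularities), so your identification of the common smoothing with $\PP^3$ in one step does not match what these families actually produce.

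Your cone idea is geometrically on target --- indeed, if you apply the relative cone construction with polarization $-\tfrac13 K$ to the correct \emph{pairwise} surface families $x_0x_1=sx_2^{\gamma'}+tx_3^{\gamma}\subset\PP(\alpha^2,\beta^2,\gamma,\gamma')$ (whose fibers have $K\sim\calO(-3\alpha\beta)$ by adjunction), you recover exactly the paper's threefold families above. But as written the construction has a second unresolved issue: flatness of $\Proj_B\bigl(\bigoplus_j\pi_*\calO_{\calS}(j\calL)\bigr)$ and the identification of its fibers require that $h^0(S_b,j\calL_b)$ be constant and that the sheaves $\calO(j\calL)$ commute with base change for \emph{all} $j$. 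Since $\calL=-\tfrac13K_{\calS/B}$ is not a $\Z$-multiple of $K$, the Koll\'ar condition on $\omega^{[n]}$ alone does not give this (this is precisely the subtlety the paper confronts elsewhere with $q$-$\Q$-Gorenstein deformations and higher-index canonical covers, cf.\ Example \ref{ex:Dnotcartier}), so ``the bookkeeping'' you defer is actually the substance of the verification. The paper's route avoids both problems by exhibiting the families as explicit complete intersections.
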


\begin{proof}
	This is proved in \cite[Example 7.7]{hackinggentype}.  We relate the weighted projective spaces one step apart on the infinite tree over a two-parameter base. Let $\PP(a,b,c,d)$ and $\PP(a,b,c',d')$ be two solutions to $64abcd = (a + b + c + d)^3$ of $\PP^2$-type related by one mutation so that \[ \PP(a,b,c,d) = \PP(\alpha^2, \beta^2, \gamma^2, \alpha\beta\gamma) \] and \[ \PP(a,b,c',d') = \PP(\alpha^2, \beta^2, \gamma'^2, \alpha\beta\gamma' ) = \PP(\alpha^2, \beta^2, (3\alpha\beta - \gamma)^2, \alpha\beta(3\alpha\beta - \gamma) ). \]
	
	Using the fact that $3\alpha\beta\gamma = \alpha^2 + \beta^2 + \gamma^2$ (and similarly for $\gamma'$), we can form the two-parameter family \[ \calX :  x_0x_1 = sx_2^{\gamma'} + tx_3^{\gamma} \subset \PP(\alpha^2, \beta^2, \gamma, \gamma', \alpha\beta) \times \Aff^2_{s,t} \] of weighted degree $\alpha^2 + \beta^2 = \gamma\gamma'$ threefolds in $\PP(\alpha^2, \beta^2, \gamma, \gamma', \alpha\beta).$
	
	When $s = t = 0$, we get a non-normal threefold $\PP(\alpha^2, \gamma, \gamma', \alpha\beta) \cup \PP(\beta^2, \gamma, \gamma', \alpha\beta)$.  
	
	When $s = 0$ but $t \ne 0$, we get $\PP(\alpha^2, \beta^2, \gamma^2, \alpha\beta\gamma) $ via the degree $\gamma$ embedding \[ \PP(\alpha^2, \beta^2, \gamma^2, \alpha\beta\gamma) \to (x_0x_1 = tx_3^{\gamma}) \subset \PP(\alpha^2, \beta^2, \gamma, \gamma', \alpha\beta) \] given by \[ (u,v,w,t) \mapsto (x_0,x_1,x_2,x_3,x_4) = (u^{\gamma}, v^{\gamma}, w, uv, t ). \]
	
	When $s \ne 0$ but $t = 0$, we get $\PP(\alpha^2, \beta^2, \gamma'^2, \alpha\beta\gamma') $ via the degree $\gamma'$ embedding \[ \PP(\alpha^2, \beta^2, \gamma'^2, \alpha\beta\gamma') \to (x_0x_1 = sx_2^{\gamma'}) \subset \PP(\alpha^2, \beta^2, \gamma, \gamma', \alpha\beta) \] given by \[ (u,v,w,t) \mapsto (x_0,x_1,x_2,x_3,x_4) = (u^{\gamma'}, v^{\gamma'}, uv, w, t ). \]
	
	Finally, for $s \ne 0$ and $t \ne 0$, we get a smoothing of the singularities of index $c$ and $c'$, respectively.  Because this is taking place as a complete intersection in weighted projective space, which is $\Q$-factorial, the total space of these smoothings is $\Q$-Gorenstein. 
\end{proof}

\begin{remark}
    Because $\PP^3$ is the `linear cone' over the anticanonically embedded $\PP^2$, it makes sense that `cones' (the weighted projective spaces $\PP(\alpha^2, \beta^2, \gamma^2, d)$) over degenerations of $\PP^2$ are appearing as degenerations of $\PP^3$.  Analogously, the equation $4\alpha\beta\gamma = \alpha^2 + \beta^2 + 2\gamma^2$ parameterizes weighted projective spaces $\PP(\alpha^2, \beta^2, 2\gamma^2)$ that appear as degenerations of $\PP^1 \times \PP^1$ \cite[Theorem 1.2]{smoothdp}.  Because $\PP^3$ is a smoothing of the cone over the anticanonical embedding of $\PP^1 \times \PP^1$, it makes sense that `cones' (weighted projective spaces $\PP(\alpha^2, \beta^2, 2\gamma^2, d)$) over degenerations of $\PP^1 \times \PP^1$ should be appearing as degenerations of $\PP^3$.  This allows us to find a second infinite tree of solutions to the equation.
\end{remark}

\begin{proposition}
	There is an infinite family of well-formed solutions to the equation $64abcd = (a + b + c + d)^3$ given by $(a,b,c,d) = (a,b,c, a+b+c)$.  All such $(a,b,c,d)$ lie on an infinite tree and are obtained by a mutation of the form $(a,b,c,d) \to (a,b,8ab-a-b-d,8ab-d)$ starting from $(1,1,2,4)$.
\end{proposition}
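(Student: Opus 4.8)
The plan is to reduce the four-variable equation to a three-variable Markov-type equation and then run a Vieta-jumping descent, exactly parallel to the $\PP^2$-type tree of Proposition~\ref{prop:tree1}. First I would substitute $d = a+b+c$ into $64abcd = (a+b+c+d)^3$. The right-hand side becomes $(2(a+b+c))^3 = 8(a+b+c)^3$ and the left-hand side is $64abc(a+b+c)$, so dividing by the positive quantity $8(a+b+c)$ shows that $(a,b,c,a+b+c)$ is a solution if and only if $(a,b,c)$ is a positive integer solution of the symmetric equation
\[ 8abc = (a+b+c)^2. \]
Thus the family is parametrized by integer points on this surface, with $(1,1,2)$ (giving $(1,1,2,4)$) as a base solution.

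Next I would identify the stated mutation with a Vieta involution. Fixing $a,b$ and regarding $8abc = (a+b+c)^2$ as the quadratic $c^2 - (8ab - 2a - 2b)c + (a+b)^2 = 0$ in $c$, its two roots $c,c'$ satisfy $c+c' = 8ab-2a-2b$ and $cc' = (a+b)^2$. Hence the conjugate root is $c' = 8ab - 2a - 2b - c = (a+b)^2/c$, which is again a positive integer; and setting $d' = a+b+c'$ recovers precisely $c' = 8ab - a - b - d$ and $d' = 8ab - d$, matching the mutation in the statement. This shows the mutation sends solutions to solutions and preserves both the relation $d = a+b+c$ and the defining equation. Infinitude is then immediate, since mutating the smallest coordinate always strictly increases it (for instance $a \mapsto (b+c)^2/a > a$), producing arbitrarily large solutions.

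The heart of the argument—and the step I expect to be the main obstacle—is showing every solution descends to $(1,1,2)$, so that the solutions form the claimed tree rooted at $(1,1,2,4)$. Using the symmetry of $8abc=(a+b+c)^2$ I would order $a \le b \le c$ and mutate the largest coordinate; since $cc' = (a+b)^2$, the $c$-mutation strictly decreases $c$ precisely when $c > a+b$. The crux is to prove $c > a+b$ for every ordered solution other than $(1,1,2)$. Suppose instead $c \le a+b$; then $a+b+c \le 2(a+b)$, so $(a+b+c)^2 \le 4(a+b)^2$, and the equation gives $2abc \le (a+b)^2 \le 4b^2$, whence $ac \le 2b$. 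Since $b \le c$ this forces $a \le 2$, and a short case check on $a \in \{1,2\}$ with $b \le c \le a+b$ leaves only the boundary solution $(1,1,2)$, for which $c = a+b$. Hence every non-base ordered solution satisfies $c > a+b$, so $c' = (a+b)^2/c < c$ and the mutation strictly decreases $a+b+c$.

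It follows by well-ordering that finitely many mutations, together with permutations of $a,b,c$, carry any solution to $(1,1,2)$; since each non-base vertex has exactly one mutation lowering the sum and two raising it, the solutions assemble into an infinite tree rooted at $(1,1,2,4)$. Finally I would record that well-formedness is preserved along the mutation by the same elementary gcd bookkeeping as in Proposition~\ref{prop:tree1}, using $8abc = d^2$ to control common factors among the coordinates.
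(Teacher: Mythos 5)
Your proposal is correct and follows essentially the same route as the paper: substitute $d=a+b+c$ to reduce to the Markov-type equation $8abc=(a+b+c)^2$ and realize the mutation as the Vieta involution in one variable with the other two fixed. You in fact supply more detail than the paper's sketch (which defers the descent to a reference), and your descent argument — showing $c>a+b$ for every ordered solution other than $(1,1,2)$ and hence that the largest-coordinate mutation strictly decreases the sum — checks out.
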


\begin{proof}
	If $a + b + c = d $, the equation $64abcd = (a+ b+ c+d)^3$ simplifies to $8abc = (a+b+c)^2$.  If desired, one can simplify this further by showing $a = \alpha^2$, $b = \beta^2$, and $c = 2\gamma^2$ so the equation becomes $4\alpha\beta\gamma = \alpha^2 + \beta^2 + 2\gamma^2$.  These parametrize weighted projective degenerations of $\bP^1 \times \bP^1$, and we use the mutation process in \cite[Theorem 1.2]{smoothdp} to obtain the result.
\end{proof}

In the list above, one sees that $\PP(1,1,2,4)$, $\PP(1,2,9,12)$, and $\PP(1,9,50,60)$ are all of this form.

\begin{definition}
	We will call a solution of this form $Q$-type because it arises from a degeneration of the quadric. 
\end{definition}

One can prove a simple lemma showing that $(1,1,2,4)$ is the only overlap between the types. 

\begin{lemma}
	The only solution to the equation $64abcd = (a + b + c + d)^3$ that is both of $\PP^2$-type and $Q$-type is $(1,1,2,4)$.  
\end{lemma}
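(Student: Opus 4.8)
The plan is to translate both defining conditions into statements about the total weight $S = a+b+c+d$ and then read off the unique common solution directly from the master equation $64abcd = S^3$.

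First I would record the two reformulations as necessary conditions for membership in each family. A solution is of sum-type precisely when one of its four entries equals the sum of the other three; writing that entry as $d$, this says $d = a+b+c$, equivalently $S = 2d$, i.e.\ one weight equals $S/2$. A solution is of $\PP^2$-type precisely when, after permutation, $(a,b,c,d) = (\alpha^2,\beta^2,\gamma^2,\alpha\beta\gamma)$ with $3\alpha\beta\gamma = \alpha^2+\beta^2+\gamma^2$; here the distinguished entry $d = \alpha\beta\gamma$ satisfies $3d = a+b+c$, equivalently $S = 4d$, i.e.\ one weight equals $S/4$. I only need these as necessary conditions, so no converse is required.

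Next, suppose $(a,b,c,d)$ lies in both families. Then some weight equals $S/2$ and some weight equals $S/4$; since $S>0$ these are different values and hence occupy different positions. Relabel so that the sum-type entry is $w_4 = S/2$ and the $\PP^2$-type entry is $w_3 = S/4$. The remaining two weights then satisfy
\[ w_1 + w_2 = S - \tfrac{S}{4} - \tfrac{S}{2} = \tfrac{S}{4}. \]
Substituting $w_3 = S/4$ and $w_4 = S/2$ into $64\, w_1 w_2 w_3 w_4 = S^3$ gives $8\, w_1 w_2 S^2 = S^3$, hence $8\, w_1 w_2 = S$. Combining this with $w_1 + w_2 = S/4 = 2 w_1 w_2$ yields $w_1 + w_2 = 2 w_1 w_2$, and for positive integers this forces $w_1 = w_2 = 1$. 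Then $S = 8$, so $w_3 = 2$ and $w_4 = 4$, and the solution must be $(1,1,2,4)$.

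Finally I would confirm that $(1,1,2,4)$ genuinely belongs to both families, which is immediate since it is the common base point of the two mutation trees (e.g.\ Proposition \ref{prop:tree1}); hence it is an actual common solution and, by the computation above, the only one. The argument is elementary once the two conditions are expressed through $S$, and I do not expect a serious obstacle. The one point requiring care is the observation that membership in both families produces two \emph{distinct} distinguished weights — one equal to $S/2$ and one to $S/4$ — since this is exactly what overdetermines the system enough to pin down a single tuple; if one tried instead to let the two conditions be witnessed by the same entry the argument would collapse, but that is impossible because $S/2 \neq S/4$.
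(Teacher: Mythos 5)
Your proof is correct and follows essentially the same route as the paper: both extract the linear characterizations of the two distinguished entries (one equal to half the total weight, one to a quarter), feed them into $64abcd=(a+b+c+d)^3$, and reduce to $w_1+w_2=2w_1w_2$, forcing $(1,1,2,4)$. The only cosmetic difference is that you obtain the relation $d=2c$ directly from the two linear conditions, whereas the paper derives it from $8abc=d^2$ and $abd=c^2$ via $8c^3=d^3$ (where, incidentally, the paper's ``$c=2d$'' should read $d=2c$).
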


As in the case of solutions of $\PP^2$-type, we can relate two weighted projective spaces of $Q$-type that are one mutation apart.  

\begin{proposition}\label{prop:sum}
	Given two weighted projective spaces that are solutions of $Q$-type one mutation apart, there is a two-parameter $\Q$-Gorenstein family connecting them and each are $\mathbb{Q}$-Gorenstein deformations of a common smoothing.
\end{proposition}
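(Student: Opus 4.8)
The plan is to follow the construction in the proof of Proposition \ref{prop:p2} (Hacking's Example 7.7) essentially verbatim, replacing the $\PP^2$-type parametrization by the sum-type one and appealing to \cite{KN} for the mutation combinatorics. First I would record the relevant normal form: writing $a = \alpha^2$, $b = \beta^2$, $c = 2\gamma^2$, the equation $8abc = (a+b+c)^2$ becomes $4\alpha\beta\gamma = \alpha^2 + \beta^2 + 2\gamma^2$, so that $d = a+b+c = 4\alpha\beta\gamma$ and the solution is exactly $\PP(\alpha^2,\beta^2,2\gamma^2,4\alpha\beta\gamma)$. Viewing $4\alpha\beta\gamma = \alpha^2+\beta^2+2\gamma^2$ as a quadratic in $\gamma$ with roots $\gamma,\gamma'$, the mutation partner is $\PP(\alpha^2,\beta^2,2\gamma'^2,4\alpha\beta\gamma')$, and the product of the roots yields the crucial relation $2\gamma\gamma' = \alpha^2 + \beta^2$ (a mutation in the $\alpha$ or $\beta$ slot is symmetric, and I would reduce to the $\gamma$-case by relabeling).

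With this relation in hand, I would write down the explicit two-parameter family
\[ \calX : \ x_0 x_1 = s\, x_2^{\gamma'} + t\, x_3^{\gamma} \ \subset\ \PP(\alpha^2,\beta^2,2\gamma,2\gamma',4\alpha\beta)\times \Aff^2_{s,t}, \]
of weighted degree $\alpha^2 + \beta^2 = 2\gamma\gamma'$; each monomial has this degree precisely because $2\gamma\gamma' = \alpha^2+\beta^2$. The fiber analysis then mirrors Proposition \ref{prop:p2}. When $s = t = 0$ the fiber is the non-normal union $\{x_0 = 0\}\cup\{x_1=0\}$ of two weighted projective spaces glued along $\PP(2\gamma,2\gamma',4\alpha\beta)$. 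When $s = 0$, $t \neq 0$, the binomial $x_0 x_1 = t\, x_3^\gamma$ is identified with $\PP(\alpha^2,\beta^2,2\gamma^2,4\alpha\beta\gamma)$ via the degree-$\gamma$ map $(u,v,w,z)\mapsto(u^\gamma, v^\gamma, w, uv, z)$, and symmetrically the fiber $s\neq 0$, $t = 0$ is $\PP(\alpha^2,\beta^2,2\gamma'^2,4\alpha\beta\gamma')$ via $(u,v,w,z)\mapsto (u^{\gamma'}, v^{\gamma'}, uv, w, z)$; in both cases the identity $x_0 x_1 = (uv)^{\gamma}$ (resp. $(uv)^{\gamma'}$) places the image in the stated binomial. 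For $s,t$ both nonzero the fiber smooths the index-$c$ and index-$c'$ cyclic quotient singularities, giving the common smoothing.

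Finally, since $\calX$ is a hypersurface in $\PP(\alpha^2,\beta^2,2\gamma,2\gamma',4\alpha\beta)\times\Aff^2_{s,t}$ and weighted projective space is $\Q$-factorial, adjunction $K_{\calX} = (K_{\mathrm{amb}} + \calX)|_{\calX}$ shows $K_{\calX}$ is $\Q$-Cartier; restricting over the two coordinate axes in $\Aff^2_{s,t}$ then exhibits each of the two weighted projective spaces as a $\Q$-Gorenstein deformation of the common generic fiber, as required. I expect the main obstacle to be the deformation-theoretic content rather than the set-theoretic family: verifying that the generic fiber is genuinely smooth along the locus the mutation is supposed to resolve (as in \cite{KN}), and that the family is $\Q$-Gorenstein in the strong sense that $\omega^{[N]}$ commutes with base change, not merely that the total space has $\Q$-Cartier canonical class. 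A secondary point, absent from the $\PP^2$-type case, is tracking the factors of $2$ coming from $c = 2\gamma^2$: one must confirm the ambient weights $(\alpha^2,\beta^2,2\gamma,2\gamma',4\alpha\beta)$ are well-formed and that no unexpected common factor collapses the ambient or alters the fibers, which I would dispatch by the same well-formedness normalization used earlier in this section.
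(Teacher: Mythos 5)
Your construction is essentially the paper's: the paper builds the same kind of two-parameter family, $x_0x_1 = tx_2^{a+b}+sx_3^{c}$ in the ambient $\PP(ac,bc,c,a+b,d)$, with the two coordinate axes giving the two weighted projective spaces via Veronese-type embeddings, the origin giving the non-normal union, and $\Q$-Gorensteinness coming from the complete-intersection structure --- except that it works directly with $(a,b,c,d)$ and the relations $a+b = cd'/d = c'd/d'$ (equivalently $cc'=(a+b)^2$, of which your $2\gamma\gamma'=\alpha^2+\beta^2$ is the special case $c=2\gamma^2$). The one caveat is your reduction of an $\alpha$- or $\beta$-slot mutation to the $\gamma$-case ``by relabeling'': the normal form is not symmetric in the three slots (the mutated slot need not be the one carrying the factor of $2$, and the product-of-roots relation becomes $\alpha\alpha'=\beta^2+2\gamma^2$), which is exactly why the paper's uniform $(a,b,c,d)$ formulation is the cleaner choice; this is a presentational wrinkle rather than a gap, since the analogous family still works in each case.
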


\begin{proof}
	Let $(a,b,c,d)$ be the first solution and $(a,b,c',d') = (a,b,8ab-a-b-d,8ab-d)$ be the second.  Without loss of generality, assume $d < d'$.  First, because $d = a + b + c$, we have $8abc = (a + b + c)^2 = d^2$.  Then, observe that $d(a+b) = d(d - c) = d^2 - dc = 8abc - dc = c(8ab-d) = cd'$, hence $a+b = \frac{cd'}{d}$, and similarly, $a+b = \frac{c'd}{d'}$.  Using this relationship repeatedly, we can form the desired family.
	
	Then, we can consider the family \[ \calX : x_0x_1 = tx_2^{a+b} + sx_3^{c} \subset \PP(ac,bc,c,a+b,d) \times \Aff^2_{s,t}. \]  When $s = 0 $ and $t = 0$, this is a non-normal threefold $\PP(ac,c,a+b,d) \cup \PP(bc,c,a+b,d)$.
	
	For $t =0$ but $s \ne 0$, this is the image of the degree $c$ embedding of \[ \PP(a,b,c,d) \to \PP(a,b,c,a+b,d) \] given by \[ (x,y,z,w) \mapsto (x_0,x_1,x_2,x_3,x_4) = (x^c, y^c, z, xy, w). \]  
	When $s = 0$ but $t \ne 0$, this is the image of the degree $a+b$ embedding of \[ \PP(a,b,c',d') \to \PP(a,b,c,a+b,d) \] given by \[ (x,y,z,w) \mapsto (x_0,x_1,x_2,x_3,x_4) = (x^{a+b}, y^{a+b}, xy, z, w). \]
	
	Finally, for $s \ne 0$ and $t \ne 0$, this gives a partial smoothing of the singularities of index $c$ and $d$ and $c'$ and $d'$.   Because the total space is a complete intersection in weighted projective space, it is $\Q$-Gorenstein
\end{proof}

\begin{remark}
 Although one could write the smoothings in Propositions \ref{prop:p2} and \ref{prop:sum} over a one-parameter base, the family over the two-dimensional base shows how to degenerate each pair of normal threefolds to a non-normal threefold, which can also appear in the moduli problem. 
\end{remark}

\begin{remark}
	The fact that there are two essentially distinct families of solutions to the equation $64abcd = (a + b + c + d)^3$ indicates the increase in complexity when studying degenerations of $\PP^3$ versus those of $\PP^2$.  Although we do not know smoothability for all solutions, Propositions \ref{prop:p2} and \ref{prop:sum} show that weighted projective spaces of $\PP^2$ or $Q$ type are smoothable to $\PP^3$.  
\end{remark}

Furthermore, based on preliminary computation we make the following conjecture.  

\begin{conjecture}
The only weighted projective spaces $\PP(a,b,c,d)$ that admit $\mathbb{Q}$-Gorenstein smoothings to $\PP^3$ are solutions of $\PP^2$ type or $Q$ type. 
\end{conjecture}

\subsection{Examples of log terminal degenerations and their smoothings}

\begin{example}\label{ex:1124}
    Let $X$ be the cone over the anticanonically embedded $\PP^1 \times \PP^1$.  In other words, $X$ is the cone over the anticanonical embedding of the quadric surface in $\PP^3$.  By construction, $X$ is a hyperplane section of $\PP(1,1,1,1,2)$, the cone over the anticanonical embedding of $\PP^3$.  However, we could apply the same construction to the cone over the anticanonical embedding of the singular quadric $(xy- z^2 = 0) \subset \PP^3$ to realize the cone $\PP(1,1,2,4)$ as another hyperplane section of $\PP(1,1,1,1,2)$.  Taking an appropriate pencil of these hyperplanes, we realize $X$ as a $\Q$-Gorenstein smoothing of $\PP(1,1,2,4)$.  
\end{example}

\begin{example}\label{ex:X26}
    The weighted projective space $\PP(1,4,10,25)$ admits at least seven different partial smoothings that all admit smoothings to $\PP^3$.  For the construction of the smoothings and a discussion of the applications to the study of moduli of quintic surfaces, see Section \ref{sec:quintics}.  Here, we draw a rough sketch of some smoothings.  First, we observe that $\PP(1,4,10,25)$ has singular locus $\PP^1 \cup \PP^1$.  At the general point of the first component $\PP^1$, it is isomorphic to $\frac{1}{2}(1,1) \times \Aff^1$.  At the general point of the second component $\PP^1$, it is isomorphic to $\frac{1}{5}(1,4) \times \Aff^1$. This has canonical singularities at all points except the unique $\frac{1}{25}(1,4,10)$ singularity.  There is a partial smoothing $Y_{26}$ that smooths the $\frac{1}{4}(1,1,2)$ singularity to a singularity of type $v$, isomorphic to the vertex of the cone over the anticanonical embedding of $\PP^1 \times \PP^1$ (as in Example \ref{ex:1124}). There is a different partial smoothing $W_{26}$ of $\PP(1,4,10,25)$ that partially smooths the $\frac{1}{25}(1,4,10)$ singularity to a non-isolated singularity $q$ isomorphic to the quotient of $ab - c^3d = 0 \subset \mathbb{A}^4$ by the $\mu_5$ action $(a,b,c,d) \sim (\zeta_5 a, \zeta_5^4 b, \zeta_5 c, \zeta_5^2 d) $. We can further smooth $W_{26}$ to a threefold $Z_{26}$ that has an isolated singularity $p$ in place of $q$.  This singularity $p$ is the quotient of the isolated singularity $ab - c(d^2+dc^2+c^4) = 0 \subset \mathbb{A}^4$, a perturbation of a cone over a $D_6$ singularity, by the same $\mu_5$ action.  
    
\begin{center}
\begin{tikzpicture}[scale=.8,font=\small]
\draw (0,0) circle [radius=2.25];
\draw (0,-1.5) -- (0,1.5);
\draw (-1.5,0) -- (1.5,0); 
\draw [fill=black] (0,0) circle [radius=.06]; 
\node [below right] at (0,0) {$\frac{1}{10}(1,4,5)$};
\draw [fill=black] (0,1) circle [radius=.06]; 
\node [right] at (0,1) {$\frac{1}{4}(1,1,2)$};
\draw [fill=black] (-1,0) circle [radius=.06]; 
\node [above] at (-1.2,0) {$\frac{1}{25}(1,4,10)$};
\node at (0,-2.6) {$\PP(1,4,10,25)$};
\end{tikzpicture}
\begin{tikzpicture}[scale=.8,font=\small]
\draw (0,0) circle [radius=2.25];
\draw (-1.5,0) -- (1.5,0); 
\node [below right] at (-.2,0) {$\frac{1}{5}(1,4) \times \Aff^1$};
\draw [fill=black] (-1,0) circle [radius=.06]; 
\draw [fill=black] (0,1) circle [radius=.06]; 
\node [right] at (0,1) {$v$};
\node [above] at (-1,0) {$\frac{1}{25}(1,4,10)$};
\node at (0,-2.6) {$Y_{26}$};
\end{tikzpicture}
\begin{tikzpicture}[scale=.8,font=\small]
\draw (0,0) circle [radius=2.25];
\draw (0,-1.5) -- (0,1.5);
\draw (-1.5,0) -- (1.5,0); 
\draw [fill=black] (0,0) circle [radius=.06]; 
\node [below right] at (0,0) {$\frac{1}{10}(1,4,5)$};
\draw [fill=black] (0,1) circle [radius=.06]; 
\node [right] at (0,1) {$\frac{1}{4}(1,1,2)$};
\draw [fill=black] (-1,0) circle [radius=.06]; 
\node [above] at (-1,0) {$q$};
\node at (0,-2.6) {$W_{26}$ \textcolor{white}{blahhh}};
\end{tikzpicture}
\begin{tikzpicture}[scale=.8,font=\small]
\draw (0,0) circle [radius=2.25];
\draw (0,-1.5) -- (0,1.5);
\node [below right] at (-.5,0) {$\frac{1}{2}(1,1) \times \Aff^1 $};
\draw [fill=black] (0,1) circle [radius=.06]; 
\node [right] at (0,1) {$\frac{1}{4}(1,1,2)$};
\draw [fill=black] (-1,0) circle [radius=.06]; 
\node [above] at (-1,0) {$p$};
\node at (0,-2.6) {$Z_{26}$};
\end{tikzpicture}
\end{center}

Furthermore, the isolated singularities $p$ and $v$ are themselves smoothable.  Because the local to global deformation theory is unobstructed, we can combine the partial smoothings of each component of the singular locus of $\PP(1,4,10,25)$ in every possible way to obtain seven different partial smoothings.
\end{example}

Let $X_{26}$ be the partial smoothing of $\PP(1,4,10,25)$ with only one singular point, a unique singularity of type $p$.  By a careful dimension count, we obtain the following analogue of Proposition \ref{prop:Xdivisor}. 

\begin{proposition}\label{prop:X26divisor}
    Let $X_{26}$ be the partial smoothing of $\PP(1,4,10,25)$ with a unique singularity of type $p$.  If the pair $\left( X_{26}, \frac{4}{d} D \right)$ is log terminal for the general member $D$ in the $\mathbb{Q}$-linear system $|-\frac{d}{4} K_{X_{26}}|$, then there is a divisor in the moduli space of \he stable pairs of degree $d$ parameterizing surfaces on $X_{26}$.  
\end{proposition}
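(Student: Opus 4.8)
The plan is to reduce the statement to a dimension count comparing the locus $\calD$ parametrizing pairs $(X_{26},D)$ with the ambient moduli space. Since the generic point of $\calM_{\PP^3\s,(d,4)}$ is a pair $(\PP^3,S)$ with $S$ a smooth degree $d$ surface, we have $\dim \calM_{\PP^3\s,(d,4)} = \dim |\calO_{\PP^3}(d)| - \dim \PGL_4 = \binom{d+3}{3} - 16$. First I would check that a general $D$ actually produces a point of this moduli space: the hypothesis that $(X_{26},\tfrac{4}{d}D)$ is log terminal, together with $-K_{X_{26}}$ ample, the relation $dK_{X_{26}} + 4D \sim 0$, and the ampleness of $D$, shows $(X_{26},D)$ is \he stable; smoothability to $\PP^3$ follows from the construction of $X_{26}=Z_{26}$ in Example \ref{ex:X26} (where $p$ is exhibited as smoothable) together with the fact that $D$ extends across the smoothing because $H^1(X_{26},\calO_{X_{26}}(D))=0$ by Lemma \ref{lem:H1(D)}. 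Finally $K_{X_{26}}^3=-64$ by Proposition \ref{prop:constantvolume}.

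Next I would compute $\dim\calD$. The key input is that $h^0(X_{26},\calO_{X_{26}}(D))$ is a deformation invariant: in a $q$-$\mathbb{Q}$-Gorenstein smoothing of $X_{26}$ to $\PP^3$ the reflexive sheaf $\calO(D)$ commutes with base change (the Koll\'ar condition built into the moduli functor), the family is flat, and $H^1$ vanishes on every fiber by Lemma \ref{lem:H1(D)}; hence $h^0(X_{26},\calO_{X_{26}}(D)) = h^0(\PP^3,\calO_{\PP^3}(d)) = \binom{d+3}{3}$, so $\dim |{-\tfrac{d}{4}}K_{X_{26}}| = \binom{d+3}{3} - 1$. (One can cross-check directly that already $\PP(1,4,10,25)$ has $h^0(\calO(10d)) = \binom{d+3}{3}$.) The locus $\calD$ is then the image of the rational map $|{-\tfrac{d}{4}}K_{X_{26}}| \dashrightarrow \calM_{\PP^3\s,(d,4)}$ sending $D$ to $[(X_{26},D)]$; two members map to the same point exactly when they lie in one $\Aut(X_{26})$-orbit, and a general member of this large linear system has finite stabilizer, so $\dim\calD = \binom{d+3}{3} - 1 - \dim\Aut(X_{26})$.

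Thus everything reduces to the single computation $\dim\Aut(X_{26}) = 16$, which is the careful and genuinely hard step, and the reason this count is harder than for the cone over $\PP^1\times\PP^1$ (where $\Aut$ is computed inside $\PP(1,1,1,1,2)$ to be $16$). Equivalently, since $\mathrm{codim}_{\calM}\calD = \dim\Aut(X_{26}) - 15$, the statement amounts to showing that the semiuniversal $\mathbb{Q}$-Gorenstein smoothing of the isolated singularity $p$ — the $\mu_5$-quotient of the perturbed cone over a $D_6$ singularity $ab - c(d^2-c^4)=0$ — is one-dimensional, this being the direction along which $X_{26}$ deforms to $\PP^3$. I would attack this either by tracking which of the automorphisms of $\PP(1,4,10,25)$ (whose automorphism group one computes to have dimension $20$) survive the two partial smoothings $\PP(1,4,10,25)\rightsquigarrow W_{26}\rightsquigarrow Z_{26}=X_{26}$, or by a local computation of the $\mu_5$-invariant part of $T^1$ of that equation. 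Granting $\dim\Aut(X_{26})=16$, we obtain $\dim\calD = \binom{d+3}{3} - 17 = \dim\calM_{\PP^3\s,(d,4)} - 1$, so $\calD$ is a divisor. The main obstacle is precisely this automorphism (equivalently, smoothing-dimension) computation for a non-toric, non-complete-intersection threefold, where the ambient-space trick used for the quadric cone is unavailable.
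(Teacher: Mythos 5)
The paper offers no written proof of this proposition beyond the sentence ``by a careful dimension count, we obtain the following analogue of Proposition \ref{prop:Xdivisor},'' so your write-up is already more explicit than the source, and your framework is certainly the intended one. The parts you carry out are correct: the hypothesis plus $dK_{X_{26}}+4D\sim 0$ gives \he stability of the general pair; smoothability follows from the chain of partial smoothings in Example \ref{ex:X26}; flatness together with Lemma \ref{lem:H1(D)} and the Koll\'ar condition gives $\dim\left|-\tfrac{d}{4}K_{X_{26}}\right| = \binom{d+3}{3}-1$ (and your cross-check on $\PP(1,4,10,25)$ is right: $h^0(\calO(50))=56$); and the resulting numerology $\mathrm{codim}\,\calD = \dim\Aut(X_{26})-15$ is consistent with the paper's Section \ref{sec:quintics} data, where the quadric cone has $\dim\Aut = 16$ (divisor) and $\PP(1,1,2,4)$ has $\dim\Aut = 17$ (codimension $2$).

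However, the one step you defer --- $\dim\Aut(X_{26})=16$, equivalently that the singularity $p$ contributes exactly one $\mathbb{Q}$-Gorenstein smoothing parameter --- is precisely the ``careful'' part of the count, and you do not carry it out; you only name two strategies for doing so. Without it the conclusion is not established: a priori the locus could have codimension $\ge 2$, as indeed happens for surfaces on $\PP(1,1,2,4)$ (codimension $2$) and on $\PP(1,4,10,25)$ itself (codimension $5$ for $d=5$, consistent with your correct value $\dim\Aut(\PP(1,4,10,25))=20$). So there is a genuine gap, albeit one the paper also leaves to the reader: closing it requires computing the $\mu_5$-invariant $\mathbb{Q}$-Gorenstein $T^1$ of the equation defining $p$ (or tracking which of the $20$ automorphisms of $\PP(1,4,10,25)$ survive to $X_{26}$), together with the vanishing of local-to-global obstructions as in Proposition \ref{prop:smoothdivisor}. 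One small imprecision to fix: $X_{26}$ is not equal to $Z_{26}$ but is a further partial smoothing of it in which the $\frac{1}{2}(1,1)$ curve and the $\frac{1}{4}(1,1,2)$ point are also smoothed away, leaving only $p$; your smoothability argument still goes through, but the identification as written is not accurate.
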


\begin{remark}
Propositions \ref{prop:Xdivisor} and \ref{prop:X26divisor} are the higher dimensional version of calculations in \cite{hacking}: curves on the surfaces $\PP(1,1,4)$ and $X_{26}$, the smoothing of the $\frac{1}{4}(1,1)$ singularity on $\PP(1,4,25)$, form divisors in the moduli space of degree $d$ plane curves, provided the curves have appropriate singularities. 
\end{remark}

\section{Quintic surfaces}\label{sec:quintics}

Because $\PP(H^0(\PP^3, \calO(5)) \cong \PP^{55}$ and $\dim \Aut \PP^3 = 15$, we have a $40$-dimensional space of quintic surfaces in $\PP^3$.  However, just fixing numerical invariants, we obtain a moduli space of smooth quintic surfaces with an additional component \cite{horikawa}.  Smooth quintic surfaces have numerical invariants $K_S^5 = 5$, $p_g = 4$, and $q = 0$ and the moduli space parameterizing these surfaces has two $40$ dimensional components.  The first component, consisting of \textit{type I} surfaces, parameterizes traditional quintic surfaces $S$ such that $K_S$ is very ample and defines an embedding $S \subset \PP^3$.  The second component parameterizes \textit{type IIa} surfaces such that $|K_S|$ has a base-point and $S$ admits a generically two-to-one morphism to $\PP^1 \times \PP^1$.  The two components meet along a divisor of dimension $39$ parameterizing \textit{type IIb} surfaces such that $|K_S|$ has a base-point and $S$ admits a generically two-to-one morphism to $\F_2$.  For an image of the moduli space and the construction of type II surfaces, see \cite{rana}.  One might naturally ask how the moduli space of pairs defined in this paper encompasses surfaces of type II.  To describe the type II surfaces, we recall how to embed surfaces them into weighted projective spaces, worked out in \cite{quinticsurfaces}.  There is a typographical error in the main theorem in \cite{quinticsurfaces} in the first relation $r_1$, but it is stated correctly below.  

\begin{theorem}[Griffin]\label{thm:griffin}
Let $S$ be a numerical quintic surface of type II.  Then, $$S = \PP(1,1,1,1,2,3,3)/I$$ where $\PP(1,1,1,1,2,3,3)$ has coordinates $(x_0,x_1,x_2,x_3,y,z_1,z_2)$ and $I$ is generated by the relations

\begin{tabular}{l}
     $r_1: x_1x_3 - x_2^2 = - \beta x_0^2 $ \\
     $r_2: x_1y - (x_2 + \beta x_0)(x_3^2 + \gamma x_0 x_3 + \delta x_0^2) = 0$ \\
     $r_3: (x_2 - \beta x_0)y - x_3(x_3^2 + \gamma x_0 x_3 + \delta x_0^2) = 0$ \\
     $r_4: x_1z_2 - (x_2 + \beta x_0)z_1 = 0 $ \\
     $r_5: (x_2 - \beta x_0)z_2 - x_3 z_1  = 0$ \\
     $r_6: z_1y - z_2(x_3^2 + \gamma x_0 x_3 + \delta x_0^2) = 0$\\
     $r_7: z_1^2 - \lambda y x_3^4 - x_1 Q(x_i, y) - x_0 e_1 = 0$ \\
     $r_8: z_1z_2 - \lambda y^2 x_3^2 - x_2Q(x_i, y) - x_0 e_2 = 0$ \\
     $r_9: z_2^2 - \lambda y^3 - x_3 Q(x_i, y) - x_0 e_3 = 0$  \\
\end{tabular}

where $Q$ and $e_i$ are weight 5 polynomials satisfying certain conditions.  The surface $S$ is of type IIb if $\beta = 0$ and type IIa if $\beta \ne 0$. 
\end{theorem}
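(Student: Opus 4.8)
The plan is to recover the presentation as an explicit description of the canonical ring $R(S,K_S)=\bigoplus_{n\ge 0}H^0(S,nK_S)$, using that a minimal surface of general type satisfies $S\cong\Proj R(S,K_S)$, so that the weights $(1,1,1,1,2,3,3)$ and the ideal $I$ are nothing but the generators and relations of this graded ring. First I would fix the graded pieces by Riemann--Roch and Kodaira vanishing: since $\chi(\mathcal{O}_S)=1-q+p_g=5$ and $K_S^2=5$, one has $P_n=\chi(\mathcal{O}_S)+\tfrac12 n(n-1)K_S^2$ for $n\ge 2$, giving $P_1=4$, $P_2=10$, $P_3=20$, and so on, with no higher cohomology to worry about for $n\ge 2$.

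Next I would pin down the minimal generators degree by degree. In degree $1$ the four sections $x_0,\dots,x_3$ span $H^0(K_S)$, and by Horikawa's classification \cite{horikawa} a type II surface has canonical map $2:1$ onto a quadric surface $Q\subset\PP^3$; hence the $x_i$ satisfy exactly one quadratic relation, which is $r_1$. The rank of this quadric is precisely what separates the two cases: $Q$ is smooth (so $Q\isom\PP^1\times\PP^1$) exactly when $\beta\ne 0$, yielding type IIa, and $Q$ is a rank-$3$ cone (the image of $\F_2$) when $\beta=0$, yielding type IIb. The degree-$2$ part of the coordinate ring of $Q$ is then $9$-dimensional, and comparing with $P_2=10$ forces a single new generator $y$ in degree $2$. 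Similarly, the subring generated by $x_i,y$ has an $18$-dimensional degree-$3$ piece (the $16$-dimensional cubic part of $\calO_Q$ together with the four products $x_iy$, cut by the two relations $r_2,r_3$), and comparison with $P_3=20$ forces exactly two new generators $z_1,z_2$ in degree $3$; in degrees $\ge 4$ the multiplication maps are surjective, so no further generators appear.

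Then I would derive the relations from the double-cover structure over $Q$. The relations not involving $y$, namely $r_1,r_4,r_5$, should appear as the $2\times 2$ minors of a $2\times 3$ matrix whose entries are the linear and cubic generators (this determinantal package encodes the map to the scroll containing $Q$), while $r_7,r_8,r_9$ are the ``squaring'' relations $z_iz_j=\ldots$ expressing the degree-$2$ cover, and $r_2,r_3,r_6$ record how $y$ interacts with the $x_i$ and $z_j$. Finally, completeness is checked by verifying that the Hilbert series of $\PP(1,1,1,1,2,3,3)/I$ equals $\sum_n P_n t^n$; this forces $I$ to be the full relation ideal and the map to be an isomorphism onto $\Proj R(S,K_S)=S$.

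The hard part will be this last coefficient-level bookkeeping: determining the exact scalars $\beta,\gamma,\delta,\lambda$ and the weight-$5$ forms $Q,e_i$, and proving that the nine listed relations generate $I$ with no missing syzygies. In particular, the correction to $r_1$ flagged in the remark should emerge precisely here, from matching the determinantal minor against the quadric's defining equation (where a coefficient such as $\beta$ versus $\beta^2$ on the $x_0^2$ term is easy to misstate); a careful syzygy or Gr\"obner-basis argument, together with the Hilbert-series match, is what ultimately certifies the stated form.
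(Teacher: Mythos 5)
There is nothing in the paper to compare your argument against: Theorem \ref{thm:griffin} is imported verbatim from Griffin's paper \cite{quinticsurfaces} (with a correction to the coefficient in $r_1$), and the author gives no proof of it. So the only question is whether your reconstruction would actually work. Your overall strategy is the right one and is, in outline, what Griffin does: present $S=\Proj R(S,K_S)$, pin down the Hilbert function ($P_1=4$, $P_2=10$, $P_3=20$ from $\chi(\OO_S)=5$, $K_S^2=5$), use Horikawa's structure theorem for the degree-two canonical map of a type II surface to get the single quadric relation among $x_0,\dots,x_3$, and count off one new generator in degree $2$ and two in degree $3$. The identification of the IIa/IIb dichotomy with the rank of the quadric ($\beta\ne 0$ versus $\beta=0$) also matches the statement.

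That said, what you have is a program rather than a proof, and the part you defer is the entire content of the theorem. Two concrete warnings. First, for type II surfaces $|K_S|$ has a base point, so the canonical map is only a rational $2\!:\!1$ map; the double-cover structure lives on a blow-up, mapping to $\PP^1\times\PP^1$ or $\F_2$ rather than literally onto the quadric $Q\subset\PP^3$, and your claims that ``the multiplication maps are surjective in degrees $\ge 4$'' and that the $z_iz_j$ relations are simple squaring relations for a double cover of $Q$ both need to be routed through this resolution (Griffin does this via the restriction to a general canonical curve). Second, your proposed determinantal packaging of $r_1,r_4,r_5$ is internally inconsistent as written: the $2\times 3$ matrix with rows $(x_1,\ x_2-\beta x_0,\ z_1)$ and $(x_2+\beta x_0,\ x_3,\ z_2)$ reproduces $r_4$ and $r_5$ exactly, but its remaining minor is $x_1x_3-x_2^2+\beta^2x_0^2$, not $x_1x_3-x_2^2+\beta x_0^2$ as in the stated $r_1$. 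This $\beta$-versus-$\beta^2$ tension is almost certainly the very error in Griffin's original $r_1$ that the paper flags and corrects, so you have located the delicate point correctly --- but resolving it (by a reparametrization of $\beta$, or by accepting that $r_1$ is not the third minor) is precisely the coefficient-level verification you have not carried out, along with the determination of $\gamma,\delta,\lambda,Q,e_i$ and the Hilbert-series check that the nine relations generate the full ideal.
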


The simplest example is $\beta = \gamma = \delta = \lambda = e_i = 0$.  In this case, we will show that $S$ is a hypersurface of degree $50$ on $X = \PP(1,4, 10, 25)$, so it satisfies $5K_X + 4S \sim 0$.

\begin{example}\label{example:griffin}
    Let $X = \PP(1,4,10,25)$ with coordinates $a_0, a_1, a_2, a_3$.  First, consider the embedding $$X \to \PP(1,2,5,13,25)$$ given by $$(a_0,a_1,a_2,a_3) \mapsto (a_0^2, a_1, a_2, a_0a_3, a_3^2)$$ so that, if $\PP(1,2,5,13,25)$ has coordinates $b_0,b_1,b_2,b_3,b_4$, then $$X : (b_0b_3 - b_4^2 = 0) \subset \PP(1,2,5,13,25).$$  
    
    Then, consider the embedding $$\PP(1,2,5,13,25) \to \PP(1,1,1,1,2,3,3,5)$$ given by $$(b_0,b_1,b_2,b_3,b_4) \mapsto (b_2, b_0^5, b_0^3b_1, b_0b_1^2, b_1^5, b_0^2b_3, b_1b_3, b_4).$$ 
    
    If $\PP(1,1,1,1,2,3,3,5)$ has coordinates $x_0,x_1,x_2,x_3,y,z_1,z_2,$ and $t$, in the composition $$X \to \PP(1,1,1,1,2,3,3,5)$$ we find that $X$ is defined almost exactly by the equations in Theorem \ref{thm:griffin}, with $\beta = \gamma = \delta = \lambda = e_i = 0$, the only difference is that $t = Q(x_i,y)$.  To obtain the surface $S$, let $S$ be $t = Q(x_i,y)$ in $\PP(1,1,1,1,2,3,3,5)|_X$.  As desired, $S$ has degree $50$ on $X$.

\end{example}

By using various partial smoothings of $\PP(1,4,10,25)$, we can obtain all surfaces of type IIb as a hypersurface on one smoothing of $\PP(1,4,10,25)$, with the generic one on $X_{26}$.  

Returning to $\calM_{\PP^3\s,(5,4)}$, the previous result shows that pairs $(X_{26},S)$ form a divisor $\calD_1$ in the moduli space: it parameterizes the surfaces of type IIb. By Proposition \ref{prop:Xdivisor}, we know there is a second divisor $\calD_2$ in $\calM_{\PP^3\s,(5,4)}$ parameterizing surfaces on $X$, the cone over the anticanonical embedding of the quadric surface.  Because $X$ is a section of $\calO_W(2)$ for the weighted projective space $W = \PP(1,1,1,1,2)$, we compute that $\calO_X(K_X) = \calO_W(-4)|_X$ and generic $D$ satisfying $5 K_X + 4D \sim 0$ is a section of $\calO_X(D) = \calO_W(5)|_X$.  For general $D \in \calO_W(5)|_X$, because $D$ is a complete intersection in $\PP(1,1,1,1,2)$, we can compute the singularities as in \cite[Section 1.7]{wpsfletcher}.  The computation shows that $D$ has a unique $\frac{1}{4}(1,1)$ singularity at the vertex of $X$ (c.f. Proposition \ref{prop:DQ}).  Furthermore, by Proposition \ref{prop:smoothdivisor}, we know $\calM_{\PP^3\s,(5,4)}$ is smooth at the generic point of $\calD_2$.  In \cite[Theorem 1.5]{rana}, it is shown that there is a divisor $\calD'$ in the moduli space of stable quintic surfaces parameterizing surfaces whose unique non Du Val singularity is of type $\frac{1}{4}(1,1)$.  The component $\calD_2$ in $\calM_{\PP^3\s,(5,4)}$ found here parameterizes the surfaces Rana calls `type 1' (appearing as a divisor on the component parameterizing surfaces of type I).  In other words, for general $S$ such that $[S] \in \calD'$ in Rana's work, $S$ appears as a divisor on the threefold $X$ where $[(X,S)] \in \calD_2$ in this interpretation.  

We can describe components of higher codimension using the work in Section \ref{sec:lt}.  From Example \ref{ex:1124}, we know that $\PP(1,1,2,4)$ admits a smoothing to $X$, so should correspond to a higher codimension piece of $\calM_{\PP^3\s,(5,4)}$.  Indeed, a toric computation shows that the projective dimension of the automorphism group of $\PP(1,1,2,4)$ is $17$, and surfaces on $Z = \PP(1,1,2,4)$ satisfying $5K_Z + 4D \sim 0$ are elements of the linear system $| \calO_Z(10)|$.  This linear system has projective dimension $55$, so the space parameterizing surfaces on $\PP(1,1,2,4)$ has dimension 38.  This is a codimension 2 component of $\calM_{\PP^3\s,(5,4)}$ that is codimension 1 inside $\calD_2$.  Furthermore, a computation as in \cite[Section 1.7]{wpsfletcher} shows that the surfaces appearing on $\PP(1,1,2,4)$ have two singularities: $\frac{1}{4}(1,1)$ and $\frac{1}{2}(1,1)$.  
We could continue further: there is a 37-dimensional (or codimension 3) component parameterizing surfaces on $\PP(1,2,9,12)$.  This admits a smoothing to $\PP(1,1,2,4)$ (see Section \ref{sec:lt}) and the surfaces appearing on $\PP(1,2,9,12)$ have an additional $\frac{1}{9}(1,2)$ singularity. 

We can also describe some non-normal threefolds appearing using Propositions \ref{prop:p2} and \ref{prop:sum}.  For instance, considering the mutations going from $(1,1,1,1)$ to $(1,1,2,4)$, from $(1,1,2,4)$ to $(1,4,10,25)$, and from $(1,1,2,4)$ to $(1,2,9,12)$, we obtain the non-normal threefolds $\PP(1,1,1,2)\cup \PP(1,1,1,2)$, $\PP(1,1,2,5) \cup \PP(1,4,2,5)$, and $\PP(1,1,3,4) \cup \PP(1,2,3,4)$.

In fact, we can strengthen the result on non-normal threefolds as follows.  Theorem \ref{thm:slt} implies that any threefold $X$ appearing in one of these pairs is semi dlt.  In the case of quintic surfaces, it is in fact semi plt. 

\begin{theorem}\label{quinticsplt}
The varieties $X$ occurring in a $(\mathbb{P}^3,5,4)$ \he stable pair have at most two components, so each component $(X^\nu, \Delta)$ of the normalization is plt.  
\end{theorem}

\begin{proof}
    If $X$ is not normal and the double locus $\Delta$ on $X^\nu$ has more than one component, $\Delta$ must be connected by \cite[Theorem 17.4]{kollarplus}.  In particular, if $(X^\nu, \Delta)$ is not plt, two components $\Delta_i$ and $\Delta_j$ meet along a curve \cite[Theorem 4.16(2)]{newkollar}.  Call this curve $C$.  By hypothesis, $-(K_{X^\nu} +\Delta)$ is ample, and writing $\Delta' = \Delta \setminus (\Delta_1 + \Delta_2)$, we compute $-(K_{X^\nu} + \Delta_1 + \Delta_2 + \Delta')\cdot C = -2 + \sum(1 - \frac{1}{m_i}) + \Delta'\cdot C < 0$, where $m_i$ is the index of any singularities along $C$.  If $\Delta'$ is not empty, then there is at most one such singularity and $-(K_{X^{\nu}} + \Delta)\cdot C = -\frac{1}{m}$ or $-1$, and $D \cdot C \in \frac{1}{m} \mathbb{Z}$, so the relationship $dK_X + 4D \sim 0$ implies that $d$ is even.  Assume then $\Delta'$ is empty.  If there are no singular points of index $m_i >1$ along $C$, we also get that $d$ is even.  However, there may be one or two singularities along $C$, in which case $-(K_{X^\nu} + \Delta_1 + \Delta_2) \cdot C = -1 - \frac{1}{m}$ or $-\frac{1}{m_1} - \frac{1}{m_2}$.  Note that these do \textit{not} necessarily violate the condition that $dK_X + 4D \sim 0$ (for example, if there is only singular point of index $3$).  Assume that $dK_X + 4D \sim 0$ and $d$ is odd.  Then, we must have $m = 3 \mod 4$ in the case of one singular point, or $m_1+m_2 = 0 \mod 4$.
    
    Consider the first case (one singular point of index $m$) and now assume $d = 5$.  The second is similar.  If $D$ misses the singular point, then $m$ divides $d$ because $dK_X$ is Cartier in a neighborhood of the point, but $m = 3 \mod 4$, so this is impossible.  If $D$ passes through the singular point, the pair $(X, \frac{4}{5}D)$ is slc, so $(X^\nu, \Delta + \frac{4}{5}D^\nu)$ is lc and resolving the point $\pi: Y \to X^\nu$, we find that this is possible only if the curve $D|_{\Delta_i}$ passes through the singular point with multiplicity 1.  Restricting to $\Delta_i$, suppose the singular point is of type $\frac{1}{m}(1,a)$, and $m = 3 \mod 4$.  Using local coordinates $D|_{\Delta_i} = x^iy^j + \dots$ and the relationship $4D|_{\Delta_i} = -5(K_{\Delta_i} + C)$, we find that $5 = 1 \mod m$, but $m = 3 \mod 4$, and this is impossible. 
\end{proof}

\begin{remark}
For larger degree $d$, even when $d$ is odd, the threefolds need not be semi plt: there is a degeneration of $\PP^3$ that is a union of 6 components, isomorphic to $\PP(1,1,2,3)$, glued in a cycle such that the double locus in each component is $\Delta_1 + \Delta_2 $, $\Delta_1 \cong \bP(1,1,3)$ and $\Delta_2 \cong \bP(1,2,3)$.  On each component, $D \in |\mathcal{O} (d)|$ (and this can occur for odd degree $d$; even for instance $d = 7$). 
\end{remark}

For general degree, to explicitly describe all threefolds appearing in $\calM_{\dfours}$, we must complete the classification begun in Section \ref{sec:lt}.  Furthermore, if we denote by $\calM_d^{GIT}$ the GIT moduli space of degree $d$ surfaces, one expects a rational map $$\calM_{\dfours} \dashrightarrow \calM_d^{GIT}$$ although understanding this map would require a better understanding of both $\calM_{\dfours}$ and $\calM_d^{GIT}$.  This will be explored in future work.

\bibliographystyle{alpha}
\bibliography{main}

\newcommand{\etalchar}[1]{$^{#1}$}
\begin{thebibliography}{BCHM10}

\bibitem[AH11]{stablevars}
Dan Abramovich and Brendan Hassett.
\newblock Stable varieties with a twist.
\newblock In {\em Classification of algebraic varieties}, EMS Ser. Congr. Rep.,
  pages 1--38. Eur. Math. Soc., Z\"urich, 2011.

\bibitem[AK00]{abramovichkaru}
D.~Abramovich and K.~Karu.
\newblock Weak semistable reduction in characteristic 0.
\newblock {\em Invent. Math.}, 139(2):241--273, 2000.

\bibitem[Ale96]{alexeev}
Valery Alexeev.
\newblock Moduli spaces {$M_{g,n}(W)$} for surfaces.
\newblock In {\em Higher-dimensional complex varieties ({T}rento, 1994)}, pages
  1--22. de Gruyter, Berlin, 1996.

\bibitem[BCHM10]{BCHM}
Caucher Birkar, Paolo Cascini, Christopher~D. Hacon, and James McKernan.
\newblock Existence of minimal models for varieties of log general type.
\newblock {\em J. Amer. Math. Soc.}, 23(2):405--468, 2010.

\bibitem[CJR08]{almostfano}
Cinzia Casagrande, Priska Jahnke, and Ivo Radloff.
\newblock On the {P}icard number of almost {F}ano threefolds with pseudo-index
  {$>1$}.
\newblock {\em Internat. J. Math.}, 19(2):173--191, 2008.

\bibitem[Cut88]{cutkosky}
Steven Cutkosky.
\newblock Elementary contractions of {G}orenstein threefolds.
\newblock {\em Math. Ann.}, 280(3):521--525, 1988.

\bibitem[dFF13]{defernexfusi}
Tommaso de~Fernex and Davide Fusi.
\newblock Rationality in families of threefolds.
\newblock {\em Rend. Circ. Mat. Palermo (2)}, 62(1):127--135, 2013.

\bibitem[DH18]{deohan}
A.~{Deopurkar} and C.~{Han}.
\newblock {Stable log surfaces, admissible covers, and canonical curves of
  genus 4}.
\newblock {\em ArXiv e-prints}, July 2018.

\bibitem[Fle89]{fletcher}
A.~R. Fletcher.
\newblock Inverting {R}eid's exact plurigenera formula.
\newblock {\em Math. Ann.}, 284(4):617--629, 1989.

\bibitem[Fuj14]{fujino2}
Osamu Fujino.
\newblock Fundamental theorems for semi log canonical pairs.
\newblock {\em Algebr. Geom.}, 1(2):194--228, 2014.

\bibitem[Fuj16]{Fujita}
Kento Fujita.
\newblock Log del pezzo surfaces with not small fractional indices.
\newblock {\em Mathematische Nachrichten}, 289(1):34--59, 2016.

\bibitem[Gri85]{quinticsurfaces}
Edmond~E. Griffin, II.
\newblock Families of quintic surfaces and curves.
\newblock {\em Compositio Math.}, 55(1):33--62, 1985.

\bibitem[Hac04]{hacking}
Paul Hacking.
\newblock Compact moduli of plane curves.
\newblock {\em Duke Math. J.}, 124(2):213--257, 2004.

\bibitem[Hac12]{hackinggentype}
Paul Hacking.
\newblock Compact moduli spaces of surfaces of general type.
\newblock In {\em Compact moduli spaces and vector bundles}, volume 564 of {\em
  Contemp. Math.}, pages 1--18. Amer. Math. Soc., Providence, RI, 2012.

\bibitem[Har77]{hartshorne}
Robin Hartshorne.
\newblock {\em Algebraic geometry}.
\newblock Springer-Verlag, New York-Heidelberg, 1977.
\newblock Graduate Texts in Mathematics, No. 52.

\bibitem[Has99]{hassett}
Brendan Hassett.
\newblock Stable log surfaces and limits of quartic plane curves.
\newblock {\em Manuscripta Math.}, 100(4):469--487, 1999.

\bibitem[HM07]{RC}
Christopher~D. Hacon and James Mckernan.
\newblock On {S}hokurov's rational connectedness conjecture.
\newblock {\em Duke Math. J.}, 138(1):119--136, 2007.

\bibitem[HMX14a]{hmx2}
Christopher~D. Hacon, James McKernan, and Chenyang Xu.
\newblock A{CC} for log canonical thresholds.
\newblock {\em Ann. of Math. (2)}, 180(2):523--571, 2014.

\bibitem[HMX14b]{hmx}
Christopher~D. Hacon, James McKernan, and Chenyang Xu.
\newblock Boundedness of moduli of varieties of general type, 2014.
\newblock arXiv:1412.1186v1 [math.AG].

\bibitem[Hor73]{horikawa}
Eiji Horikawa.
\newblock On deformations of quintic surfaces.
\newblock {\em Proc. Japan Acad.}, 49:377--379, 1973.

\bibitem[HP10]{smoothdp}
Paul Hacking and Yuri Prokhorov.
\newblock Smoothable del {P}ezzo surfaces with quotient singularities.
\newblock {\em Compos. Math.}, 146(1):169--192, 2010.

\bibitem[HX13]{haconxu}
Christopher~D. Hacon and Chenyang Xu.
\newblock Existence of log canonical closures.
\newblock {\em Invent. Math.}, 192(1):161--195, 2013.

\bibitem[IF00]{wpsfletcher}
A.~R. Iano-Fletcher.
\newblock Working with weighted complete intersections.
\newblock In {\em Explicit birational geometry of 3-folds}, volume 281 of {\em
  London Math. Soc. Lecture Note Ser.}, pages 101--173. Cambridge Univ. Press,
  Cambridge, 2000.

\bibitem[Ish91]{ishii}
Shihoko Ishii.
\newblock Quasi-{G}orenstein {F}ano {$3$}-folds with isolated nonrational loci.
\newblock {\em Compositio Math.}, 77(3):335--341, 1991.

\bibitem[K{\etalchar{+}}92]{kollarplus}
J\'anos Koll\'ar et~al.
\newblock {\em Flips and abundance for algebraic threefolds}.
\newblock Soci\'et\'e Math\'ematique de France, Paris, 1992.
\newblock Papers from the Second Summer Seminar on Algebraic Geometry held at
  the University of Utah, Salt Lake City, Utah, August 1991, Ast\'erisque No.
  211 (1992) (1992).

\bibitem[Kar00]{karu}
Kalle Karu.
\newblock Minimal models and boundedness of stable varieties.
\newblock {\em J. Algebraic Geom.}, 9(1):93--109, 2000.

\bibitem[KK10]{dubois}
J\'anos Koll\'ar and S\'andor~J. Kov\'acs.
\newblock Log canonical singularities are {D}u {B}ois.
\newblock {\em J. Amer. Math. Soc.}, 23(3):791--813, 2010.

\bibitem[KM92]{classofflips}
J\'{a}nos Koll\'{a}r and Shigefumi Mori.
\newblock Classification of three-dimensional flips.
\newblock {\em J. Amer. Math. Soc.}, 5(3):533--703, 1992.

\bibitem[KM98]{km98}
J\'anos Koll\'ar and Shigefumi Mori.
\newblock {\em Birational geometry of algebraic varieties}, volume 134 of {\em
  Cambridge Tracts in Mathematics}.
\newblock Cambridge University Press, Cambridge, 1998.

\bibitem[KMM87]{KMM}
Yujiro Kawamata, Katsumi Matsuda, and Kenji Matsuki.
\newblock Introduction to the minimal model problem.
\newblock In {\em Algebraic geometry, {S}endai, 1985}, volume~10 of {\em Adv.
  Stud. Pure Math.}, pages 283--360. North-Holland, Amsterdam, 1987.

\bibitem[KNX18]{KNX}
J\'{a}nos Koll\'{a}r, Johannes Nicaise, and Chenyang Xu.
\newblock Semi-stable extensions over 1-dimensional bases.
\newblock {\em Acta Math. Sin. (Engl. Ser.)}, 34(1):103--113, 2018.

\bibitem[Kol96]{kollarrationalcurves}
J\'{a}nos Koll\'{a}r.
\newblock {\em Rational curves on algebraic varieties}, volume~32 of {\em
  Ergebnisse der Mathematik und ihrer Grenzgebiete. 3. Folge. A Series of
  Modern Surveys in Mathematics [Results in Mathematics and Related Areas. 3rd
  Series. A Series of Modern Surveys in Mathematics]}.
\newblock Springer-Verlag, Berlin, 1996.

\bibitem[Kol13]{newkollar}
J\'anos Koll\'ar.
\newblock {\em Singularities of the minimal model program}, volume 200 of {\em
  Cambridge Tracts in Mathematics}.
\newblock Cambridge University Press, Cambridge, 2013.
\newblock With a collaboration of S\'andor Kov\'acs.

\bibitem[Kol17]{modulibook}
J{\'a}nos Koll{\'a}r.
\newblock {\em Families of varieties of general type}.
\newblock 2017.
\newblock
  \url{https://web.math.princeton.edu/~kollar/book/modbook20170720-hyper.pdf}.

\bibitem[{Kol}19]{kflatness}
J{\'a}nos {Koll{\'a}r}.
\newblock {Families of divisors}.
\newblock {\em arXiv e-prints}, 2019.

\bibitem[Kov09]{ypgmod}
S\'andor~J. Kov\'acs.
\newblock Young person's guide to moduli of higher dimensional varieties.
\newblock In {\em Algebraic geometry---{S}eattle 2005. {P}art 2}, volume~80 of
  {\em Proc. Sympos. Pure Math.}, pages 711--743. Amer. Math. Soc., Providence,
  RI, 2009.

\bibitem[KSB88]{ksb}
J.~Koll\'ar and N.~I. Shepherd-Barron.
\newblock Threefolds and deformations of surface singularities.
\newblock {\em Invent. Math.}, 91(2):299--338, 1988.

\bibitem[KX19]{KX}
J{\'a}nos {Koll{\'a}r} and Chenyang {Xu}.
\newblock {Moduli of polarized Calabi-Yau pairs}.
\newblock {\em arXiv e-prints}, Jun 2019.

\bibitem[Man91]{manetti}
Marco Manetti.
\newblock Normal degenerations of the complex projective plane.
\newblock {\em J. Reine Angew. Math.}, 419:89--118, 1991.

\bibitem[Mat86]{matsusaka}
T.~Matsusaka.
\newblock On polarized normal varieties. {I}.
\newblock {\em Nagoya Math. J.}, 104:175--211, 1986.

\bibitem[Mor88]{mori}
Shigefumi Mori.
\newblock Flip theorem and the existence of minimal models for {$3$}-folds.
\newblock {\em J. Amer. Math. Soc.}, 1(1):117--253, 1988.

\bibitem[MP11]{moriprokhorov}
Shigefumi Mori and Yuri Prokhorov.
\newblock Threefold extremal contractions of type ({IA}).
\newblock {\em Kyoto J. Math.}, 51(2):393--438, 2011.

\bibitem[Nak07]{nakayama}
Noboru Nakayama.
\newblock Classification of log del {P}ezzo surfaces of index two.
\newblock {\em J. Math. Sci. Univ. Tokyo}, 14(3):293--498, 2007.

\bibitem[Pat16]{zsolt}
Zsolt Patakfalvi.
\newblock Fibered stable varieties.
\newblock {\em Trans. Amer. Math. Soc.}, 368(3):1837--1869, 2016.

\bibitem[Ran17]{rana}
Julie Rana.
\newblock A boundary divisor in the moduli spaces of stable quintic surfaces.
\newblock {\em Internat. J. Math.}, 28(4):1750021, 61, 2017.

\bibitem[Rei87]{ypg}
Miles Reid.
\newblock Young person's guide to canonical singularities.
\newblock In {\em Algebraic geometry, {B}owdoin, 1985 ({B}runswick, {M}aine,
  1985)}, volume~46 of {\em Proc. Sympos. Pure Math.}, pages 345--414. Amer.
  Math. Soc., Providence, RI, 1987.

\bibitem[Sch71]{schlessinger}
Michael Schlessinger.
\newblock Rigidity of quotient singularities.
\newblock {\em Invent. Math.}, 14:17--26, 1971.

\bibitem[Tzi05]{tzi}
Nikolaos Tziolas.
\newblock Three dimensional divisorial extremal neighborhoods.
\newblock {\em Math. Ann.}, 333(2):315--354, 2005.

\bibitem[WX14]{wangxu}
Xiaowei Wang and Chenyang Xu.
\newblock Nonexistence of asymptotic {GIT} compactification.
\newblock {\em Duke Math. J.}, 163(12):2217--2241, 2014.

\end{thebibliography}

\end{document}